\documentclass[12pt,a4paper]{article}
\usepackage{amssymb}
\usepackage{amsmath, enumerate,amsthm,fullpage,wasysym,textcomp}
\usepackage{epsfig}
\textwidth16cm
\textheight24.4cm
\topmargin-1.2cm
\setlength{\oddsidemargin}{-0.4cm}
\setlength{\evensidemargin}{-0.4cm}

\numberwithin{equation}{section}
\theoremstyle{plain}
\newtheorem{thm}{Theorem}[section]
\newtheorem{prop}[thm]{Proposition}
\newtheorem*{mthm}{Main Theorem}
\newtheorem{cor}[thm]{Corollary}
\newtheorem{lem}[thm]{Lemma}
\theoremstyle{definition}
\newtheorem{exa}[thm]{Example}
\newtheorem{conj}[thm]{Conjecture}
\newtheorem{rem}[thm]{Remark}
\newtheorem{defi}[thm]{Definition}
\newtheorem{notation}[thm]{Notation}

\newcommand{\real}{\mathbb{R}}
\newcommand{\comp}{\mathbb{C}}
\DeclareMathOperator*{\nat}{\mathbb{N}}

\usepackage{calc}
\newcounter{PartitionDepth}
\newcounter{PartitionLength}





\newcommand{\lhalf}[2]{
 \begin{picture}(#2,#1)
 \put(-#2,-#1){\line(0,1){#1}}
 \put(-#2,-#1){\line(1,0){#2}} 
 \end{picture}}

\newcommand{\rhalf}[2]{
 \begin{picture}(#2,#1)
 \put(-#2,-#1){\line(0,1){#1}}
 \put(-#2,-#1){\line(-1,0){#2}} 
 \end{picture}}

%

\setlength{\unitlength}{0.5cm}

\begin{document}
\title{ $k$-divisible random variables in free probability}
\author{Octavio Arizmendi\footnote{Supported by DFG-Deutsche Forschungsgemeinschaft Project SP419/8-1} \\ Universit\"{a}t des Saarlandes, FR $6.1-$Mathematik,\\ 66123 Saarbr\"{u}cken, Germany \\ }


\maketitle

\begin{abstract}
 We introduce and study the notion of $k$-divisible elements in a non-commutative probability space. A $k$-divisible element is a (non-commutative) random variable whose $n$-th moment vanishes whenever $n$ is not a multiple of $k$.

First, we consider the combinatorial convolution $*$ in the lattices $NC$ of non-crossing partitions and $NC^k$ of $k$-divisible non-crossing partitions and show that convolving $k$ times with the zeta-function in $NC$ is equivalent to convolving once with the zeta-function in $NC^k$. 
Furthermore, when $x$ is $k$-divisible, we derive a formula for the free cumulants of $x^k$ in terms of the free cumulants of $x$, involving $k$-divisible non-crossing partitions.

Second, we prove that if $a$ and $s$ are free and $s$ is $k$-divisible then $sps$ and $a$ are free, where $p$ is any polynomial (on $a$ and $s$) of degree $k-2$ on $s$. Moreover, we define a notion of R-diagonal $k$-tuples and prove similar results.

Next, we show that free multiplicative convolution between a measure concentrated in the positive real line and a probability measure with $k$-symmetry is well defined. Analytic tools to calculate this convolution are developed.
 
Finally, we concentrate on free additive powers of $k$-symmetric distributions and prove that $\mu^{\boxplus t}$ is a well defined probability measure, for all $t>1$. We derive central limit theorems and Poisson type ones. More generally, we consider freely infinitely divisible measures and prove that free infinite divisibility is maintained under the mapping $\mu\rightarrow\mu^k$.
 We conclude by focusing on ($k$-symmetric) free stable distributions, for which we prove a reproducing property generalizing the ones known for one sided and real symmetric free stable laws.

\end{abstract}
\newpage
\section*{Introduction}

Let $\mathcal{M}$ and $\mathcal{M}_\comp$ be the classes of all Borel probability measures on the real
line $\mathbb{R}$ and on the complex plane, respectively. Moreover, let $\mathcal{M}_{b}$ and $\mathcal{M}^{+}$ be the
subclasses of $\mathcal{M}$ consisting of probability measures with bounded support and of probability measures having support on $\mathbb{R}%
_{+}=[0,\infty)$, respectively.

For $q$ a primitive $k$-th root of unity consider the $k$-semiaxes $A_k:= \{ x\in \comp\mid x=tq^s $ for some $t>0$ and $s\in\nat\}$
and denote by $\mathcal{M}_k$ the subclass of $\mathcal{M}_\comp$ of probability measures supported $A_k$ such that $\mu(B)=\mu(qB)$, for all Borel sets $B$. A measure in $\mathcal{M}_k$ will be called $k$-symmetric.
We say that a measure in $\mathcal{M}_\comp$ has all moments if $m_{k}(\mu):=\int_{\mathbb{C}}\left\vert t\right\vert
^{n}\mu(\mathrm{d}t)<\infty,$ for each integer $n\geq1$.

In this paper we study random variables whose distribution is $k$-symmetric, which we will call \textit{$k$-divisible}.  We give a framework to these $k$-divisible random variables from the free probabilistic point of view. We consider various aspects of $k$-symmetric distributions including combinatorial, algebraic and probabilistic ones.

These $k$-divisible (non-commutative) random variables appear naturally in free probability. A typical example of a $k$-divisible random variable is the so called $k$-Haar unitaries with distribution $\mu=\frac{1}{k}\sum_{j=1}^k\delta_{q^j}$. $k$-divisible free random variables appear not only in the abstract setting but also in applications to random matrices. For instance, in \cite{Neagu} it is shown that an independent family $U_1,U_2,...,U_s$ of random $N\times N$ permutation matrices with cycle lengths of size $k$ converges in $*$-distribution to a $*$-free family $u_1,u_2,...,u_s$ of $k$-Haar unitaries. 

Other interesting examples of $k$-divisible free random variables come from the context of quantum groups. In Banica et al. \cite{BBCC}, where free Bessel laws are studied in detail, a modified $k$-symmetric version appears as the asymptotic law of the truncated characters of certain quantum groups. Similarly, from their studies of the law of the quantum isometry groups, Banica and Skalski \cite{BaS} found $k$-symmetric measures which are the analog of free compound Poissons, see Theorem 4.4 and Remark 4.5 in \cite{BaS}.

The free additive convolution $\boxplus$ and free multiplicative convolution $\boxtimes$ of measures supported on the real line (explained in Section 3) were introduced by Voiculescu \cite{Vo87b} to describe the sum and the product of free (non-commuting) random variables. These operations have many applications in the theory of large dimensional random matrices, since they allow to compute the asymptotic spectrum of the sum and the product of two independent random matrices from the individual asymptotic spectra \cite{HiPe00}, \cite{VoDyNi92}. Even though some work has been done in the physics literature (see e.g. \cite{BJN}) until now, this machinery could only be used for selfadjoint random variables and $k$-divisible random variables are not selfadjoint whenever $k>2$. Let us mention that $k$-symmetric distributions were considered by Goodman \cite{Goodman} in the framework of graded independence.

The Main Theorem (stated below) enables to define free multiplicative convolution between a measure concentrated in the positive real axis and a probability measure with $k$-symmetry. We extend the definition of the Voiculescu's $S$-transform to any $k$-symmetric measure $\mu$ to calculate effectively the free multiplicative convolution $\mu\boxtimes\nu$, between a $k$ symmetric measure $\mu$ and a measure $\nu$ supported on $\real^+$. 

 The Main Theorem also permits to define free additive powers for $k$-divisible measures leading to central limit theorems and Poisson type ones. Once we have free additive powers, the concept of free infinite divisibility arises naturally. We prove that for a $k$-symmetric measure $\mu$, free infinite divisibility is maintained under the mapping $\mu\rightarrow\mu^k$.

 Moreover, interesting combinatorial implications regarding the combinatorial convolution in $NC^k$, the poset of $k$-divisible non-crossing partitions are derived from the Main Theorem. This gives new ways of counting objects like $k$-equal partitions, $k$-divisible partitions and $k$-multichains both in $NC$ and $NC^k$.

From the combinatorial results on the poset of $k$-divisible non-crossing partitions we derive a formula for the free cumulants of $x^k$ in terms of the free cumulants of $x$ involving $k$-divisible non-crossing partitions.  Moreover, we define a notion of $R$-diagonal $k$-tuples and prove similar results.


A detailed description of the results of the paper is made in Section 1. Apart from this, the paper is organized as follows. The preliminaries needed in this paper are explained in Sections 2 and 3. In Section 2 we review non-commutative random variables and free probability including the analytic machinery to calculate free additive and multiplicative convolution, while in Section 3 we recall the combinatorics of non-crossing partitions.

We introduce the concept of $k$-divisible elements and study some of the combinatorial aspects of their cumulants in Section 4. Results of Section 4 are generalized in Section 5, where we introduce the concept of $R$-diagonal $k$-tuples. 
In Section 6, the main section, we present the main theorem of the paper and direct consequences, including free multiplicative convolution and free additive powers. Section 8 is dedicated to limit theorems: free central limit theorems, free compound Poisson, free infinite divisibility and connections to limit theorems in free multiplicative convolution is made. Finally, Section 9 deals with the case of unbounded measures, the $S$-transform of any $k$-symmetric probability measure as well as 
the free multiplicative convolution of distributions in
$\mathcal{M}_k$ with distributions in $\mathcal{M}^{+}$ is considered. We end by focusing on free stable distributions.

\section{Statement of Results}

First, from the combinatorial point of view we study the poset $NC^{k}(n)$ and its associated combinatorial convolution $*$ and translate the combinatorial convolution on $NC^k(n)$ to the convolution in $NC(n)$ of dilated sequences. Basically, we show that convolving $k$ times with the zeta-function in $NC$ is equivalent to convolving once with the zeta-function in $NC^k$.

\begin{thm}
The following statements are equivalent.
\begin{enumerate}[{\rm (1)}]
\item The multiplicative family $f:=(f_n)_{n>0}$ is the result of applying $k$ times the zeta-function to $g:=(g_n)_{n>0}$, that is 
\begin{equation*}
f=g\ast \underbrace{\zeta \ast \cdots \ast \zeta}_{k~times}
\end{equation*}

\item The multiplicative family $f^{(k)}:=(f_{n}^{(k)})_{n>0}$ is the result of applying one time the zeta-function to $g^{(k)}:=(g_{n}^{(k)})_{n>0}$, that is 
\begin{equation*}
f^{(k)}=g^{(k)}\ast \zeta,
\end{equation*}
where for a sequence $(a_n)_{n>0}$, the sequence $(a^{(k)}_n)_{n>0}$ denotes the dilated sequence given by $a^{(k)}_{kn}=a_n$ and $a^{(k)}_n=0$ if $n$ is not a multiple of $k$.
\end{enumerate}
\end{thm}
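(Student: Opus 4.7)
The plan is to unfold both sides of the equivalence into explicit sums over partition combinatorics and then exhibit a weight-preserving bijection between the two indexing sets. On the $NC$ side, an induction on $k$ using the standard unrolling of the combinatorial convolution for multiplicative families yields
\[
f_{n} \;=\; (g\ast \zeta^{\ast k})_{n} \;=\; \sum_{\pi_{1}\leq \pi_{2}\leq \cdots \leq \pi_{k}} g[\pi_{1}],
\]
where the sum runs over length-$k$ multichains in $NC(n)$ and $g[\pi]:=\prod_{V\in\pi} g_{|V|}$; each further convolution with $\zeta$ adjoins a new partition above the current top while leaving the weight $g[\pi_{1}]$ unchanged. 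On the $NC^{k}$ side, since $g^{(k)}$ vanishes outside multiples of $k$ and every block of $\sigma \in NC^{k}(n)$ has size divisible by $k$, the single $\zeta$-convolution collapses to
\[
f^{(k)}_{kn} \;=\; (g^{(k)}\ast \zeta)_{kn} \;=\; \sum_{\sigma\in NC^{k}(n)} g^{(k)}[\sigma] \;=\; \sum_{\sigma\in NC^{k}(n)} \prod_{W\in\sigma} g_{|W|/k}.
\]

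The central step is to produce a bijection
\[
\Phi : \{\pi_{1}\leq \cdots \leq \pi_{k} \text{ in } NC(n)\} \;\longrightarrow\; NC^{k}(n),
\]
with the block-size matching property that if $\sigma=\Phi(\pi_{1},\ldots,\pi_{k})$ then the blocks of $\pi_{1}$ are in bijective correspondence with the blocks of $\sigma$ via $V\leftrightarrow W$ with $|W|=k|V|$. This is essentially the classical Edelman bijection, refined in the spirit of Armstrong's work on $k$-divisible non-crossing partitions to track the sizes of the bottom partition. I would build $\Phi$ inductively on $k$: the case $k=1$ is trivial since $NC^{1}(n)=NC(n)$, and for $k>1$ one uses the Kreweras complement of the top partition $\pi_{k}$ to introduce $n$ auxiliary points interleaved with the original ones, which reduces the $k$-chain problem on $n$ points to a $(k-1)$-chain problem whose end product is a $k$-divisible partition of $kn$ points refining the inflation of $\pi_{1}$ block-by-block.

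Once $\Phi$ is available, the two summands match: $g[\pi_{1}]=\prod_{V} g_{|V|}=\prod_{W}g_{|W|/k}=g^{(k)}[\sigma]$ under the correspondence, so the sums in the two displayed formulas are term-wise equal, which gives the equivalence of (1) and (2). The main obstacle is precisely the construction of $\Phi$: the raw enumerative identity $|NC^{k}(n)|=\#\{\text{length-}k\text{ multichains in }NC(n)\}$ (the Fuss--Catalan number) is classical, but refining it to a size-preserving bijection on the blocks of $\pi_{1}$ is the combinatorially delicate step, for which iterated Kreweras complementation inside $NC$ provides the right recursion.
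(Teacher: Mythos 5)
Your reduction of the equivalence to the single polynomial identity
\[
\sum_{\pi_1\le\cdots\le\pi_k \ \text{in}\ NC(n)} \ \prod_{V\in\pi_1} g_{|V|}
\;=\;
\sum_{\sigma\in NC^k(n)} \ \prod_{W\in\sigma} g_{|W|/k}
\]
is correct: the unfolding of $g\ast\zeta^{\ast k}$ as a sum over length-$k$ multichains weighted by the bottom element, and the collapse of $(g^{(k)}\ast\zeta)_{kn}$ onto $k$-divisible partitions, are both right (you should also record the easy observation that $(g^{(k)}\ast\zeta)_m=0$ for $k\nmid m$, since no partition of $[m]$ is then $k$-divisible; this is needed for the output to be a genuine $k$-dilated family). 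The problem is that after this reduction the entire proof rests on the existence of a bijection $\Phi$ between $k$-multichains of $NC(n)$ and $NC^k(n)$ under which the blocks of $\pi_1$ correspond to the blocks of $\sigma$ with sizes multiplied by $k$ --- and this, which you yourself flag as the delicate step, is never constructed. The sketch ``use the Kreweras complement of $\pi_k$ to interleave $n$ auxiliary points and recurse'' does not say which partition is placed on the new points, why the output is $k$-divisible, why its blocks inflate those of $\pi_1$ (rather than of some other member of the chain) by exactly a factor of $k$, or why the map is invertible. The unrefined Fuss--Catalan cardinality match is classical, but the type-refined statement is exactly as strong as the theorem itself, so asserting it with a gesture leaves a genuine gap at the only nontrivial step. (Such a refined bijection does exist --- its existence even follows a posteriori from the identity --- but exhibiting one is precisely the work you would have to do.)

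For contrast, the paper takes a route that avoids constructing any bijection: Theorem \ref{MainRemark} is read off from Corollary \ref{maincor}, which is pure algebra with the functional equations $M(z)=B(zM(z))$, $M(z)=A(zM(z)^{k})$ and $B(z)=A(zB(z)^{k-1})$ supplied by Propositions \ref{FunctionalEquation} and \ref{FunctionalEquation2}; the role of your $\Phi$ is played there by substitutions such as $y=zM(z)$. Your plan, if completed, would yield a more informative bijective proof of the same identity, but as written the key lemma is assumed, not proved. Either carry out the construction of $\Phi$ in full (the $k=2$ case already contains the essential difficulty), or replace that step by the generating-function argument.
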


 Noticing that, when $x$ is $k$-divisible, the moments of $x$ are nothing else than the dilation of the moments of $x^k$ and using the so called moment-cumulant formula of Speicher (see e.g. \cite{NiSp06})  which relates the moments and the free cumulants via the combinatorial convolution in $NC(n)$ we give a relation between the free cumulants of $x$ and $x^k$ which generalizes results in \cite{NiSp95}. 

\begin{thm}
Let $(\mathcal{A},\phi )$ be a non-commutative probability
space and let $x$ be a $k$-divisible element with $k$-determining sequence $
(\alpha _{n}=\kappa_{kn}(x,...,x))_{n\geq 1}$. Then the following formula holds for the free cumulants
of $x^{k}$.
\begin{equation}
\kappa_{n}(x^{k},x^{k},...,x^{k})=[\alpha\ast \underbrace{\zeta \ast \cdots \ast \zeta}_{k~times}]_n.
\end{equation}
\end{thm}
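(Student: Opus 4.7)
The plan is to reduce the claim to the combinatorial equivalence of Theorem~1.1 by re-expressing everything through the free moment--cumulant formula and observing that $k$-divisibility is a dilation statement.

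I would first recall the multiplicative-convolution form of the moment--cumulant relation in $NC$, namely $m(y) = \kappa(y) \ast \zeta$ for any non-commutative random variable $y$.  Next, I would unpack $k$-divisibility of $x$ into two parallel dilation identities.  The moments $m_n(x) = \phi(x^n)$ vanish off multiples of $k$ and satisfy $m_{kn}(x) = \phi((x^k)^n) = m_n(x^k)$, so
\begin{equation*}
m(x) = \bigl(m(x^k)\bigr)^{(k)}.
\end{equation*}
A Möbius inversion on $NC$ then forces $\kappa_n(x) = 0$ whenever $k \nmid n$, so that
\begin{equation*}
\kappa(x) = \alpha^{(k)}.
\end{equation*}

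Substituting both identities into $m(x) = \kappa(x) \ast \zeta$ produces
\begin{equation*}
\bigl(m(x^k)\bigr)^{(k)} = \alpha^{(k)} \ast \zeta,
\end{equation*}
which is precisely statement~(2) of Theorem~1.1 with $g := \alpha$ and $f := m(x^k)$.  The equivalence proved there immediately upgrades it to statement~(1),
\begin{equation*}
m(x^k) = \alpha \ast \underbrace{\zeta \ast \cdots \ast \zeta}_{k\text{ times}},
\end{equation*}
and one last moment--cumulant Möbius convolution in $NC(n)$ applied to $x^k$ passes from moments to free cumulants, yielding the stated formula for $\kappa_n(x^k,\ldots,x^k)$.

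The main obstacle is essentially bookkeeping rather than conceptual.  One must verify that the multiplicative convolution $\ast$ appearing inside the moment--cumulant formula coincides with the one featured in Theorem~1.1, and that the dilation $a \mapsto a^{(k)}$ interacts with $\ast\,\zeta$ in exactly the manner prescribed there.  The vanishing of $\kappa_n(x)$ off multiples of $k$ deserves a one-line Möbius argument and is the only place where the specific structure of $x$ (beyond its moments) is used.  Once the conventions are aligned, the whole statement is a short chain of substitutions ending in a black-box invocation of Theorem~1.1.
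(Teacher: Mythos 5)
Your overall strategy is the paper's own: the paper proves the First formula (Theorem \ref{1formula}) by exactly this chain of substitutions, run through the generating-function form of the dilation equivalence (Proposition \ref{Main Proposition}, i.e.\ Theorem \ref{MainRemark}), and the Third formula is meant to be a reformulation of that. Everything you do up to the identity $m(x^k)=\alpha\ast\zeta^{\ast k}$ (writing $\zeta^{\ast k}$ for the $k$-fold convolution $\zeta\ast\cdots\ast\zeta$) is correct: $m(x)=\bigl(m(x^k)\bigr)^{(k)}$, $\kappa(x)=\alpha^{(k)}$ by the inductive M\"obius argument, and Theorem \ref{MainRemark} applied to $\bigl(m(x^k)\bigr)^{(k)}=\alpha^{(k)}\ast\zeta$ indeed yields $m(x^k)=\alpha\ast\zeta^{\ast k}$.

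The gap is in your last sentence. Passing from the moments of $x^k$ to its free cumulants is convolution with the M\"obius function, which \emph{removes} one factor of $\zeta$: from $m(x^k)=\alpha\ast\zeta^{\ast k}$ you get $\kappa(x^k)=m(x^k)\ast\mu=\alpha\ast\zeta^{\ast(k-1)}$, with $k-1$ copies of $\zeta$, not $k$. So your own computation does not land on the displayed right-hand side; what you have with $k$ copies of $\zeta$ is a formula for the moments $m_n(x^k)$, and the cumulant formula has one fewer $\zeta$. This is not cosmetic: for $x$ standard semicircular and $k=2$ one has $\alpha=(1,0,0,\dots)$ and $\kappa_n(x^2)=1$ for all $n$ (free Poisson), which equals $[\alpha\ast\zeta]_n$ (one $\zeta$), whereas $[\alpha\ast\zeta\ast\zeta]_n=\#NC(n)$ is the Catalan number. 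The $(k-1)$-fold version is also the one consistent with the First formula, since $\sum_{\pi\in NC((k-1)n)}\alpha^{(k-1)}_\pi=[\alpha^{(k-1)}\ast\zeta]_{(k-1)n}=[\alpha\ast\zeta^{\ast(k-1)}]_n$ by Theorem \ref{MainRemark}, and with the later identity $\mathrm{distr}(x^k)=\pi^{\boxtimes (k-1)}\boxtimes\nu$ of Proposition \ref{Poisson mult}. In short, the statement as printed carries an off-by-one slip, and your proof, rather than establishing it, glosses over the discrepancy at precisely the step where it arises: either keep the final M\"obius inversion honest and conclude $\kappa_n(x^k)=[\alpha\ast\zeta^{\ast(k-1)}]_n$, or stop one step earlier and note that $[\alpha\ast\zeta^{\ast k}]_n$ computes $m_n(x^k)$.
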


Second, we consider how freeness is behaved when conjugating with $k$-divisible elements in a non-commutative probability space. More precisely, if $a$ and $s$ are free and $s$ is $k$-divisible then $a$ is aldo free from $sps$, where $p$ is any polynomial on $a$ and $s$ of degree $k-2$ on $s$. 

Moreover, we generalize the concept of diagonally balanced pairs from Nica and Speicher \cite{NiSp95}, which contains three of the most frequently used examples in free probability, that is, semicircular, circular and Haar unitaries, and prove similar results for what we call diagonally balanced $k$-tuples.

\begin{thm}
 Let $(\mathcal{A},\phi)$ be a non-commutative probability space, and let $(s_1,... ,s_k)$ be
a diagonally balanced $k$-tuple free from $a$. Moreover, let $h=s_1a_2s_2a_3s_3\cdots s_{k-1}a_{k-1}s_k$, where for all $i=1,...,n$ the element $a_i$ is free from $\{s_1,\cdots ,s_k\}.$ Then $h$ and $a$ are free.
\end{thm}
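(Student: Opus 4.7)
The plan is to establish freeness of $h$ and $a$ by showing that all mixed free cumulants vanish. By the characterization of freeness through cumulants (see e.g.\ \cite{NiSp06}), it suffices to prove that $\kappa_n(y_1,\ldots,y_n)=0$ whenever $n\geq 2$, each $y_j$ is a monomial lying either in the unital subalgebra generated by $h$ or in the one generated by $a$, and both types of monomials actually occur among the $y_j$. Multilinearity of free cumulants further reduces matters to arguments that are individual monomials in $h$ and in $a$.

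The core computational step is to expand every factor $h$ inside such a mixed cumulant as the explicit product $s_1a_2s_2\cdots s_{k-1}a_{k-1}s_k$ and then apply the Krawczyk--Speicher formula for cumulants of products (Theorem 11.12 of \cite{NiSp06}). The mixed cumulant becomes a sum over non-crossing partitions $\pi$ of the expanded letter sequence, with $\pi$ subject to the standard compatibility condition with the interval partition coming from the original slotting. Using the hypothesis that the subalgebra generated by $\{a,a_2,\ldots,a_{k-1}\}$ is free from $\{s_1,\ldots,s_k\}$ (which, if not taken as joint freeness outright, can be arranged by enlarging the underlying space without changing the mixed cumulants in question), every block of $\pi$ that mixes an $s$-letter with any $a$-type letter produces a vanishing mixed free cumulant and drops out; only \emph{monochromatic} $\pi$ survive.

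At this stage the definition of diagonally balanced $k$-tuple from Section 5 enters: the non-vanishing pure-$s$ blocks must have their index sequences following the prescribed pattern dictated by which cumulants of $(s_1,\ldots,s_k)$ are allowed to be non-zero. The main obstacle I expect is the combinatorial check that monochromaticity, the non-crossing property, the Krawczyk--Speicher compatibility condition, and the diagonally balanced index pattern are jointly incompatible with the presence of at least one slot being a pure power of $a$: intuitively, such a slot forces a gap in the cyclic index progression across consecutive $h$-slots, and bridging that gap within a non-crossing partition necessarily creates a crossing with some $a$-block. I would make this precise by induction on the number of $h$-slots, analysing the outermost $s$-block of $\pi$ and tracking the cyclic index constraint as one passes from one $h$-slot to the next through the intervening $a$-slot. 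Once this combinatorial clash is established no non-crossing partition survives, the sum collapses, the mixed cumulant vanishes, and the freeness of $h$ and $a$ follows.
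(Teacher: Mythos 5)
Your proposal follows essentially the same route as the paper: the paper proves this theorem by exactly the expansion you describe --- write out the mixed cumulants of $h$ and $a$, apply the Krawczyk--Speicher product formula, discard blocks mixing $s$-letters with $a$-type letters by freeness, and observe that the connectivity condition forces the $a$-slot to hook onto some $a_i$, which by non-crossingness isolates a pure-$s$ block whose index pattern is forbidden for a diagonally balanced $k$-tuple. The paper itself only sketches this step (it presents the theorem as an ``easy modification'' of the single-element case $h=sa_1s\cdots a_{k-1}s$ with $s$ $k$-divisible, left to the reader), so your deferral of the final combinatorial check to an induction on the outermost $s$-block, and your explicit flagging of the joint-freeness hypothesis needed to kill the mixed blocks, are at the same level of detail as, and fully consistent with, the paper's own argument.
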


Furthermore, we realize $k$-divisible random variables as $R$-cyclic matrices \cite{NSS} with diagonally balanced $k$-tuples as entries. 

The third part of the paper deals with probability measures with $k$-symmetry and free convolutions $\boxtimes$ and $\boxplus$. Given a $k$-symmetric probability measure $\mu$ on $\mathcal{M}_k$, let $\mu^{k}$ be the probability measure in $\mathcal{M}^{+}$ induced by the map $t\rightarrow t^{k}$. In other words if $x$ is a $k$-divisible element with distribution $\mu$, then $\mu^{k}$ is the distribution of $x^k$.

One of the main results of this paper is to show that it is possible to define a free multiplicative $\mu\boxtimes \nu$ convolution between a probability $\mu$ in $\mathcal{M}^{+}$ and $k$-symmetric distribution $\nu$. 

The Main Theorem, which enables to define this free multiplicative convolution is the following.
\begin{mthm}\label{MainTheorem}
 Let $x,y\in(\mathcal{A},\phi)$ with $x$ positive and $y$ a $k$-divisible element. Consider $x_1,...,x_k$ positive elements with the same moments as $x$. Then $(xy)^k$ and $y^kx_1\cdots x_k$ have the same moments, i.e.
\begin{equation}
 \phi((xy)^{kn})=\phi((y^kx_1\cdots x_k)^n)
\end{equation}
\end{mthm}

As a byproduct we show that this free multiplicative convolution gives a $k$-symmetric distribution satisfying the relation $(\mu\boxtimes\nu)^k=\mu^{\boxtimes k}\boxtimes\nu^k$. Using this identity we give a formula for the moments of $\mu^{\boxtimes k}$ of positive measure in terms of $k$-divisible partitions.

An important analytic tool for computing the free multiplicative convolution of two probability measures is Voiculescu's $S$-transform. It was introduced in \cite{Vo87b} for non-zero mean distributions with bounded support and further studied by Bercovici and Voiculescu \cite{BeVo93} in the case of probability measures in $\mathcal{M}^{+}$ with unbounded support, see also \cite{BeVo92}. 

Raj Rao and Speicher \cite{RS} extended the $S$-transform to the case of random variables having zero mean and all moments. Their main tools are combinatorial arguments based on moment calculations.

We use the approach of \cite{RS} to extend the $S$-transform to random variables with first $k$ moments vanishing. After this, we specialize
to the case of $k$-divisible random variables where simple relations between the $S$-transforms
of $x$ and $x^k$ are found.

Moreover, for the case of $k$-symmetric probability measures we are able to extend the $S$-transform  even if we have no moments. To do this, we follow an analytic approach similar to  \cite{APA} and show that this $S$-transform allows to compute the desired free multiplicative convolution between probability measures on $[0,\infty)$ general
$k$-symmetric measures.

Another remarkable consequence of the Main Theorem is that we can define free additive powers $\mu^{\boxplus t}$ for $t>1$ when $\mu$ is a $k$-symmetric distribution. This opens the possibility to new central limit theorems.

 \begin{thm}[Free central limit theorem for $k$-symmetric measures]
Let $\mu$ be a $k$-symmetric measure with finite moments and $\kappa_k(\mu)=1$ then, as $N$ goes to infinity,
\begin{equation*}
D_{N^{-1/k}}(\mu^{\boxplus N})\rightarrow s_k,
\end{equation*}
where $s_k$ is the only $k$-symmetric measure with 
free cumulant sequence $\kappa_n(s_k)=0$ for all $n\neq k$ and $\kappa_k(s_k)=1$. Moreover,
 $$(s_k)^k=\pi^{\boxtimes k-1},$$
 where $\pi$ is a free Poisson measure with parameter 1.
\end{thm}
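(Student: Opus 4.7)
The statement has two parts: (a) the weak limit $D_{N^{-1/k}}(\mu^{\boxplus N})\to s_k$, and (b) the identification $(s_k)^k=\pi^{\boxtimes(k-1)}$. I would handle them in order.

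For (a), I work at the level of free cumulants. Additivity of $\kappa_n$ under $\boxplus$ together with the scaling rule $\kappa_n(D_c\mu)=c^n\kappa_n(\mu)$ yields
\begin{equation*}
\kappa_n\bigl(D_{N^{-1/k}}(\mu^{\boxplus N})\bigr)=N^{1-n/k}\,\kappa_n(\mu).
\end{equation*}
Because $\mu$ is $k$-symmetric, the results of Section 4 force $\kappa_n(\mu)=0$ whenever $k\nmid n$. Among the surviving indices $n=jk$, the coefficient equals $N^{1-j}\kappa_{jk}(\mu)$, which is $1$ for $j=1$ and tends to $0$ for $j\geq 2$. Hence the free cumulants of the rescaled measures converge pointwise to the sequence $\delta_{n,k}$; since each moment is a polynomial of bounded degree in the cumulants, moment convergence follows, and provided $s_k$ is shown to have compact support (which comes for free from (b), since $\pi^{\boxtimes(k-1)}$ has compact support), $s_k$ is moment-determinate and weak convergence follows.

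For (b), I would define $s_k$ as the $k$-symmetric measure corresponding to $\pi^{\boxtimes(k-1)}\in\mathcal{M}^+$ under the bijection $\mathcal{M}_k\leftrightarrow\mathcal{M}^+$, $\mu\mapsto\mu^k$. The task is then to check that this $s_k$ has free cumulant sequence $\delta_{n,k}$, or equivalently that the free cumulants of $\pi^{\boxtimes(k-1)}$ agree with those produced by Theorem 1.2 when fed the $k$-determining sequence $\alpha_n=\delta_{n,1}$:
\begin{equation*}
\kappa_n\bigl(\pi^{\boxtimes(k-1)}\bigr)=[\alpha\ast\underbrace{\zeta\ast\cdots\ast\zeta}_{k\text{ times}}]_n.
\end{equation*}
I would compare the two sides using Theorem 1.1, which rewrites the $k$-fold $\zeta$-convolution on $NC$ as a single $\zeta$-convolution on $NC^k$ acting on the dilated sequence $\alpha^{(k)}$, and the multiplicativity of the $S$-transform on $\mathcal{M}^+$: since $S_\pi(z)=(1+z)^{-1}$, one has $S_{\pi^{\boxtimes(k-1)}}(z)=(1+z)^{-(k-1)}$, and extracting its free cumulants by the usual Lagrange-inversion identities should match the chain enumeration on $NC^k(n)$ arising from the right-hand side.

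\textbf{Main obstacle.} The non-routine step is precisely this last identification of the combinatorial convolution $[\alpha\ast\zeta^{\ast k}]_n$ with $\kappa_n(\pi^{\boxtimes(k-1)})$. It is essentially a Fuss-Narayana-type identity, and my preferred route is to stay inside the analytic framework by combining the multiplicativity of $S$ on $\mathcal{M}^+$ with the paper's extension of the $S$-transform to $k$-symmetric measures, so that the equality of free cumulants becomes an equality of (known) analytic transforms rather than a direct bijective count.
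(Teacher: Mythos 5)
Your part (a) is the paper's argument verbatim: additivity plus the dilation rule give $\kappa_n(D_{N^{-1/k}}(\mu^{\boxplus N}))=N^{1-n/k}\kappa_n(\mu)$, the non-multiples of $k$ vanish because $\mu$ is $k$-divisible, and only $\kappa_k$ survives in the limit (you are slightly more careful than the paper about passing from cumulant convergence to weak convergence, which is a plus). For part (b) you diverge, and your route is heavier than it needs to be. The paper does not touch the cumulants of $\pi^{\boxtimes(k-1)}$ at all: it simply computes the moments of $(s_k)^k$ from the moment--cumulant formula,
\begin{equation*}
m_n\bigl((s_k)^k\bigr)=m_{nk}(s_k)=\sum_{\pi\in NC(nk)}\kappa_\pi(s_k)=\#NC_k(n)=\frac{\binom{kn}{n}}{(k-1)n+1},
\end{equation*}
since $\kappa_\pi(s_k)=1$ exactly when $\pi$ is $k$-equal, and then quotes the known Fuss--Catalan moments of $\pi^{\boxtimes(k-1)}$ (Example 7.9 / \cite{BBCC}). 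Two compactly supported measures on $\real^+$ with equal moments coincide, and one is done. Your ``main obstacle'' --- identifying $[\alpha\ast\zeta^{\ast\cdot}]_n$ with $\kappa_n(\pi^{\boxtimes(k-1)})$ --- is therefore avoidable; but if you insist on the cumulant-level route, you do not need Lagrange inversion either: since $\kappa_n(\pi)=1$ for all $n$, iterating Equation (\ref{cum-prod}) gives $\kappa_n(\pi^{\boxtimes(k-1)})=[\delta\ast\zeta\ast\cdots\ast\zeta]_n$ directly (this is exactly the paper's later Proposition \ref{Poisson mult} specialized to $\nu=\delta_1$), and Theorem 1.1 then matches it with the $k$-equal count. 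As written, your proposal leaves this identification at the level of ``should match,'' which is the one genuine soft spot; close it either by the moment computation above or by the iteration of (\ref{cum-prod}), not by $S$-transform asymptotics.
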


Free compound Poisson distributions exists in $\mathcal{M}_k$ and Poisson limit theorems also hold. We generalize Theorem 7.3 in \cite{BBCC}, where $\nu=\frac{1}{k}\sum_{j=1}^k\delta_{q^j}$ was considered in connection with free Bessel laws.
\begin{thm}
Let $\nu$ be a $k$-symmetric distribution, then the Poisson type limit convergence holds
\begin{equation*}
((1-\frac{\lambda}{N})\delta_0+\frac{\lambda}{N}\nu)^{\boxplus N}\rightarrow \pi(\lambda,\nu).
\end{equation*}
\end{thm}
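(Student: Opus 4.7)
The plan is to follow the classical strategy for the free compound Poisson limit theorem, adapted to the $k$-symmetric framework set up by the Main Theorem.

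First, I would note that $\mu_N := (1-\lambda/N)\delta_0 + (\lambda/N)\nu$ lies in $\mathcal{M}_k$, since $\delta_0$ is $k$-symmetric and $\mathcal{M}_k$ is closed under convex combinations. Its moments are simply $m_n(\mu_N) = (\lambda/N)\,m_n(\nu)$ for $n \geq 1$, so $m_n(\mu_N) = 0$ unless $k \mid n$, and by triangularity of the moment-cumulant relation this forces $\kappa_n(\mu_N) = 0$ unless $k \mid n$ as well.

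Next, I would invoke the results of Section 6, by which $\mu_N^{\boxplus N}$ is a well-defined $k$-symmetric measure whose free cumulants obey the usual linearity
\begin{equation*}
\kappa_n(\mu_N^{\boxplus N}) = N\,\kappa_n(\mu_N).
\end{equation*}
Applying M\"obius inversion on $NC(n)$ to expand $\kappa_n(\mu_N)$ in moments $m_j(\mu_N) = O(1/N)$, only the one-block partition contributes at leading order $1/N$, so
\begin{equation*}
\kappa_n(\mu_N) = \tfrac{\lambda}{N}\,m_n(\nu) + O\!\left(\tfrac{1}{N^2}\right),
\end{equation*}
and therefore $\kappa_n(\mu_N^{\boxplus N}) \to \lambda\,m_n(\nu) = \kappa_n(\pi(\lambda,\nu))$ for every $n$, the right-hand side being the defining property of the free compound Poisson $\pi(\lambda,\nu) \in \mathcal{M}_k$.

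The main obstacle is promoting this convergence of free cumulants to genuine convergence of the measures $\mu_N^{\boxplus N}$. When $\nu$ has bounded support the limit is compactly supported on $A_k$ and determined by its moments, so convergence of all moments (which follows from cumulant convergence via the moment-cumulant formula) together with the usual uniform bounds yields weak convergence by the method of moments. In the unbounded case I would instead work with the analytic $S$-transform for $k$-symmetric measures developed in Section 9 and invoke the corresponding continuity theorem to upgrade cumulant convergence into weak convergence in $\mathcal{M}_k$.
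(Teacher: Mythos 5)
Your proposal is correct and follows essentially the same route as the paper: the paper's proof consists precisely of the observation that $\kappa_n(\mu_N)=\frac{\lambda}{N}m_n(\nu)+O(1/N^2)$, so that $N\kappa_n(\mu_N)\to\lambda m_n(\nu)$, with the rest referred to the selfadjoint case in \cite{NiSp06}. You merely spell out the intermediate steps (the $k$-symmetry of $\mu_N$, the M\"obius inversion giving the $O(1/N^2)$ error, and the passage from cumulant convergence to convergence of the measures) in more detail than the paper does.
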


We also address questions of free infinite divisibility. A measure is $\mu\in\mathcal{M}_k$ is said to be infinitely divisible if $\mu^{\boxplus t}\in\mathcal{M}_k$ for all $t>0$. For these measures, it is also shown that free additive convolution is well defined. Moreover we show that $\mu^k$ is also freely infinitely divisible.

\begin{thm} \label{freeinf10-2}
If $\mu$ is $k$-symmetric and $\boxplus$-infinitely divisible, then $\mu ^{k}$ is
also $\boxplus$-infinitely divisible.
\end{thm}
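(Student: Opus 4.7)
The plan is to approximate $\mu$ by compound free Poisson distributions with jumps in $\mathcal{M}_k$, show that the $k$-th power of each such approximant is $\boxplus$-infinitely divisible in $\mathcal{M}^{+}$, and then pass to the weak limit. Since $\mu$ is $\boxplus$-infinitely divisible in $\mathcal{M}_k$, the measures $\mu^{\boxplus 1/N}$ all belong to $\mathcal{M}_k$ and shrink to $\delta_0$ as $N\to\infty$; the standard Bercovici--Pata type construction, adapted to $\mathcal{M}_k$ by way of the Poisson-type limit theorem stated earlier in this introduction, produces compound free Poisson measures $\pi_N=\pi(\lambda_N,\nu_N)$ with $\nu_N\in\mathcal{M}_k$ and $\pi_N\to\mu$ weakly.

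The core step is the following cumulant identification for a compound free Poisson $\pi=\pi(\lambda,\nu)$ with $\nu\in\mathcal{M}_k$. Its free cumulants are $\kappa_m(\pi)=\lambda m_m(\nu)$, vanishing unless $m$ is a multiple of $k$, so the $k$-determining sequence is $\alpha_j=\kappa_{kj}(\pi)=\lambda m_j(\nu^k)$. Applying Theorem 1.2, and using the translation provided by Theorem 1.1, one obtains
\begin{equation*}
\kappa_n(\pi^k)=\sum_{\sigma\in NC(n)}\lambda^{|\sigma|}\prod_{V\in\sigma}m_{|V|}(\nu^k)=m_n\bigl(\pi(\lambda,\nu^k)\bigr),
\end{equation*}
so the free cumulants of $\pi^k$ are exactly the free moments of the positive compound Poisson $\pi(\lambda,\nu^k)\in\mathcal{M}^{+}$.

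This identity certifies $\pi^k$ as $\boxplus$-infinitely divisible in $\mathcal{M}^{+}$ via the free L\'evy--Khintchine criterion: a measure $\rho\in\mathcal{M}^{+}$ is $\boxplus$-infinitely divisible iff there is a finite positive measure $\tau$ on $[0,\infty)$ (with a compatible drift for the first cumulant) such that $\kappa_n(\rho)=\int_0^{\infty} t^n\,d\tau(t)$ for every $n\geq 2$, and the choice $\tau=\pi(\lambda,\nu^k)$ works. Therefore each $\pi_N^{\,k}$ is $\boxplus$-infinitely divisible; since $t\mapsto|t|^k$ is continuous on $A_k$ we have $\pi_N^{\,k}\to\mu^k$ weakly, and the closure of the $\boxplus$-infinitely divisible class in $\mathcal{M}^{+}$ under weak convergence (Bercovici--Voiculescu) gives the conclusion.

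The main obstacle is the cumulant identification in the displayed equation: unpacking the combinatorial convolution from Theorem 1.2 and recognising it as the moment-cumulant expansion for $\pi(\lambda,\nu^k)$ relies on the bijection between $k$-divisible non-crossing partitions of $nk$ with the natural connectivity condition and ordinary non-crossing partitions of $n$, which is packaged precisely by Theorem 1.1. A secondary subtlety is the case of unbounded support for $\mu$, where the moment-based arguments have to be supplemented by the analytic $S$- and $R$-transform machinery of Section 9.
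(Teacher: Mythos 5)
Your overall strategy coincides with the paper's: approximate $\mu$ by $k$-symmetric free compound Poissons (Proposition \ref{approx}), show that the $k$-th power of each approximant is $\boxplus$-infinitely divisible by identifying it as a positive free compound Poisson, and conclude by weak closedness of the infinitely divisible class. The problem is that your core displayed identity is false for $k\geq 3$. The correct cumulant formula for a $k$-divisible element with $k$-determining sequence $(\alpha_j)$ is Theorem \ref{1formula}: $\kappa_n(x^k)=\sum_{\sigma\in NC((k-1)n)}\alpha^{(k-1)}_\sigma=[\alpha\ast\zeta^{\ast(k-1)}]_n$, and Theorem 1.1 converts an iterated $\zeta$-convolution in $NC$ into a single $\zeta$-convolution in $NC^{k}$ --- that is, into a sum over $k$-divisible non-crossing partitions of a \emph{larger} set with the dilated sequence, not into a sum over $NC(n)$. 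Your formula $\kappa_n(\pi^k)=\sum_{\sigma\in NC(n)}\alpha_\sigma$ keeps only one of these $\zeta$-convolutions; it is the $k=2$ formula of Nica--Speicher applied verbatim for every $k$. Concretely, take $k=3$, $\lambda=1$, $\nu=\tfrac13\sum_{j=1}^3\delta_{q^j}$, so that $\alpha_j=m_j(\nu^3)=1$ for all $j$. Your display yields $\kappa_n(\pi^3)=C_n=m_n(\pi)$, i.e.\ $\pi^3=\pi^{\boxtimes 2}$, whereas by Corollary \ref{cormulconv} and Example \ref{freebesselex} one has $\pi^3=(\pi\boxtimes\nu)^3=\pi^{\boxtimes 3}$, with $\kappa_n(\pi^{\boxtimes 3})=\binom{3n}{n}/(2n+1)$; for $n=2$ this is $3$, while your formula gives $C_2=2$.

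The identification the paper actually uses (Proposition \ref{Poisson mult} and its corollary) is that $\pi(\lambda,\nu)^k$ is a free compound Poisson whose jump distribution is $\pi^{\boxtimes(k-1)}\boxtimes\rho$, where $\rho$ is the positive measure with free cumulant sequence $(\alpha_j)=(\lambda\, m_j(\nu^k))$; equivalently $\kappa_n(\pi^k)=m_n\bigl(\pi^{\boxtimes(k-1)}\boxtimes\rho\bigr)$, the extra factor $\pi^{\boxtimes(k-1)}$ accounting for the $k-2$ convolutions your formula drops. Since $\pi^{\boxtimes(k-1)}\boxtimes\rho$ still lies in $\mathcal{M}^{+}$, your downstream reasoning (compound Poisson $\Rightarrow$ infinitely divisible, then weak limits) survives once the identity is repaired, so the gap is local but genuine. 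Two further cautions: your ``iff'' version of the L\'evy--Khintchine criterion, $\kappa_n(\rho)=\int_0^\infty t^n\,d\tau(t)$ for $n\geq2$, characterizes compound Poissons (up to drift) rather than all of $ID^{\boxplus}(\mathcal{M}^+)$ --- the semicircle law already fails it --- so state only the sufficiency you need; and the candidate $\tau$ must then be the corrected jump measure $\pi^{\boxtimes(k-1)}\boxtimes\rho$, not $\pi(\lambda,\nu^k)$.
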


Finally, as an important example of distributions without finite moments and unbounded supports we consider free stable laws and show reproducing properties similar to the ones found in \cite{APA} and \cite{Be-Pa}.

\begin{thm}\label{Tstable-2}
For any $s,r>0$, let $\sigma^k_{1/(1+r)}$ be a $k$-symmetric strictly stable distribution of index $1/(1+r)$ and $\nu_{1/(1+s)}$ be a positive strictly stable distribution of index $1/(1+s)$. Then
\begin{equation}
\sigma^k_{1/(1+t)}\boxtimes\nu_{1/(1+s)}= \sigma^k_{1/(1+t+s)}.                           \end{equation}
\end{thm}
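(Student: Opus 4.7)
The plan is to reduce the claim to the classical reproducing property for positive free strictly stable laws, via the identity $(\mu\boxtimes\nu)^k=\mu^{\boxtimes k}\boxtimes\nu^k$ which the Main Theorem yields (and which, in the section on unbounded $k$-symmetric measures, has been extended to distributions without finite moments). The key observation is that the map $\mu\mapsto\mu^k$ is injective on $\mathcal{M}_k$, because a $k$-symmetric measure on the $k$-semiaxes $A_k$ is uniquely determined by its push-forward under $t\mapsto t^k$ together with its $k$-symmetry; hence it suffices to compare the $k$-th powers of both sides.

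First, I record the positive reproducing property $\nu_{1/(1+s)}\boxtimes\nu_{1/(1+t)}=\nu_{1/(1+s+t)}$, which follows from Bercovici--Voiculescu by computing $S_{\nu_{1/(1+s)}}(z)=C_s(-z)^s$ and using multiplicativity of the $S$-transform on $\mathcal{M}^{+}$. Iterating this, $\nu_{1/(1+s)}^{\boxtimes k}=\nu_{1/(1+ks)}$ up to dilation. Next, I identify the positive measure $(\sigma^k_{1/(1+t)})^k$: using the $k$-symmetric extension of the $S$-transform developed in the paper and the stability of $\sigma^k_{1/(1+t)}$ under dilation-scaled additive convolution, one shows $(\sigma^k_{1/(1+t)})^k$ is a positive strictly stable law of an explicit index $1/(1+\alpha(t,k))$; the same computation shows that $(\sigma^k_{1/(1+t)})^{\boxtimes k}$ is itself $k$-symmetric strictly stable with a stability index that can be read off from its $S$-transform.

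Now I apply the $k$-th power to both sides of the target equality. The left-hand side becomes, by the Main Theorem identity extended to the unbounded setting,
\begin{equation*}
\bigl(\sigma^k_{1/(1+t)}\boxtimes\nu_{1/(1+s)}\bigr)^{k}=\bigl(\sigma^k_{1/(1+t)}\bigr)^{\boxtimes k}\boxtimes \nu_{1/(1+s)}^{k},
\end{equation*}
and by the previous paragraph both factors on the right are positive strictly stable laws; their $\boxtimes$-product is again positive strictly stable whose index is additive in the parameters $t$ and $s$ by the positive reproducing property. The right-hand side becomes $(\sigma^k_{1/(1+t+s)})^k$, which by the same computation is a positive strictly stable law with the matching index. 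Equality of these two positive stable laws, combined with injectivity of $\mu\mapsto\mu^{k}$ on $\mathcal{M}_k$, yields the theorem.

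The main obstacle is bookkeeping the stability indices consistently under the three operations involved ($\boxtimes$ between $\mathcal{M}^{+}$ and $\mathcal{M}_k$, the $k$-th free multiplicative power, and the $k$-th pointwise power $t\mapsto t^k$), and verifying that the Main Theorem's factorization identity together with the $S$-transform multiplicativity remain valid for measures without moments. Once these are established, the proof amounts to a short calculation using the explicit $S$-transforms $S(z)=(-z)^s$ and their $k$-symmetric analogues. An equivalent and perhaps cleaner route, bypassing $(\cdot)^k$ entirely, is to compute $S_{\sigma^k_{1/(1+t)}}(z)\cdot S_{\nu_{1/(1+s)}}(z)$ directly using the $k$-symmetric $S$-transform constructed in the earlier section and recognize the product as $S_{\sigma^k_{1/(1+t+s)}}(z)$.
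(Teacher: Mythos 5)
There is a genuine gap in your main route. First, you mis-state the factorization identity: in the paper's setting (Corollary \ref{cormulconv} and its unbounded analogue) the $k$-symmetric factor receives the pointwise power $t\mapsto t^k$ and the \emph{positive} factor receives the free multiplicative power, i.e.\ $\bigl(\sigma^k_{1/(1+t)}\boxtimes\nu_{1/(1+s)}\bigr)^{k}=\bigl(\sigma^k_{1/(1+t)}\bigr)^{k}\boxtimes \nu_{1/(1+s)}^{\boxtimes k}$; your version $\bigl(\sigma^k\bigr)^{\boxtimes k}\boxtimes\nu^{k}$ swaps the roles, and $\boxtimes$-powers of a $k$-symmetric measure are not even defined here. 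Second, and more seriously, the pivotal intermediate claim that $\bigl(\sigma^k_{1/(1+t)}\bigr)^{k}$ is a positive \emph{strictly stable} law is false: by the representation $\sigma^k_{\beta}=w_k\boxtimes\nu_{\alpha}$ of Theorem \ref{st1} one gets $\bigl(\sigma^k_{1/(1+t)}\bigr)^{k}=\pi^{\boxtimes k-1}\boxtimes\nu_{1/(1+kt')}$, whose $S$-transform carries the non-homogeneous factor $(1+z)^{-(k-1)}$ coming from $S_{\pi}(z)=1/(1+z)$, whereas a positive strictly stable law must have an $S$-transform that is a pure power of $z$ (this is forced by $\tfrac12 S(z/2)=\lambda^{-1}S(z)$). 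Taking $k$-th powers does not commute with $\boxplus$, so stability is simply not preserved. As written, your argument breaks at this step; your "index bookkeeping" cannot be made consistent because there is no index to book-keep.

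The skeleton is salvageable: the factor $\pi^{\boxtimes k-1}$ is common to both sides after taking $k$-th powers, the positive stable parts combine via $\nu_{1/(1+a)}\boxtimes\nu_{1/(1+b)}=\nu_{1/(1+a+b)}$, and injectivity of $\mu\mapsto\mu^{k}$ on $\mathcal{M}_k$ is indeed valid. But this repaired version is a detour. The paper avoids passing to $k$-th powers entirely: it writes $\sigma^k_{1/(1+t)}=w_k\boxtimes\nu_{1/(1+\beta)}$ with $\beta=\beta(t,k)$ satisfying $\beta(t+s,k)=\beta(t,k)+s$, then applies associativity of $\boxtimes$ and the Bercovici--Pata reproducing property once, landing directly on $w_k\boxtimes\nu_{1/(1+\beta+s)}=\sigma^k_{1/(1+t+s)}$. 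Your closing remark --- computing $S_{\sigma^k_{1/(1+t)}}\cdot S_{\nu_{1/(1+s)}}$ with the $k$-symmetric $S$-transform and recognizing the product --- is essentially this argument and is the route you should take.
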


\section{Preliminaries on non-crossing partitions}

\subsection{Basic properties and definitions}

\begin{defi}

(1) We call $\pi =\{V_{1},...,V_{r}\}$ a \textbf{partition }of the set $[n]:=\{1, 2,.., n\}$
if and only if $V_{i}$ $(1\leq i\leq r)$ are pairwise disjoint, non-void
subsets of $S$, such that $V_{1}\cup V_{2}...\cup V_{r}=\{1, 2,.., n\}$. We call $%
V_{1},V_{2},..,V_{r}$ the \textbf{blocks} of $\pi $. The number of blocks of 
$\pi $ is denoted by $\left\vert \pi \right\vert $.

(2) A partition $\pi =\{V_{1},...,V_{r}\}$ is called \textbf{non-crossing} if for all $1 \leq a < b < c < d \leq n$
if $a,c\in V_{i}$ then there is no other subset $V_{j}$ with $j\neq i$ containing $b$ and $d$.

 (3) We say that a partition $\pi$  is  \textbf{$k$-divisible} if the size of all the blocks is multiple of  $k$.  If all the block are exactly of size k we say that $\pi$ is \textbf{$k$-equal}.

\end{defi}

We will denote the set of non-crossing partitions of $[n]$ by $NC(n)$, the set of $k$-divisible non-crossing partitions of $[kn]$ by $NC^k(n)$ and the set of $k$-equal non-crossing partitions of $[kn]$ by $NC_k(n)$\footnote{%
The notation that we follow is the one of Armstrong \cite{Arm09} which does not coincide with the one in Nica and Speicher \cite{NiSp06} for $2$-equal partitions.
\par
{}}.

\begin{rem} \label{rem1}The following characterization of non-crossing partitions is sometimes useful: for any $\pi\in NC(n)$, one can always find a block $V=\{r+1,\dots,r+s\}$ containing consecutive numbers. If one removes this block from $\pi$, the partition $\pi\setminus V\in NC(n-s)$ remains non-crossing.
\end{rem}

There is a graphical representation of a partition $\pi $ which makes clear the property of being crossing or non-crossing, usually called the circular representation. We think of $[n]$ as labelling the vertices of a regular $n$-gon, clockwise.
If we identify each block of $\pi $ with the convex hull of its
corresponding vertices, then we see that $\pi $ is non-crossing precisely
when its blocks are pairwise disjoint (that is, they don't cross).

\begin{figure}[here]
\begin{center}
\epsfig{file=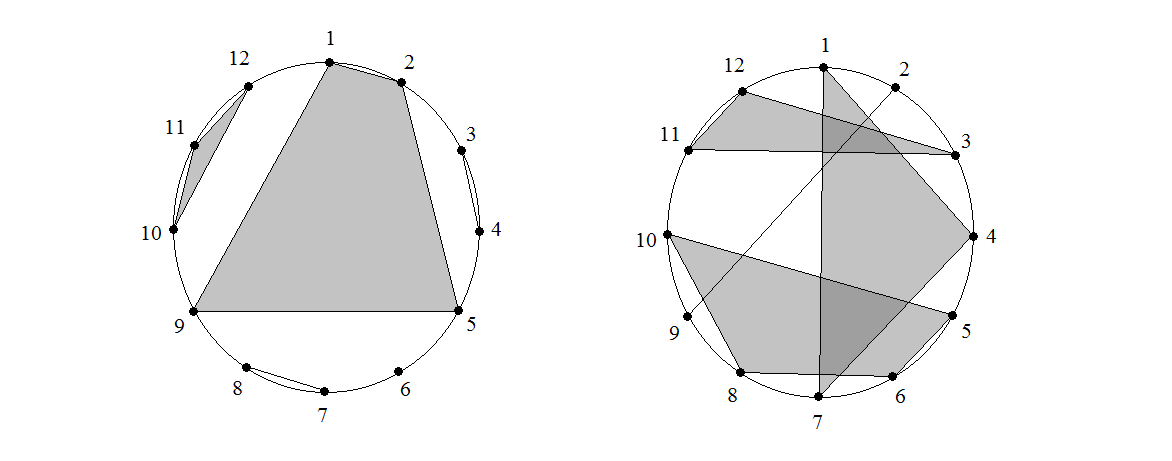, width=12cm}
\caption{Crossing and Non-Crossing Partitions}
\end{center}
\end{figure}

Figure 1 shows  the non-crossing partition $\{\{1,2,5,9\},\{3,4\},\{6\},\{7,8\},\{10,11,12\}\}$ of the set $[12]$, and the crossing
partition $\{ \{ 1,4,7\},\{ 2,9\}, \{ 3,11,12\} ,\{ 5,6,8,10\} \} $ of $[12]$  in their circular
representation.

It is well known that the number of non-crossing partition is given by the Catalan numbers $\frac{1}{n+1}\binom{2n}{n}$. More generally we can count $k$-divisible partitions, see \cite{Edel1}.

\begin{prop}
\label{k-divisible}Let $NC^{k}(n)$ be the set of non-crossing partitions
of $[nk]$ whose sizes of blocks are multiples of $k$. Then 
\begin{equation*}
\#NC^{k}(n)=\frac{\binom{(k+1)n}{n}}{kn+1}
\end{equation*}
\end{prop}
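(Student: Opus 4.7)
The plan is to compute the ordinary generating function
\[
F(z):=\sum_{n\geq 0}|NC^{k}(n)|\,z^{n},\qquad |NC^{k}(0)|:=1,
\]
via a functional equation, and then extract coefficients by Lagrange inversion. This bypasses any attempt at a direct bijection while using only the non-crossing structure emphasized in Remark~\ref{rem1}.

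First I would derive a functional equation by decomposing $\pi\in NC^{k}(n)$ according to the block $B$ containing the element~$1$. Because $\pi$ is $k$-divisible, $|B|=jk$ for some $j\geq 1$; write $B=\{a_{1}<a_{2}<\cdots<a_{jk}\}$ with $a_{1}=1$. The non-crossing property forces every other block of $\pi$ to lie entirely inside one of the $jk$ ``gaps''
\[
G_{i}=\{a_{i}+1,\dots,a_{i+1}-1\}\ \ (i<jk), \qquad G_{jk}=\{a_{jk}+1,\dots,kn\},
\]
and the restriction of $\pi$ to each $G_{i}$ is itself a $k$-divisible non-crossing partition. Since $|B|$ and the size of every other block are multiples of $k$, each $|G_{i}|=km_{i}$ is automatically a multiple of $k$, and the $m_{i}$ satisfy $m_{1}+\cdots+m_{jk}=n-j$. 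Translating this bijective decomposition into generating functions yields
\[
F(z)=1+\sum_{j\geq 1}z^{j}F(z)^{jk}=1+\frac{zF(z)^{k}}{1-zF(z)^{k}},
\]
which simplifies to the key identity
\[
F(z)=1+zF(z)^{k+1}.
\]

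To finish, I would apply Lagrange inversion to $G(z):=F(z)-1$, which obeys $G=z\phi(G)$ with $\phi(t)=(1+t)^{k+1}$ and $\phi(0)=1\neq 0$. The standard formula gives, for $n\geq 1$,
\[
[z^{n}]F(z)=[z^{n}]G(z)=\frac{1}{n}[t^{n-1}](1+t)^{(k+1)n}=\frac{1}{n}\binom{(k+1)n}{n-1}=\frac{1}{kn+1}\binom{(k+1)n}{n},
\]
matching the claimed count. A quick sanity check: at $k=1$ the functional equation reduces to $F=1+zF^{2}$ and the formula to the Catalan number $\tfrac{1}{n+1}\binom{2n}{n}=|NC(n)|$, as expected.

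The main obstacle is the combinatorial setup of the recursion: one must verify both that the map $\pi\mapsto(B,\pi|_{G_{1}},\dots,\pi|_{G_{jk}})$ is a bijection onto its image, and that each gap $G_{i}$ has cardinality divisible by $k$. Both facts follow cleanly once one argues that a block meeting some $G_{i}$ must be contained in $G_{i}$ (any escape would produce a crossing with $B$), but this is the only point requiring care; everything downstream is routine generating-function manipulation.
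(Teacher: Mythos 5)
Your proof is correct, and it takes a genuinely different route from the paper. The paper states Proposition \ref{k-divisible} without proof, citing Edelman \cite{Edel1}, and only later (Example \ref{FunctionalEquation4}) re-derives the functional equation $A(z)=1+zA(z)^{k+1}$ for the generating function of $\#NC^{k}(n)$ — not by a direct decomposition, but by feeding the constant sequence $\beta_n=1$ through the incidence-algebra machinery: the relation $f^{[k]}=g^{[k]}\ast\zeta$ in $NC^{k}(n)$ is converted to a convolution of $k$-dilated sequences in $NC(kn)$, and Proposition \ref{FunctionalEquation2} (which rests on the Nica--Speicher functional equation of Proposition \ref{FunctionalEquation}) then yields $A(z)=B(zA(z)^k)$ with $B(z)=1/(1-z)$. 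The paper deliberately stops at the functional equation, using it to identify $\#NC^{k}(n)$ with the counts of $(k+1)$-equal partitions and $(k+1)$-multichains, and leaves the closed form to the citation. You instead obtain the same equation $F=1+zF^{k+1}$ by the elementary first-block decomposition (remove the block of $1$, observe that the gaps it cuts out are themselves $k$-divisible and independently partitioned), and then actually extract the Fuss--Catalan coefficient by Lagrange inversion. Your argument is self-contained and more elementary; the paper's machinery buys uniformity — the same convolution identity simultaneously handles $k$-equal partitions, $k$-divisible partitions, multichains, and later the cumulants of $x^k$ — at the cost of not producing the explicit number without an extra inversion step. The two points you flag as needing care (a block meeting a gap is contained in it, and each gap has size divisible by $k$) are exactly the right ones, and both arguments go through: a block with elements in two distinct gaps produces a crossing with two elements of the block of $1$, and each gap is a disjoint union of blocks of sizes divisible by $k$.
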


On the other hand, we can easily count $k$-equal partitions. 
\begin{cor}
\label{PartexactK}Let $NC_{k}(n)$ be the set of non-crossing partitions
of $nk$ whose blocks are of size of $k$. Then 
\begin{equation*}
\#NC_{k}(n)=\frac{\binom{kn}{n}}{(k-1)n+1}
\end{equation*}
\end{cor}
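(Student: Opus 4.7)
The key observation is that the claimed count $\frac{1}{(k-1)n+1}\binom{kn}{n}$ coincides with $\#NC^{k-1}(n)$ obtained by applying the Proposition with $k$ replaced by $k-1$. One natural route would therefore be to exhibit a bijection $NC_k(n)\leftrightarrow NC^{k-1}(n)$ and conclude from the Proposition. Constructing such a bijection explicitly is however delicate; for instance the naive rule ``delete the largest element of each block'' already fails for $k=3,\,n=2$, where two of the three elements of $NC_3(2)$ collapse onto the same image.

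I would instead give a self-contained generating function argument. Let $F(x)=\sum_{n\geq 0}\#NC_k(n)\,x^n$ with the convention $\#NC_k(0)=1$. For $n\geq 1$ and $\sigma\in NC_k(n)$, consider the (unique) block $V$ containing the element $1$; since $\sigma$ is $k$-equal, $V$ has the form $V=\{1=a_0<a_1<\cdots<a_{k-1}\}$. By non-crossingness (cf.\ Remark~\ref{rem1}), each of the $k$ arcs $(a_0,a_1),\,(a_1,a_2),\,\ldots,\,(a_{k-2},a_{k-1}),\,(a_{k-1},kn]$ supports a $k$-equal non-crossing partition of its own elements. In particular each arc contains a multiple of $k$ elements, with the arc sizes summing to $k(n-1)$. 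Summing over the shapes of $V$ yields the functional equation
\[
F(x)=1+xF(x)^k.
\]

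Writing $G=F-1$ so that $G=x(1+G)^k$, a standard application of Lagrange inversion gives
\[
\#NC_k(n)=[x^n]G=\frac{1}{n}[y^{n-1}](1+y)^{kn}=\frac{1}{n}\binom{kn}{n-1}=\frac{1}{(k-1)n+1}\binom{kn}{n},
\]
which is the claimed formula. The main obstacle is the bookkeeping in setting up the functional equation—one must verify that each of the $k$ arcs around the distinguished block contains a number of elements divisible by $k$, so that the recursive piece is itself an element of some $NC_k(\ell)$—while the Lagrange inversion step is routine.
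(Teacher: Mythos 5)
Your argument is correct, and it is genuinely different from how the paper handles this statement. The paper states the corollary without proof, treating it as a known enumeration (it follows the pattern of Proposition \ref{k-divisible}, which is attributed to Edelman); the closest the paper comes to a derivation is Example \ref{k-k-k}, where the functional equation $A(z)=1+zA(z)^{k}$ for the generating function of $\#NC_{k}(n)$ is obtained from the incidence-algebra machinery of Proposition \ref{FunctionalEquation2}, by convolving the delta-sequence with $\zeta$ in $NC^{k}$ --- and even there the closed form is never extracted. You arrive at the same functional equation by a direct first-block decomposition (removing the block containing $1$ and observing that each of the $k$ gaps it creates carries an independent $k$-equal non-crossing partition, each gap necessarily of size divisible by $k$), and then you finish with Lagrange inversion to produce the Fuss--Catalan number $\frac{1}{(k-1)n+1}\binom{kn}{n}$ explicitly. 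Your route is more elementary and self-contained: it needs no incidence algebras and no prior knowledge of the answer, at the cost of the Lagrange inversion step; the paper's route buys uniformity, since the same convolution formalism simultaneously handles $k$-equal partitions, $k$-divisible partitions and multichains. Your side remark that the naive ``delete the largest element of each block'' map $NC_{k}(n)\to NC^{k-1}(n)$ is not injective is accurate (the example with $k=3$, $n=2$ does exhibit a collision), but since you do not rely on any such bijection it has no bearing on the validity of the proof. One small point worth making explicit in a final write-up: the $k$ gaps are the $k-1$ intervals strictly between consecutive elements of the block of $1$ together with the tail after its last element, and a block meeting two different gaps would produce a crossing with two consecutive elements of the distinguished block --- this is the one-line verification behind the claim that each gap is a union of blocks.
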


\begin{figure}[here]
\begin{center}
\epsfig{file=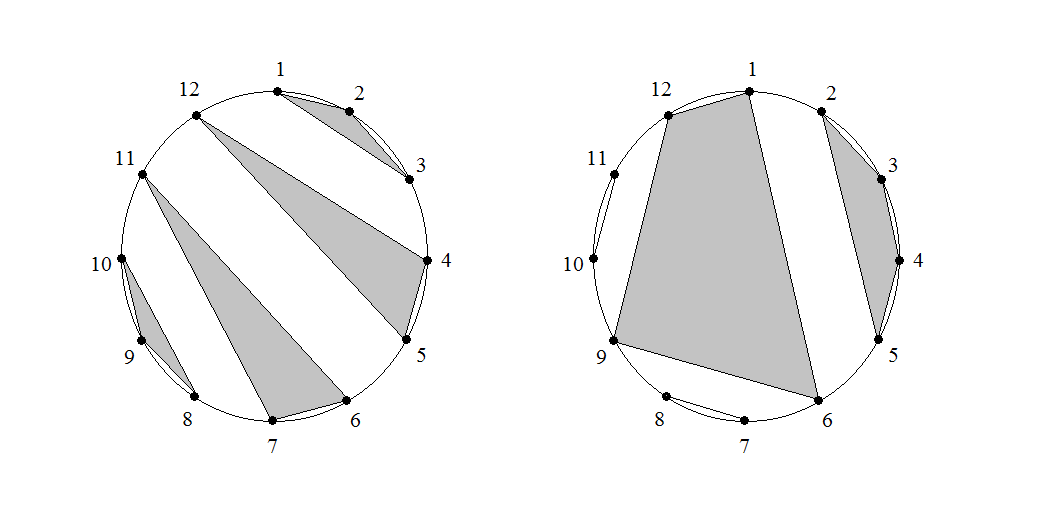, width=12cm}
\caption{3-equal and 2-divisible non-crossing partitions}
\end{center}
\end{figure}

The set $NC(n)$ can be equipped with the partial order $\preceq$ of reverse refinement ($\pi\preceq\sigma$ if and only if every block of $\pi$ is completely contained in a block of $\sigma$), making 
it a lattice.

\begin{defi} 
Given a partial order set, a $k$-multichain (or multichain of length $k-1$) is a sequence $x_{0}\leq
x_{1}\leq \cdots \leq x_{k-1}$ of elements of $P$.
We denote by $NC^{[k]}(n)$ the set of $k$-multichains in $NC(n)$.
\end{defi}

The number of $k$-multichains in $NC(n)$ was given by Edelman in \cite{Edel1}.

\begin{prop}
Let $NC^{[k]}(n)$ be the set of $k$-multichains in $NC(n)$. Then 
\begin{equation*}
\#NC^{[k]}(n)=\frac{\binom{(k+1)n}{n}}{kn+1}
\end{equation*}
\end{prop}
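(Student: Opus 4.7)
The plan is to construct an explicit bijection $\Phi: NC^{[k]}(n) \to NC^k(n)$, after which the formula follows immediately from Proposition \ref{k-divisible}. The combinatorial engine is the Kreweras complement $K: NC(n) \to NC(n)$, the order-reversing bijection defined by interleaving two copies of $[n]$ on a circle with $2n$ points and letting $K(\pi)$ be the coarsest non-crossing partition of the second copy making $\pi \cup K(\pi)$ non-crossing on $[2n]$. The crucial consequence is that for any chain $\pi \preceq \sigma$ in $NC(n)$, the union $\pi \cup K(\sigma)$ is non-crossing on $[2n]$, because $\pi \cup K(\sigma) \preceq \sigma \cup K(\sigma)$ and refining a non-crossing partition preserves non-crossingness.

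First I would iterate this interleaving. Arranging $k$ copies of $[n]$ on a circle with $kn$ points, I would assemble from a multichain $\pi_0 \preceq \pi_1 \preceq \cdots \preceq \pi_{k-1}$ a single partition $\Phi(\pi_0, \dots, \pi_{k-1})$ of $[kn]$ by placing the $\pi_j$'s (alternately with their Kreweras complements) on consecutive copies and taking the non-crossing join. The order-reversing property of $K$ combined with the nesting of the multichain guarantees that this partition remains non-crossing, while a block-size count shows that each block meets the $k$ interlaced copies an equal number of times, placing the output in $NC^k(n)$. For the inverse, given $\sigma \in NC^k(n)$ I would read off the multichain by restricting $\sigma$ successively to the $k$ interlaced copies of $[n]$; the $k$-divisibility of $\sigma$ forces these restrictions to nest in the correct refinement order.

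The main obstacle is precisely the verification that $\Phi$ lands in $NC^k(n)$ and that $\Phi^{-1}$ produces a genuine multichain with refinement going the right way. Both rely on a careful tracking of how $K$ interacts with the multichain structure, together with the observation that each block of a $k$-divisible non-crossing partition of $[kn]$ cycles uniformly through the $k$ interlaced copies. I would carry out this verification inductively, using Remark \ref{rem1} to peel off a block of consecutive elements at each step and reduce to a smaller instance. An alternative, more algebraic route would be to express $\#NC^{[k]}(n)$ as an iterated zeta convolution $\zeta^{*(k+1)}(\hat 0_n, \hat 1_n)$ and appeal to the identity between $\zeta^{*k}$ in $NC$ and $\zeta$ in $NC^k$ established in Theorem 1.1, but the bijective proof is both more transparent and gives additional structural information.
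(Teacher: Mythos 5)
Your primary route has a genuine gap: the map $\Phi$ is never actually pinned down, and under its natural reading it fails to land in $NC^k(n)$. If you interleave $k$ copies of $[n]$ and take the union of the $\pi_j$'s ``alternately with their Kreweras complements'', you get a non-crossing partition of $[kn]$ each of whose blocks lies inside a single copy of $[n]$; such a partition is essentially never $k$-divisible. Concretely, for $k=2$ take the constant multichain $\pi_0=\pi_1=\hat 0_n$: the interleaved union of $\hat 0_n$ and $K(\hat 0_n)=\hat 1_n$ has $n$ singleton blocks together with one block of size $n$. The ``block-size count showing each block meets the $k$ copies equally often'' is a true property of partitions already known to lie in $NC^k(n)$ (provable by peeling interval blocks as in Remark \ref{rem1}), but it says nothing about whether your output lies there. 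The proposed inverse breaks for the same reason: restricting $\sigma\in NC^k(n)$ to the $k$ interleaved copies of $[n]$ is not injective (for $n=k=2$, both $\{\{1,2\},\{3,4\}\}$ and $\{\{1,4\},\{2,3\}\}$ restrict to the pair $(\hat 0_2,\hat 0_2)$), and the restrictions need not nest. A correct bijection of this flavour does exist (Edelman, Armstrong), but it is built from the \emph{relative} Kreweras complements $K_{\pi_{j+1}}(\pi_j)$ and the associated delta-sequence/shuffle construction; that is precisely where the work lies, and none of it is in your sketch.

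The ``alternative, more algebraic route'' you mention only in passing --- writing $\#NC^{[k]}(n)=\zeta^{\ast(k+1)}(\hat 0_n,\hat 1_n)$ via Remark \ref{multichains1} and invoking the equivalence between iterated $\zeta$-convolution on $NC$ and single $\zeta$-convolution on $NC^k$ --- is in substance the paper's own derivation: in Section 4 the generating function of the multichain numbers is shown to satisfy $A(z)=1+zA(z)^{k+1}$, the same functional equation obtained for $\#NC^k(n)$ and $\#NC_{k+1}(n)$, and the explicit Fuss--Catalan value is then read off from Proposition \ref{k-divisible}. So the argument you demote to a footnote is the one that actually closes the proof; the bijective argument you lead with is not yet a proof as written.
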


\begin{rem}[Definition of Kreweras complement]\label{kreweras}
Let $\pi $ be a partition in $NC(n).$ Then the Kreweras complement $K(\pi )$
is characterized in the following way. It is the only element $%
\sigma \in NC(\overline{1},2,...\overline{n})$ with the properties that $\pi
\cup \sigma \in NC(1,\overline{1},2,\overline{2},...,n,\overline{n}%
\}\backsimeq NC(2n)$ is non-crossing and that 
\begin{equation*}
\pi \cup \sigma \vee \{(1,\overline{1}),(2,\overline{2}%
),...,(n,\overline{n})\}=1_{2n}
\end{equation*}
The map $Kr:NC(n)\to NC(n)$ is an order reversing isomorphism. Furthermore, for all $\pi \in NC(n)$ we have that $|\pi|+|Kr(\pi)|=n+1$, see \cite{NiSp06} for details.
\end{rem}

The reader may have noticed from Proposition \ref{k-divisible} and Corollary \ref{PartexactK} that the number of $(k+1)$-equal non-crossing partitions of $n(k+1)$ and the number of $k$-divisible non-crossing partitions of $nk$ coincide with the number of $k$-multichains on $NC(n)$. This will be of relevance for this paper, and we will give a proof in Example \ref{k-k-k} as an application on how the zeta-function in $NC^{k}(n)$ is related to $\zeta ^{* k}$ in $NC(n).$

\subsection{Incidence algebra in $NC$}

Let us recall the main concepts about posets and incidence algebras first introduced by Rota et al. \cite{Rota}. The incidence algebra $I(P)=I(P,\mathbb{C})$ of a finite poset $(P,\leq)$ consists of all functions $f:P^{(2)}\rightarrow \mathbb{C}$ such that $f(\pi ;\sigma )=0$ whenever $\pi \nleq \sigma $ We can also
consider functions of one element; these are restrictions of functions of
two variables as above to the case where the first argument is equal to
$0$, i.e. $f(\pi )=f(0,\pi )$ for $\pi \in P$.

We endow $I(P,\mathbb{C})$ with the usual structure of vector space over $\mathbb{C}.$ On this incidence algebra we have a canonical multiplication or (combinatorial) convolution\footnote{Not to be confused with the concept of convolution of measures.
\par
{}} defined by

\begin{equation*}
(F\ast G)(\pi,\sigma ):=\sum\limits_{\substack{ \rho \in P  \\ \pi \leq \rho \leq \sigma 
}}F(\sigma, \rho )G(\rho ,\sigma )\text{.}
\end{equation*}

Moreover, for functions $f:P\rightarrow \mathbb{C}$ and $G:P^{(2)}\rightarrow 
\mathbb{C}$ we consider the convolution $f\ast G:P\rightarrow $ defined by%
\begin{equation*}
(f\ast G)(\sigma ):=\sum\limits_{\substack{ \rho \in P \\ \rho \leq \sigma 
}}f(\rho )G(\rho ,\sigma )\text{.}
\end{equation*}

The convolutions defined above are associative and distributive with respect to taking linear combinations of functions in $
P^{(2)}$ or in $P$. It is easy to verify that the function $\delta
:P^{(2)}\rightarrow 
\mathbb{C}$ defined as 
\[ \delta (\pi ,\sigma ) = \left\{ \begin{array}{cc}

       1      &  \pi=\sigma       \\
       0      &  \pi \neq \sigma   
    \end{array} \right. \]
is the unity with respect to the convolutions, making $I(P,\mathbb{C})$ a unital algebra.
Two other prominent functions in the in incidence algebra $I(P,\mathbb{C})$ are the zeta-function and its inverse the M\"{o}bius function. 

\begin{defi}
\label{Zeta-Mob}Let $(P,\leq)$ be a finite partially ordered set. The \textbf{zeta function} 
of $P$, $\zeta :P^{(2)}\rightarrow 
\mathbb{C}$ is defined by 
\begin{equation*}
\zeta (\pi ,\sigma )=1,\qquad \text{\ for all }\pi \leq \sigma \in P
\end{equation*}%
The inverse of $\zeta $ under the convolution is called the \textbf{M\"{o}bius function} of $P$, which will be denoted by $\mu $.
\end{defi}

\begin{rem}\label{multichains1}
Note that
\begin{equation*}
\zeta \ast \zeta (\pi ,\sigma )=\sum\limits_{\pi \leq \rho \leq \sigma
}1=card[\pi ,\sigma ],
\end{equation*}
and, more generally, 
\begin{equation*}
\underset{k\text{ times}}{\underbrace{\zeta \ast \zeta \ast \cdots \ast
\zeta }}(\pi ,\sigma )=\sum\limits_{\pi =\rho _{0}\leq \rho _{1}\cdots \leq
\rho _{k}=\sigma }1
\end{equation*}%
counts the number of $k+1-$multichains from $\pi $ to $\sigma.$ 
\end{rem}

\begin{defi}
Let $(\alpha _{n})_{n\geq 1}$ be a sequence of complex numbers. Define a family of functions $f_{n}:NC(n)\rightarrow 
\mathbb{C},$ $n\geq 1,$ by the following formula: if $\pi =\{V_{1},...,V_{r}\}\in
NC(n) $ then%
\begin{equation*}
f_{n}(\pi )=\alpha _{\left\vert V_{1}\right\vert }\cdots \alpha _{\left\vert
V_{r}\right\vert }\text{.}
\end{equation*}%
Then $(f_{n})$ is called the \textbf{multiplicative family of functions} on $%
NC$ determined by $(\alpha _{n})_{n\geq 1}$
\end{defi}

To emphasize the fact that the $\alpha _{n}$ encode the information of the
multiplicative family of function $f_{n}$ we will use the following notation.

\begin{notation}
Let $(\alpha _{n})_{n\geq 1}$ be a sequence of complex numbers, and let $%
(f_{n})$ is the multiplicative family of functions on $NC$ determined by $%
(\alpha _{n})_{n\geq 1}$. Then we will use the notation 
\begin{equation*}
\alpha _{\pi }:=f_{n}(\pi )\text{ \ \ \ for }\pi \in NC(n),
\end{equation*}%
and we will call the family of numbers $(\alpha _{\pi })_{n\in \mathbb{N}},_{\pi \in NC(n)}$ the multiplicative extension of $(\alpha _{n})_{n\in \mathbb{N}}$.
\end{notation}

Finally, for $g:=(g_n)_{n\geq1}$ and $f:=(f_n)_{n\geq1}$ multiplicative families in the lattice of non-crossing partitions we can define the combinatorial convolution $f*g:=((f*g)_n)_{n\geq1}$ in $NC$ by the following formula
\begin{equation*}
(f\ast g)_{n}:=\sum\limits_{\substack{ \pi \in NC(n) } 
} f_n(\pi )g_n(K(\pi)).
\end{equation*}

The importance of this combinatorial convolution is that the multiplicative family  $((f\ast g)_{n})_{n\geq1}$ can be used to describe free multiplicative convolution, in the following sense, see Equation (\ref{cum-prod}):
\begin{equation*}
\kappa_{n}(ab)=\sum_{\pi \in NC(n)}k_{\pi }(a)k_{K(\pi )}(b).
\end{equation*}
Moreover, the so-called moment-cumulant formula (see Equation (\ref{freemc})) may be stated as follows:
\begin{equation}\label{m-c formula}
m_{n}(x)=\sum_{\pi \in NC(n)}\kappa_{\pi }(a)
\end{equation}
which in our notation (if $m:=m_n(x)$ and $\kappa:\kappa_n(x)$) is nothing else than $m=k*\zeta$ or $k=m*\mu$.
There is a functional equation for the power series two multiplicative
families $(f_{n})_{n\geq 1}$ and $(g_{n})_{n\geq 1}$ on $NC,$ related by 
\begin{equation}
g=f\ast \mu\text{ }(\text{or equivalently: }f=g\ast
\zeta).  \label{mob5}
\end{equation}%
This is the content of next proposition.
\begin{prop}
\label{FunctionalEquation}Let $(f_{n})_{n\geq 1}$ and $(g_{n})_{n\geq 1}$ be
two multiplicative families on $NC$, which are related as in Equation (\ref{mob5}). Let $(\alpha _{n})_{n\geq 1}$ and $(\beta _{n})_{n\geq 1}$ be the
sequences of numbers that determine the multiplicative families; that is, we
denote $\alpha _{n}:=f_{n}(\mathbf{1}_{n})$ and $\beta _{n}:=g_{n}(\mathbf{1}%
_{n}),$ $n\geq 1.$ If we consider the power series%
\begin{equation*}
A(z)=1+\sum_{n=1}^{\infty }\alpha _{n}z^{n}\text{ \ \ \ and \ \ }%
B(z)=1+\sum_{n=1}^{\infty }\beta _{n}z^{n}\text{.}
\end{equation*}%
Then $A$ and $B$ satisfy the functional equation 
\begin{equation*}
A(z)=B(zA(z))\text{ \ \ \ and \ \ }B(z)=A(\frac{z}{B(z)})
\end{equation*}
\end{prop}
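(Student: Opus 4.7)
The plan is to prove this classical moment-cumulant style functional equation via the standard block-decomposition argument on $NC(n)$, extracting the block that contains the element $1$.

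First I would translate the hypothesis $f = g \ast \zeta$ from the level of multiplicative families back to the level of the defining sequences. Evaluating at the maximal partition $\mathbf{1}_n$ and using the definition of the convolution together with the multiplicativity of $g_n$, this gives
\begin{equation*}
\alpha_n \;=\; f_n(\mathbf{1}_n) \;=\; \sum_{\pi \in NC(n)} g_n(\pi)\,\zeta(\pi,\mathbf{1}_n) \;=\; \sum_{\pi \in NC(n)} \beta_\pi \;=\; \sum_{\pi \in NC(n)} \prod_{V \in \pi} \beta_{|V|}.
\end{equation*}
So the whole statement reduces to establishing the generating-function identity $A(z) = B(zA(z))$ from this combinatorial identity (with the convention $\alpha_0 := 1$, corresponding to the empty partition).

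Next, I would derive a recursion for $\alpha_n$ by classifying each $\pi \in NC(n)$ according to the block $V$ that contains the element $1$. Write $V = \{1 = i_1 < i_2 < \cdots < i_s\}$ and let $j_\ell$ denote the number of elements of $[n] \setminus V$ strictly between $i_\ell$ and $i_{\ell+1}$ (with $i_{s+1} := n+1$); these $s$ gaps have total length $j_1 + \cdots + j_s = n - s$. By the non-crossing condition (essentially the standard application of Remark \ref{rem1}) the restriction of $\pi$ to each gap is an arbitrary non-crossing partition, independently chosen, and these choices determine $\pi$ uniquely. Because $(g_n)$ is multiplicative, the contribution of the chosen block $V$ to $\beta_\pi$ is exactly $\beta_s$, and the contribution of the $\ell$-th gap, summed over all its internal non-crossing partitions, is precisely $\alpha_{j_\ell}$. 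Hence
\begin{equation*}
\alpha_n \;=\; \sum_{s \geq 1} \beta_s \sum_{\substack{j_1,\dots,j_s \geq 0 \\ j_1 + \cdots + j_s = n-s}} \alpha_{j_1} \cdots \alpha_{j_s}.
\end{equation*}

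Finally, I would multiply by $z^n$ and sum over $n \geq 1$. The inner sum is the coefficient of $z^{n-s}$ in $A(z)^s$, so the right-hand side becomes $\sum_{s \geq 1} \beta_s\, z^s A(z)^s = B(zA(z)) - 1$. Adding the constant term gives $A(z) = B(zA(z))$, which is the first functional equation. For the second, I would perform the substitution $w := zA(z)$, so that $A(z) = B(w)$ and $z = w/B(w)$; plugging back yields $B(w) = A(w/B(w))$, as claimed. The second equation can alternatively be obtained by running exactly the same block-decomposition argument starting from the inverse relation $g = f \ast \mu$ (which when expanded back into non-crossing partitions still has the same multiplicative structure), but the substitution argument is cleaner.

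The main obstacle I anticipate is bookkeeping in the block decomposition: one must verify that the map $\pi \mapsto (V, \pi|_{\text{gap}_1}, \ldots, \pi|_{\text{gap}_s})$ is a bijection between $NC(n)$ and the set of tuples described above, and in particular that each gap really inherits an \emph{arbitrary} non-crossing partition on its vertices with no residual constraint from the non-crossing condition. Once this bijection is justified, the rest is a purely formal manipulation of generating series.
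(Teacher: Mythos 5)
Your proof is correct. The paper itself states Proposition \ref{FunctionalEquation} without proof, treating it as a standard fact from the Speicher--Nica theory of multiplicative functions on $NC$; your argument --- reducing $f=g\ast\zeta$ to $\alpha_n=\sum_{\pi\in NC(n)}\beta_\pi$, then decomposing each $\pi$ by the block containing $1$ and its $s$ gaps of sizes $j_1,\dots,j_s$ with $j_1+\cdots+j_s=n-s$ --- is exactly the classical derivation of $A(z)=B(zA(z))$ found in the cited literature, and the passage to $B(w)=A(w/B(w))$ via $w=zA(z)$ is legitimate since $zA(z)=z+\cdots$ is invertible under composition. The one step you rightly flag, that no block other than $V$ can meet two distinct gaps and that arbitrary non-crossing partitions of the gaps recombine with $V$ into a non-crossing partition of $[n]$, is the standard interval-decomposition fact (the paper's Remark \ref{rem1}) and closes the argument.
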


\section{Preliminaries on Free Probability}

Following \cite{VoDyNi92}, we recall that a non-commutative probability space
$(\mathcal{A},\varphi)$ is called a $W^{\ast}$-\textit{probability space} if
$\mathcal{A}$ is a non-commutative von Neumann algebra and $\varphi$ is a
normal faithful trace. A family of unital von Neumann subalgebras $\left\{
\mathcal{A}_{i}\right\}  _{i\in I}\mathcal{\subset A}$ in a $W^{\ast}%
$-probability space is said to be \textit{free} if $\varphi(a_{1}a_{2}%
\cdot\cdot\cdot a_{n})=0$ whenever $\varphi(a_{j})=0,a_{j}\in\mathcal{A}%
_{i_{j}},$ and $i_{1}\neq i_{2}\neq...\neq i_{n}.$ A self-adjoint operator
$X$\textit{\ is said to be affiliated with} $\mathcal{A}$ if $f(X)\in
\mathcal{A}$ for any bounded Borel function $f$ on $\mathbb{R}$. In this case
it is also said that $X$ is a (non-commutative) \textit{random variable}.
Given a self-adjoint operator $X$ affiliated with $\mathcal{A}$, the
\textit{distribution} of $X$ is the unique measure $\mu_{X}$ in $\mathcal{M}$
satisfying
\[
\varphi(f(X))=\int_{\mathbb{R}}f(x)\mu_{X}(\mathrm{d}x)
\]
for every Borel bounded function $f$ on $\mathbb{R}$.  If $\left\{
\mathcal{A}_{i}\right\}  _{i\in I}$ is a family of free unital von Neumann
subalgebras and $X_{i}$ is a random variable affiliated with $\mathcal{A}_{i}$
for each $i\in I$, then the \textit{random variables} $\left\{  X_{i}\right\}
_{i\in I}$ are said to be \textit{free}.

\subsection{Free Additive Convolution and Free Additive Powers}

The \textit{Cauchy transform} of a probability measure $\mu$ on $\mathbb{R}%
$\ is defined, for $z\in\mathbb{C}\backslash\mathbb{R}$, by%
\begin{equation}
G_{\mu}(z)=\int_{\mathbb{R}}\frac{1}{z-x}\mu\left(  \mathrm{d}x\right)  .
\label{CaTr}%
\end{equation}
It is well known that $G_{\mu}$ is an analytic function in $\mathbb{C}%
\backslash\mathbb{R}$, $G_{\mu}:\mathbb{C}_{+}\rightarrow\mathbb{C}_{-}$ and
that $G_{\mu}$ determines uniquely the measure $\mu.$ The \textit{reciprocal
of the Cauchy} \textit{transform} is the function $F_{\mu}\left(  z\right)
:\mathbb{C}_{+}\rightarrow\mathbb{C}_{+}$ defined by $F_{\mu}\left(  z\right)
=1/G_{\mu}(z).$ It was proved in \cite{BeVo93} that there are positive numbers
$\eta$ and $M$ such that $F_{\mu}$ has a right inverse $F_{\mu}^{-1}$ defined
on the region
\begin{equation}
\Gamma_{\eta,M}:=\left\{  z\in\mathbb{C};\left\vert \operatorname{Re}%
(z)\right\vert <\eta\operatorname{Im}(z),\quad\operatorname{Im}(z)>M\right\}
. \label{RegFinv}%
\end{equation}
The \textit{Voiculescu transform of }$\mu$ is defined by
\begin{equation}
\phi_{\mu}(z)=F_{\mu}^{-1}(z)-z \label{VoTr}%
\end{equation}
on any region of the form $\Gamma_{\eta,M}$, where $F_{\mu}^{-1}$ is defined,
see \cite{BePa99}, \cite{BeVo93}. The \textit{free cumulant transform} is a
variant of $\phi_{\mu}$ defined as
\begin{equation}
\mathcal{C}_{\mu}^{\boxplus}\mathcal{(}z)=z\phi_{\mu}(\frac{1}{z})=zF_{\mu
}^{-1}\left(  \frac{1}{z}\right)  -1, \label{freecumtrans}%
\end{equation}
for $z$ in a domain $D_{\mu}$ $\subset$ $\mathbb{C}_{-}$ such that $1/z\in$
$\Gamma_{\eta,M}$ where $F_{\mu}^{-1}$ is defined, see \cite{BNT06}.

The \textit{free additive convolution} of two probability measures $\mu _{1},\mu _{2}$ on $ \mathbb{R}$ is defined as the probability
measure $\mu _{1}\boxplus \mu _{2}$ on $\mathbb{R}
$ such that $\phi _{\mu _{1}\boxplus \mu _{2}}\mathcal{(}z)=\phi _{\mu _{1}}
\mathcal{(}z)+\phi _{\mu _{2}}\mathcal{(}z)$ or equivalently
\begin{equation}
\mathcal{C}_{\mu _{1}\boxplus \mu _{2}}^{\boxplus }\mathcal{(}z)=\mathcal{C} _{\mu
_{1}}^{\boxplus }\mathcal{(}z)+\mathcal{C}_{\mu _{2}}^{\boxplus }\mathcal{(}z)
\label{DefAdtCon}
\end{equation}
for $z\in D_{\mu _{1}}\cap D_{\mu _{2}}$. It turns out that $\mu _{1}\boxplus \mu _{2}$ is the
distribution of the sum $X_{1}+X_{2}$ of two free random variables $X_{1}$ and $X_{2}$ having
distributions $\mu _{1}$ and $\mu _{2}$ respectively.

 Moreover the \textit{free additive powers} of a measure $\mu$ are the measures $\mu^{\boxplus t}$ such that $\phi _{\mu^{\boxplus t}
 }\mathcal{(}z)=t\phi _{\mu}(z)$, or equivalently 
\begin{equation}\label{DefAdtPow}
\mathcal{C}_{\mu ^{\boxplus t}}(z)=t\mathcal{C}_\mu(z),
\end{equation}
whenever they exist. For any measure $\mu$ on $\real$ and $t>1$ the free additive power $\mu^{\boxplus t}$ exists. 

\subsection{Free Multiplicative Convolution}

The\textit{\ free multiplicative operation} $\boxtimes$ of probability measures with bounded support is defined as follows, see \cite{BeVo93}. Let $\mu_{1},\mu_{2}$
be probability measures on $\mathbb{R},$ with $\mu_{1}\in\mathcal{M}^{+}$ and
let $X_{1},X_{2}$ be free random variables such that $\mu_{X_{i}}=\mu_{i}$.

Since $\mu_{1}$ is supported on $\mathbb{R}_{+},$ $X_{1}$ is a positive
self-adjoint operator and $\mu_{X_{1}^{1/2}}$ is uniquely determined by
$\mu_{1}.$ Hence the distribution $\mu_{X_{1}^{1/2}X_{2}X_{1}^{1/2}}$ of the
self-adjoint operator $X_{1}^{1/2}X_{2}X_{1}^{1/2}$ is determined by $\mu_{1}$
and $\mu_{2}.$ This measure is called the \textit{free multiplicative convolution} of $\mu_{1}$ and $\mu_{2}$ and it is
denoted by $\mu_{1}\boxtimes$ $\mu_{2}$. This operation on $\mathcal{M}$ is
associative and commutative.

\begin{defi}
Let $x$ be a random variable in some non-commutative probability space $(\mathcal{A},\phi)$ with $\phi(x)\neq 0$. Then its $S$-transform is defined as follows. Let $\chi$ denote the inverse under composition of the series 
\begin{equation}
\psi(z):=\sum^\infty _{n=1}\phi(x^n)z^n,
\end{equation}
then 
\begin{equation}
S_x(z):=\chi(z)\frac{1+z}{z}.
\end{equation}
Moreover, if $\mu$ is distribution of $x$, the $S$-\textit{transform} of $\mu$ is defined as
\[
S_{\mu}(z)=\chi(z)\frac{1+z}{z}\text{.}%
\]
\end{defi}

The following result shows the role of the $S$-transform as an analytic tool
for computing free multiplicative convolutions with bounded support. It was shown in \cite{Vo87a} for measures for measures in $\mathcal{M}^{+}$ with bounded support and generalized in \cite{BeVo93} for measures in $\mathcal{M}^{+}$ with unbounded support. We will postpone the discussion of the unbounded case for Section \ref{Analytic Aspects}

\begin{prop}
\label{StransfPos} Let $\mu_{1}$ and $\mu_{2}$ be probability measures with compact support in
$\mathcal{M}^{+}$ with $\mu_{i}\neq\delta_{0}$, $i=1,2.$ Then $\mu
_{1}\boxtimes$ $\mu_{2}\neq\delta_{0}$ and
\[
S_{\mu_{1}\boxtimes\mu_{2}}(z)=S_{\mu_{1}}(z)S_{\mu_{2}}(z)
\]
in that component of the common domain which contains $(-\varepsilon,0)$ for
small $\varepsilon>0.$ Moreover, $(\mu_{1}\boxtimes$ $\mu_{2})(\{0\})=\max
\{\mu_{1}(\{0\}),\mu_{2}(\{0\})\}.$
\end{prop}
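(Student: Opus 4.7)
My plan is to follow Voiculescu's combinatorial approach, leveraging the non-crossing partition machinery already set up in Section 2. Realize $\mu_1,\mu_2$ as distributions of free operators $x_1, x_2$ in a $W^*$-probability space with $x_1\geq 0$. Traciality of $\phi$ gives $\phi((x_1^{1/2} x_2 x_1^{1/2})^n)=\phi((x_1 x_2)^n)$ for every $n$, so it suffices to compute the moments of $x_1 x_2$. The hypothesis $\mu_i\neq \delta_0$ ensures $\phi(x_i)\neq 0$ or at least a non-trivial moment generating series $\psi_{\mu_i}$ exists in a neighbourhood of the origin, so $\chi_{\mu_i}$ and $S_{\mu_i}$ are defined.

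The combinatorial heart of the argument is the product-of-free-variables formula for cumulants: if $a,b$ are free, then
\begin{equation*}
\kappa_n(ab,\ldots,ab)=\sum_{\pi\in NC(n)}\kappa_\pi(a,\ldots,a)\,\kappa_{K(\pi)}(b,\ldots,b),
\end{equation*}
i.e., with the notation of Section 2, $\kappa^{ab}=\kappa^a\ast\kappa^b$. This is established by expanding $\kappa_n(ab,\ldots,ab)$ through the moment-cumulant formula into a sum over non-crossing pair partitions of the $2n$-tuple $(a,b,a,b,\ldots,a,b)$, then using the vanishing of mixed cumulants across free subalgebras. The surviving partitions decompose into a part $\pi$ grouping the $a$-entries and a part $\sigma$ grouping the $b$-entries, with the non-crossing condition forcing $\sigma=K(\pi)$.

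The next step is to translate this combinatorial identity into the multiplicativity of $S$. Let $M_\mu(z):=1+\psi_\mu(z)$ and $C_\mu(z):=1+\sum_{n\geq 1}\kappa_n(\mu)z^n$ be the cumulant and moment generating series. The moment–cumulant formula $m=\kappa\ast\zeta$ together with Proposition \ref{FunctionalEquation} yields the functional equation $M_\mu(z)=C_\mu(zM_\mu(z))$. A short calculation then shows that $S_\mu$ is essentially the compositional inverse of $C_\mu$ shifted by $1$; concretely,
\begin{equation*}
C_\mu^{-1}(1+w)=w\,S_\mu(w).
\end{equation*}
Applying this to the convolution identity $\kappa^{x_1 x_2}=\kappa^{x_1}\ast\kappa^{x_2}$ and inverting, using the involutive and order-reversing nature of the Kreweras complement $K$, produces $C_{\mu_1\boxtimes\mu_2}^{-1}(1+w)=w\,S_{\mu_1}(w)\,S_{\mu_2}(w)$, and hence $S_{\mu_1\boxtimes\mu_2}=S_{\mu_1}S_{\mu_2}$ on a common neighbourhood. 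This passage from a combinatorial convolution to an honest multiplication of formal series is the technically most delicate part, and I expect it to be the main obstacle.

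For the atom statement, set $\alpha_i:=\mu_i(\{0\})$ and let $p_i$ denote the kernel projection of $x_i$, so $\phi(p_i)=\alpha_i$. Since $x_1,x_2$ are free, $p_1$ and $p_2$ are free projections, and by Voiculescu's formula for the trace of the join (or direct computation with free compression) the kernel projection $q$ of $x_1^{1/2}x_2x_1^{1/2}$ satisfies $\phi(q)=\phi(p_1\vee p_2)=\min\{\alpha_1+\alpha_2,1\}$ when $p_1,p_2$ are free; however, one has $\ker(x_1^{1/2}x_2x_1^{1/2})=\ker(x_2^{1/2}x_1^{1/2})$ whose trace equals $\max\{\alpha_1,\alpha_2\}$ after the standard polar decomposition argument and the identity $\phi(\mathrm{r}(x_1^{1/2}x_2^{1/2}))=1-\max\{\alpha_1,\alpha_2\}$ for the range projection of a product of free positive operators. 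This gives $(\mu_1\boxtimes\mu_2)(\{0\})=\max\{\alpha_1,\alpha_2\}$, completing the proof.
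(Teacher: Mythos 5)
The paper does not actually prove Proposition \ref{StransfPos}: it is quoted as a preliminary, attributed to Voiculescu \cite{Vo87a} for the compactly supported case and to Bercovici--Voiculescu \cite{BeVo93} in general, so there is no internal proof to compare against. Judged on its own terms, your outline picks a legitimate route --- the combinatorial one, via the product formula $\kappa_n(ab)=\sum_{\pi\in NC(n)}\kappa_\pi(a)\kappa_{K(\pi)}(b)$ and the identity $\mathcal{C}_\mu^{\boxplus}(zS_\mu(z))=z$ --- and your reductions (traciality to replace $x_1^{1/2}x_2x_1^{1/2}$ by $x_1x_2$; $\mu_i\neq\delta_0$ with positive support forcing $\phi(x_i)>0$ so that $\chi_{\mu_i}$ exists) are fine. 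But the step you yourself flag as ``the main obstacle'' is the entire theorem: deducing $S_{ab}=S_aS_b$ from $\kappa^{ab}=\kappa^a\ast\kappa^b$ is precisely the content of the Nica--Speicher ``Fourier transform'' theorem (\cite{NiSp97}, or Lecture 18 of \cite{NiSp06}), which says that the map sending a multiplicative family $f$ to $\frac{1+z}{z}\,\varphi_f^{\langle-1\rangle}(z)$ carries the convolution $\ast$ on $NC$ to multiplication of power series. That is not obtained by ``inverting'' the convolution identity and invoking order-reversal of the Kreweras complement; it requires its own nontrivial combinatorial argument. As written, the proof asserts the conclusion at the one place where the real work sits.

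Two smaller defects. In the derivation of the product formula the relevant objects are general non-crossing partitions of $[2n]$, not pair partitions. More seriously, the atom computation is internally inconsistent: you first claim the kernel projection of $x_1^{1/2}x_2x_1^{1/2}$ has trace $\phi(p_1\vee p_2)=\min\{\alpha_1+\alpha_2,1\}$, then override this with $\max\{\alpha_1,\alpha_2\}$; the kernel of $x_1^{1/2}x_2x_1^{1/2}$ is not $p_1\vee p_2$. The correct argument is the second half of your sentence only: $\ker(x_1^{1/2}x_2x_1^{1/2})=\ker(x_2^{1/2}x_1^{1/2})$ has trace $1-\phi\bigl(r(x_1^{1/2}x_2^{1/2})\bigr)$, and for free positive elements this range projection has trace $1-\max\{\alpha_1,\alpha_2\}$, using $\phi(q\wedge p_1)=\max\{\phi(q)+\phi(p_1)-1,0\}$ for free projections. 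Delete the first clause and supply the freeness computation for the meet of projections.
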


\begin{prop}
\label{WeakConCon} Let $\left\{  \mu_{n}\right\}  _{n=1}^{\infty}$ and
$\left\{  \nu_{n}\right\}  _{n=1}^{\infty}$ be sequences of probability
measures in $\mathcal{M}^{+}$ converging to probability measures $\mu$ and
$\nu\ $in $\mathcal{M}^{+}$, respectively, in the weak* topology and such that
$\mu\neq\delta_{0}\neq\nu$. Then, the sequences $\{\mu_{n}\boxtimes\nu
_{n}\}_{n=1}^{\infty}$ converges to $\mu\boxtimes\nu$ in the weak* topology.
\end{prop}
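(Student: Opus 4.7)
The plan is to reduce weak continuity of $\boxtimes$ on $\mathcal{M}^+$ to a continuity statement for the $S$-transform and then invoke its multiplicativity from Proposition \ref{StransfPos}. The proof will proceed in three steps: transfer weak convergence to locally uniform convergence of an analytic moment transform; invert and multiply to obtain convergence of $S_{\mu_n \boxtimes \nu_n}$; then invert once more, using uniqueness of the $S$-transform on $\mathcal{M}^+$, to conclude weak convergence.

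First, I would work with the auxiliary transform
\begin{equation*}
\psi_\rho(z) := \int_{0}^{\infty}\frac{tz}{1-tz}\,\rho(\mathrm{d}t),
\end{equation*}
which is analytic on $\mathbb{C}\setminus[0,\infty)^{-1}$ for every $\rho\in\mathcal{M}^+$ and determines $\rho$ uniquely. For any $z$ in this domain, the map $t\mapsto tz/(1-tz)$ is bounded and continuous on $[0,\infty)$; hence the Portmanteau theorem gives $\psi_{\mu_n}(z)\to\psi_\mu(z)$ pointwise, and the uniform bound $|tz/(1-tz)|\le C_K$ for $z$ in any compact $K$ away from $[0,\infty)^{-1}$ together with Montel's theorem upgrades this to locally uniform convergence; similarly $\psi_{\nu_n}\to\psi_\nu$.

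Second, since $\mu\neq\delta_0$ and $\nu\neq\delta_0$, the functions $\psi_\mu$ and $\psi_\nu$ are non-constant and their ranges contain an interval $(a,0)\subset\mathbb{R}_-$ with $a:=\max\{\mu(\{0\}),\nu(\{0\})\}-1<0$. Thus $\psi_\mu$ has an analytic inverse $\chi_\mu$ defined on this interval, and Hurwitz's theorem, applied to the locally uniform convergence $\psi_{\mu_n}\to\psi_\mu$, yields that for $n$ large the inverses $\chi_{\mu_n}$ exist on a common open neighborhood of $(a,0)$ and converge locally uniformly to $\chi_\mu$; the same holds for the $\nu$-side. Consequently $S_{\mu_n}(z)=\chi_{\mu_n}(z)(1+z)/z\to S_\mu(z)$ and $S_{\nu_n}\to S_\nu$ locally uniformly on a common domain, and Proposition \ref{StransfPos} gives
\begin{equation*}
S_{\mu_n\boxtimes\nu_n}(z)=S_{\mu_n}(z)\,S_{\nu_n}(z)\longrightarrow S_\mu(z)\,S_\nu(z)=S_{\mu\boxtimes\nu}(z).
\end{equation*}

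Third, I would reverse the correspondence to turn convergence of $S$-transforms into weak convergence of measures. Convergence of the $S_{\mu_n\boxtimes\nu_n}$ on a common domain yields convergence of the corresponding $\chi$- and hence $\psi$-transforms of $\mu_n\boxtimes\nu_n$; using tightness (the locally uniform bound on the $\psi$-transforms near $z\to -\infty$ forces $\sup_n(\mu_n\boxtimes\nu_n)([R,\infty))\to 0$ as $R\to\infty$) one extracts via Helly a subsequential weak limit $\rho\in\mathcal{M}^+$, and unicity of the Stieltjes-type transform identifies $\psi_\rho=\psi_{\mu\boxtimes\nu}$, forcing $\rho=\mu\boxtimes\nu$. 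Since every subsequential limit coincides, the whole sequence converges weakly. The main obstacle is the careful bookkeeping of domains in the second step: one must use the hypothesis $\mu,\nu\neq\delta_0$ precisely to guarantee non-degenerate common domains on which the inverses $\chi_{\mu_n}$ and $\chi_{\nu_n}$ exist simultaneously for all large $n$, so that the multiplicativity identity of Proposition \ref{StransfPos} can be invoked uniformly.
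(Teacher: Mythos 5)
The paper itself contains no proof of Proposition \ref{WeakConCon}: it is stated in the preliminaries as a known result imported from \cite{BeVo93}, so your argument has to be judged on its own merits rather than against an in-paper proof. Your route --- transferring weak convergence to locally uniform convergence of the $\psi$-transforms, a Hurwitz/Rouch\'e inversion to get locally uniform convergence of $\chi_{\mu_n}$ and hence of $S_{\mu_n}$, multiplicativity of the $S$-transform, and then inversion back plus Helly and uniqueness --- is the natural one given the paper's toolkit, and its overall architecture is sound. One bookkeeping correction is needed: Proposition \ref{StransfPos} is stated for compactly supported measures, whereas the proposition you are proving makes no compactness assumption (and weak limits of compactly supported measures need not have compact support); the identity $S_{\mu_n\boxtimes\nu_n}=S_{\mu_n}S_{\nu_n}$ you need is the unbounded-support version, Proposition \ref{StransfPos2} in Section \ref{Analytic Aspects}, also due to \cite{BeVo93}. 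Note also that it is Proposition \ref{PisiForPos} that guarantees your common real interval $(a,0)$, $a=\max\{\mu(\{0\}),\nu(\{0\})\}-1<0$, and this is precisely where the hypothesis $\mu\neq\delta_0\neq\nu$ enters.

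The one step that fails as written is your tightness claim. Control of $\psi_{\rho_n}$, $\rho_n:=\mu_n\boxtimes\nu_n$, ``near $z\to-\infty$'' cannot force tightness: for every $\rho\in\mathcal{M}^{+}$ and every $s>0$ one has $\psi_{\rho}(-s)\in(-1,0]$, and $\lim_{s\to+\infty}\psi_{\rho}(-s)=\rho(\{0\})-1$, so that regime is uniformly bounded for trivial reasons and in the limit only detects the atom at the origin, not escape of mass to infinity. The regime that works is $z\to 0^{-}$: since $\frac{ts}{1+ts}\geq\frac{Rs}{1+Rs}$ for $t\geq R$, one has
\begin{equation*}
\rho_n\bigl([R,\infty)\bigr)\;\leq\;\frac{1+Rs}{Rs}\,\bigl|\psi_{\rho_n}(-s)\bigr|,
\end{equation*}
so given $\delta>0$ one first chooses $s$ small with $|\psi_{\mu\boxtimes\nu}(-s)|<\delta/4$, then uses the convergence $\psi_{\rho_n}\to\psi_{\mu\boxtimes\nu}$ on a real interval $(b,0)$ (which your inversion of the $\chi$-transforms does provide) to get $|\psi_{\rho_n}(-s)|<\delta/2$ for all large $n$, and finally takes $R$ with $Rs>1$; the finitely many remaining measures are handled individually. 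With this correction your endgame is fine: by Helly any subsequential weak limit $\rho'$ satisfies $\psi_{\rho'}=\psi_{\mu\boxtimes\nu}$ on an interval, hence everywhere by analytic continuation, hence $\rho'=\mu\boxtimes\nu$, and tightness upgrades subsequential convergence to convergence of the full sequence.
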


The next proposition is a particular case of a recent result proved in
\cite{RS} for probability measures $\mu_{1},\mu_{2}$ on $\mathbb{R}$ with
all moments, when $\mu_{1}$ has zero mean and $\mu_{2}\in\mathcal{M}^{+}$.

\begin{prop}
\label{MCRajSp} Let $\mu_{1}$ be a compactly supported 
measure on $\mathbb{R}$ with zero mean and let $\mu_{2}\in$ $\mathcal{M}^{+}$ have compact
support, with $\mu_{i}\neq\delta_{0}$, $i=1,2.$ Then, $\mu_{1}\boxtimes$
$\mu_{2}\neq\delta_{0}$ and
\[
S_{\mu_{1}\boxtimes\mu_{2}}(z)=S_{\mu_{1}}(z)S_{\mu_{2}}(z).
\]
\end{prop}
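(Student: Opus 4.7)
The proposition is the $k=1$ instance of Raj Rao and Speicher's extension of the $S$-transform multiplicativity to zero-mean variables. The approach I would take is to (i) verify non-triviality of $\mu_1 \boxtimes \mu_2$ directly from freeness, (ii) extend the $S$-transform to the zero-mean setting as a formal Puiseux series, and (iii) transfer the standard combinatorial proof of $S$-multiplicativity (via the product-of-free-variables cumulant formula) to this Puiseux setting.

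For non-triviality, take $x_1, x_2$ free with laws $\mu_1, \mu_2$. Since $\phi(x_1) = 0$, the usual freeness moment computation gives
\begin{equation*}
\phi\bigl((x_1 x_2)^2\bigr) = \phi(x_1 x_2 x_1 x_2) = m_2(\mu_1)\, m_1(\mu_2)^2,
\end{equation*}
which is strictly positive because $\mu_1 \neq \delta_0$ forces $m_2(\mu_1) > 0$ and $\mu_2 \in \mathcal{M}^{+}\setminus\{\delta_0\}$ forces $m_1(\mu_2) > 0$. So $\mu_1\boxtimes\mu_2\neq\delta_0$, and crucially it has zero mean but positive second moment.

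For the $S$-transform identity, first note that $\psi_{\mu_1}(z) = m_2(\mu_1)z^2 + O(z^3)$ with $m_2(\mu_1)>0$, so $\psi_{\mu_1}$ has a compositional inverse $\chi_{\mu_1}(z) = z^{1/2}/\sqrt{m_2(\mu_1)} + O(z)$ as a formal Puiseux series in $z^{1/2}$ (fixing a branch of the square root). This defines $S_{\mu_1}(z) = \chi_{\mu_1}(z)(1+z)/z$ as a Laurent--Puiseux series; the analogous construction works for $\mu_1\boxtimes\mu_2$, while $S_{\mu_2}$ is an ordinary power series since $\mu_2$ has nonzero mean. Next, combine the moment--cumulant identity (Equation (\ref{m-c formula}) in the form $M_x = C_x\ast\zeta$) with Proposition \ref{FunctionalEquation} to obtain the universal identity $S_x(w) = C_x^{-1}(1+w)/w$, which is valid as a Puiseux identity regardless of whether $\kappa_1(x)$ vanishes. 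Thus the claim $S_{\mu_1\boxtimes\mu_2} = S_{\mu_1}\,S_{\mu_2}$ becomes: setting $u = wS_{\mu_1}(w)$ and $v = wS_{\mu_2}(w)$, one must show $C_{\mu_1\boxtimes\mu_2}(uv/w) = 1+w$. Finally, using the cumulant product formula
\begin{equation*}
\kappa_n(x_1 x_2) = \sum_{\pi\in NC(n)} \kappa_\pi(x_1)\,\kappa_{K(\pi)}(x_2),
\end{equation*}
together with $|\pi|+|K(\pi)|=n+1$ (which accounts for the $1/w$ factor), one derives the required generating-function identity exactly as in the nonzero-mean case.

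The main obstacle is the bookkeeping of the square-root branches. Specifically, I must verify that the branch of $\chi_{\mu_1}$ and the branch of $\chi_{\mu_1\boxtimes\mu_2}$ can be chosen compatibly so that the formal identity $S_{\mu_1\boxtimes\mu_2}(z) = S_{\mu_1}(z)S_{\mu_2}(z)$ holds with matching signs; the leading-order check $\sqrt{\kappa_2(x_1 x_2)} = \phi(x_2)\sqrt{\kappa_2(x_1)}$ (from $\kappa_2(x_1 x_2) = \kappa_2(x_1)\phi(x_2)^2$) fixes the correct convention. Compactness of supports guarantees convergence of all Puiseux series in a punctured neighborhood of $0$, so the formal identity upgrades to the asserted analytic equality on a component containing $(-\varepsilon, 0)$.
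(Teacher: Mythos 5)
Your argument is correct and is essentially the proof the paper relies on: the paper states this proposition as a quoted special case of \cite{RS}, and your reconstruction (non-triviality via the second moment of $x_1x_2$, the Puiseux-series inverse of $\psi$ with a fixed branch, and verification of $C_{x_1x_2}(zS_{x_1x_2}(z))=z$ via the cumulant product formula) is exactly the Raj Rao--Speicher route, which the paper itself reuses verbatim for its generalization in Theorem \ref{Stran2}. The only cosmetic point is that in the paper's later indexing your situation is the case $k=2$ (first $k-1=1$ moments vanishing), not $k=1$.
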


\begin{rem} In \cite{RS}, the definition of the $S$-transform is not unique. We will explain this in detail in Section \ref{mainsection}.
\end{rem}

From (\ref{freecumtrans}) and the fact that $\Psi_{\mu}(z)=\frac{1}{z}G_{\mu
}(\frac{1}{z})-1$, one obtains the following relation observed in
\cite{NiSp97} between the free cumulant transform and the $S$-transform%
\begin{equation}
z=\mathcal{C}_{\mu}^{\boxplus}\left(  zS_{\mu}(z\right)  ). \label{freecumS0}%
\end{equation}

This equation holds for measures in $\mathcal{M}^{+}$ or in $\mathcal{M}_{b}$
with zero mean. It was suggested in \cite{RS} that (\ref{freecumS0}) may
be used to define $S$-transforms of general probability measures on
$\mathbb{R}$.

As it is readily seen from Equation (\ref{freecumS0}) free additive powers may also be described by the $S$-transform in the following way
\begin{equation}\label{S-free}
S_{\mu^{\boxplus t}}(z)=\frac{1}{t}S_{\mu}(z/t),
\end{equation}
while the $S$ transform of a dilation is given by
\begin{equation}\label{S-Dil}
S_{D_t(\mu)}(z)=\frac{1}{t}S_{\mu}(z),
\end{equation}
from where we can deduce the following equality (see \cite{BN})
\begin{equation}\label{mult-additive}
(\mu\boxtimes\nu)^{\boxplus t}=D_t(\mu^{\boxplus t}\boxtimes\nu^{\boxplus t})~~~~t>1.
\end{equation}

\subsection{Free cumulants}

A measure $\mu $ has \textit{all moments} if $m_{k}(\mu )=\int_{\mathbb{R}}t^{k}\mu (\mathrm{d}t)<\infty ,$ for each
integer $k\geq 1$. Probability measures with compact support have all moments.

The \textbf{free cumulants} $(\kappa_{n})$ were introduced by Roland Speicher in \cite{Sp94}, in his
combinatorial approach to Voiculescu's free probability theory. We refer the reader to the book of Nica and Speicher \cite{NiSp06}
for an introduction to this combinatorial approach. Let $\mu \in \mathcal{M}_{b},$
then the cumulants are the coefficients $\kappa_{n}=\kappa_{n}(\mu )$ in the series expansion
\begin{equation*}
\mathcal{C}_{\mu }^{\boxplus }\mathcal{(}z)=\sum\nolimits_{n=1}^{\infty }\kappa_{n}(\mu )z^{n}.
\end{equation*}

The relation between the free cumulants and the moments is described using the
lattice of non-crossing partitions $NC(n)$, namely,
\begin{equation}\label{freemc}
m_{n}(\mu )=\sum\limits_{\mathbf{\pi }\in NC(n)}\kappa_{\pi }(\mu ), 
\end{equation}
where $\pi \rightarrow \kappa_{\pi }$ is the multiplicative extension of the free cumulants to
non-crossing partitions, that is
\begin{equation*}
\kappa_{\pi }:=\kappa_{\left\vert V_{1}\right\vert }\cdots \kappa_{\left\vert V_{r}\right\vert }\qquad
\text{for}\qquad \pi =\{V_{1},...,V_{r}\}\in NC(n) \text{.}
\end{equation*}
Since free cumulants are just the coefficients of the series expansion of  $\mathcal{C}_{\mu }^{\boxplus }\mathcal{(}z)$,
we can describe free additive convolution as follows.
\begin{equation*}
\kappa_{n}\mathcal{(}\mu _{1}\boxplus \mu _{2})=\kappa_{n}\mathcal{(}\mu _{1})+\kappa_{n} \mathcal{(}\mu_{2})
\end{equation*}
and 
\begin{equation*}
\kappa_{n}(\mu ^{\boxplus t})=t\kappa_{n}(\mu ). 
\end{equation*}

Moreover, free multiplicative convolution may be described in terms of cumulants as
follows.
\begin{equation}
\kappa_{\mu _{1}\boxtimes \mu _{2}}\mathcal{(}z)=\sum\limits_{\mathbf{\pi }\in NC(n)}\kappa_{\pi
}\mathcal{(}\mu _{1})\kappa_{K(\pi )}\mathcal{(}\mu _{2}) \label{cum-prod}
\end{equation}
where $K(\pi )$ is the Kreweras complement defined in the Remark \ref{kreweras}. 
In fact, this formula is valid for any two free random variables $a,b\in
A$, not necessarily selfadjoint. For two random variables which are free the free cumulants  of $ab$ are given by
\begin{equation*}
\kappa_{n}(ab)=\sum_{\pi \in NC(n)}\kappa_{\pi }(a)\kappa_{K(\pi )}(b).
\end{equation*}

We will often use the more general \textbf{formula for product as arguments}, first proved by Krawczyk and Speicher \cite{KrSp}. For a proof see Theorem 11.12 in Nica and Speicher \cite{NiSp06} 
 
\begin{thm}[Formula for products as arguments]\label{products}
Let $(\mathcal{A},\phi)$ be a non-commutative probability space and let $(\kappa_n)_{n\in\mathbb{N}}$ be the corresponding  free cumulants. Let $m,n\in\mathbb{N}$ and $1\leq i(1)<i(2)\cdots<i(m)=n$ be given and consider the partition $$\hat0_m={(1,...,i(1),...,(i(m-1)+1,...,i(m))}\in NC(n)$$ and the random variables $a_1,...,a_n\in \mathcal{A}$
then the following equation holds:
\begin{equation}\label{fprod}
\kappa_m(a_1\cdots a_{i(1)},...,a_{i(m-1)+1}\cdots a_{i(m)})=\sum\limits_{\substack{ \mathbf{\pi }\in NC(n)  \\ \pi \vee \hat0_m =1_{n} 
}}\kappa_{\mathbf{\pi }}(a_1,...,a_n)
\end{equation}

\end{thm}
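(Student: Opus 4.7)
The plan is to derive this identity by the classical Möbius-inversion argument, which combines the moment–cumulant relation with the observation that $NC(m)$ embeds as the interval $[\hat0_m, 1_n]$ inside $NC(n)$. Throughout, I denote by $\phi_\sigma$ the multiplicative extension of $\phi$ to a partition $\sigma$.

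First, I would start from the definition of free cumulants via Möbius inversion on $NC(m)$: with $A_j := a_{i(j-1)+1}\cdots a_{i(j)}$,
\begin{equation*}
\kappa_m(A_1,\dots,A_m) \;=\; \sum_{\tau \in NC(m)} \mu(\tau, 1_m)\, \phi_\tau(A_1,\dots,A_m).
\end{equation*}
For a fixed $\tau \in NC(m)$, each factor $\phi\bigl(\prod_{j\in V} A_j\bigr)$ (over a block $V$ of $\tau$) can be expanded by the moment–cumulant formula (\ref{freemc}) into a sum over non-crossing partitions of the corresponding set of indices in $[n]$. Collecting these block-by-block expansions, and using that $\tau \mapsto \hat\tau$ (inflating each block of $\tau$ into the union of the intervals $\{i(j-1)+1,\dots,i(j)\}$ it contains) is a lattice isomorphism between $NC(m)$ and the interval $[\hat0_m, 1_n]\subset NC(n)$, one obtains
\begin{equation*}
\phi_\tau(A_1,\dots,A_m) \;=\; \sum_{\substack{\pi \in NC(n) \\ \pi \le \hat\tau}} \kappa_\pi(a_1,\dots,a_n).
\end{equation*}

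Next I would substitute this into the first display and interchange summations:
\begin{equation*}
\kappa_m(A_1,\dots,A_m) \;=\; \sum_{\pi \in NC(n)} \kappa_\pi(a_1,\dots,a_n) \sum_{\substack{\tau \in NC(m) \\ \hat\tau \ge \pi}} \mu(\tau, 1_m).
\end{equation*}
The key combinatorial point is now to analyze the inner sum. Since $\hat\tau \ge \hat0_m$ automatically, the condition $\hat\tau \ge \pi$ is equivalent to $\hat\tau \ge \pi \vee \hat0_m$. The smallest element of $NC(n)$ that both dominates $\pi$ and lies in $[\hat0_m, 1_n]$ is precisely $\pi \vee \hat0_m$, which under the isomorphism corresponds to a unique $\tilde\pi \in NC(m)$ with $\hat{\tilde\pi} = \pi \vee \hat0_m$. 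Hence the inner sum becomes $\sum_{\tau \ge \tilde\pi} \mu(\tau, 1_m) = \delta_{\tilde\pi, 1_m}$, which vanishes unless $\pi \vee \hat 0_m = 1_n$, in which case it equals $1$. Collecting the surviving terms gives exactly (\ref{fprod}).

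The main obstacle is the combinatorial bookkeeping in the middle step: one must verify that $\tau \mapsto \hat\tau$ really is a lattice isomorphism onto $[\hat0_m, 1_n]$ (in particular that $\hat\tau$ is non-crossing whenever $\tau$ is, which uses that the blocks of $\hat0_m$ are intervals), and that $\pi\vee\hat0_m$ belongs to this interval so that the preimage $\tilde\pi$ is well-defined in $NC(m)$. Once this is established, the rest is a standard Möbius-inversion collapse and the formula drops out.
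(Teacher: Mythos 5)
The paper does not prove this statement; it is quoted as a known result of Krawczyk and Speicher, with the reader referred to Theorem 11.12 of Nica--Speicher for the proof. Your argument is correct and is essentially that standard proof: Möbius inversion on $NC(m)$, the identification of $NC(m)$ with the interval $[\hat 0_m,1_n]$ (which indeed requires $\hat0_m$ to be an interval partition), the expansion $\phi_\tau(A_1,\dots,A_m)=\sum_{\pi\le\hat\tau}\kappa_\pi(a_1,\dots,a_n)$, and the collapse $\sum_{\tau\ge\tilde\pi}\mu(\tau,1_m)=\delta_{\tilde\pi,1_m}$, so there is nothing to correct.
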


\section{Combinatorics in $k$-divisible Non-Crossing Partitions}

In this section we study the poset $NC^{(k)}(n)$ of $k$-divisible
non-crossing partitions and the combinatorial convolution associated to this poset.

\subsection{The poset $NC^{k}(n)$}

Recall that a partition $\pi \in NC(nk)$ is called $k$-divisible if the size
of each block in $\pi $ is divisible by $k.$ As we have done for
non-crossing partitions, we can regard the set $NC^{(k)}(n)$ as a subposet
of $NC(nk)$.

\begin{defi}
We denote by $(NC^{k}(n),\leq )$ the induced subposet of $NC(kn)$
consisting of partitions in which each block has cardinality divisible by $k$.
\end{defi}

This poset was introduced by Edelman \cite{Edel1}, who calculated many of its enumerative invariants. Observe that coarsening of partitions preserves the property of $k$-divisibility, hence the set of $k$-divisible non-crossing partitions form a join-semilattice. However $NC^{k}(n)$ is not a lattice
for $k>1$ since, in general, some elements $\pi ,\sigma \in NC^{(k)}(n)$ do
not have a meet in $NC^{k}(n)$ (for instance, two different elements of
the type $\lambda =(k,k,...,k)).$


Since $NC^{(k)}(n)$ is a finite poset we can define the incidence algebra $
I(NC^{(k)}(n),\mathbb{C})$.
Recall that for a poset $P$ and functions $f:P\rightarrow \mathbb{C}$ and $G:P^{(2)}\rightarrow \mathbb{C}$ the convolution $f\ast G:P\rightarrow 
\mathbb{C}$ is defined as
\begin{equation*}
(f\ast G)(\sigma ):=\sum\limits_{\substack{ \rho \in P  \\ \rho \leq \sigma 
}}f(\rho )G(\rho ,\sigma )\text{.}
\end{equation*}
In particular, when $P=NC^{k}(n)$ and $G$ is the zeta function $\zeta $
(in $NC^{k}(n)$) we have that 
\begin{equation*}
f\ast \zeta (\sigma )=\sum\limits_{\substack{ \pi \in NC^{(k)}(n)  \\ \pi
\leq \sigma }}f(\pi ).
\end{equation*}

We will be interested the case when $f(\pi )$ is part of a multiplicative
family on $NC$ and on $NC^k$. So let us define a multiplicative family on $NC^k$ in analogy to the case of $NC$. 

\begin{defi}
Let $(\alpha _{n})_{n\geq 1}$ be a sequence of complex numbers. Define a
family of functions $f_{n}^{[k]}:NC^{k}(n)\rightarrow \mathbb{C},$ $n\geq 1,$ by the 
following formula: if $\pi =\{V_{1},...,V_{r}\}\in
NC^{(k)}(n)$ then
\begin{equation*}
f_{n}^{[k]}(\pi )=\alpha _{\left\vert V_{1}\right\vert/k}\cdots \alpha
_{\left\vert V_{r}\right\vert/k}\text{.}
\end{equation*}%
Then $(f_{n}^{[k]})_{n\geq 1}$ is called the \textbf{multiplicative family
of functions} on $NC^{(k)}(n)$ determined by $(\alpha _{n})_{n\geq 1}$.
\end{defi}

Observe, on one hand, that if $\pi =$ $\{V_{1},...,V_{r}\}$ is a $k$-divisible partition 
then the value 
\begin{equation*}
f_{nk}(\pi )=\alpha _{\left\vert V_{1}\right\vert }\cdots \alpha
_{\left\vert V_{r}\right\vert }
\end{equation*}%
only depends on the $\alpha _{i}$'s such that $k$ divides $i$ and then we can choose arbitrarily the values of $
\alpha _{i}$ for $i$ not divisible by $k.$ In particular, we can choose them
to be $0.$

On the other hand, if $(f_{n})_{n\geq 1}$ is the multiplicative family on $
NC(n)$ determined by a sequence $(\alpha _{n})_{n\geq 1}$ such that $\alpha
_{i}=0$ when $i$ is not divisible by $k$ then for $\pi \notin NC^{k}(n)$
we have that $\alpha _{\left\vert V_{1}\right\vert }\cdots \alpha
_{\left\vert V_{r}\right\vert }=0$ and thus, in $I(NC(kn),\mathbb{C}),$ we have 
\begin{equation*}
(f\ast \zeta)_{nk}(\sigma )=\sum\limits_{\substack{ \pi \in NC(kn)  \\ %
\pi \leq \sigma }}f(\pi )=\sum\limits_{\substack{ \pi \in NC^{k}(n) 
\\ \pi \leq \sigma }}f_{nk}(\pi ).
\end{equation*}%
and
\begin{equation*}
(f^{[k]}\ast \zeta) _{nk}(\sigma )=\sum\limits_{\substack{ \pi \in NC^{k}(n) 
\\ \pi \leq \sigma }}f^{[k]}_{n}(\pi )=\sum\limits_{\substack{ \pi \in NC^{k}(n) 
\\ \pi \leq \sigma }}f_{nk}(\pi ).
\end{equation*}
So, for multiplicative families on $NC$ determined by sequences such that $
\alpha _{i}=0$ whenever $i$ is not divisible by $k,$ the convolution with
the zeta function $\zeta $ is exactly the same in $I(NC(kn),\mathbb{C}
)$ as in $I(NC^{(k)}(n),\mathbb{C})$. 

 Let us fix some notation to encode the information in sequences of this type.

\begin{notation}
We call a sequence $\alpha _{n}^{(k)}$ the $k$-dilation of $\alpha _{n}$ if $
\alpha _{kn}^{(k)}=\alpha _{n}$ and $\alpha _{n}^{(k)}=0$ if $n$ is not
multiple of $k$.
\end{notation}
By the arguments given above we can deal with the convolution between 
$(f^{[k]}_{n})_{n\geq 1}$ (a multiplicative family on $NC^{(k)}(n))$ and $\zeta\in I(NC^{(k)}(n),\mathbb{C})$ by just considering the $k$-dilations of the original sequence and work with the
usual convolution in $NC(n)$. In particular, we can use the functional
equation in Proposition \ref{FunctionalEquation} to get a functional
equation for multiplicative families in $NC^{(k)}(n)$.

\begin{prop}
\label{FunctionalEquation2}Let $g_{n}^{[k]}$ be a multiplicative family in $
NC^{(k)}(n)$ determined by the sequence $(\beta _{n})_{n\geq 1}$ and $
f_{n}^{[k]}$ be a multiplicative family in $NC^{(k)}(n)$ determined by the
sequence $(\alpha _{n})_{n\geq 1}$. Suppose that $f^{[k]}=g^{[k]}\ast
\zeta $. If we consider the power series%
\begin{equation*}
A(z)=1+\sum_{n=1}^{\infty }\alpha _{n}z^{n}\text{ \ \ and \ }%
B(z)=1+\sum_{n=1}^{\infty }\beta _{n}z^{n}\text{.}
\end{equation*}%
then 
\begin{equation*}
A(z)=B(zA(z)^{k})
\end{equation*}
\end{prop}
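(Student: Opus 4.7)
The plan is to reduce the functional equation on $NC^{(k)}(n)$ to the one already established on $NC(n)$ in Proposition \ref{FunctionalEquation}, using the $k$-dilation trick developed in the discussion just before the statement.

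First, I would let $(\tilde f_n)_{n\geq 1}$ and $(\tilde g_n)_{n\geq 1}$ be the multiplicative families on $NC$ determined by the $k$-dilated sequences $(\alpha^{(k)}_n)_{n\geq 1}$ and $(\beta^{(k)}_n)_{n\geq 1}$, respectively. Since $\alpha^{(k)}_i$ and $\beta^{(k)}_i$ vanish whenever $k\nmid i$, for any $\pi \in NC(kn) \setminus NC^{(k)}(n)$ the values $\tilde f_{kn}(\pi)$ and $\tilde g_{kn}(\pi)$ are zero, and for $\pi \in NC^{(k)}(n)$ they coincide with $f^{[k]}_n(\pi)$ and $g^{[k]}_n(\pi)$. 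As observed in the paragraph preceding the proposition, this implies that the hypothesis $f^{[k]} = g^{[k]} \ast \zeta$ in $I(NC^{(k)}(n),\comp)$ translates term by term into $\tilde f = \tilde g \ast \zeta$ in $I(NC(kn),\comp)$.

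Next, I would apply Proposition \ref{FunctionalEquation} to $(\tilde f_n)$ and $(\tilde g_n)$. Their generating power series are
\begin{equation*}
\tilde A(z) = 1 + \sum_{n=1}^\infty \alpha^{(k)}_n z^n = 1 + \sum_{n=1}^\infty \alpha_n z^{kn} = A(z^k),
\qquad
\tilde B(z) = B(z^k),
\end{equation*}
and the proposition yields the functional equation $\tilde A(z) = \tilde B\bigl(z\tilde A(z)\bigr)$, i.e.
\begin{equation*}
A(z^k) = B\bigl((z\,A(z^k))^k\bigr) = B\bigl(z^k A(z^k)^k\bigr).
\end{equation*}
Substituting $w = z^k$ gives $A(w) = B(w\,A(w)^k)$, which is precisely the claimed identity.

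The only point that requires care is the justification that convolving with $\zeta$ in $I(NC^{(k)}(n),\comp)$ and in $I(NC(kn),\comp)$ produce identical numbers for multiplicative families supported on $k$-divisible partitions; but this is exactly the content of the two displayed equations in the paragraph preceding the proposition, so it can be invoked directly. Thus no obstacle beyond bookkeeping remains, and the proof is essentially a substitution $z \leftrightarrow z^k$ into the already known functional equation.
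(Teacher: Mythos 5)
Your proof is correct and follows essentially the same route as the paper: both pass to the $k$-dilated sequences, invoke the equivalence of convolving with $\zeta$ in $I(NC^{(k)}(n),\mathbb{C})$ and in $I(NC(kn),\mathbb{C})$, apply Proposition \ref{FunctionalEquation} to get $A(z^k)=B(z^kA(z^k)^k)$, and substitute $y=z^k$. No differences worth noting.
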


\begin{proof}
Since $f^{[k]}=g^{[k]}\ast
\zeta$ is equivalent to $f^{(k)}=g^{(k)}\ast \zeta \ $ then, by Proposition \ref{FunctionalEquation}, 
the power series $A_{k}(z)=1+\sum_{n=1}^{\infty
}\alpha _{n}^{(k)}z^{n}$ \ and \ $B_{k}(z)=1+\sum_{n=1}^{\infty }\beta
_{n}^{(k)}z^{n}$ are related by the functional equation 
\begin{equation*}
A_{k}(z)=B_{k}(zA_{k}(z)).
\end{equation*}%
Note that $A_{k}(z)=A(z^{k})$ and $B_{k}(z)=B(z^{k})$, hence 
\begin{equation*}
A(z^{k})=B(z^{k}A(z^{k})^{k}).
\end{equation*}%
Making the change of variable $z^{k}=y$ we get 
\begin{equation*}
A(y)=B(yA(y)^{k}).
\end{equation*}%
as desired$.$
\end{proof}

\subsection{Motivating example}

Consider the three following objects.
 \begin{enumerate}[{\rm (i)}]
\item $NC_{k+1}(n)$:
Non crossing partitions in $NC((k+1)n)$ with each block of size $k+1$.
\item $NC^{k}(n)$ :
Non crossing partitions of in $NC(kn)$ with block of size multiple of $k$. 
\item $NC^{[k]}(n)$ :
Multichains of order $k+1$ in $NC(n)$.
\end{enumerate}

It is well known that the Fuss-Catalan numbers count the three of them. Different ways to count them are now known. The first ones were counted by Kreweras \cite{Kr72}. Also bijections between them have been given in  \cite{Ar1} and \cite{Edel1} . Moreover in \cite{Arm09} an order has been given to (ii) makings the objects in (ii) and (iii) isomorphic as posets and generalized to other Coxeter groups.

We want to show we can use Proposition \ref{FunctionalEquation2} to derive the same functional equation for the three of them without counting them explicitly.

\begin{exa} \label{k-k-k}
\label{FunctionalEquation3}Denote the cardinality of $NC_{k}(n)$ by
the number $Z_{n}^{k}$. Let $(\beta _{n}=0)_{n\geq 2}$, $\beta _{1}=1$ and $(\alpha _{n})_{n\geq1}$ be two sequences with respective multiplicative
families $(g^{[k]}_n)_{n>0}$ and $(f^{[k]}_{n})_{n>0}$ on $NC^{k}$ related by the formula $f^{[k]}=g^{[k]}\ast \zeta$. Then  $\alpha_{n}$ equals $Z_{n}^{k}$ and $A(z)=A(z)=1+\sum_{n=1}^{\infty }\alpha _{n}z^{n}$ satisfies $$A(z)=1+zA(z)^{k}.$$ Indeed,
\begin{equation*}
\alpha _{n}=f_{n}^{(k)}(1_{nk})=g_{n}^{(k)}\ast \zeta (1_{nk})=\sum\limits 
_{\substack{ \pi \in NC_(nk)  \\ \pi \leq 1_{nk}}}g_{n}^{(k)}(\pi
)=\sum\limits_{\substack{ \pi \in NC_{k}(n)  \\ \pi \leq 1_{nk}}}%
1=Z_{n}^{(k)}.
\end{equation*}%
Then, by Proposition \ref{FunctionalEquation2} the power series $%
A(z)=1+\sum_{n=1}^{\infty }\alpha _{n}z^{n}$ and \ $B(z)=1+\sum_{n=1}^{%
\infty }\beta _{n}z^{n}$ are related by%
\begin{equation*}
A(z)=B(zA(z)^{k}).
\end{equation*}%
The power series for the sequence $(\beta _{n})_{n\geq
1} $ is $B(z)=1+z$
and then  
\begin{equation*}
A(z)=1+zA(z)^{k}.
\end{equation*}

\end{exa}

\begin{exa}
\label{FunctionalEquation4}Denote the cardinality of $NC^{k}(n)$ by
the number $C_{n}^{(k)}$. Let $(\beta _{n}=1)_{n\geq 1}$ and $(\alpha _{n})_{n>1}$ be two sequences with respective multiplicative families  on $NC^{k}$ related by the formula $f^{[k]}=g^{[k]}\ast \zeta$. Then  $f_{n}^{(k)}$ equals $C_{n}^{(k)}$ and 
\begin{equation*}
A(z)=1+zA(z)^{k+1}.
\end{equation*}
Indeed,
\begin{equation*}
\alpha _{n}=f_{n}^{(k)}(1_{nk})=g_{n}^{(k)}\ast \zeta (1_{nk})=\sum\limits 
_{\substack{ \pi \in NC^{k}(n)  \\ \pi \leq 1_{nk}}}g_{n}^{k}(\pi
)=\sum\limits_{\substack{ \pi \in NC^{(k)}(n)  \\ \pi \leq 1_{nk}}}%
1=C_{n}^{(k)}.
\end{equation*}%
Again, by Proposition \ref{FunctionalEquation2} the power series $%
A(z)=1+\sum_{n=1}^{\infty }\alpha _{n}z^{n}$ and \ $B(z)=1+\sum_{n=1}^{%
\infty }\beta _{n}z^{n}$ are related by%
\begin{equation*}
A(z)=B(zA(z)^{k}).
\end{equation*}
The power series for the sequence $(\beta _{n}=1)_{n\geq
1} $ is 
\begin{equation*}
B(z)=\sum_{n=0}^{\infty }z^{n}=\frac{1}{1-z}
\end{equation*}%
and then 
\begin{equation*}
A(z)=\frac{1}{1-zA(z)^{k}}
\end{equation*}%
or equivalently 
\begin{equation*}
A(z)=1+zA(z)^{k+1}.
\end{equation*}

\end{exa}

Finally, for $k$-multichains we have the following.

\begin{exa}
Let $c_{n}^{k}$ $(n,k\geq 1)$ denote the number of $k$-multichains in $%
NC(n).$ For every $k\geq 1$ let $(f_{n,k})_{n\geq 1}$ be the
multiplicative family of functions on $NC$ determined by the sequence $%
c_{n}^{k}$. As we have noticed in Remark \ref{multichains1}, for every poset $P$, the number of $(k+2)$-multichains from $\pi \in P$ to $\sigma \in P$ is given by%
\begin{equation*}
\underset{k+1\text{ times}}{\underbrace{(\zeta \ast \zeta \ast \cdots \ast
\zeta )}}(\pi ,\sigma )\text{. \ \ for all }k\geq 1
\end{equation*}%
In particular, for $NC(n),$ if we plug $\pi =\mathbf{0}_{n}$ and $\pi =%
\mathbf{1}_{n}$ we get the that the number of $(k+1)$-multichains is given by%
\begin{equation*}
\underset{k+1\text{ times}}{\underbrace{(\zeta _{n}\ast \zeta _{n}\ast
\cdots \ast \zeta _{n})}}(\mathbf{0}_{n},\mathbf{1}_{n})\text{ \ \ for all }%
n,k\geq 1\text{.}
\end{equation*}
In other words 
\begin{equation*}
f_{n,k}=\underset{k+1\text{ times}}{\underbrace{\zeta _{n}\ast \zeta
_{n}\ast \cdots \ast \zeta _{n}}}\text{ \ \ for all }n,k\geq 1,
\end{equation*}
or equivalently 
\begin{equation*}
f_{n,k+1}=f_{n,k}\ast \zeta _{n}\text{ \ \ for all }n,k\geq 1.
\end{equation*}
Now, consider for each $k\geq 1,$ the power series 
\begin{equation*}
A_{k}(z):=1+\sum_{n=1}^{\infty }c_{n}^{k}z^{n}\text{.}
\end{equation*}
From the Proposition \ref{FunctionalEquation},\ the power series $A_{k}(z)$
and $A_{k+1}(z)$ must satisfy the functional equation 
\begin{equation*}
A_{k+1}(z)=A_{k}(zA_{k+1}(z)).
\end{equation*}%
It is easy to see that power series of $c_{n}^{2}$ (the Catalan numbers) satisfy the relation%
\begin{equation*}
A_{1}(z)=1+zA_{1}(z)^{2}.
\end{equation*}%
By induction we see that $A_{k}$ satisfies the functional equation%
\begin{equation*}
A_{k}(z)=1+zA_{k}(z)^{k+1}.
\end{equation*}%
Indeed, since $A_{k}(y)=1+yA_{k}(y)^{k+1}$ plugging $y=zA_{k+1}$ we get 
\begin{eqnarray*}
A_{k+1}(z) &=&A_{k}(zA_{k+1}(z))=1+zA_{k+1}(z)A_{k}(zA_{k+1}))^{k+1}\\
&=&1+zA_{k+1}(z)(A_{k+1}(z))^{k+1} \\
&=&1+zA_{k+1}(z)^{k+2}.
\end{eqnarray*}
\end{exa}

We have seen that all of the three objects satisfy the same functional equation and then must be counted by the same sequence.
So the multichains of length $k+1$ in $
NC(n)$ are in bijection with the $k$-divisible non-crossing partitions in $NC(nk)$ 
and  with the $k+1$-equal partitions in $NC(n(k+1)$. This result is known and was already in \cite{Edel1} but we emphasize that our derivation did not use at any moment the explicit calculation of these object but rather relies on deriving a functional equation, these ideas will used later in this paper.
\begin{rem}
This bijection goes further. In fact, one can give an explicit order to $k$-multichains so that $NC^{k}(n)\backsimeq NC^{(k)}(n)$ as ordered sets. We will not give details about this but rather refer the reader to Chapters 3
and 4 of \cite{Arm09}. The  point here is that we may think of both as the same objects.
\end{rem}


\subsection{The convolution of $k$-dilated sequences in $NC$ }

Since the convolution with $\zeta $ in $NC^{(k)}(n)$
is equivalent to convolution with $\zeta $ in $NC(n)$ for sequences dilated
by $k$ we can forget about the former and focus on how
convolution with $k$-dilated sequences behave in $NC(n)$. From now on, we will
prefer to use the notation $\alpha _{\pi }=\alpha _{\left\vert
V_{1}\right\vert }\cdots \alpha _{\left\vert V_{r}\right\vert }$ instead of $
f(\pi )$ since there is no confusion.


The first result gives a relation between the formal power series of the $k$-dilation of the sequence $(m_{n})_{n\geq 1}$ and the $(k+1)$-dilation of
the sequence $(\beta _{n})_{n\geq 1},$ when the two sequences related $m_{n}=\beta
_{n}\ast \zeta $, namely,
\begin{equation*}
m_{n}=\sum\limits_{\pi \in NC(n)}\beta _{\pi }
\end{equation*}

\begin{prop}
\label{Main Proposition}Let $k$ be a positive integer and let 
\begin{eqnarray*}
A(z) &=&1+\sum \alpha _{n}z^{n} \\
B(z) &=&1+\sum \beta _{n}z^{n} \\
M(z) &=&1+\sum m_{n}z^{n}
\end{eqnarray*}
Then any two of the following three statements imply the third
\begin{enumerate}[{\rm (i)}]
\item 
 $M(z)=B(zM(z)).$
\item 
 $M(z)=A(zM(z)^{k}).$
\item 
 $B(z)=A(zB(z)^{k-1}).$
\end{enumerate}

Equivalently, each two of the following three statement imply the third.
\begin{enumerate}[{\rm (i)}]
\item 
The sequences $m_{n}$ and $\beta _{n}$ are related by the formula .
\begin{equation*}
m_{n}=\sum\limits_{\pi \in NC(n)}\beta _{\pi }
\end{equation*}
\item The sequences $\alpha _{n}$ and $m _{n}$ are related by \ the formula 
\begin{equation*}
m^{(k)}_{n}=\sum\limits_{\pi \in NC(kn)}\alpha _{\pi }^{(k)}
\end{equation*}%
\item The sequences $\alpha _{n}$ and $\beta _{n}$ are related by \ the
formula 
\begin{equation*}
\beta^{(k-1)} _{n}=\sum\limits_{\pi \in NC((k-1)n)}\alpha _{\pi }^{(k-1)}
\end{equation*}
\end{enumerate}
\end{prop}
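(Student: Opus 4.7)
The plan is to first translate the three combinatorial statements into three functional equations for the generating series, and then derive any one of these equations from the other two by direct substitution. The translation is an immediate application of Proposition \ref{FunctionalEquation}. For (i), that proposition applied to the sequences $(m_n)$ and $(\beta_n)$ gives exactly $M(z)=B(zM(z))$. For (ii), I would apply the same proposition to the $k$-dilated sequences $(\alpha^{(k)}_n)$ and $(m^{(k)}_n)$: this yields $M^{(k)}(z)=A^{(k)}(zM^{(k)}(z))$, and since $A^{(k)}(z)=A(z^k)$ and $M^{(k)}(z)=M(z^k)$, the change of variable $y=z^k$ produces the formal power series identity $M(y)=A(yM(y)^k)$. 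The same device applied to the $(k-1)$-dilated sequences rewrites (iii) as $B(y)=A(yB(y)^{k-1})$.

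For the implications between the three functional equations, the unifying substitution is $w=zM(z)$. To get (ii) from (i) and (iii), I would substitute $w=zM(z)$ in (iii) and replace $B(zM(z))$ by $M(z)$ using (i), which yields
\begin{equation*}
M(z)=A\bigl(zM(z)\cdot M(z)^{k-1}\bigr)=A(zM(z)^k),
\end{equation*}
the desired identity. To get (iii) from (i) and (ii), observe that (ii) can be written as $M(z)=A(zM(z)\cdot M(z)^{k-1})$; using $M(z)=B(zM(z))$ from (i), both on the outside and in the inner factor $M(z)^{k-1}$, this reads $B(w)=A(wB(w)^{k-1})$ at $w=zM(z)$. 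Since $w=z+O(z^2)$ is invertible as a formal power series, this identity extends to all $w$ near $0$, giving (iii).

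The last implication, (ii) and (iii) $\Rightarrow$ (i), requires a uniqueness argument. Set $N(z):=B(zM(z))$; the goal is to show $N=M$. From (iii) with $w=zM(z)$ I obtain $N(z)=A(zM(z)\cdot N(z)^{k-1})$, while (ii) can be rewritten as $M(z)=A(zM(z)\cdot M(z)^{k-1})$. Thus both $M$ and $N$ are formal power series solutions of the fixed-point equation $X=A(zM(z)\cdot X^{k-1})$ with constant term $1$. Since $zM(z)$ has order $\geq 1$ in $z$ and $A(0)=1$, the coefficients of $X$ can be read off one by one from those of $A$ and $zM$, so the solution is unique. Hence $N=M$, which is precisely (i).

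The main conceptual step is spotting that the single substitution $w=zM(z)$ simultaneously links the three functional equations; after that, only routine formal power series manipulations remain. The sole non-trivial point is the uniqueness argument in the last implication, but it is standard because $zM(z)$ has positive order in $z$ and $A$ starts with $1$, so the fixed-point equation can be solved recursively.
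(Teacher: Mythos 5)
Your proof is correct and follows essentially the same route as the paper: both reduce the combinatorial statements to the three functional equations via dilated sequences and Proposition \ref{FunctionalEquation}, and both prove the implications among (i)--(iii) by the substitution $w=zM(z)$ (the paper uses the equivalent inverse substitution $y=z/B(z)$ for one direction). The one place you go beyond the paper is the implication (ii) \& (iii) $\Rightarrow$ (i), which the paper dismisses with ``follows along the same lines'': a literal repetition of the substitution trick only shows that $M$ and $B(zM(z))$ satisfy the \emph{same} fixed-point equation, so your explicit uniqueness argument (recursive determination of the coefficients of $X$ in $X=A(zM(z)\,X^{k-1})$, valid because $zM(z)$ has positive order and $A(0)=1$) is exactly the detail needed to close that direction, and it is a welcome addition.
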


\begin{proof}
(i) \& (ii) $\Rightarrow $ (iii). Evaluating in $B$ in $zM(z)$ we get 
\begin{equation*}
B(zM(z))=M(z)=A(zM(z)^{k})=A(zM(z)M(z)^{k-1})=A(zM(z)B(zM(z))^{k-1}),
\end{equation*}%
making the change of variable $y=zM(z)$ the result holds.

(i) \& (iii) $\Rightarrow $ (ii). The relation (i) is equivalent to $B(z)=M(z/B(z))$ so 
\begin{equation*}
M(z/B(z))=B(z)=A(zB(z)^{k})=A(\frac{z}{B(z)}B(z)^{s+1})=A(\frac{z}{B(z)}%
M(z/B(z))^{k}),
\end{equation*}%
making the change of variable $y=z/B(z)$ we get the result.

The last equality follows along the same lines. The equivalence of the next three statements in terms of sums on non-crossing partitions follows from
Proposition \ref{FunctionalEquation2}..
\end{proof}

We can use the last result recursively to get a formula for the $k$-fold convolution of with the zeta function $\zeta$.

\begin{cor}\label{maincor}
Let $M(z),A(z),B_{i}(z)$ formal power series and such that
\begin{enumerate}[{\rm (i)}]
\item 
 $M(z)=A(zM(z)^{k})$
\item 
$M(z)=B_{1}(z(M(z))$
\item 
 $B_{i}(z)=B_{i+1}(zB_{i}(z)),$ for $i=1,...,k-1.$
\end{enumerate}

Then $B_{i}(z)=A(zB_{i}(z)^{k-i})$, in particular $B_{n}(z)=A(z).$
\end{cor}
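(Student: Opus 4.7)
The plan is to establish the formula $B_i(z)=A(zB_i(z)^{k-i})$ by induction on $i$ from $1$ to $k$, invoking Proposition \ref{Main Proposition} at each step with a value of its integer parameter tailored to $i$. Setting $i=k$ then yields $B_k(z)=A(zB_k(z)^0)=A(z)$, which I read as the intended meaning of the final assertion $B_n(z)=A(z)$ (the subscript $n$ is presumably a typo for $k$).

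For the base case $i=1$, the hypotheses (i) $M(z)=A(zM(z)^k)$ and (ii) $M(z)=B_1(zM(z))$ of the corollary are precisely statements (ii) and (i) of Proposition \ref{Main Proposition} applied to the triple $(A,B_1,M)$ with integer parameter $k$. Two of the three equivalent conditions in the main proposition thus hold, so the third one, $B_1(z)=A(zB_1(z)^{k-1})$, follows immediately.

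For the inductive step, I would assume $B_i(z)=A(zB_i(z)^{k-i})$ for some $1\le i<k$ and apply Proposition \ref{Main Proposition} once more, this time with the triple $(A,B,M)$ relabeled as $(A,B_{i+1},B_i)$ and with the integer parameter of the proposition replaced by $k-i$. Under this relabeling the three equivalent statements become
\begin{enumerate}[{\rm (a)}]
\item $B_i(z)=B_{i+1}(zB_i(z))$,
\item $B_i(z)=A(zB_i(z)^{k-i})$,
\item $B_{i+1}(z)=A(zB_{i+1}(z)^{k-i-1})$.
\end{enumerate}
Hypothesis (iii) of the corollary at index $i$ is exactly (a), and the inductive hypothesis is exactly (b); Proposition \ref{Main Proposition} therefore yields (c), which is precisely the induction statement at $i+1$.

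I do not anticipate any real obstacle. The one thing worth checking is that Proposition \ref{Main Proposition} remains valid for an arbitrary positive integer in place of $k$, including the degenerate end of the iteration where that integer becomes $1$; inspecting the proof in the excerpt, the argument uses only the factorization $M^{\kappa}=M\cdot M^{\kappa-1}$ together with the change of variables $y=zM(z)$ (respectively $y=z/B(z)$), both of which make sense at the level of formal power series with constant term $1$ for every $\kappa\ge 1$. The whole corollary therefore reduces to a clean repeated application of the main proposition.
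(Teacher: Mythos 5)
Your proof is correct and follows essentially the same route as the paper's: induction on $i$, with the base case obtained from hypotheses (i) and (ii) via Proposition \ref{Main Proposition}, and the inductive step obtained by applying the same proposition with parameter $k-i$ to the triple $(A,B_{i+1},B_i)$. Your added check that the proposition's integer parameter may be any positive integer is a sensible precaution the paper leaves implicit.
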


\begin{proof}
We will use induction on $i.$

For $i=1$, we use i) and ii) and Proposition \ref{Main Proposition} to get 
\begin{equation*}
B_{1}(z)=A(zB_{1}(z)^{k-1})\text{.}
\end{equation*}%
Now suppose that the statement is true for $i=n.$ Then%
\begin{equation*}
B_{n}(z)=A(zB_{n}(z)^{k-n})
\end{equation*}%
also by iii) $B_{n}(z)=B_{n+1}(zB_{n}(z)),$ so again by Proposition \ref{Main Proposition} we get%
\begin{equation*}
B_{n+1}(z)=A(zB_{n+1}(z)^{k-n-1}).
\end{equation*}
\end{proof}

The last proposition may look rather artificial. But it explains how the successive convolution with the zeta-function in $NC(n)$
is equivalent to the convolution with the zeta-function in $NC^{(k)}(n)$, as we state more precisely in the following theorem.

\begin{thm}
\label{MainRemark} The following statements are equivalent.
\begin{enumerate}[{\rm (1)}]
\item The multiplicative family $f:=(f_n)_{n>0}$ is the result of applying $k$ times the zeta-function to $g:=(g_n)_{n>0}$, that is 
\begin{equation*}
f=g\ast \underbrace{\zeta \ast \cdots \ast \zeta}_{k~times}
\end{equation*}

\item The multiplicative family $f^{(k)}:=(f_{n}^{(k)})_{n>0}$ is the result of applying one time the zeta-function to $g^{(k)}:=(g_{n}^{(k)})_{n>0}$, that is 
\begin{equation*}
f^{(k)}=g^{(k)}\ast \zeta.
\end{equation*}
\end{enumerate}
\end{thm}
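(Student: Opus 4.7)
The plan is to show that both conditions translate into the \emph{same} functional equation on generating series, from which the equivalence is immediate. Set
\[
A(z) \;=\; 1 + \sum_{n\geq 1} f_n(\mathbf{1}_n)\, z^n, \qquad B(z) \;=\; 1 + \sum_{n\geq 1} g_n(\mathbf{1}_n)\, z^n,
\]
and observe that the $k$-dilations $f^{(k)}$ and $g^{(k)}$ have generating series $A(z^k)$ and $B(z^k)$, respectively.

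First I would dispatch (2): by Proposition \ref{FunctionalEquation} applied to the dilated sequences, $f^{(k)} = g^{(k)}\ast\zeta$ is equivalent to $A(z^k) = B\bigl(z^k A(z^k)^k\bigr)$, i.e., after substituting $y = z^k$, to the equation
\[
A(y) \;=\; B\bigl(y\, A(y)^k\bigr). \qquad (\ast)
\]

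Next I would translate (1) by iterating Proposition \ref{FunctionalEquation}. Define $h^{(0)} := g$ and $h^{(i)} := h^{(i-1)}\ast\zeta$ for $i = 1,\ldots,k$, so that (1) says $h^{(k)} = f$. Writing $H_i(z)$ for the generating series of $h^{(i)}$, we have $H_0 = B$ and $H_i(z) = H_{i-1}\bigl(z H_i(z)\bigr)$ for $i = 1,\ldots,k$. I would prove by induction on $i$ that
\[
H_i(z) \;=\; H_0\bigl(z\, H_i(z)^{\,i}\bigr).
\]
The step from $i$ to $i+1$ uses Proposition \ref{Main Proposition}: with $M := H_{i+1}$, $B := H_i$, $A := H_0$ (in the notation of that proposition), and exponent $i+1$, the chain relation $H_{i+1} = H_i(zH_{i+1})$ realizes item (i) there, and the inductive hypothesis $H_i = H_0(zH_i^{\,i})$ realizes item (iii), forcing item (ii), namely $H_{i+1} = H_0(zH_{i+1}^{\,i+1})$. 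This inductive collapse is essentially Corollary \ref{maincor}. Taking $i = k$ and using $H_k = A$ under (1), I obtain $A(z) = B\bigl(z A(z)^k\bigr)$, which is precisely $(\ast)$.

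Since $(\ast)$ has a unique formal power series solution $A$ with $A(0) = 1$ when $B$ is prescribed (coefficients can be read off iteratively from $A(z) = B(zA(z)^k) = 1 + zA(z)^k\,\bigl(\sum_{n\ge 1}\beta_n (zA(z)^k)^{n-1}\bigr)$), both (1) and (2) assign the same series $A$ to a given $B$, proving the equivalence. The main technical ingredient is the induction above---the collapse of a chain of $k$ single-step functional equations into one ``$k$-fold'' equation---which, as sketched, reduces to repeated application of Proposition \ref{Main Proposition}.
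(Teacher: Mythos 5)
Your proof is correct and follows essentially the same route as the paper: the paper's one-line proof invokes Corollary \ref{maincor}, whose inductive collapse of the chain of single-step functional equations via Proposition \ref{Main Proposition} is exactly the induction you carry out, and your translation of statement (2) into $A(z)=B\bigl(zA(z)^k\bigr)$ is just Proposition \ref{FunctionalEquation2}. The only addition is that you make explicit the uniqueness of the formal power series solution of the functional equation, which the paper leaves implicit.
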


\begin{proof}
This is just a reformulation of Corollary \ref{maincor} in terms of combinatorial convolution.
\end{proof}

\begin{rem} This phenomena is very specific for the non-crossing partitions. For instance, it does not occur if we change $NC(n)$ by $P(n)$ the lattice of all partition nor $IN(n)$ the lattice of interval partitions. 
\end{rem}
To finish this section let us see how Theorem \ref{MainRemark} may be applied in our motivating example to give a shorter proof of the fact that $k+1$-multichains,
$k$-divisible non-crossing partitions and $k+1$-equal non-crossing partitions have the same cardinality.
\begin{exa}
 Let ${a_n}$ be the sequence determined by $a_{1}=1$ and $a_{n}=0$ for $n>1$ (notice that this is just the sequence associated of the delta-function $\delta$). Next, consider $$c=a\ast \underbrace{\zeta \ast \cdots \ast \zeta}_{k+1~times}.$$
On one hand, by Remark \ref{multichains1}, $c_n$ counts the number of $(k+1)$-multichains of $NC(n)$.
On the other hand, by Theorem \ref{MainRemark} applied to $a_n$
\begin{equation*}
c_{n}=c^{(k+1)}_{(k+1)n}=\sum_{\pi \in NC((k+1)n)}a^{(k+1)}_{\pi }=\sum_{\pi \in NC_{k+1}(n)}1=\#NC_{k+1}(n)
\end{equation*}
and we get the number of $(k+1)$-equal noncrossing partitions.
Finally, for $k$-divisible partitions, consider $b=a*\zeta$. Then $$c=b\ast \underbrace{\zeta \ast \cdots \ast \zeta}_{k~times}$$
and 
$$b_{n}=\sum_{\pi \in NC(n)}a_{\pi }=1. $$
So, again by Theorem \ref{MainRemark}, applied to $b_n$,
$$c_n=c^{(k)}_{kn} =\sum_{\pi \in NC(nk)}b^{(k)}_{\pi }=\sum_{\pi \in NC^k(n)}1=\# NC^k(n).$$
Thus we have proved that $c_n$ counts $k$-divisible non crossing partitions of $[kn]$, $(k+1)$-equal non crossing partitions of $[(k+1)n]$ and $(k+1)$-multichains on $NC(n)$.
\end{exa}

We can push more this example to also recover Theorem $3.6.9$ of Armstrong \cite{Arm09}  for the case of classical $k$-divisible non-crossing partitions. The proof is left to the reader.
\begin{cor}
The number of $l$-multichains of $k$-divisible noncrossing partitions equals the number of $lk$ multichains of $NC(n)$ and is given by the Fuss-Catalan number $C_{kl,n}$. 
\end{cor}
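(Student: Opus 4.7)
The plan is to mimic the strategy of the preceding example, trading every convolution with $\zeta$ in $NC^k(n)$ for $k$ convolutions with $\zeta$ in $NC(n)$ via Theorem \ref{MainRemark}.

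Let $a_1=1$ and $a_n=0$ for $n\geq 2$, and set $b:=a\ast\zeta$, so that $b_n=1$ for all $n$; its $k$-dilation $b^{(k)}$ is the sequence which is $1$ on multiples of $k$ and $0$ elsewhere. Define
\begin{equation*}
c:=b\ast\underbrace{\zeta\ast\cdots\ast\zeta}_{lk~\text{times}}.
\end{equation*}
Then $c=a\ast\zeta^{\ast(lk+1)}$, and unwinding the convolutions exactly as in the preceding example shows that $c_n$ counts chains $0_n\leq\rho_1\leq\cdots\leq\rho_{lk}\leq 1_n$ in $NC(n)$, i.e.\ the $lk$-multichains of $NC(n)$; by Edelman's formula this equals the Fuss--Catalan number $C_{lk,n}$.

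Next, iterating Theorem \ref{MainRemark} $l$ times (each block of $k$ $\zeta$'s on the left becomes a single $\zeta$ on the right) gives the equivalence
\begin{equation*}
c=b\ast\zeta^{\ast lk}\quad\Longleftrightarrow\quad c^{(k)}=b^{(k)}\ast\zeta^{\ast l},
\end{equation*}
and by definition of the dilation $c^{(k)}_{kn}=c_n$. So it remains to identify $(b^{(k)}\ast\zeta^{\ast l})(1_{kn})$. The key observation is that $b^{(k)}_\pi\neq 0$ only when every block of $\pi\in NC(kn)$ has cardinality divisible by $k$, that is $\pi\in NC^k(n)$, in which case the value is $1$. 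Since coarsening preserves $k$-divisibility, whenever $\pi\in NC^k(n)$ and $\pi\leq\sigma$ in $NC(kn)$ one has $\sigma\in NC^k(n)$ too, so
\begin{equation*}
(b^{(k)}\ast\zeta^{\ast l})(1_{kn})=\sum_{\pi_0\leq\pi_1\leq\cdots\leq\pi_{l-1}\leq 1_{kn}}b^{(k)}_{\pi_0}=\#\{(\pi_0,\ldots,\pi_{l-1})\in NC^k(n)^l:\pi_0\leq\cdots\leq\pi_{l-1}\},
\end{equation*}
which is precisely the number of $l$-multichains in $NC^k(n)$. Equating the two expressions for $c_n=c^{(k)}_{kn}$ yields both claimed equalities with common value $C_{lk,n}$.

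The main obstacle is the last identification: one has to argue carefully that summing $b^{(k)}_{\pi_0}$ over chains $\pi_0\leq\cdots\leq \pi_{l-1}\leq 1_{kn}$ in $NC(kn)$ really picks out precisely the $l$-multichains in $NC^k(n)$, using both the vanishing of $b^{(k)}$ off $NC^k$ and the upward-closure of $NC^k$ inside $NC$. Once this bookkeeping is in place, the iterated application of Theorem \ref{MainRemark} and the appeal to Edelman's count of $lk$-multichains are mechanical.
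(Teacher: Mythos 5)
Your proposal is correct, and it is exactly the argument the paper has in mind: the paper explicitly leaves this corollary to the reader as a continuation of the preceding example, and your proof completes it in the intended way (set up $b=a\ast\zeta$, iterate Theorem \ref{MainRemark} $l$ times to trade $\zeta^{\ast lk}$ in $NC(n)$ for $\zeta^{\ast l}$ in $NC^k(n)$, and use the upward-closure of $NC^k(kn)$ in $NC(kn)$ together with Edelman's count of multichains). The only remark worth making is that your bookkeeping of multichain lengths is actually more careful than the paper's own in the preceding example, where the labelling of "$(k+1)$-multichains" versus "$k$-multichains" is off by one relative to the paper's Definition of multichain; your counts are the consistent ones.
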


It would be very interesting to see if similar arguments can be used to count invariants for non-crossing partitions in the different Coxeter groups. To the knowledge of the author this is not known.

\section{$k$-divisible elements}

We introduce the concept of $k$-divisible elements and study
some of the combinatorial aspects of their cumulants. The main result in this section describes the cumulants of the $k$-th power
of a $k$-divisible element.

\subsection{Basic properties and definitions}

Let $(\mathcal{A},\phi )$ be a non-commutative probability space.
\begin{notation}

1) An element $x\in \mathcal{A}$ is called $k$-divisible if the only non vanishing
moments are multiples of $k$. That is 
\begin{equation*}
\phi (x^{n})=0\text{ if }k\nmid n
\end{equation*}

2) Let $x\in \mathcal{A}$ be $k$-divisible and let $\alpha _{n}:=\kappa_{kn}(x,...,x)$. We
call $(\alpha _{n})_{n\geq 1}$ the $k$-determining sequence of $x$
\end{notation}

It is clear that $x\in \mathcal{A}$ is $k$-divisible if and only if its non-vanishing free
cumulants are multiples of $k$. The following is a generalization of Theorem $11.25$ in Nica and Speicher \cite{NiSp06} where, for an even element $x$, the free cumulants of $x^2$ are given in terms of the moments of $x$.

\begin{thm}[Free cumulants of $x^{n}$, First formula] \label{1formula}
\label{kumulants of x^k } Let $(\mathcal{A},\phi )$ be a non-commutative probability
space and let $x$ a $k$-divisible element with $k$-determining sequence $(\alpha _{n})_{n\geq 1}$. Then the following formula holds for the cumulants
of $x^{k}$.
\begin{equation}
\kappa_{n}(x^{k},x^{k},...,x^{k})=\sum\limits_{\mathbf{\pi }\in
NC((k-1)n)}\alpha _{\pi }^{(k-1)}.  \label{cumulant-cumulant formula}
\end{equation}
\end{thm}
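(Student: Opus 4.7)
The plan is to apply Proposition \ref{Main Proposition} to three canonical sequences and read off the desired identity as statement (iii). Set $\alpha_n := \kappa_{kn}(x)$ (the given $k$-determining sequence of $x$), $m_n := \phi((x^k)^n) = \phi(x^{kn})$ (the moments of $x^k$, which coincide with the non-vanishing moments of $x$), and $\beta_n := \kappa_n(x^k)$ (the free cumulants of $x^k$, i.e.\ the quantities we wish to compute).

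Statement (i) of Proposition \ref{Main Proposition}, namely $m_n = \sum_{\sigma\in NC(n)}\beta_\sigma$, is just the moment-cumulant formula (\ref{freemc}) applied to $x^k$. For statement (ii), $m_n = \sum_{\pi\in NC(kn)}\alpha^{(k)}_\pi$, I would apply the moment-cumulant formula to $x$ itself: $\phi(x^{kn}) = \sum_{\pi\in NC(kn)}\kappa_\pi(x)$. By $k$-divisibility, $\kappa_\pi(x)$ vanishes unless every block of $\pi$ has cardinality divisible by $k$; for such a $\pi$ we have $\kappa_\pi(x) = \prod_V \kappa_{|V|}(x) = \prod_V \alpha_{|V|/k}$, which is precisely the multiplicative extension $\alpha^{(k)}_\pi$ of the $k$-dilated sequence. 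This gives (ii).

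With (i) and (ii) both verified, Proposition \ref{Main Proposition} delivers (iii): $\beta_n = \sum_{\pi\in NC((k-1)n)}\alpha^{(k-1)}_\pi$, which upon substituting $\beta_n = \kappa_n(x^k,\dots,x^k)$ is exactly the claimed formula. I foresee no essential obstacle: the content is the bookkeeping that correctly matches the dilation conventions of the two sequences appearing in the hypotheses of the Main Proposition, since the actual analytic work (via the functional equations on the associated generating series) has already been carried out in Propositions \ref{FunctionalEquation2} and \ref{Main Proposition}.
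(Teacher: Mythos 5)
Your proposal is correct and is essentially the paper's own first proof of this theorem: the paper likewise feeds the moment--cumulant formula for $x^k$ (your statement (i)) and the moment--cumulant formula for $x$ together with the vanishing of cumulants of non-$k$-divisible order (your statement (ii)) into Proposition \ref{Main Proposition} and reads off (iii), merely phrasing the two hypotheses in the equivalent functional-equation form $M(z)=B(zM(z))$ and $M(z)=A(zM(z)^k)$ rather than as sums over non-crossing partitions. (The paper also records a second, more combinatorial proof via the products-as-arguments formula, but that is not the route you took.)
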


\begin{proof}[First proof]
Set $\alpha _{n}=\kappa_{kn}(x)$, $\beta _{n}=\kappa_{n}(x^{k},...,x^{k}),$ $%
m_{n}=m_{n}(x^{k})=m_{kn}(x)$ and let 
\begin{eqnarray*}
A(z) &=&1+\sum \alpha _{n}z^{n} \\
B(z) &=&1+\sum \beta _{n}z^{n} \\
M(z) &=&1+\sum m_{n}z^{n}
\end{eqnarray*}%
The moment-cumulant formula for $x^{k}$ gives 
\begin{equation*}
M(z)=B_{1}(z(M(z))
\end{equation*}%
and the moment-cumulant formula for $x$ says
\begin{equation*}
M(z)=A(zM(z)^{k})
\end{equation*}%
so by Proposition \ref{Main Proposition} we get 
\begin{equation*}
B(z)=A(zB(z)^{k-1})
\end{equation*}%
or equivalently. 
\begin{equation*}
\kappa_{n}(x^{k},x^{k},...,x^{k})=\sum\limits_{\mathbf{\pi }\in
NC((k-1)n)}\alpha _{\pi }^{(k-1)}
\end{equation*}
\end{proof}
\begin{proof}[Second proof]
This proof is more involved but gives a better insight on the combinatorics of $k$-divisible elements and works for the more general setting of diagonally balances $k$-tuples of Section \ref{k tuples}.
The argument is very similar as in the proof in \cite{NiSp06} for $k=2$.
The formula for products as arguments (Eq. \ref{fprod}) yields
\begin{equation*}
\kappa_{n}(x^{k},x^{k},...,x^{k})=\sum\limits_{\substack{ \mathbf{\pi }\in
NC(kn)  \\ \pi \vee \sigma =1_{kn}}}\kappa_{\mathbf{\pi }}(x,x,...,x,x)
\end{equation*}%
with $\sigma =\{(1,2,3,..k),(k+1,k+2,...,2k),...,(kn-n+1,...,kn)\}$.

Observe that since $x$ is $k$-divisible then 
\begin{equation*}
\sum\limits_{\substack{ \mathbf{\pi }\in NC(kn)  \\ \pi \vee \sigma =1_{kn} 
}}\kappa_{\mathbf{\pi }}(x,x,...,x,x)=\sum\limits_{\substack{ \mathbf{\pi }\in
NC(kn),\text{ }\pi \text{ k-divisible}  \\ \pi \vee \sigma =1_{kn}}}\kappa_{\pi
}[x,x,...,x,x]
\end{equation*}
The basic observation is the following
\begin{eqnarray*}
\{\pi &\in &NC(kn)\mid \pi~k \text{-divisible},\pi \vee \sigma =1_{kn}\}= \\
\{\pi &\in &NC(kn)\mid \pi ~k\text{-divisible, }1\thicksim _{\pi
}kn, sk\thicksim _{\pi }sk+1\text{ }\forall s=1,...,n-1\}
\end{eqnarray*}
 Let $V$ be the block of $\pi $ which contains the element $1$. Since $\pi $ is
$k$-divisible in order that the size of all the blocks of $\pi$ to be multiple of $k$ the last element of $V$ must be $sk$ for some $s\in \{1,...n\}$. But
if $k\neq n\,$ then $sk$ would not be connected to $sk+1\,\ $ in $\pi$ and
neither in $\sigma$.  This of course means that $\pi \vee \sigma \neq
1_{kn}.$ Therefore $1\thicksim _{\pi }kn$.

\begin{figure}[h]
\begin{center}
\begin{picture}(10,2)
 \put(1,1) {$1$}

 \put(2,1) {$2$}
 \put(3,1) {$\cdots$}
 \put(3.75,-.4) {
$V$ 
}
\put(3.55,-.2) {\vector(-1,0){2}}
\put(4.55,-.2) {\vector(1,0){2}}
 \put(4.5,1) {{$sk$-$1$}}

 \put(6.5,1) {$sk$}


 \put(8.5,1) {{$sk$+$1$}}
 \put(10.6,1) {$\cdots$}

 \multiput(8,1.5)(0,-0.2){16}{\line(0,-1){0.1}}

 \linethickness{2pt}
 \put(2.4,.8){\lhalf{2}{1.5}}
 \put(8,.8){\rhalf{2}{1.5}}
 \multiput(2.9,-1.2)(0.4,0){6}{\line(1,0){0.2}}

 \put(9.9,.8){\lhalf{2}{1}}

 \multiput(10.4,-1.20)(0.4,0){4}{\line(1,0){0.2}}

\end{picture}
\end{center}
\end{figure}

 Relabelling the elements in $\{1,\dots ,$ $kn\}$ by a rotation of $k$ does not affect the
properties of $\pi $ being $k$-divisible or $\pi \vee \sigma =1_{kn}$, so the same argument implies that $sk\thicksim _{\pi }sk+1$, $\forall k=1,...,n-1.$

Now, the set $\{\pi \in NC(kn)\mid \pi $ k-divisible, $1\thicksim _{\pi
}kn,sk\thicksim _{\pi }sk+1$ $\forall s=1,...,n-1\}$ is in canonical
bijection with $\{\widetilde{\pi }\in NC((k-1)n)\mid \widetilde{\pi }$ is $
(k-1)$-divisible induced by the identification  $sk\equiv sk+1$, for $s=1,...,n-1$, and $1\equiv kn$.  

And since%
\begin{eqnarray*}
\alpha _{kn}^{(k)} &:&=\kappa_{kn}^{x} \\
\alpha _{n}^{(k)} &=&0\text{ if }k\nmid n
\end{eqnarray*}%
Then $\kappa_{\pi }(x,x,...,x,x)\rightarrow \alpha^{(k-1)} _{\widetilde{\pi }}$. So 
\begin{equation*}
\kappa_{n}(x^{k},x^{k},...,x^{k})=\sum_{\substack{ 
\widetilde{\pi }\in NC(kn)\\
\widetilde{\pi}~k\text{-divisible}
} }
\alpha^{(k-1)} _{\widetilde{\pi }}
=\sum_{\mathbf{\pi }\in NC((k-1)n)}\alpha^{(k-1)} _{\pi }
\end{equation*}%
as desired.
\end{proof}

\begin{prop}[Free cumulants of $x^{k}$, Second formula]\label{2formula}
Let $(\mathcal{A},\phi )$ be a non-commutative probability
space and let $x$ a $k$-divisible element with $k$-determining sequence $(\alpha _{n})_{n\geq 1}$. Then the following formula holds for the cumulants
of $x^{k}$.
\begin{equation*}
\kappa_{n}(x^{k},x^{k},...,x^{k})=\sum\limits_{\mathbf{\pi }\in
NC(n)}\beta _{\pi }
\end{equation*}%
where 
\begin{equation}
\beta _{k}=\sum_{\pi \in NC((k-1))n}\alpha _{ \pi}\text{.}
\label{beta}
\end{equation}

\end{prop}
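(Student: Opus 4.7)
The plan is to obtain the Second formula as an immediate repackaging of the First formula (Theorem \ref{1formula}) via Theorem \ref{MainRemark}, which exchanges a $k$-fold zeta convolution in $NC$ for a single zeta convolution after $k$-dilation. No new combinatorial argument is needed; the proof amounts to splitting off the last $\zeta$ in an iterated convolution.

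First I would rewrite the First formula in the language of combinatorial convolution. The identity
$$\kappa_n(x^k,\dots,x^k)=\sum_{\pi\in NC((k-1)n)}\alpha_\pi^{(k-1)}$$
says that the right-hand side is $[\alpha^{(k-1)}\ast\zeta]_{(k-1)n}$, the value at level $(k-1)n$ of the convolution in $I(NC((k-1)n),\mathbb{C})$. Applying Theorem \ref{MainRemark} with $k$ replaced by $k-1$, this is in turn equivalent to
$$\kappa_n(x^k,\dots,x^k)=\bigl[\alpha\ast\underbrace{\zeta\ast\cdots\ast\zeta}_{k-1~\text{times}}\bigr]_n.$$

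Next, I would define the auxiliary sequence $\beta$ by $\beta_n:=\sum_{\pi\in NC((k-1)n)}\alpha_\pi^{(k-1)}$, so that, by the step above, $\beta=\alpha\ast\zeta^{\ast(k-1)}$ in the incidence algebra of $NC$. Applying one further convolution with $\zeta$ and comparing with Theorem~4.1 (equivalently, with the First formula unpacked into a $k$-fold zeta convolution as above) gives
$$\beta\ast\zeta \;=\; \alpha\ast\zeta^{\ast k} \;=\; \kappa(x^k).$$
Unfolding the definition of the combinatorial convolution with $\zeta$ in $NC(n)$ then yields
$$\kappa_n(x^k,\dots,x^k)=(\beta\ast\zeta)_n=\sum_{\pi\in NC(n)}\beta_\pi,$$
which is precisely the Second formula.

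The only thing that requires care is the bookkeeping of dilations: $\beta$ itself is \emph{not} a dilated sequence, even though its defining sum runs over partitions in $NC((k-1)n)$ and pairs with the $(k-1)$-dilated sequence $\alpha^{(k-1)}$. This dilation/iterated-convolution correspondence is exactly what Theorem \ref{MainRemark} supplies, and it is the only substantive ingredient beyond the First formula. Thus the ``difficulty'' is notational rather than mathematical, and the statement follows essentially by inspection once the convolution formulation of Theorem \ref{1formula} is written down.
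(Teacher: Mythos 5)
Your overall strategy is the right one --- and, since the paper states Proposition \ref{2formula} without proof, presumably the intended one: recast the First formula as an iterated $\zeta$-convolution in $NC$ via Theorem \ref{MainRemark}, and then split off one factor of $\zeta$. The problem is in the execution of the splitting step, where your argument contradicts itself. In your first step you correctly obtain
\begin{equation*}
\kappa_n(x^k,\dots,x^k)=\bigl[\alpha\ast\underbrace{\zeta\ast\cdots\ast\zeta}_{k-1\ \text{times}}\bigr]_n ,
\end{equation*}
i.e.\ $\kappa(x^k)=\alpha\ast\zeta^{\ast(k-1)}$. You then set $\beta:=\alpha\ast\zeta^{\ast(k-1)}$ (the literal reading of Equation (\ref{beta})) and assert $\beta\ast\zeta=\alpha\ast\zeta^{\ast k}=\kappa(x^k)$. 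The second equality is incompatible with the display you just proved: with your $\beta$, the First formula says $\beta_n=\kappa_n(x^k)$ \emph{already}, so $\sum_{\pi\in NC(n)}\beta_\pi=(\beta\ast\zeta)_n$ is the moment $m_n(x^k)$, not the cumulant $\kappa_n(x^k)$. A concrete counterexample to the chain you wrote: take $k=2$ and $x$ a standard semicircular, so $\alpha=(1,0,0,\dots)$ and $\beta_n=\sum_{\pi\in NC(n)}\alpha_\pi=1$ for all $n$; then $\sum_{\pi\in NC(n)}\beta_\pi$ is the Catalan number $C_n$, whereas $x^2$ is a free Poisson and $\kappa_n(x^2)=1$.

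The repair is an index shift, not a new idea: the determining sequence of the Second formula must be $\beta=\alpha\ast\zeta^{\ast(k-2)}$, equivalently (by Theorem \ref{MainRemark} applied with $k-2$ in place of $k$)
\begin{equation*}
\beta_m=\sum_{\pi\in NC((k-2)m)}\alpha^{(k-2)}_\pi ,
\end{equation*}
after which $\beta\ast\zeta=\alpha\ast\zeta^{\ast(k-1)}=\kappa(x^k)$ and the peeling argument closes correctly. To be fair, the statement you were handed is itself garbled (the subscript in Equation (\ref{beta}) does not parse, and Proposition \ref{3formula} carries the same off-by-one, writing ``$k$ times'' where the First formula forces ``$k-1$ times''); but a blind proof should have detected this inconsistency --- e.g.\ by the semicircular check above --- rather than reproduced it. As written, the step $\beta\ast\zeta=\kappa(x^k)$ is false for the $\beta$ you defined, so the proof does not go through without the correction.
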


The last theorem gives a moment cumulant formula between $\beta _{n}$ and $\kappa_{n}(x^{k}...,x^{k})$ which for example says that when $\beta _{n}$ is a cumulant sequence then $x^{k+1}$ is a free compound Poisson and then $\boxplus$- infinitely divisible, this will be explained in detail in Section \ref{limits}.
 
\begin{prop}[Free cumulants of $x^{k}$, Third formula]\label{3formula}
\label{kumulants of x^k 3 }Let $(\mathcal{A},\phi )$ be a non-commutative probability
space and let $x$ a $k$-divisible element with $k$-determining sequence $(\alpha _{n})_{n\geq 1}$. Then the following formula holds for the cumulants
of $x^{k}$.
\begin{equation}
\kappa_{n}(x^{k},x^{k},...,x^{k})=[\alpha\ast \underbrace{\zeta \ast \cdots \ast \zeta}_{k~times}]_n.
\end{equation}
\end{prop}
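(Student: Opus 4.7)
The plan is to obtain the identity as an immediate corollary of the First Formula (Theorem \ref{1formula}) via the combinatorial equivalence established in Theorem \ref{MainRemark}.

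First I would rewrite the First Formula
\[
\kappa_n(x^k, \ldots, x^k) \;=\; \sum_{\pi \in NC((k-1)n)} \alpha^{(k-1)}_\pi
\]
in convolution form. By the very definition of the combinatorial convolution $f\ast g$ on multiplicative families in $NC$ (with $\zeta$, viewed as a multiplicative family, contributing a factor of $1$ at every Kreweras complement), the right-hand side is exactly $[\alpha^{(k-1)}\ast\zeta]_{(k-1)n}$. Setting $\beta_n:=\kappa_n(x^k,\ldots,x^k)$, this identity is precisely the statement that the $(k-1)$-dilation of $\beta$ satisfies $\beta^{(k-1)}=\alpha^{(k-1)}\ast\zeta$ in $NC$.

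Then I would apply Theorem \ref{MainRemark} to the sequence $g:=\alpha$ with dilation index $k-1$. That theorem converts the assertion $\beta^{(k-1)}=\alpha^{(k-1)}\ast\zeta$ (a single $\zeta$-convolution on the dilated sequences in $NC$) into the equivalent iterated assertion $\beta=\alpha\ast\underbrace{\zeta\ast\cdots\ast\zeta}_{k~\text{times}}$ on the original sequences in $NC$, which is exactly the claimed formula.

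The argument amounts to a bookkeeping translation between two equivalent encodings of the same combinatorial data: a single convolution on a dilated sequence in the larger poset $NC((k-1)n)$, versus iterated convolutions on the original sequence in the smaller poset $NC(n)$. No serious obstacle is anticipated, since all the substantive combinatorial content has already been established in the First Formula and in Theorem \ref{MainRemark}; the only care needed is to match correctly the dilation index appearing in the First Formula with the number of iterated zeta-factors appearing in the target identity.
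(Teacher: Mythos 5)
Your overall strategy is the right one and is surely what the paper intends (the proposition is stated without proof, immediately after the First Formula and the dilation machinery of Section 4): rewrite the First Formula as the single convolution identity $\beta^{(k-1)}=\alpha^{(k-1)}\ast\zeta$ for the $(k-1)$-dilated sequences, where $\beta_n:=\kappa_n(x^k,\dots,x^k)$, and then undo the dilation with Theorem \ref{MainRemark}. The first half of your argument is correct. The gap is in the final step: Theorem \ref{MainRemark} applied with dilation index $k-1$ converts $\beta^{(k-1)}=\alpha^{(k-1)}\ast\zeta$ into $\beta=\alpha\ast\underbrace{\zeta\ast\cdots\ast\zeta}_{k-1~times}$, with $k-1$ zeta-factors, not $k$. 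The number of iterated $\zeta$'s in statement (1) of that theorem equals the dilation index in statement (2), so a dilation index of $k-1$ cannot produce a $k$-fold convolution. Precisely the ``care needed to match the dilation index with the number of zeta-factors'' that you flag is where the argument breaks.

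The mismatch is not cosmetic. For $k=2$ and $x$ semicircular one has $\alpha=(1,0,0,\dots)$ and $x^2$ free Poisson, so $\kappa_n(x^2)=1$ for all $n$; this agrees with $[\alpha\ast\zeta]_n=1$ (only the all-singletons partition contributes) but not with $[\alpha\ast\zeta\ast\zeta]_n=\sum_{\pi\in NC(n)}1=C_n$, the Catalan number. In fact $[\alpha\ast\underbrace{\zeta\ast\cdots\ast\zeta}_{k~times}]_n$ equals the moment $m_n(x^k)=m_{kn}(x)$, which sits one convolution step above the cumulants: in the chain of Corollary \ref{maincor} the cumulant series $B_1$ of $x^k$ lies $k-1$ steps above $A$, while the moment series $M$ lies $k$ steps above. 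So the displayed identity with ``$k$ times'' appears to be an off-by-one misprint for ``$k-1$ times'' (the latter is also what the application in Proposition \ref{Poisson mult}, namely $\mathrm{distr}(x^{k})=\pi^{\boxtimes (k-1)}\boxtimes\nu$, requires). Your proof as written asserts the wrong output of Theorem \ref{MainRemark} in order to land on the printed statement; you should either prove the $k-1$ version, or note explicitly that the $k$ version cannot be proved because it fails already for the semicircle.
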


\subsection{Freeness and k-divisible elements} 

Recall the definition of diagonally balanced pairs from Nica and Speicher \cite{NiSp95}.

\begin{defi}
Let $\mathcal{A},\phi$ be a non-commutative probability space, and let $a_1,a_2$ be
in $\mathcal{A}$. We will say that $(a_1,a_2)$ is a diagonally balanced pair if
\begin{equation}
\phi(\underbrace{a_1a_2\cdots a_1a_2a_1}_{2n+1})=\phi(\underbrace{a_2a_1...a_2a_1a_2}_{2n+1})=0
\end{equation}

\end{defi}

Two prominent examples of balanced pairs are $(u,u^*)$ where $u$ is a Haar unitary and $(s,s)$ where $s$ is even. It is well known in free probability that if $a$ is free from $\{u,u^*\}$ then $uau*$ is free from $a$, and similarly if $a$ is free from $s$ then it is also free from $sas$.

More generally, it was proved in \cite{NiSp95} that if $(b_1,b_2)$ is a diagonally balanced pair and $a$ is free from $\{b_1,b_2\}$ then $b_1ab_2$ is free from $a$. Now, notice that if $s$ is $k$-divisible then the pair $(s^i,s^{k-i})$ is diagonally balanced and then ${sas^{k-1},s^2as^{k-2},...,s^{k-1}as}$ and $a$ are free. We can consider instead of $s^ias^{k-i}$  any monomial on $a$ and $s$ of degree $k$ on $s$ and freeness will still hold. Furthermore, we see that if $a$ and $s$ are free and $s$ is $k$-divisible then $shs$ and $a$ are free, where $p$ is any polynomial on $a$ and $s$ of degree $k$ on $s$. This is the content of the next proposition.

\begin{prop}\label{freeness}
 Let $s$ be $k$-divisible and $a$ be free of $s$. And let $h=sa_1sa_2sa_3s...sa_{k-1}s$, where for all $i=1,...,n$ the element $a_i$ is free from $s$. Then $h$ and $a$ are free.
\end{prop}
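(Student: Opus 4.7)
The plan is to verify the cumulant characterization of freeness: $h$ and $a$ are free if and only if every mixed free cumulant $\kappa_n(c_1,\dots,c_n)$ with $c_i\in\{h,a\}$ and both symbols actually appearing vanishes. Fix such a tuple, with $p\ge 1$ factors equal to $h$ and $q\ge 1$ factors equal to $a$. Each $h=sa_1sa_2\cdots sa_{k-1}s$ is a product of $2k-1$ elementary factors, so the formula for products as arguments (Theorem~\ref{products}) rewrites the cumulant as
\[
\kappa_n(c_1,\dots,c_n)=\sum_{\substack{\pi\in NC(N)\\ \pi\vee \hat 0_n = 1_N}}\kappa_\pi(\tilde c_1,\dots,\tilde c_N),
\]
where $\tilde c_1,\dots,\tilde c_N$ is the expanded word of total length $N=(2k-1)p+q$ and $\hat 0_n$ groups the positions according to the factor they came from.

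The next step is to restrict which partitions can contribute. The hypotheses that $a$ and each $a_i$ is free from $s$ mean that the algebra $B$ generated by $\{a,a_1,\dots,a_{k-1}\}$ is free from the algebra generated by $\{s\}$. Hence, by the vanishing of mixed cumulants between free subalgebras, any surviving $\pi$ must be \emph{pure}: each of its blocks consists entirely of $s$-positions or entirely of $B$-positions. Since $s$ is $k$-divisible, every pure $s$-block moreover has cardinality a multiple of $k$.

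The main obstacle is the resulting combinatorial claim: no pure $\pi$ whose $s$-blocks have $k$-divisible cardinalities can satisfy $\pi\vee \hat 0_n = 1_N$. The idea is a trapping argument. An $a$-unit carries only a single $B$-position, so the only way to link it to the rest of the diagram is via a $B$-block it shares with some $a_i$-position sitting inside an $h$-unit. But inside an $h$-unit the $a_i$'s lie strictly between consecutive $s$-positions; since each $h$-unit contributes exactly $k$ such $s$-positions, the divisibility-by-$k$ condition on $s$-block sizes together with non-crossingness forces the $s$-blocks meeting a given $h$-unit to close off its interior, trapping each $a_i$ in a planar region bounded by $s$-positions of the same $h$-unit. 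I would formalize this by an induction on the number of $s$-blocks, repeatedly removing an innermost $s$-block in the spirit of Remark~\ref{rem1}; the technical heart is showing that the divisibility-by-$k$ condition is precisely what makes the boundary of such a region close without leaving an ``escape route'' for the $a_i$'s — this is the direct analogue, for $k$-divisible elements, of the balance condition used by Nica and Speicher for diagonally balanced pairs. Once trapping is established, no $B$-block can link an $a$-unit to an $h$-unit, so $\pi\vee \hat 0_n$ must keep every $a$-unit as its own class, contradicting $1_N$ and completing the proof.
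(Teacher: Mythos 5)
Your reduction is the same as the paper's: expand the mixed cumulant via products-as-arguments, use freeness to discard any partition with a block mixing $s$-positions and $B$-positions, and use $k$-divisibility of $s$ to force every surviving $s$-block to have size a multiple of $k$. The gap is in the one step that actually carries the proof, namely that no such $\pi$ can satisfy $\pi\vee\hat 0_n=1_N$. You reduce this to a ``trapping'' lemma --- that each $a_i$ is confined to a region bounded by $s$-positions of its own $h$-unit --- and defer its proof to an unspecified induction whose ``technical heart'' you acknowledge but do not supply. Worse, that lemma is false as stated. Take $k=3$, $h=sa_1sa_2s$, and the cumulant $\kappa_3(h,h,a)$, so the expanded word occupies positions $1,\dots,11$ with $s$'s at $1,3,5,6,8,10$, the $a_i$'s at $2,4,7,9$ and $a$ at $11$. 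The partition with $s$-blocks $\{1,3,10\}$ and $\{5,6,8\}$ (both of size $3$), $B$-block $\{4,9\}$ and singletons $\{2\},\{7\},\{11\}$ is non-crossing and pure, yet it joins the $a_2$ of the first $h$-unit to the $a_2$ of the second: the $a_i$'s are not trapped inside their own units, and the bounding $s$-positions $3$ and $10$ come from different units. What is true, and all you actually need, is that no block can contain both an $a$-position and an $a_i$-position.

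The paper proves exactly that by a short counting argument which you should adopt in place of the induction: if a block $V$ contained an $a_i$-position $p$ (the $i$-th $a$-slot of some $h$-unit, $1\le i\le k-1$) and an $a$-position $q$, choose $p,q$ cyclically adjacent in $V$ of these two types and count the $s$-positions in the open arc from $p$ to $q$: one first passes the remaining $k-i$ $s$'s of that $h$-unit and thereafter only whole units (each contributing $0$ or $k$ $s$'s), so the count is $\equiv -i\not\equiv 0 \pmod k$. Non-crossingness with respect to $V$ forces every $s$-block meeting that arc to lie entirely inside it, so those $s$'s would be partitioned into blocks each of size divisible by $k$ --- a contradiction. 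Without this (or a corrected, actually proved version of your trapping statement), the proposal is a correct strategy but not a proof.
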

\begin{proof}
Consider a mixed cumulant of $h$ and $a$ and use the formula for cumulants with products as arguments. 
\begin{equation}
\kappa_n(...,h,...,a,...)=\sum_{\substack{ 
\pi \in NC(n)\\
\pi \vee \sigma =1_{n}
} }
=\kappa_n(...,\underbrace{s,a_1,s,a_2,s,...s,a_{k-1},s}_h,...,a,...)
\end{equation}
Let us analyze the summand of the RHS and show that the must vanish. In order to satisfy the minimum condition $a$ must be joined with some element on $h$. Now, for this $h=sa_1s,a_2s...sa_{k-1}s$, a can not be joined with some $s$, since they are free. So it must join with some $a_i$ as follows.

\begin{figure}[h]
\begin{center}
\begin{picture}(10,3)
\linethickness{2pt}
\put(-5,1,1) {$\kappa_n(...,\overbrace{s,a_1,s,...s,a_i,s,...s,a_{k-1},s}^{h},...,s,a,...)$}
\put(-5,1.1) {$\kappa_n(...,s,a_1,s,...s,a_i,\underbrace{s,...s,a_{k-1},s,...,s}_{km-i~number~of~s},a,...)$}
\put(5.1,1){\lhalf{2}{4}}
\put(13,1){\rhalf{2}{4}}

\put(2.1,1){\rhalf{2}{1}}
\put(10,1){\lhalf{2}{1}}
\multiput(10.5,-1)(0.4,0){4}{\line(1,0){0.2}}
\multiput(-1.6,-1)(0.4,0){5}{\line(1,0){0.2}}
\end{picture}

\end{center}
\end{figure}

 In this case there must be a block of size not multiple of $k$ containing only $s$\'{}s (since $s$ is free from $\{a,a_1,...a_n\}$) and then $\kappa_n(...,s,a_1,s,a_2,s,...s,a_{k-1},s,...,a,...)$ must vanish for all summands in RHS. So any mixed cumulant of $h$ and $a$ vanishes and hence $a$ and $h$ are free.
\end{proof}

\section{R-diagonality} \label{k tuples}

We may generalize the concept of diagonally balanced pair to $k$-tuples.
\subsection {Diagonally balanced k-tuples}

\begin{defi}
Let $\mathcal{A},\phi$ be a non-commutative probability space, and let $a_1,\dots,a_k$ be
in $\mathcal{A}$. We will say that $(a_1,\dots ,a_k)$ is a \emph{diagonally balanced $k$-tuple} if every ordered sequence of 
size not multiple of $k$ vanishes with $\phi$, i.e.
\begin{equation}
\phi(a_ja_{j+1}\cdots a_ka_1\cdots a_ka_1\cdots a_{i-1}a_i)=0
\end{equation}
whenever $a_{j-1}\neq a_i$ (the indices are taken modulo $k$).
\end{defi}

The proof of Proposition \ref{freeness} can be easily modified for diagonally balanced $k$-tuples, and is left to the reader. So we have a more general result.

\begin{thm}
 Let $(\mathcal{A},\phi)$ be a non-commutative probability space, and let $(s_1,\dots ,s_k)$ be
a diagonally balanced $k$-tuple free from $a$. And let $h=s_1a_2s_2a_3s_3\cdots s_{k-1}a_{k-1}s_k$, where for all $i=1,...,n$ the element $a_i$ is free from $\{s_1,\dots ,s_k\}.$ Then $h$ and $a$ are free.
\end{thm}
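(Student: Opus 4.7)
The plan is to mirror the proof of Proposition \ref{freeness} almost verbatim, replacing the single $k$-divisible element by the ordered tuple $(s_1,\dots,s_k)$. It suffices to show that every mixed free cumulant of $h$ with $a$ vanishes, so I fix such a cumulant $\kappa_n(b_1,\dots,b_n)$ where each $b_j\in\{h,a\}$ and both values occur. The formula for products as arguments (Theorem \ref{products}) expands it as
\begin{equation*}
\kappa_n(b_1,\dots,b_n)=\sum_{\substack{\pi\in NC(N)\\ \pi\vee\sigma=1_N}}\kappa_\pi[\tilde b_1,\dots,\tilde b_N],
\end{equation*}
where $N$ counts the expanded factors, $\sigma$ is the interval partition grouping the letters of each copy of $h$ (and keeping each outside $a$ as a singleton), and the $s$-letters within each copy of $h$ are read as $s_1,s_2,\dots,s_k$ in that order.

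Next I would apply the freeness hypotheses exactly as in Proposition \ref{freeness}: since $\{s_1,\dots,s_k\}$ is free from each $a_i$ and from $a$, any block of $\pi$ mixing some $s_i$ with a non-$s$ letter yields a vanishing cumulant. Hence in every surviving summand the blocks of $\pi$ split into pure $s$-blocks and pure non-$s$ blocks, and the connectivity requirement $\pi\vee\sigma=1_N$ forces each outside argument $a$ to be joined, through a chain of non-$s$ blocks, to some $a_i$ sitting inside a copy of $h$. This reproduces the very picture drawn in the proof of Proposition \ref{freeness}.

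The combinatorial trap then runs exactly as in that proof. Once an $a_i$ inside a copy of $h$ is connected to an element outside that copy, the non-crossing property confines the $s$-arguments of that copy lying strictly to one side of $a_i$ into $s$-sub-blocks entirely on that side. Because $h$ is read in the fixed order $s_1,a_2,s_2,\dots,s_k$, the arguments of any such confined sub-block form a consecutive sub-tuple $(s_r,s_{r+1},\dots,s_{r+t})$ of $(s_1,\dots,s_k)$ with $t+1<k$. The definition of a diagonally balanced $k$-tuple asserts that any such consecutive product whose length is not a multiple of $k$ has vanishing $\phi$-value, and so the corresponding sub-cumulants vanish and the whole summand does.

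The main obstacle is the passage from the moment condition in the definition of a diagonally balanced $k$-tuple to the analogous statement on cumulants; in the single-element setting of Proposition \ref{freeness} this was the immediate equivalence $\phi(s^n)=0\iff \kappa_n(s,\dots,s)=0$ for $k\nmid n$, but here it requires a genuine lemma for consecutive sub-tuples of $(s_1,\dots,s_k)$. I would handle it by M\"obius inversion on $NC(m)$ together with induction on $m$: a cumulant $\kappa_m(s_{i_1},\dots,s_{i_m})$ on such a sub-run of non-multiple length equals a $\mu$-weighted sum of products of moments, and every such product either contains at least one consecutive moment of non-multiple length (zero by hypothesis) or factors through strictly shorter cumulants of the same type (zero by induction). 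Once this lemma is in place the combinatorial step above closes, every mixed cumulant of $h$ and $a$ vanishes, and $h$ and $a$ are free.
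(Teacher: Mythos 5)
The paper itself gives no argument here beyond ``modify Proposition \ref{freeness}'', and your proposal is exactly that modification; you have also correctly isolated the one genuinely new ingredient, namely that the moment condition defining a diagonally balanced $k$-tuple must be upgraded to the vanishing of the cumulants $\kappa_m(s_j,s_{j+1},\dots,s_{j+m-1})$ over cyclically consecutive runs with $k\nmid m$. That lemma is true, and induction on $m$ is the right tool for it.

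There is, however, a concrete gap in your combinatorial step, and the same gap reappears inside your sketch of the lemma. You assert that every all-$s$ block confined to the trapped region has as its arguments a consecutive sub-tuple $(s_r,\dots,s_{r+t})$ with $t+1<k$. This is false: a block of $\pi$ need not be an interval, so it can skip $s$-positions (giving, say, arguments $(s_2,s_1)$ for $k=3$, which is not a consecutive run), and it can also collect $s$'s from several copies of $h$ lying inside the trapped region, so nothing bounds its length by $k$. What is true, and what the argument actually needs, is that \emph{at least one} block has cyclically consecutive arguments of length not divisible by $k$. To get this, observe that the $s$-letters trapped between $a_i$ and the matching leg of the block containing $a$ read, in order, as a cyclically consecutive word of total length $\not\equiv 0 \pmod{k}$; the restriction of $\pi$ to these positions is non-crossing and hence has an interval block, whose arguments are automatically a consecutive run; if its length is a multiple of $k$, removing it leaves a word that is \emph{still} cyclically consecutive (the letter $s_{r-1}$ is now followed by $s_{r+kq}=s_r$) with the same length mod $k$, so iterating must eventually produce an interval block of non-multiple length, which the lemma kills. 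The identical removal device is what closes the induction in the lemma itself: M\"obius inversion produces moments over non-interval blocks, about which the defining moment condition says nothing, so it is cleaner to expand $0=\phi(s_j\cdots s_{j+m-1})=\sum_{\pi\in NC(m)}\kappa_\pi$ and annihilate every $\pi\neq 1_m$ by locating such an interval block of non-multiple size strictly less than $m$. With these repairs your proof is complete and coincides with what the paper intends.
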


A special kind of diagonally balanced pair which is very important in the literature of free probability is the one of $R$-diagonal pair, introduced in \cite{NiSp95}. 
There is a lot of structure in these elements and relation to even elements is well known \cite{NiSp06}. Moreover a big class of invariant subspaces have been studied by Speicher and Sniady \cite{S-S} and relation to $R$-cyclic matrices was pointed out in \cite{NSS}. 

\begin{defi}
Let $(\mathcal{A},\phi)$ be a non-commutative probability space, and let $a_1,\cdots,a_k$ be
in $\mathcal{A}$. We will say that $(a_1,\cdots ,a_k)$ is an $R$-diagonal $k$-tuple if the only non-vanishing free cumulants have increasing order, i.e. they are of the form
\begin{equation*}
\kappa_{kn}(a_1,a_2,\dots a_k,a_1,a_2,\dots ,a_k\cdots a_1,a_2,\dots a_k)=\kappa_{kn}(a_ia_{i+1}...a_ka_1\dots a_ka_1\dots a_{k-i+1}).
\end{equation*}

\end{defi}

\begin{rem}
The case $k=2$ was well studied in \cite{NiSp95}. An element a is $R$-diagonal if and only if the pair $(a,a^*)$ is $R$-diagonal. 
\end{rem}

\begin{thm}[cumulants of $R$-diagonal tuples] \label{Rdiag1}
Let $(a_1,\dots,a_s)$ be an $R$-diagonal $k$-tuple in a tracial state and denote by 
\begin{equation}
\alpha_n:=\kappa_{kn}(a_1,\dots,a_k,\dots,a_1\dots a_k)
\end{equation}
Then, if $a=a_1a_2...a_k$, we have
\begin{equation}
\kappa_n(a,\dots,a)=\sum_{\pi\in NC(n)}\alpha^{(k-1)}_\pi
\end{equation}
\end{thm}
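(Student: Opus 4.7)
My plan is to adapt the combinatorial argument of the second proof of Theorem~\ref{1formula} to the $R$-diagonal setting. The $R$-diagonal hypothesis imposes two constraints on the surviving cumulants, namely block sizes divisible by $k$ and a cyclic ordering of indices within each block, but the crucial observation is that the non-crossing geometry makes the second constraint automatic whenever the first holds. Thus the reduction is essentially identical to the $k$-divisible case, with $\kappa_{kn}(x,\ldots,x)$ replaced by the $R$-diagonal parameter $\alpha_n$.

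First, I would apply the formula for products as arguments (Theorem~\ref{products}) to write
\begin{equation*}
\kappa_n(a,\dots,a) \;=\; \sum_{\substack{\pi\in NC(kn)\\ \pi\vee\sigma_0=1_{kn}}} \kappa_\pi[a_1,\dots,a_k,a_1,\dots,a_k,\dots,a_1,\dots,a_k],
\end{equation*}
where $\sigma_0=\{\{1,\dots,k\},\{k+1,\dots,2k\},\dots,\{(n-1)k+1,\dots,nk\}\}$ is the interval partition grouping each run of $k$ consecutive positions. The $R$-diagonal assumption restricts the non-vanishing summands to those $\pi$ whose blocks all have size divisible by $k$ and whose indices cycle through $1,2,\dots,k$ along each block. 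I would then show that the cyclic-index condition is automatic: for any block $V=\{j_1<\cdots<j_{km_V}\}$ of a $k$-divisible non-crossing $\pi$ and any two consecutive elements $j_l,j_{l+1}$ of $V$, the positions strictly between them form a non-crossing sub-partition (by non-crossing of $\pi$, any block meeting $\{j_l+1,\dots,j_{l+1}-1\}$ is entirely contained in it) and this sub-partition is $k$-divisible, so $j_{l+1}-j_l-1$ is a multiple of $k$ and hence $j_{l+1}-j_l\equiv 1\pmod{k}$. By $R$-diagonality and traciality, the cumulant attached to $V$ then equals $\alpha_{m_V}$.

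From this point the proof parallels the second proof of Theorem~\ref{1formula}. The connectivity condition $\pi\vee\sigma_0=1_{kn}$, combined with $k$-divisibility and a rotational argument using traciality, forces $1\sim_\pi kn$ and $sk\sim_\pi sk+1$ for $s=1,\dots,n-1$. The admissible $\pi$ are in canonical bijection with $(k-1)$-divisible non-crossing partitions $\widetilde\pi\in NC((k-1)n)$ via the quotient map $sk\equiv sk+1$ and $1\equiv kn$. In each block $V$ of size $km_V$ there are exactly $m_V$ such identifications, one per index-$k$ position (whose paired index-$1$ partner lies in $V$ because $sk\sim_\pi sk+1$), so its image $\widetilde V$ has size $(k-1)m_V$ and the contribution $\alpha_{m_V}$ matches $\alpha^{(k-1)}_{|\widetilde V|}$. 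Hence $\kappa_\pi[\cdots]=\alpha^{(k-1)}_{\widetilde\pi}$, and extending the sum to all of $NC((k-1)n)$ is harmless since $\alpha^{(k-1)}$ vanishes on non-$(k-1)$-divisible partitions. The main obstacle is the automatic-cyclicity step described in the second paragraph; once that is in hand, every remaining step is directly inherited from the $k$-divisible argument.
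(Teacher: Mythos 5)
Your argument is correct and follows essentially the same route as the paper: the formula for products as arguments, the reduction to $k$-divisible partitions via $R$-diagonality, and then the canonical bijection with $(k-1)$-divisible non-crossing partitions exactly as in the second proof of Theorem \ref{1formula}. Your gap-counting verification that the cyclic-index condition is automatic for $k$-divisible non-crossing partitions is a point the paper leaves implicit when it declares the remainder identical to the $k$-divisible case, and it is needed (and correctly executed) to identify each block's contribution with $\alpha_{m_V}$; note also that the index set in the theorem's display should read $NC((k-1)n)$, as in Theorem \ref{1formula}, which is the version you prove.
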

\begin{proof}

Again, the formula for products as arguments yields
\begin{equation*}
\kappa_{n}(a,a,...,a)=\sum\limits_{\substack{ \mathbf{\pi }\in
NC(kn)  \\ \pi \vee \sigma =1_{kn}}}\kappa_{\mathbf{\pi }}(a_1,a_2,...,a_{k-1},a_k)
\end{equation*}%
with $\sigma =\{(1,2,3,..k),(k+1,k+2,...,2k),...,(k(n-1)+1,...,nk)\}$.

Observe that by the fact that $(a_1,\dots,a_k)$ is an $R$-diagonal $k$-tuple
\begin{equation*}
\sum\limits_{\substack{ \mathbf{\pi }\in NC(kn)  \\ \pi \vee \sigma =1_{kn} 
}}\kappa_{\mathbf{\pi }}(a_1,a_2,...,a_{k-1},a_k)=\sum\limits_{\substack{ \mathbf{\pi }\in
NC(kn),~\pi ~k\text{-divisible}  \\ \pi \vee \sigma =1_{kn}}}\kappa_{\pi
}(a_1,a_2,...,a_{k-1},a_k)
\end{equation*}%

From this point the argument is identical as in the second proof of Theorem \ref{1formula}.

\end{proof}

Similar formulas as in Theorems \ref{2formula} and \ref{3formula} hold for $R$-diagonal tuples. 

\begin{prop} \label{Rdiag2}
Let $(a_1,\dots,a_k)$ be an $R$-diagonal $k$-tuple in a tracial state and denote by 
\begin{equation}
\alpha_n:=\kappa_{kn}(a_1,\cdots,a_k,\cdots,a_1\cdots a_k)
\end{equation}
The following formulas hold for the cumulants of $a=a_1a_2....a_s$
\begin{equation*} 
\kappa_n(a,\cdots,a)=[\alpha\ast \underbrace{\zeta \ast \cdots \ast \zeta}_{k~times}]_n
\end{equation*}
and
\begin{equation*} 
\kappa_n(a,\cdots,a)=\sum\limits_{\mathbf{\pi }\in
NC(n)}\beta _{\pi }
\end{equation*}%
where 
\begin{equation*}
\beta _{k}=\sum_{\pi \in NC((k-1)n)}\alpha^{(k-1)} _{\pi}\text{.}
\end{equation*}%
\end{prop}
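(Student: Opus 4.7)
My plan is to reduce Proposition \ref{Rdiag2} to its analogues in the $k$-divisible setting, namely Propositions \ref{2formula} and \ref{3formula}, by observing that Theorem \ref{Rdiag1} gives exactly the same combinatorial identity for $\kappa_n(a,\ldots,a)$ as Theorem \ref{1formula} does for $\kappa_n(x^k,\ldots,x^k)$ in the $k$-divisible case. The derivations of Propositions \ref{2formula} and \ref{3formula} from Theorem \ref{1formula} are purely combinatorial and formal-power-series in nature; they do not use the specific probabilistic origin of the sequence $\alpha_n$, only the form of the identifying identity. Hence the same derivations apply verbatim to the $R$-diagonal setting once Theorem \ref{Rdiag1} is in hand.

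Concretely, let $A(z) = 1 + \sum_n \alpha_n z^n$ and $\Gamma(z) = 1 + \sum_n \kappa_n(a,\ldots,a)\,z^n$. Theorem \ref{Rdiag1}, combined with Proposition \ref{Main Proposition} applied to the dilated sequences as in Proposition \ref{FunctionalEquation2}, yields the functional equation $\Gamma(z) = A(z\,\Gamma(z)^{k-1})$. This is the sole analytic input; everything downstream is formal. For the first formula $\kappa_n(a,\ldots,a) = [\alpha \ast \zeta^{\ast k}]_n$, I would invoke Corollary \ref{maincor}: it produces a chain $B_k = A$ and $B_i = B_{i+1} \ast \zeta$ in the incidence algebra of $NC$, with $B_1 = \Gamma$. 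Unwinding the chain expresses $\Gamma$ as the iterated $\zeta$-convolution of $A$ in the combinatorial convolution of $NC$, precisely matching the right-hand side in the statement; equivalently, via Theorem \ref{MainRemark}, this is a single $\zeta$-convolution in $NC^{(k)}$ applied to the appropriate dilation of $\alpha$.

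For the second formula, the same chain gives $\Gamma = B_2 \ast \zeta$, which in coefficients reads $\kappa_n(a,\ldots,a) = \sum_{\pi \in NC(n)} B_{2,\pi}$. Setting $\beta_n := B_{2,n}$ and identifying it, via Theorem \ref{MainRemark} applied to $B_2 = A \ast \zeta^{\ast(k-2)}$, with the closed-form sum over $(k-1)$-dilated non-crossing partitions of the claimed index set completes the proof. The main obstacle is purely bookkeeping: one must carefully track exactly which power of $\zeta$-convolution and which dilation of the sequence $\alpha$ produces the intermediate sequence $\beta_n$ along the chain of Corollary \ref{maincor}. There is no new combinatorial content to establish beyond Section 4, because the $R$-diagonal $k$-tuple hypothesis is used only in the proof of Theorem \ref{Rdiag1}, and never again after the base identity is available.
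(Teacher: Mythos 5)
Your overall strategy coincides with the paper's: the paper gives no separate proof of this proposition, remarking only that the formulas follow from Theorem \ref{Rdiag1} in exactly the way Propositions \ref{2formula} and \ref{3formula} follow from Theorem \ref{1formula}, and you correctly identify that $R$-diagonality and traciality enter only through Theorem \ref{Rdiag1}, after which everything is formal manipulation of multiplicative families via Proposition \ref{Main Proposition}, Corollary \ref{maincor} and Theorem \ref{MainRemark}.

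The problem is that the ``bookkeeping'' you defer is the only substantive step, and when one actually does it the counts do not come out as you assert. Starting from $\Gamma(z)=A(z\Gamma(z)^{k-1})$ (which is the correct functional equation), the chain of Corollary \ref{maincor} has $B_1=\Gamma$, $B_k=A$ and only $k-1$ links $B_i=B_{i+1}\ast\zeta$, so unwinding gives $\Gamma=A\ast\zeta^{\ast(k-1)}$, i.e.\ $\kappa_n(a,\dots,a)=[\alpha\ast\zeta^{\ast(k-1)}]_n$ --- one $\zeta$ fewer than the displayed statement; the $k$-th convolution with $\zeta$ produces the \emph{moments} of $a$, not its cumulants. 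Likewise your own (correct) identification $B_2=A\ast\zeta^{\ast(k-2)}$ corresponds, via Theorem \ref{MainRemark}, to $\beta_n=\sum_{\pi\in NC((k-2)n)}\alpha^{(k-2)}_\pi$, not to the stated sum over $NC((k-1)n)$ of $(k-1)$-dilations; these are genuinely different sequences. A concrete check: for $k=2$ and $a_1=a_2=s$ semicircular, $a=s^2$ is free Poisson with $\kappa_n(a)=1$, while the formulas with the printed indices yield the Catalan numbers. So the claim that the chain ``precisely matches the right-hand side'' is false as written. The mismatch is an off-by-one already present in the paper's statement (compare Proposition \ref{Poisson mult}, whose proof needs the $(k-1)$-fold version, and the case $k=2$, which must reduce to Theorem 11.25 of Nica--Speicher), but a complete proof must either carry out the count and correct the indices or explain why the stated indexing is consistent; asserting the match without performing the one computation the argument requires is exactly where your proposal currently fails.
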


\begin{rem} 
(1) Theorem \ref{Rdiag1} and Proposition \ref{Rdiag2} are also true for diagonally balanced. One can easily modify the proofs by using Remark \ref{rem1}.

(2)Notice that the determining sequence of a diagonally balanced $k$-tuple is determined by the moments of $a=a_1a_2\cdots a_d$ but the same is not true for the whole distribution of $(a_1,a_2,\dots,a_d)$.
\end{rem}

\subsection{R-cyclic matrices}

Let $(\mathcal{A},\phi)$ be a non-commutative probability space, and let $d$ be a positive integer. Consider the algebra $M_d(\mathcal{A})$
of $d\times d$ matrices over $\mathcal{A}$ and the linear functional $\phi_d$ on $M_d(\mathcal{A})$ defined by the formula
\begin{equation}
\phi((a_{i,j})^n_{i,j=1})=\frac{1}{d}\sum^d_{i=1}\phi(a_ii)
\end{equation}
Then $(M_d(\mathcal{A}),\phi_d)$ is itself a non-commutative probability space.

\begin{defi}
Let $(\mathcal{A},\phi)$ and let $A\in(M_n(\mathcal{A}),\phi_n)$, $A$ is said to be $R$-cyclic if the following conditions holds
\begin{equation}
\kappa_n(a_{i_1,j_1},\cdots a_{i_n,j_n})=0
\end{equation}
for every $n>0$ and every $1\leq i_1,j_1,...\leq d$ for which it is not true that $j_1=i_2,\dots,j_n-1=i_n,j_n=i_1$.

\end{defi}

We can realize $k$-divisible elements as $R$-cyclic matrices with $R$-diagonal $k$-tuples as entries.
A formula for the distribution of an $R$-cyclic matrix in terms of its entries was given in \cite{NSS}.  However, in the case treated here, this formula will not be needed in full generality and we will rather use the special information we know to obtain the desired distribution.

\begin{prop} \label{r-cyclic}
Let $(a_1,a_2,...a_k)$ be a tracial diagonally balanced $k$-tuple in a $(\mathcal{A},\phi)$ and consider the superdiagonal matrix
\[ A := \left( \begin{array}{ccccc}
0       &   a_1 & 0   & \cdots & 0       \\
 \vdots &\ddots & a_2 & \ddots & \vdots  \\
        &       &     & \ddots & 0       \\
 0      &       &     & 0      & a_{k-1} \\
a_k     &0      &     & \cdots & 0       \end{array} \right)\]
as an element in $(M_k(\mathcal{A}),\phi_k)$.

(1)$A$ is $k$-divisible.

(2)$A^k$ has the same moments as $a:=a_1\cdots a_k$. In particular, if $a$ is positive $A$ has moments as a $k$-symmetric distribution.

(3)$A$ has the same determining sequence as $(a_1,a_2,...a_k)$.

(4)$A$ is $R$-cyclic if and only if $(a_1,\dots,a_d)$ is an $R$-diagonal tuple.

\end{prop}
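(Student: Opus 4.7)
The plan is to analyze the matrix $A$ directly via its cyclic structure. Parts (1) and (2) reduce to an explicit computation of powers of $A$; part (3) follows from (2) by a triangular inversion using the cumulant-cumulant formulas of Section 4; and part (4) amounts to translating the $R$-cyclic condition into a condition on the entries $a_1,\ldots,a_k$.

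For (1) and (2), I would write $A=\sum_{i=1}^{k-1}a_iE_{i,i+1}+a_k E_{k,1}$ in standard matrix units and observe that $A^n$ is supported on positions $(i,\,i+n\bmod k)$, with
\[
[A^n]_{i,\,i+n\bmod k}=a_i a_{i+1}\cdots a_{i+n-1}
\]
(with $a$-indices understood cyclically). Thus the diagonal of $A^n$ vanishes unless $k\mid n$, giving $\phi_k(A^n)=0$ and proving (1). When $n=km$ the matrix $A^{km}$ is diagonal with entries $(a_i a_{i+1}\cdots a_{i+k-1})^m$ that are the cyclic rotations of $(a_1\cdots a_k)^m$; traciality of $\phi$ makes each of them equal to $\phi(a^m)$, and averaging over $i$ yields $\phi_k(A^{km})=\phi(a^m)$, proving (2).

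For (3), I would combine (2) with the fact that moments determine free cumulants, so $\kappa_m(A^k,\ldots,A^k)=\kappa_m(a,\ldots,a)$ for every $m$. Applying Theorem \ref{1formula} to the $k$-divisible element $A$, and Theorem \ref{Rdiag1} (extended to diagonally balanced $k$-tuples as in the remark following Proposition \ref{Rdiag2}) to the tuple $(a_1,\ldots,a_k)$, expresses both sides of this equality through the same formula
\[
\kappa_m(A^k,\ldots,A^k)=\sum_{\pi\in NC((k-1)m)}\alpha^{(k-1)}_\pi,\qquad \kappa_m(a,\ldots,a)=\sum_{\pi\in NC((k-1)m)}\widetilde\alpha^{(k-1)}_\pi,
\]
applied to the two candidate determining sequences $\alpha_m=\kappa_{km}(A,\ldots,A)$ and $\widetilde\alpha_m=\kappa_{km}(a_1,\ldots,a_k,\ldots,a_1,\ldots,a_k)$. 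This system is triangular in each sequence, with $\alpha_m$ (resp.\ $\widetilde\alpha_m$) as the leading term; equivalently, the generating series satisfy the invertible functional equation $B(z)=A(zB(z)^{k-1})$ of Proposition \ref{FunctionalEquation2}. Hence $\alpha_m=\widetilde\alpha_m$ for every $m$. The main obstacle lies precisely in this inversion: the equality of moments of $A^k$ and $a$ does not by itself give equality of determining sequences without appealing to these structural formulas.

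Finally, for (4), the only non-vanishing entries of $A$ are $A_{i,i+1}=a_i$ (cyclically), so any joint cumulant $\kappa_n(A_{i_1,j_1},\ldots,A_{i_n,j_n})$ vanishes unless $j_s\equiv i_s+1\pmod k$ for every $s$, in which case it equals $\kappa_n(a_{i_1},\ldots,a_{i_n})$. The $R$-cyclic chain condition $j_s=i_{s+1}$ then becomes $i_{s+1}\equiv i_s+1\pmod k$, and the closure $j_n=i_1$ forces $k\mid n$; together these say that the indices form a cyclic consecutive sequence of length divisible by $k$, which is exactly the $R$-diagonal condition for $(a_1,\ldots,a_k)$.
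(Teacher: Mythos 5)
Your proof is correct and follows essentially the same route as the paper's: explicit computation of the powers of $A$ for (1) and (2) (the paper simply displays the diagonal matrix $A^k$ and invokes traciality), the combination of Theorem \ref{1formula} and Theorem \ref{Rdiag1} for (3), and a direct translation of the $R$-cyclic index condition for (4). The only difference is that you spell out the triangular/invertible nature of the cumulant formulas in step (3), which the paper leaves implicit in the phrase ``depends on the moments of $A^k$ in the same way''; this is a welcome clarification rather than a new approach.
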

\begin{proof}

(1) $A$ is $k$-divisible since the powers if $A$ which are not multiple of $k$ have zero entries in the diagonal.

(2) This is clear since 
\[ A^k := \left( \begin{array}{cccc}
a_1\cdots a_k  &                  &            &  0       \\
               & a_2\cdots a_ka_1 &            &  \\
               &                  & \ddots     &       \\
  0            &                  &            &a_k\cdots a_{k-1}  \end{array} \right)\] 

which by traciality has moments $\phi((a_1...a_k)^n)=\phi(a^n)$.

(3) By Theorems \ref{kumulants of x^k } and \ref{Rdiag1}, the determining sequence of $A$ depends on the moments of $A^k$ in the same way as $(a_1,a_2,...,a_k)$ so by $(2)$ the determining sequences 
must coincide.

(4) The definition of $R$-ciclicality says that $\kappa_{n}(a_{i_1},a_{1_2},\dots a_{i_n}=0$ whenever is not true that $i_2=i_1+1$, $i_3=i_2+1$,...,$i_1=i_{n}+1$. This is equivalent to the fact that $n$ is multiple of $k$ and the indices are increasing, which is exactly the definition of $R$-diagonal tuples.
\end{proof}

\begin{exa}[free $k$-Haar unitaries] \label{freehk}

The simplest example of the last theorem is given by taking $a_i=1$. \[ A := \left( \begin{array}{ccccc}
0       &   1 & 0   & \cdots & 0       \\
 \vdots &\ddots & 1 & \ddots & \vdots  \\
        &       &     & \ddots & 0       \\
 0      &       &     & 0      & 1 \\
1     &0      &     & \cdots & 0       \end{array} \right)\]
Clearly, this matrix is $k$-Haar unitary, with distribution $\mu_A=\frac{1}{k}\sum_{j=1}^k\delta_{q^j}$
as an element in $(M_k(\mathcal{A}),\phi_k)$. Notice that, instead of the upperdiagonal matrix, we can choose any permutation matrix of size $nk\times nk$ in which any cycle has length $k$. Of course, if we choose at random one of them, we still get a $k$-Haar Unitary.
Moreover, Neagu \cite{Neagu} proved that if we let $N\rightarrow \infty$ we get asymptotic freeness in the following sense. 
\begin{thm}
Let $\{U_1^N,U_2^N,...,U_r^N\}_{N>0}$ be a family of independent random $Nk\times Nk$ permutation matrices with cycle lengths of size $k$, then as $N$ goes to infinity $U_1^N,U_2^N,...,U_r^N$ converges in $*$-distribution to a $*$-free family $u_1,u_2,...,u_r$ of non-commutative random variables with each $u_i$ $k$-Haar unitary. \end{thm}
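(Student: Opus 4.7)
The plan is to handle the convergence of single variables and the asymptotic $*$-freeness separately. For the single-variable part, for a permutation $\pi_i$ on $[Nk]$ with every cycle of length exactly $k$, the number of fixed points of $\pi_i^m$ equals $Nk$ when $k\mid m$ and zero otherwise, so
\[
\phi_{Nk}\big((U_i^N)^m\big)=\frac{1}{Nk}\#\{j:\pi_i^m(j)=j\}=\mathbf{1}_{k\mid m},
\]
which is identically the moment sequence of a $k$-Haar unitary (cf.\ Example \ref{freehk}). Since $U_i^N$ is unitary the $*$-distribution is determined by these moments, so each $U_i^N$ already has the $*$-distribution of $u_i$ at every finite $N$.

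For the freeness part I would show that for alternating indices $i_1\ne i_2\ne\cdots\ne i_n$ and integers $m_\ell$ with $k\nmid m_\ell$ (so each $(U_{i_\ell}^N)^{m_\ell}$ is centered),
\[
\mathbb{E}\,\phi_{Nk}\Big((U_{i_1}^N)^{m_1}(U_{i_2}^N)^{m_2}\cdots(U_{i_n}^N)^{m_n}\Big)\longrightarrow 0
\]
as $N\to\infty$. Expanding the normalized trace as an average count of closed paths $j_0\mapsto j_1\mapsto\cdots\mapsto j_n=j_0$ with $j_\ell=\pi_{i_\ell}^{m_\ell}(j_{\ell-1})$, and conditioning on the partition $\tau\in P(n)$ that records which $j_\ell$'s coincide, the independence of the $\pi_i$ factorizes the conditional count over blocks of $\tau$, and each monochromatic block yields a moment of a single $k$-Haar unitary on the relevant power sum.

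The combinatorial core of the argument is a genus-type reduction. A clean way to run it is through the conjugation model $U_i^N=\Sigma_i\Gamma\Sigma_i^{-1}$, where $\Gamma$ is the fixed permutation matrix of cycle type $(k,k,\dots,k)$ and the $\Sigma_i$ are uniform on $S_{Nk}$, independent across $i$. Integrating out the $\Sigma_i$ (via the symmetric-group Weingarten calculus, or the elementary direct counting of Nica) expresses the mixed moment as a weighted sum over pair-coincidence patterns of the path points, where non-crossing patterns contribute at leading order $N^0$ and every crossing pattern is suppressed by a factor $O(N^{-1})$. The centering condition $k\nmid m_\ell$ then kills precisely those non-crossing patterns that would otherwise survive, and the surviving limit sum reproduces the alternating moment characterizing $*$-freeness of a family of $k$-Haar unitaries, which vanishes.

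The main obstacle is this Weingarten-style expansion for random permutations with the prescribed cycle structure. Unlike the case of the full symmetric group, where Nica's original genus expansion applies directly, here we must work inside the conjugacy class of permutations of cycle type $(k,\dots,k)$, which is acted on by the wreath product $\mathbb{Z}_k\wr S_N$; controlling the Weingarten weights over this restricted ensemble, and verifying that the centering cancellations leave only non-crossing contributions whose sum is zero, is the technically delicate step. The combinatorial framework of Sections 4--6, which identifies $k$-divisible non-crossing partitions with the patterns that naturally arise from $k$-Haar objects, is exactly what is needed to organize these leading-order terms and match them against the free-probabilistic limit.
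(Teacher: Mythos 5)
The theorem you are asked to prove is not actually proved in this paper: it is quoted from Neagu \cite{Neagu}, and the paper's ``proof'' is the citation. So the only question is whether your blind attempt stands on its own. Your first half does: the fixed-point count for $\pi_i^m$ when every cycle has length exactly $k$ gives $\phi_{Nk}((U_i^N)^m)=\mathbf{1}_{k\mid m}$ exactly at finite $N$, and since $U_i^N$ is unitary (indeed $(U_i^N)^k=I$) this pins down the individual $*$-distribution as that of a $k$-Haar unitary. Your reduction of $*$-freeness to the vanishing of $\mathbb{E}\,\phi_{Nk}\bigl((U_{i_1}^N)^{m_1}\cdots(U_{i_n}^N)^{m_n}\bigr)$ for alternating indices and $k\nmid m_\ell$ is also the correct formulation, and the conjugation model $U_i^N=\Sigma_i\Gamma\Sigma_i^{-1}$ with $\Sigma_i$ independent and uniform on $S_{Nk}$ is the right way to sample the restricted conjugacy class; this is indeed the Nica-style framework that Neagu's proof lives in.

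However, the second half is a plan, not a proof, and you say so yourself: the entire content of the theorem is concentrated in the step you label ``the technically delicate step'' and leave open. Concretely, three things are missing. First, you never actually derive the expansion of the mixed moment over coincidence patterns for the restricted ensemble, nor the weights attached to each pattern (the centralizer here is $\mathbb{Z}_k\wr S_N$, not $S_{Nk}$, so Nica's counting for uniform permutations does not apply verbatim and must be redone). Second, the claimed dichotomy --- non-crossing patterns contribute $O(N^0)$, crossing patterns $O(N^{-1})$ --- is asserted, not established; this order-of-magnitude estimate is precisely the genus-type bound that carries the theorem. Third, even granting the dichotomy, you do not exhibit the cancellation: you need to identify which non-crossing patterns survive, show their contributions are exactly the $k$-Haar moments on blocks, and verify that the hypothesis $k\nmid m_\ell$ forces every such contribution to vanish. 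Without these three steps the argument does not close, so as it stands the proposal is an accurate roadmap to Neagu's proof rather than a proof.
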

This gives a matrix model for $u_1,...,u_r$ free $k$-Haar unitaries. 
\end{exa}

\section{Main Theorem and first consequences}\label{mainsection}

 In this, the main section of the paper, we will prove the Main Theorem. This theorem will not only allow us to define free multiplicative convolution between $k$-symmetric distributions and probability measures in $\mathcal{M}^+$ but, moreover, will permit us to define free additive convolution powers for $k$-symmetric distributions. Also, in the combinatorial side, we generalize Theorem \ref{MainRemark} to any multiplicative family.

The main tool that we will use is the $S$-transform. This $S$-transform has not been defined for $k$-divisible random variables, the principal problem is on choosing an inverse for the transform $\psi$.

\subsection{The $S$-transform for random variables with $k$ vanishing moments}

We will start in the very general setting of an algebraic non-commutative probability space $(\mathcal{A},\phi)$ and define an $S$-transform for random variables such the first $k-1$ moments equal $0$.

Recall the definition of the $S$-transform for positive measures. For a probability measure $\mu$ on $\real$, we let
$\psi_\mu(z):=\int_{\real}\frac{zx}{1-zx}\mu(dx)$. $\psi_\mu$ coincides with a moment generating function if $\mu$ has finite moments of all orders. The $S$-transform is defined as
\begin{equation}\label{eq4}
S_\mu(z):= \frac{1+z}{z}\chi_\mu(z),~~~z \in  \psi_\mu(i\comp_+).
\end{equation}

In general, when $x$ is a selfadjoint random variable with non-vanishing mean the $S$-transform can be defined as follows.

\begin{defi}
Let $x$ be a random variable with $\phi(x)\neq 0$. Then its $S$-transform is defined as follows. Let $\chi$ denote the inverse under composition of the series 
\begin{equation}
\psi(z):=\sum^\infty _{n=1}\phi(x^n)z^n,
\end{equation}
then 
\begin{equation}
S_x(z):=\chi(z)\frac{1+z}{z}.
\end{equation}
\end{defi}

Here, $\phi(x)\neq0$ ensures that the inverse of $\psi$ exists as a formal power series. The importance of the $S$-transform is the fact that $S_{xy}=S_xS_y$ whenever $\phi(x)\neq 0$ and $\phi(y)\neq 0$. 

We want to consider the case when $\phi(x)=0$. The case when $x$ is selfadjoint and $\phi(x^2)>0$  was treated in Raj Rao and Speicher in \cite{RS}. The main observation is that although $\psi$ cannot be inverted by a power series in $z$ it can be inverted by a power series in $\sqrt{z}$. This inverse is not unique, but there are exactly two choices. 

The more general case where $\phi(x^n)=0$ for $n=1,2,...,k-1$ and $\phi(x^k)\neq0$ can be treated in a similar fashion. In this case there are $k$ possible choices to invert the function $\psi$. We include the proof for the convenience of the reader.

\begin{prop}
 Let $\psi(z)$ be a formal power series of the form
\begin{equation}
\psi(z)=\sum^\infty _{n=k}\alpha_nz^n
\end{equation}
with $\alpha_k>0$. There exist exactly $k$ power series in $z^{1/k}$ which satisfy
\begin{equation}
\psi(\chi(z))=z.
\end{equation}
\end{prop}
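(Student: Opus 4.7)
The plan is to reduce the problem to inverting an ordinary formal power series with nonzero linear coefficient. The first step is to factor $\psi(z) = z^k g(z)$ where $g(z) = \alpha_k + \alpha_{k+1} z + \alpha_{k+2} z^2 + \cdots$ has nonzero constant term $\alpha_k > 0$. Since $\alpha_k > 0$, the series $g$ admits a formal $k$-th root $g^{1/k} \in \comp[[z]]$ with constant term $\alpha_k^{1/k}$, obtained by applying the formal binomial series to $g/\alpha_k = 1 + (\text{higher order})$ and multiplying by $\alpha_k^{1/k}$. I would then define $h(w) := w \cdot g^{1/k}(w)$. This series satisfies $h(0) = 0$ and $h'(0) = \alpha_k^{1/k} \neq 0$, so it admits a unique formal compositional inverse $h^{-1} \in \comp[[w]]$ with $h^{-1}(0) = 0$. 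By construction $h(w)^k = w^k g(w) = \psi(w)$.

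The second step is to rewrite the defining equation in a form that reveals the $k$-fold ambiguity. Let $u = z^{1/k}$ and work in the integral domain $\comp[[u]]$. Any candidate $\chi$ which is a power series in $z^{1/k}$ is an element of $\comp[[u]]$, and $\chi(0) = 0$ is forced by the requirement that $\psi(\chi)$ be well defined and equal $u^k$, so in fact $\chi \in u \cdot \comp[[u]]$. Then the equation $\psi(\chi(z)) = z$ becomes $h(\chi(u))^k = u^k$. Because $\comp[[u]]$ is an integral domain, $X^k - u^k$ factors as $\prod_{j=0}^{k-1}(X - \omega_j u)$ with $\omega_j = e^{2\pi i j / k}$, so one must have $h(\chi(u)) = \omega_j u$ for exactly one $j \in \{0, 1, \dots, k-1\}$. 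Applying $h^{-1}$ yields $\chi_j(u) := h^{-1}(\omega_j u)$, a genuine power series in $u = z^{1/k}$.

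Finally I would verify that these $k$ candidates are pairwise distinct and all solve the equation. Distinctness is immediate: the linear coefficient of $\chi_j(u)$ is $\omega_j \alpha_k^{-1/k}$, and the $\omega_j$ are $k$ distinct $k$-th roots of unity. That each $\chi_j$ solves the equation is automatic: $\psi(\chi_j(u)) = h(\chi_j(u))^k = (\omega_j u)^k = u^k = z$. The only nontrivial point is the formal existence of the $k$-th root $g^{1/k}$, which is standard (its coefficients can be produced recursively from $(g^{1/k})^k = g$ once the constant term is fixed, with the hypothesis $\alpha_k > 0$ giving a canonical choice). Beyond that, the argument rests entirely on the classical formal-inverse theorem for power series with nonzero linear term, and I expect no serious obstacle.
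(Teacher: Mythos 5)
Your proof is correct, but it takes a genuinely different route from the paper's. The paper argues by direct coefficient comparison: it writes $\chi(z)=\sum_{i\geq 1}\beta_i z^{i/k}$ with undetermined coefficients, substitutes into $\psi(\chi(z))=z$, and reads off the system of equations $1=\alpha_k\beta_1^k$ (which has exactly $k$ solutions for $\beta_1$) together with higher-order relations that determine each $\beta_n$ recursively from $\beta_1$ and the $\alpha$'s. You instead factor $\psi(z)=z^kg(z)$, extract a formal $k$-th root to write $\psi=h^k$ with $h(w)=w\,g^{1/k}(w)$ compositionally invertible, and then use the factorization $X^k-u^k=\prod_j(X-\omega_j u)$ in the integral domain $\comp[[u]]$ to pin down all solutions as $\chi_j=h^{-1}(\omega_j u)$. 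Your approach buys a closed-form description of the $k$ inverses and makes completely transparent both why each one is a genuine power series in $z^{1/k}$ and why there are no others (the integral-domain argument replaces the paper's somewhat terse assertion that the higher equations "ensure that $\beta_n$ is determined by $\beta_1$"); it also exhibits the $k$ branches as rotations of a single inverse, which is conceptually illuminating. The paper's approach is more elementary and self-contained, requiring nothing beyond coefficient bookkeeping, and it exposes the explicit recursion for the coefficients, which is closer in spirit to how the $S$-transform is actually manipulated later in the text. Both are valid; yours is arguably the cleaner uniqueness argument.
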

\begin{proof}
Let 
\begin{equation}
\chi(z)=\sum^\infty _{i=1}\beta_iz^{i/k}
\end{equation}
The equation $\psi(\chi(z))=z$ is equivalent to
 \begin{equation}
\sum^\infty _{n=k}\alpha_n(\sum^\infty _{n=1}\beta_iz^{i/k})^n=z.
\end{equation}
This yields to the system of equations  
\begin{equation*}
1=\alpha_k\beta^k_1
\end{equation*}
and
\begin{equation*}
0=\sum^r _{n=k}\sum^r _{i_1+\dots+i_n=r}\alpha_n\beta_{i_n}\dots\beta_{i_n}
\end{equation*}
for all $r>2$. Clearly the solutions of the first equation are
\begin{equation*}
\beta_1=\alpha^{1/k}_k
\end{equation*} 
while the other equations ensure that $\beta_n$ is determined by $\beta_1$ and the $\alpha$'s. 
\end{proof}

Now, we can define the $S$-transform for random variables having vanishing moments up to order $k-1$.

\begin{defi}\label{S1} Let $x$ be a random variable with $\phi(x^n)=0$ for $n=1,2,\dots,k-1$ and $\phi(x^k)>0$. Then its $S$-transform is defined as follows. Let $\chi(z)=\sum^\infty _{i=1}\beta_iz^{i/k}$ be the inverse in under composition of the series 
\begin{equation}
\psi(z)=\sum^\infty _{n=k}\phi(x^n)z^n
\end{equation}
with leading coefficient $\beta_1>0$. Then 
\begin{equation}
S_x(z)=\chi(z)\frac{1+z}{z}.
\end{equation}

\end{defi}

The following theorem is a generalization of Theorem $2.5$ in \cite{RS} and shows the role of the $S$-transform with respect to multiplication of free random variables.

\begin{thm}\label {Stran2}
Let $x\in (\mathcal{A},\phi)$ such that $\phi(x^n)=0$ for $n=1,2,...,k-1$ and $\phi(x^k)>0$ and let $y\in (\mathcal{A},\phi)$ such that $\phi(y)\neq0$. If $S_x$ and $S_y$ denote their respective $S$-transforms, then 
\begin{equation*}
S_{xy}(z)=S_xS_y(z),
\end{equation*}
where $S_{xy}$ is the $S$-transform of $xy$. 
\end{thm}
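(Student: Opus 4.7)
The plan is to mimic the Raj Rao--Speicher proof \cite{RS} for the case $k=2$, replacing formal-power-series work in $\mathbb{C}[[z]]$ by work in $\mathbb{C}[[z^{1/k}]]$. First, I would verify $S_{xy}$ is well-defined. The formula for products as arguments (Theorem \ref{products}) together with the freeness of $x, y$ gives
\begin{equation*}
\phi((xy)^n) \;=\; \sum_{\pi \in NC(n)} \kappa_\pi(x)\, \phi_{K(\pi)}(y),
\end{equation*}
where $\phi_\sigma(y) = \prod_{V \in \sigma} \phi(y^{|V|})$. Because the moments of $x$ vanish for $n < k$, so do the free cumulants, and only partitions with every block of size $\geq k$ contribute. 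Hence $\phi((xy)^n) = 0$ for $n < k$, and at $n = k$ the only surviving partition is the single block $\{1, \ldots, k\}$, whose Kreweras complement consists of $k$ singletons, giving $\phi((xy)^k) = \phi(x^k)\phi(y)^k \neq 0$. Thus Definition \ref{S1} applies to $xy$.

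Next, I would establish the key algebraic identity $\chi_{xy}(z) = \chi_x(z)\chi_y(z)(1+z)/z$, which is equivalent to $S_{xy} = S_x S_y$. In the standard case where both variables have non-zero mean, this identity is derived by manipulating $\psi_u(\chi_u(z)) = z$ for $u \in \{x, y, xy\}$ together with the moment-cumulant functional equation of Proposition \ref{FunctionalEquation} (or via Haagerup's $\eta$-transform argument). All steps are purely formal and do not require the leading exponent of $\psi_x$ to equal $1$; they only require the inversion to exist in an appropriate formal-power-series ring. Thus, replacing $\mathbb{C}[[z]]$ by $\mathbb{C}[[z^{1/k}]]$ throughout (which is where $\chi_x$ and $\chi_{xy}$ naturally live, while $\chi_y$ remains in $\mathbb{C}[[z]] \subset \mathbb{C}[[z^{1/k}]]$), the same manipulations yield the identity. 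The combinatorial content is precisely the Nica--Speicher freeness formula displayed above, which provides the compatibility needed to compose $\chi_x$ with $\psi_y$ and its relatives inside the larger ring.

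The delicate step, and the main obstacle, is tracking the branch of the $k$-th root. Both $\chi_x$ and $\chi_{xy}$ are determined only up to multiplication of $z^{1/k}$ by a primitive $k$-th root of unity: the series $\chi_x$ has $k$ candidate leading coefficients of the form $\phi(x^k)^{-1/k}\omega^j$, and Definition \ref{S1} selects the positive real branch. Since $\chi_y \in \mathbb{C}[[z]]$ has no branch ambiguity (its leading coefficient is simply $1/\phi(y)$), the identity $\chi_{xy} = \chi_x \chi_y (1+z)/z$ ties the branch of $\chi_{xy}$ to that of $\chi_x$; comparing the leading coefficients in $z^{1/k}$, namely $\phi(x^k)^{-1/k}\cdot(1/\phi(y))$ on the right against $(\phi(x^k)\phi(y)^k)^{-1/k}$ on the left, confirms that the positive-real branch of $\chi_x$ corresponds to the positive-real branch of $\chi_{xy}$ (at least when $\phi(y)>0$; for other signs the appropriate $k$-th root of unity must be carried through consistently). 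This reconciles the $S$-transform defined by the product $S_xS_y$ with the one prescribed by Definition \ref{S1} for $xy$, completing the proof.
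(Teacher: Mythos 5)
Your proposal is correct and follows essentially the same route as the paper: the paper's proof likewise consists of verifying that $xy$ satisfies the hypotheses of Definition \ref{S1} (via $\phi((xy)^n)=0$ for $n<k$ and $\phi((xy)^k)=\phi(x^k)\phi(y)^k$) and then deferring to the formal-power-series manipulations of \cite{RS}, now carried out in $\mathbb{C}[[z^{1/k}]]$. Your explicit tracking of the $k$-th-root branch and the caveat about the sign of $\phi(y)$ is a welcome elaboration of a point the paper only alludes to in its remark that the $S$-transform of \cite{RS} is not unique.
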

\begin{proof}

The proof is exactly the same as in \cite{RS}. The only observation to be made is that $xy$ also satisfies the conditions in Definition \ref{S1}. Indeed, by freeness $\phi((xy)^n)=0$ for $n=1,2,...,k-1$ and $\phi((xy)^k)=\phi(x^k)\phi(y)^k>0$ and then all the manipulations are valid for the case when $k>2$. The key point
is to verify that $C_{xy}(S_{xy}(z))=z$.
\end{proof}

\begin{rem} We cannot drop the assumption $\phi(y)\neq0$ in Theorem \ref{Stran2}. As pointed out by Rao and Speicher \cite{RS}, freeness would yield to $\phi((yx)^n)=0$, for all $n\in\mathbb{N}$.
\end {rem}

\subsection{Free Multiplicative convolution of $k$-symmetric distributions}

Recall the notion of free multiplicative convolution of two measures $\mu$ in $\mathcal{M}$ and $\nu$ in $\mathcal{M}^+$. The idea is to consider a positive free random variables $x$ and a selfeadjoint random variable $y$ (free from $x$) with distributions $\mu$ and $\nu$, respectively, and call $\mu\boxtimes\nu$ the distribution of $x^{1/2}yx^{1/2}$. This element is selfadjoint so we can be sure that $\mu\boxtimes\nu$ is a well defined probability measure on $\mathcal{M}$, but moreover $x^{1/2}yx^{1/2}$ and $xy$ have the same moments. In other words, $\mu\boxtimes\nu$ can be defined as the only distribution in $\mathcal{M}$ whose moments equal the moments of $xy$.

Following these ideas, the strategy is clear in how to define a free multiplicative convolution $\mu\boxtimes\nu$ for $\mu$ $k$-symmetric and $\nu$ with positive support. We consider a $k$-divisible random variable $x$ and a positive element $y$ (free from $x$) with distributions $\mu$ and $\nu$, respectively. Given a $k$-divisible random variable $x$ and a positive one it is clear that $xy$ is a also $k$-divisible in the algebraic sense. The interesting question is to find an element with $k$-symmetric distribution with the same moments as $xy$. In this section we prove that this element does exist. Observe that in this case taking the random variable $x^{1/2}yx^{1/2}$ does not work since it is not necessarily normal.
 
Recall that given a $k$-symmetric probability measure $\mu$ on $\mathbb{R}$, we denote by $\mu^{k}$ the probability measure in $\mathcal{M}^{+}$ induced by the map $t\rightarrow t^{k}$. 

 We start by stating a relation between the $S$-transform of a $k$-divisible element $x$ and the $S$-transform of $x^k$.
\begin{lem}
Let $x\in(\mathcal{A},\phi)$ be a $k$-divisible element. Then the $S$-transforms of $x$ and $x^k$ are related by the formula 
\begin{equation*}
S_{x^k}(z)=S_{x}(z)^k(\frac{z}{1+z})^{k-1}.
\end{equation*}
\end{lem}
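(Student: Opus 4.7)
The plan is to reduce the identity to a relation between the compositional inverses $\chi_x$ and $\chi_{x^k}$ of $\psi_x$ and $\psi_{x^k}$, respectively, and then apply the definition $S(z)=\chi(z)\tfrac{1+z}{z}$ twice. First I would exploit $k$-divisibility to write
\begin{equation*}
\psi_x(z)=\sum_{n\geq1}\phi(x^{kn})z^{kn}=\psi_{x^k}(z^k).
\end{equation*}
In particular, if one sets $\alpha_n:=\phi(x^{kn})$, both $\psi_x$ and $\psi_{x^k}$ are determined by the same sequence $(\alpha_n)$, and the leading term of $\psi_x$ is $\alpha_1 z^k$ while that of $\psi_{x^k}$ is $\alpha_1 z$.

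The key step is the identity $\chi_x(z)^k=\chi_{x^k}(z)$. Starting from $\psi_x(\chi_x(z))=z$ and the substitution above,
\begin{equation*}
z=\psi_x(\chi_x(z))=\psi_{x^k}\!\bigl(\chi_x(z)^k\bigr),
\end{equation*}
so $\chi_x(z)^k$ is a right inverse of $\psi_{x^k}$. The only potential subtlety is that $\chi_x$ is chosen a priori as a power series in $z^{1/k}$ (with positive leading coefficient $\alpha_1^{-1/k}$), whereas $\chi_{x^k}$ is an honest power series in $z$; however, writing $u=\chi_x(z)$, the relation $z=\sum_n\alpha_n u^{kn}$ shows that $z$ is a power series in $u^k$, hence $\chi_x(z)^k$ is a genuine power series in $z$ with leading coefficient $\alpha_1^{-1}$, which matches $\chi_{x^k}$. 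This uniqueness identification is where I expect the only mild subtlety to occur, but once it is established the rest is algebraic.

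Finally I would plug this into the definition of the $S$-transform:
\begin{equation*}
S_{x^k}(z)=\chi_{x^k}(z)\,\frac{1+z}{z}=\chi_x(z)^k\,\frac{1+z}{z}=\left(S_x(z)\,\frac{z}{1+z}\right)^{k}\frac{1+z}{z}=S_x(z)^k\left(\frac{z}{1+z}\right)^{k-1},
\end{equation*}
which is the desired formula. The main work is thus concentrated in verifying the identity $\chi_x^k=\chi_{x^k}$ at the level of formal power series with the chosen branch; everything else is substitution.
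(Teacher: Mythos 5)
Your proposal is correct and follows essentially the same route as the paper: both establish $\psi_x(z)=\psi_{x^k}(z^k)$ from $k$-divisibility, deduce $\chi_{x^k}(z)=\chi_x(z)^k$, and then substitute into the definition of the $S$-transform. The only difference is that you spell out the branch/uniqueness check for the inverse more carefully than the paper does, which is a harmless (indeed welcome) addition.
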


\begin{proof}
By definition $m_n(x^k)=m_{nk}(x)$ and $m_{s}(x)=0$ if $k\nmid s$. So
 \begin{equation*}
\psi_x(z)=\sum^\infty _{n=1}m_{n}(x)=\sum^\infty _{n=1}m_{nk}(x)z^nk
\end{equation*}
and 
 \begin{equation*}
\psi_{x^k}(z)=\sum^\infty _{n=1}m_{n}(x^k)=\sum^\infty _{n=1}m_{nk}(x)z^n
\end{equation*}
Thus $\psi_x(z)=\psi_{x^k}(z^k)$, or equivalently,  $\chi_{x^k}(z)=\chi_x(z)^k$
and then
\begin{equation*}
S_{x}(z)^k=(\frac{1+z}{z})^k\chi_x(z)^k=(\frac{1+z}{z})^k\chi_{x^k}(z)=(\frac{1+z}{z})^{k-1}S_{x^k}(z).
\end{equation*}
So
\begin{equation*}
S_{x^k}(z)=S_{x}(z)^k(\frac{z}{1+z})^{k-1} 
\end{equation*}

\end{proof}

Now we are in position to prove the  Main Theorem.
\begin{mthm}
\label{MT}
 Let $x,y\in(\mathcal{A},\phi)$ with $x$ positive and $y$ a $k$-divisible element. Consider $x_1,...,x_k$ positive elements with the same moments as $x$. Then $(xy)^k$ and $y^kx_1\cdots x_k$ have the same moments, i.e.
\begin{equation}
 \phi((xy)^{kn})=\phi((y^kx_1\cdots x_k)^n)
\end{equation}
\end{mthm}
\begin{proof}
It is enough to check that the $S$- transforms of $(xy)^k$ and $y^kx_1\cdots x_k$ coincide. Now
\begin{eqnarray*}
S_{(xy)^k}(z)&=&S_{xy}(z)^k(\frac{z}{1+z})^{k-1}=S_{x}(z)^kS_{y}(z)^k(\frac{z}{1+z})^{k-1}\\
&=&S_{x}(z)^k\cdot S_{y^k}(z)=S_{(x_1)}(z)\cdots S_{x_k}(z)\cdot S_{y^k}(z)\\
&=&S_{y^kx_1\cdots x_k}(z)
\end{eqnarray*}

\end{proof}

\begin{rem}
In the tracial case, Theorem \ref{freeness} gives another proof of Main Theorem. Indeed, consider the moments of $sas...sasa$ when $s$ is $k$-divisible, since $sas...sas$, and $a$ are free, by Theorem \ref{freeness}, then these moments coincide with the moments of $sas..sasa_1$ where $a_1$ is free from $s$ and $a$. Now by traciality the moments of $sas...sas$ coincide with the moments of $s^2as...sa$ which again, Theorem \ref{freeness} coincide with the moments of $s^2as...sa_2$ where $a_2$ is free from $s$ and $a$. So the moments of $sas...sasa$ coincide with the moments of $s^2as...sa_2a_1$ with $a_1,a_2,a$ and $s$ free between them. Continuing with this procedure we see that the moments of $sas...sasa=(sa)^k$ coincide with the moments of $s^ka_1a_2\cdots a_k$, with $a_i$'s and $s$ free between them.

\end{rem}

The next corollary allows us to define free multiplicative convolution between $k$-symmetric and probability measures in $\nu\in\mathcal{M}^+$.

\begin{cor}
Let $x$ be $k$-divisible with $x^k$ positive and let $y$ be positive. For $Z=(xy)^k$ there is a positive element $\hat Z$ with $\phi(Z^n)=\phi(\hat Z^n)$
\end{cor}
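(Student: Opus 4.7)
The plan is to combine the Main Theorem with the traciality of $\phi$ and then realize the resulting element as a free multiplicative convolution on $\mathcal{M}^{+}$, which is automatically a probability measure supported on $[0,\infty)$.

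First, since the paper works in a $W^{*}$-probability space, $\phi$ is a trace, so cyclicity gives
\[
\phi((xy)^{kn})=\phi((yx)^{kn}),\qquad n\ge 1.
\]
Next, apply the Main Theorem with the roles of the positive and the $k$-divisible variables interchanged. To do this, enlarge $(\mathcal{A},\phi)$ by a standard free product construction so as to contain positive elements $y_{1},\ldots,y_{k}$ that are mutually free, each free from $x$, and each with the same distribution as $y$ (this enlargement leaves the joint $*$-distribution of $x$ and $y$ unchanged, so all moments of $(xy)^{k}$ are preserved). The Main Theorem, with ``positive'' $=y$ and ``$k$-divisible'' $=x$, then yields
\[
\phi((yx)^{kn})=\phi\bigl((x^{k}y_{1}\cdots y_{k})^{n}\bigr),
\]
so $Z=(xy)^{k}$ and $P:=x^{k}y_{1}\cdots y_{k}$ share the same moment sequence.

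By hypothesis $x^{k}$ is positive, and the elements $x^{k},y_{1},\ldots,y_{k}$ are mutually free positive elements. Consequently the distribution of $P$ coincides with the iterated free multiplicative convolution
\[
\mu:=\mu_{x^{k}}\boxtimes\mu_{y_{1}}\boxtimes\cdots\boxtimes\mu_{y_{k}},
\]
which by the material recalled in Section 3 is a well-defined probability measure in $\mathcal{M}^{+}$. Taking $\hat Z$ to be any positive self-adjoint operator realizing $\mu$ in a tracial $W^{*}$-probability space gives
\[
\phi(\hat Z^{n})=\int_{0}^{\infty}t^{n}\,\mu(\mathrm{d}t)=\phi(P^{n})=\phi(Z^{n}),
\]
which is what we wanted.

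The only mildly delicate step is the enlargement producing the free positive copies $y_{1},\ldots,y_{k}$, but this is a routine free-product construction; everything else is an immediate consequence of the Main Theorem together with the standard fact that $\boxtimes$ is well-defined on $\mathcal{M}^{+}$ and preserves support in $[0,\infty)$. Notably, no separate positivity analysis of the moments of $Z$ is required, because positivity is inherited from the fact that $P$ has been identified with a product of free positive elements whose distribution is a genuine probability measure on $\mathbb{R}_{+}$.
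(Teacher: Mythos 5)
Your argument is correct and is precisely the route the paper intends: the corollary is stated there without proof as an immediate consequence of the Main Theorem, and you supply exactly the missing details — swapping the roles of the positive and $k$-divisible factors, producing mutually free positive copies $y_1,\dots,y_k$ of $y$, and identifying the moment sequence of $x^k y_1\cdots y_k$ with the probability measure $\mu_{x^k}\boxtimes\mu_{y}^{\boxtimes k}$ on $[0,\infty)$, which is then realized by a positive operator. (One small remark: the step $\phi((xy)^{kn})=\phi((yx)^{kn})$ also follows from freeness alone, via vanishing of mixed cumulants and the rotation-invariance of non-crossing partitions, so traciality is convenient here but not essential.)
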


\begin{defi}
Let $\mu\in\mathcal{M}^+$ and let $\nu\in\mathcal{M}^k$ be a $k$-symmetric probability measure. And suppose that $\mu$ and $\nu$ are the distributions of $X$ and $Y$, free elements in some probability space $(\mathcal{A},\phi)$, respectively. We define $\mu\boxtimes\nu=\nu\boxtimes\mu$ to be the unique $k$-symmetric probability measure with the same moments as $XY$.
\end{defi}

\begin{rem}
Notice that the last definition does not depend on the choice of $X$ and $Y$ since the distribution of $X$ and $Y$ (by freeness) determine the moments of $XY$ moments uniquely.
\end{rem}

Finally we obtain the mentioned relation.
\begin{cor}\label{cormulconv}
Let $\mu\in\mathcal{M}^+$ and let $\nu\in\mathcal{M}^k$. The following formula holds:
\begin{equation}\label{multconv}
(\mu\boxtimes\nu)^k=\mu^{\boxtimes k}\boxtimes\nu^{k}.
\end{equation}

\end{cor}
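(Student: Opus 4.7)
The plan is to realize both sides of the identity as distributions of concrete non-commutative random variables and then invoke the Main Theorem to match their moments. Take $x, y \in (\mathcal{A},\phi)$ free with $x$ positive of distribution $\mu$ and $y$ a $k$-divisible element of distribution $\nu$. By the definition of free multiplicative convolution between a positive and a $k$-symmetric measure given just above the statement, $xy$ has the same moments as a $k$-divisible element with distribution $\mu\boxtimes\nu$; hence its $k$-th power satisfies $\phi((xy)^{kn})=\int t^n\,d(\mu\boxtimes\nu)^k(t)$ for every $n\ge 1$. Thus the left-hand side of the corollary is pinned down by the moment sequence $\{\phi((xy)^{kn})\}_{n\ge1}$.

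Next I would apply the Main Theorem directly: choose $x_1,\dots,x_k\in(\mathcal{A},\phi)$ positive, each with distribution $\mu$, and free from one another and from $y$. The Main Theorem then gives the moment identity
\begin{equation*}
\phi((xy)^{kn}) \;=\; \phi\bigl((y^k x_1 x_2\cdots x_k)^n\bigr) \qquad (n\ge 1).
\end{equation*}
So it only remains to identify the right-hand sequence as the moment sequence of $\mu^{\boxtimes k}\boxtimes\nu^k$. Since $y$ is $k$-divisible with $k$-symmetric distribution $\nu$, the element $y^k$ is positive with distribution $\nu^k\in\mathcal{M}^+$; because $y$ is free from $\{x_1,\dots,x_k\}$, the subalgebra generated by $y^k$ is free from that generated by $x_1,\dots,x_k$.

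The identification now follows by a two-step application of the $S$-transform in the positive case. First, iterating $S_{\mu_1\boxtimes\mu_2}=S_{\mu_1}S_{\mu_2}$ from Proposition \ref{StransfPos} (combined with traciality, which makes $\phi((x_1\cdots x_k)^n)$ coincide with the moments of a positive element), the product $x_1\cdots x_k$ has moments equal to those of the positive measure $\mu^{\boxtimes k}$. Second, $y^k$ being positive with distribution $\nu^k$ and free from $x_1\cdots x_k$, the moments of $y^k(x_1\cdots x_k)$ are those of $\nu^k\boxtimes\mu^{\boxtimes k}=\mu^{\boxtimes k}\boxtimes\nu^k$. Combining this with the Main Theorem identity and the first paragraph yields $(\mu\boxtimes\nu)^k=\mu^{\boxtimes k}\boxtimes\nu^k$. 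The only delicate point is the traciality argument that turns the non-selfadjoint product $x_1\cdots x_k$ into a bona fide positive distribution $\mu^{\boxtimes k}$, but this is a standard consequence of $S$-transform multiplicativity in $\mathcal{M}^+$ and poses no real difficulty once the Main Theorem is in hand.
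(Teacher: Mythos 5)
Your argument is correct and is exactly the route the paper intends: the corollary is stated as an immediate consequence of the Main Theorem, and your proof simply makes explicit the bookkeeping — the moments of $(\mu\boxtimes\nu)^k$ are $\phi((xy)^{kn})$ by the definition of $\boxtimes$ for $k$-symmetric measures, the Main Theorem converts these to $\phi((y^kx_1\cdots x_k)^n)$, and $S$-transform multiplicativity on $\mathcal{M}^+$ identifies the latter as the moments of $\mu^{\boxtimes k}\boxtimes\nu^k$. The freeness and traciality points you flag are indeed the only details to check, and they hold in the paper's $W^*$-probability setting.
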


\begin{rem}\label{k-diagonal}
One may ask if any measure $k$-divisible measure can be decomposed as the free multiplicative convolution of a $k$-Haar $ \nu_k=\frac{1}{k}\sum_{j=1}^k\delta_{q^j}k$ and a positive measure. However, Corollary \ref{cormulconv} shows that this is not the case since $$(\mu\boxtimes\nu_k)^k=\mu^{\boxtimes k}.$$
\end{rem}

\subsection{Free additive powers}

 Just as in the multiplicative case, it is not straightforward that free additive convolution for $k$-symmetric distributions is well defined. In fact, at this point this is an open problem. 

\textbf{Open Question.} Can we define free additive convolution of $k$-symmetric probability measures?

We will give a partial answer in the next section, see Theorem \ref{freeconvK}.
However, another important consequence of the Main Theorem is the existence of free additive powers of $\mu$, when $\mu$ is a probability measure with $k$-symmetry.

\begin{thm}
 Let $\mu\in\mathcal{M}_k$ be a $k$-symmetric distribution. Then for each $t>1$ there exists a $k$-symmetric measure $\mu^{\boxplus t}$ with $\kappa_n( \mu^{\boxplus t})= tk_n(\mu)$. 
\end{thm}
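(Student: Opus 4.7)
The strategy is to reduce the existence question to the positive measure case, where free additive powers are classically available, using the Main Theorem and the $S$-transform machinery.

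First, I would record that the candidate free cumulants $(t\kappa_n(\mu))_{n \geq 1}$ automatically have the correct support: for $\mu \in \mathcal{M}_k$ one has $G_\mu(qz) = q^{-1}G_\mu(z)$ directly from $\mu(B) = \mu(qB)$, which forces $F_\mu^{-1}(qz) = qF_\mu^{-1}(z)$ and hence $\mathcal{C}_\mu(qz) = \mathcal{C}_\mu(z)$. Comparing coefficients gives $\kappa_n(\mu)(q^n - 1) = 0$, so $\kappa_n(\mu) = 0$ whenever $k \nmid n$. The same vanishing pattern then holds for $(t\kappa_n(\mu))$, so any probability measure realizing these cumulants will automatically lie in $\mathcal{M}_k$. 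The problem reduces to existence.

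Second, I would propose $\mu^{\boxplus t}$ through its $S$-transform. Since $\mu \in \mathcal{M}_k$ has $\phi(x^n) = 0$ for $1 \leq n \leq k-1$, Definition \ref{S1} applies and produces an $S$-transform $S_\mu(z)$ (a power series in $z^{1/k}$, up to a choice of $k$-th root). Formula (\ref{S-free}) then forces the candidate $S_{\mu^{\boxplus t}}(z) := \frac{1}{t}S_\mu(z/t)$. Now $\mu^k \in \mathcal{M}^+$, so $(\mu^k)^{\boxplus t}$ exists in $\mathcal{M}^+$ for every $t > 1$ by classical free probability, and its $S$-transform is $\frac{1}{t}S_{\mu^k}(z/t)$. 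The Lemma derived earlier in this section, namely $S_{\mu^k}(z) = S_\mu(z)^k\bigl(\frac{z}{1+z}\bigr)^{k-1}$, lets us match $S$-transforms between the two worlds. Existence can then be obtained by realizing $\mu^{\boxplus t}$ concretely in an enlarged $W^*$-probability space containing a positive element with distribution $(\mu^k)^{\boxplus t}$ together with a free $k$-Haar unitary, and verifying via the Main Theorem that the resulting $k$-divisible element has the prescribed cumulants.

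The main obstacle will be that $(\mu^{\boxplus t})^k \neq (\mu^k)^{\boxplus t}$ in general. Indeed, writing $\alpha_n = \kappa_{kn}(\mu)$, Theorem \ref{1formula} applied to the determining sequence $t\alpha_n$ of $\mu^{\boxplus t}$ gives
\begin{equation*}
\kappa_n\bigl((\mu^{\boxplus t})^k\bigr) = \sum_{\pi \in NC((k-1)n)} t^{|\pi|}\,\alpha^{(k-1)}_\pi,
\end{equation*}
so the weight $t^{|\pi|}$ depends on the block count rather than being a uniform factor $t$. Hence one cannot simply take the $k$-symmetric root of $(\mu^k)^{\boxplus t}$. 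To overcome this I would either (i) use the $R$-cyclic matrix model of Proposition \ref{r-cyclic}, choosing a diagonally balanced $k$-tuple of positive entries whose joint distribution is arranged (through the Main Theorem and a suitable free compound Poisson ingredient) so that the resulting superdiagonal matrix has determining sequence $(t\alpha_n)$; or (ii) adapt the Bercovici--Voiculescu analytic construction of free additive powers, exploiting the equivariance $F_\mu(qz) = qF_\mu(z)$ to set $\phi_{\mu^{\boxplus t}}(z) := t\phi_\mu(z)$ on a Stolz region in each of the $k$ rotated half-planes and recover the measure via an appropriate Nevanlinna-type representation.
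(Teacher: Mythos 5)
Your opening observation (that the free cumulants of a $k$-symmetric measure vanish off the multiples of $k$, so the candidate cumulant sequence $t\kappa_n(\mu)$ has the right support pattern) is fine, and you have correctly identified the central difficulty: $(\mu^{\boxplus t})^k \neq (\mu^k)^{\boxplus t}$, since by Theorem \ref{1formula} the determining sequence $t\alpha_n$ produces weights $t^{|\pi|}$ rather than a uniform factor $t$. So one cannot simply pull back $(\mu^k)^{\boxplus t}$ along $z\mapsto z^k$, and your initial plan of ``matching $S$-transforms between the two worlds'' collapses exactly where you say it does. The problem is that neither of your two escape routes is actually carried out, and neither is obviously viable. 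Route (ii) essentially restates the theorem: the whole content of the claim is that $t\phi_\mu$ is the Voiculescu transform of a genuine measure, and the Nevanlinna/Stolz-angle machinery that delivers this for measures on $\mathbb{R}$ is not available for measures supported on the $k$ rotated semiaxes, so ``recover the measure via an appropriate Nevanlinna-type representation'' is precisely the missing step, not a proof of it. Route (i) is too vague to assess --- it is not explained what diagonally balanced $k$-tuple or which ``free compound Poisson ingredient'' would produce the determining sequence $(t\alpha_n)$.

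The idea you are missing is free compression by a projection, which is how the paper proves the theorem. Take $x$ $k$-divisible with $x^k$ positive realizing $\mu$, let $p$ be a projection free from $x$ with $\phi(p)=1/t$, set $X=tx$, and pass to the compressed space $(p\mathcal{A}p,\phi^{p\mathcal{A}p})$. By Theorem 14.10 of \cite{NiSp06}, the free cumulants of $pXp$ in the compressed space are $t\kappa_n(x)$, which is exactly the target cumulant sequence --- no comparison with $(\mu^k)^{\boxplus t}$ is ever needed. What remains is positivity, i.e.\ that this cumulant sequence comes from an honest $k$-symmetric probability measure; here the Main Theorem enters: since $X$ is $k$-divisible and $p$ is positive, $(Xp)^{kn}$ has the same moments as $(X^k p_1\cdots p_k)^n$ for free positive copies $p_i$ of $p$, so by traciality the moments of $(pXp)^k$ in the compressed space are (up to the normalization $1/\phi(p)$) those of a positive element, and the distribution of $pXp$ is therefore a genuine measure in $\mathcal{M}_k$. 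If you want to salvage your write-up, replace both of your routes by this compression argument; as it stands the proposal stops at the point where the real work begins.
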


\begin{proof}
Let $x\in(\mathcal{A},\phi)$ be a tracial $C$*-probability space and let  $x\in(\mathcal{A},\phi)$ be such that $x^k$ is positive and with distribution $\mu$ and $p\in(\mathcal{A},\phi)$ a projection such that $\phi(p)=\frac{1}{t}$, with $x$ and $p$ free. Now consider the compressed space $(p\mathcal{A}p,\phi^{p\mathcal{A}p})$ and the element $x_t:=pXp\in(p\mathcal{A}p,\phi^{p\mathcal{A}p})$, with $X=tx$. By Theorem 14.10 in \cite{NiSp06} the cumulants of $x_t$ (with respect to $\phi^{p\mathcal{A}p}$) are 
\begin{equation}
\kappa^{p\mathcal{A}p}_n(X_t,\dots,X_t)=t\kappa_n(\frac{1}{t}tX,\dots,\frac{1}{t}tX)=t\kappa_n(X,\dots ,X)
\end{equation}
Now, X is $k$-divisible and $X^k$ is positive  and $p$ is positive so, by the Main Theorem, the moments of $Xp$ also define $k$-symmetric distribution. Also, since $\phi$ is tracial we have
\begin{equation}
\phi^{p\mathcal{A}p}(pXppXp\cdots pXp)=\phi(pXpXp\cdots pXpXp)=\phi(XpX\cdots Xp)
\end{equation}
this means that the moments of $(pXp)^k$ define a positive measure $\mu$. Now consider the compressed $(pXp)^k$. Then
\begin{equation}
\phi^{p\mathcal{A}p}(pXppXp\cdots pXp)=\frac{1}{t}\phi(pXpXp\cdots pXpXp)=\phi(XpX\cdots Xp)
\end{equation}
but the measure $\nu=(1-1/t)\delta_0+1/t\mu)$ has moments $m_n(\nu)=\frac{1}{t}m_n(\mu)$ and we are done.
\end{proof}
 
Although we are not able to define free additive convolution for all $k$-symmetric measures, having free additive powers is enough to talk about central limit theorems and Poisson type ones. This will be done in Section \ref{limits}.

\subsection{Combinatorial consequences}

The following theorem of Nica and Speicher \cite{NS96} gives a formula for the moments and free cumulants of product of free random variables. 
\begin{thm}  \label{momentofprod}
Let $(A,\phi )$ be a non-commutative probability space and consider the free random
variables $a, b\in A$. The we have 
\begin{equation*}
\phi((ab)^n)=\sum_{\pi \in NC(n)} \kappa_{\pi
}(a)\phi _{K(\pi )}(b^n)
\end{equation*}%
and 
\begin{equation*}
\kappa_{n}(a)=\sum_{\pi \in NC(n)} \kappa_{\pi
}(a)\kappa_{K(\pi )}(b)
\end{equation*}
\end{thm}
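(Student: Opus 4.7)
My plan is to prove the two formulas by a single combinatorial mechanism: expand $\phi((ab)^n)$ or $\kappa_n(ab,\ldots,ab)$ by viewing $ab$ as a product of two letters and invoke (i) the moment--cumulant formula / the formula for products as arguments (Theorem~\ref{products}), and (ii) freeness, which kills every mixed $(a,b)$-cumulant and forces the surviving partitions to factor as a pair of partitions, one on odd positions (the $a$'s) and one on even positions (the $b$'s). The structural bridge between such a pair and the Kreweras complement is exactly Remark~\ref{kreweras}.

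For the cumulant identity, I would apply Theorem~\ref{products} with $m=n$, $i(j)=2j$, and the word $a_1a_2\cdots a_{2n}=a,b,a,b,\ldots,a,b$. This gives
\[
\kappa_n(ab,\ldots,ab)=\sum_{\substack{\pi\in NC(2n)\\ \pi\vee\hat 0_n=1_{2n}}}\kappa_\pi(a,b,a,b,\ldots,a,b).
\]
Freeness of $a$ and $b$ forces every non-vanishing term to have a partition $\pi$ whose blocks contain only $a$-slots or only $b$-slots. Thus $\pi=\pi_a\cup\pi_b$ with $\pi_a\in NC(n)$ on the odd positions and $\pi_b\in NC(n)$ on the even positions. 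The non-crossing condition together with the connectedness requirement $\pi\vee\hat 0_n=1_{2n}$ is precisely the characterization of the Kreweras complement recalled in Remark~\ref{kreweras}: $\pi_b=K(\pi_a)$. Since the cumulant factorizes as $\kappa_{\pi_a}(a)\kappa_{K(\pi_a)}(b)$, summing over $\pi_a\in NC(n)$ yields the desired formula.

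For the moment identity, I would start from the moment--cumulant formula (\ref{freemc}) applied directly to the word of length $2n$:
\[
\phi((ab)^n)=\sum_{\sigma\in NC(2n)}\kappa_\sigma(a,b,a,b,\ldots,a,b).
\]
Again freeness restricts the sum to $\sigma=\pi_a\cup\tau$ with $\pi_a\in NC(n)$ on odd positions and $\tau\in NC(n)$ on even positions, subject to $\pi_a\cup\tau$ being non-crossing in $NC(2n)$. For a fixed $\pi_a$, the set of admissible $\tau$ is exactly the order ideal $\{\tau\in NC(n):\tau\leq K(\pi_a)\}$, by the very definition of $K(\pi_a)$ as the largest non-crossing partition on the even positions compatible with $\pi_a$. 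Splitting the sum block by block and applying (\ref{freemc}) inside each block $W$ of $K(\pi_a)$,
\[
\sum_{\tau\leq K(\pi_a)}\kappa_\tau(b)=\prod_{W\in K(\pi_a)}\phi(b^{|W|})=\phi_{K(\pi_a)}(b),
\]
so the double sum collapses to $\sum_{\pi\in NC(n)}\kappa_\pi(a)\,\phi_{K(\pi)}(b)$, which is the moment formula.

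The main subtlety I expect is the combinatorial step identifying $\{\tau:\pi_a\cup\tau\in NC(2n)\}$ with the interval $[0_n,K(\pi_a)]$, and, in the cumulant case, verifying that the connectedness condition $\pi\vee\hat 0_n=1_{2n}$ picks out precisely the \emph{equality} $\pi_b=K(\pi_a)$ among all admissible $\pi_b\leq K(\pi_a)$. Both of these are standard consequences of the characterization of the Kreweras complement and of the fact that $|\pi|+|K(\pi)|=n+1$, but they are the only non-bookkeeping ingredients in the argument.
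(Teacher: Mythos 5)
The paper does not prove this statement at all: it is quoted as a known result of Nica and Speicher \cite{NS96}, so there is no internal proof to compare against. Your argument is correct and is essentially the standard proof from that reference (Theorem 14.4 of \cite{NiSp06}): expand the word $abab\cdots ab$ of length $2n$, use vanishing of mixed cumulants to split each surviving partition into $\pi_a\cup\pi_b$ supported on odd and even positions, and identify the compatibility constraints with the Kreweras complement. Two small points. First, for the cumulant formula you use the characterization of $K(\pi_a)$ as the unique $\sigma$ with $\pi_a\cup\sigma$ non-crossing and $(\pi_a\cup\sigma)\vee\hat 0_n=1_{2n}$, which is literally what Remark~\ref{kreweras} provides; but for the moment formula you use the \emph{other} characterization, namely that $\{\tau:\pi_a\cup\tau\in NC(2n)\}$ is the order ideal $[0_n,K(\pi_a)]$, i.e.\ that $K(\pi_a)$ is the maximal compatible partition. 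The two characterizations are equivalent, but the paper only records the first, so the maximality statement (and the factorization of the interval $[0_n,K(\pi_a)]$ as a product of $NC(|W|)$ over blocks $W$, which is what lets you collapse $\sum_{\tau\le K(\pi_a)}\kappa_\tau(b)$ to $\prod_W\phi(b^{|W|})$) deserves an explicit word. Second, note that the statement as printed contains typos --- the left-hand side of the second identity should read $\kappa_n(ab)$ rather than $\kappa_n(a)$, and $\phi_{K(\pi)}(b^n)$ denotes the multiplicative extension $\prod_{W\in K(\pi)}\phi(b^{|W|})$ --- and you have correctly proved the intended statements, consistent with Equation~(\ref{cum-prod}) elsewhere in the paper.
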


The observation here is that we can go the other way. Indeed for two multiplicative family ${f_n}$ and ${g_n}$ we can find a probability space $(A,\phi)$, and elements $a$ and $b$ in $A$ such that 
$\kappa_n^a=f_n$ and $\phi(b^n)=g_n$ and then we can calculate $(f*g)_n$ by the formula $(f*g)_n=\phi((ab)^n)$. Using this idea and the Main Theorem  we can generalize broadly Theorem \ref{MainRemark} to any multiplicative family whose first element is not zero.

\begin{thm}
\label{MainRemark2} \ The following statements are equivalent.

1) The sequence $f_n$ is given by the $k$-fold convolution
\begin{equation*}
f_{n}=g _{n}\ast\underbrace{h _{n}\ast \cdots \ast h _{n}}_{k~times} 
\end{equation*}
2) The dilated sequence $f_{n}^{(k)}$ is given by the convolution
\begin{equation*}
f_{n}^{(k)}=g_{n}^{(k)}\ast h _{n}
\end{equation*}

\end{thm}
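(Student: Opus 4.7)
The strategy, foreshadowed in the paragraph preceding the theorem, is to realize both sides of the equivalence as moments of products of free random variables and then to transport (2) into (1) using the Main Theorem. Concretely, I would pass to a tracial non-commutative probability space $(\mathcal{A},\phi)$ containing a free family $\{s, a_1, \ldots, a_k\}$ where $s$ is $k$-divisible with $\phi((s^k)^n) = g_n$ for every $n \geq 1$ (so $s$ itself has moment sequence $g^{(k)}$), and each $a_i$ has free cumulants $\kappa_n(a_i) = h_n$. Such a configuration exists at the purely algebraic level (define the trace on the appropriate free product algebra by the prescribed cumulants), and if a concrete model is wanted the $R$-cyclic construction of Proposition \ref{r-cyclic} produces such an $s$.

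Next, by iterating the product formula of Theorem \ref{momentofprod} for free families one obtains $m(b_1 \cdots b_r) = \kappa(b_1) \ast \cdots \ast \kappa(b_{r-1}) \ast m(b_r)$ whenever $b_1, \ldots, b_r$ are free; combined with $m = \kappa\ast\zeta$ and associativity/commutativity of $\ast$ this yields
\begin{equation*}
m_n(s^k a_1 a_2 \cdots a_k) = (g \ast h^{\ast k})_n, \qquad m_{kn}(s a_1) = (g^{(k)} \ast h)_{kn},
\end{equation*}
where $h^{\ast k}$ denotes the $k$-fold combinatorial convolution of $h$ with itself.

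The key third step is to invoke the Main Theorem. In the tracial setting the Remark following its statement establishes $\phi((xy)^{kn}) = \phi((y^k x_1 \cdots x_k)^n)$ whenever $y$ is $k$-divisible and free from $\{x, x_1, \ldots, x_k\}$, with no positivity hypothesis. Applied with $y = s$, $x = a_1$, $x_i = a_i$, this gives $m_{kn}(s a_1) = m_n(s^k a_1 \cdots a_k)$, so
\begin{equation*}
(g^{(k)} \ast h)_{kn} = (g \ast h^{\ast k})_n \qquad (n \geq 1).
\end{equation*}

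Finally, any non-crossing partition of $[m]$ with $k \nmid m$ must contain a block whose cardinality is not a multiple of $k$, so $(g^{(k)} \ast h)_m = 0$ for such $m$; hence $g^{(k)} \ast h$ is exactly the $k$-dilation of $g \ast h^{\ast k}$, and the equivalence (1)~$\Leftrightarrow$~(2) follows at once. The only genuinely nontrivial step is the third one — the invocation of the Main Theorem in its tracial, non-positive form; once that transport identity is in hand, everything else is routine manipulation of formal convolutions.
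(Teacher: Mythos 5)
Your proposal is correct and follows essentially the same route as the paper: realize $g$ as the moment sequence of (the $k$-th power of) a $k$-divisible element and $h$ as the cumulant sequence of the free factors, express both convolutions as moments of $(XY)^{kn}$ and $(Y^kX_1\cdots X_k)^n$ via Theorem \ref{momentofprod}, and transport one to the other by the Main Theorem in its formal (positivity-free) version. The only cosmetic difference is that you justify the non-positive form of the Main Theorem via the tracial remark and Theorem \ref{freeness}, whereas the paper simply observes that positivity plays no role in the combinatorial proof; your added check that $(g^{(k)}\ast h)_m=0$ for $k\nmid m$ is a small but welcome detail the paper leaves implicit.
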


\begin{proof}
In the proof of the Main Theorem, from the combinatorial point of view, positivity is not important. So let $X,Y$ be in $(A,\phi)$ with $Y$ a $k$-divisible element, and assume that $X$ has cumulants
 $\kappa_n^x=h_n$ and $Y^k$ has moments $\phi((Y^k)^n)=g_n$ (and therefore $\phi(X^n)=g_n^{(k)}$). If we consider $X_1,...,X_k$ elements with the same moments as $X$. Then $(XY)^k$ and $X_1\cdots X_kY^k$ have the same moments, i.e.
\begin{equation}\label{master}
 \phi((XY)^{kn})=\phi((X_1\cdots X_kY^k)^n)
\end{equation}
Now, the moments of $X_1\cdots X_kY^k$ are given by
\begin{equation*}
f_n:=\phi((Y^kX_1\cdots X_k)^n)=g_n\ast h_n\ast\cdots *h_n 
\end{equation*}
and the moments of $XY$ are given by
\begin{equation*}
\tilde f_n:=\phi((XY)^{n})=g_n^{(k)} \ast h_{n}
\end{equation*}
Now Equation (\ref{master}) implies that $\tilde f_n=f_{n}^{(k)}$. 
\end{proof}

\section{Limit theorems and free infinite divisibility} \label{limits}

In this section we will address questions regarding limit theorems. First, we prove central limit theorems and compound type ones, for $k$-symmetric measures. Next,  we consider the free infinite divisibility. Finally, we study the free multiplicative convolution of measures on the positive real line from the point of view of $k$-divisible partitions and its connections to the free multiplicative convolution between $k$-symmetric measures.

\subsection{Free central limit theorem for $k$-divisible measures}
 We have a new free central limit theorem for $k$-symmetric measures. Recall that for a measure $\mu$, $D_t(\mu)$ denotes the dilation by $t$ of the measure $\mu$.

\begin{thm}[Free Central limit theorem for $k$-symmetric measures] \label{freeCLT}

Let $\mu$ be a $k$-symmetric measure with finite moments and $\kappa_k(\mu)=1$ then, as $N$ goes to infinity,
\begin{equation*}
D_{N^{-1/k}}(\mu^{\boxplus N})\rightarrow s_k,
\end{equation*}
where $s_k$ is the only $k$-symmetric measure with 
free cumulant sequence $\kappa_n(s_k)=0$ for all $n\neq k$ and $\kappa_k(s_k)=1$. Moreover,
 $$(s_k)^k=\pi^{\boxtimes k-1},$$
 where $\pi$ is a free Poisson measure with parameter 1.
\end{thm}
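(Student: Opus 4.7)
The plan is to establish convergence through free cumulants, and then to identify the $k$-th power of the limit via the cumulant formula for $k$-divisible random variables from Section 4.

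First I would compute the cumulants of $D_{N^{-1/k}}(\mu^{\boxplus N})$ directly. The additivity of free cumulants under $\boxplus$ and their homogeneity under dilation give
\[
\kappa_n\bigl(D_{N^{-1/k}}(\mu^{\boxplus N})\bigr)=N^{1-n/k}\kappa_n(\mu).
\]
Since $\mu$ is $k$-symmetric, its free cumulants vanish at indices not divisible by $k$. Writing $n=mk$, the expression becomes $N^{1-m}\kappa_{mk}(\mu)$, which equals $\kappa_k(\mu)=1$ for $m=1$ and tends to $0$ for $m\geq 2$. Thus the cumulants converge termwise to the sequence with a single nonzero entry $\kappa_k=1$.

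To justify that this yields convergence to a well-defined $k$-symmetric probability measure $s_k$, I would note that the approximants $D_{N^{-1/k}}(\mu^{\boxplus N})$ are themselves $k$-symmetric (free additive powers of $k$-symmetric measures are $k$-symmetric by the preceding discussion, and dilations preserve $k$-symmetry). The limiting moment sequence determined by $(\kappa_n)=(\delta_{n,k})$ consists of Fuss-Catalan–type numbers $\#NC_k(n/k)$, which grow at most geometrically, hence determine a unique compactly supported distribution. Convergence of moments and preservation of $k$-symmetry under the weak-$*$ limit give the existence and uniqueness of $s_k$.

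For the identification $(s_k)^k=\pi^{\boxtimes(k-1)}$, I would apply Theorem \ref{1formula} to $x=s_k$. The $k$-determining sequence of $s_k$ is $\alpha_n=\delta_{n,1}$, so its $(k-1)$-dilation $\alpha^{(k-1)}$ is supported only at $n=k-1$ with value $1$. Hence $\alpha^{(k-1)}_{\pi}=1$ exactly when every block of $\pi\in NC((k-1)n)$ has size $k-1$, and $0$ otherwise, so
\[
\kappa_n\bigl((s_k)^k\bigr)=\#NC_{k-1}(n).
\]
On the other side, since $\pi$ has all free cumulants equal to $1$, the product formula $\kappa_n(\mu\boxtimes\pi)=\sum_{\sigma\in NC(n)}\kappa_\sigma(\mu)\cdot 1=m_n(\mu)$ yields the recursion $\kappa_n(\pi^{\boxtimes s})=m_n(\pi^{\boxtimes(s-1)})$. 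Iterating and using the bijection from the motivating example that $\#NC_{k-1}(n)=\#NC^{k-2}(n)$ equals the number of $(k-1)$-multichains in $NC(n)$, one recognizes $\kappa_n(\pi^{\boxtimes(k-1)})=\#NC_{k-1}(n)$. Agreement of the cumulant sequences then forces $(s_k)^k=\pi^{\boxtimes(k-1)}$.

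The main obstacle I expect is the combinatorial identification of $\kappa_n(\pi^{\boxtimes(k-1)})$ with $\#NC_{k-1}(n)$; this reduces to the triple bijection among $k$-equal partitions, $k$-divisible partitions and $k$-multichains exhibited in Section 4, so it is conceptually transparent but requires carefully tracking the Fuss-Catalan structure produced by iterated free multiplicative convolution with the free Poisson. Everything else is routine termwise cumulant bookkeeping.
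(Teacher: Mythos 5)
Your proof is correct. The first half (termwise convergence of free cumulants, $\kappa_n(D_{N^{-1/k}}(\mu^{\boxplus N}))=N^{1-n/k}\kappa_n(\mu)\to \delta_{n,k}$) is exactly the paper's argument; your added remarks on why the limiting cumulant sequence determines a genuine $k$-symmetric compactly supported measure are extra care the paper leaves implicit, and they are sound since the approximants are $k$-symmetric by the free-additive-power theorem and the limit moments $\#NC_k(n)$ grow geometrically.

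For the identification $(s_k)^k=\pi^{\boxtimes(k-1)}$ you take a mildly different route. The paper matches \emph{moments}: by the moment--cumulant formula only $k$-equal partitions contribute, so $m_n(s_k^k)=\#NC_k(n)=\binom{kn}{n}/((k-1)n+1)$, which is then recognized as $m_n(\pi^{\boxtimes(k-1)})$ from the free Bessel computation (Example \ref{freebesselex}). You instead match \emph{cumulants}: Theorem \ref{1formula} with determining sequence $\delta_{n,1}$ gives $\kappa_n((s_k)^k)=\#NC_{k-1}(n)$, and the recursion $\kappa_n(\mu\boxtimes\pi)=m_n(\mu)$ gives $\kappa_n(\pi^{\boxtimes(k-1)})=m_n(\pi^{\boxtimes(k-2)})=\#NC_{k-1}(n)$. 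Both reductions land on the same Fuss--Catalan enumeration and the same triple bijection from Section 4, so neither buys a real saving; the paper's version is one step shorter because it never needs the cumulants-of-$x^k$ formula, while yours has the small advantage of staying entirely inside cumulant language (and, as a bonus, your computation confirms directly that $\kappa_n((s_k)^k)\geq 0$, consistent with $\boxplus$-infinite divisibility of $(s_k)^k$). One pedantic caution: keep the multichain indexing straight --- what you need is $\#NC_{k-1}(n)=\#NC^{[k-2]}(n)$ in the paper's normalization $\#NC_{j+1}(n)=\#NC^{j}(n)=\#NC^{[j]}(n)$; the count itself is right, but the paper's own off-by-one conventions for multichains are easy to trip over.
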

\begin{proof}
Convergence in distribution to a measure determined by moments is equivalent to the convergence of the free cumulants. 
Now, for $i=1,2,...,n-1$ the $i$-th free cumulant $\kappa_i(\mu^{\boxplus N})$ equals zero and for $i> k$, the $i$-th free cumulant
	$$
\kappa_i(D_{N^{-1/k}}(\mu^{\boxplus N}))=(N^{-1/k})^i\kappa_i(\mu^{\boxplus N})=\frac{N}{N^{i/k}}\kappa_i(\mu)={N^{1-i/k}}\kappa_i(\mu)\rightarrow0$$
when $N$ goes to infinity. So, in the limit, the only non vanishing free cumulant is $\kappa_k(D_{N^{-1/k}}(\mu^{\boxplus N}))=\kappa_k(\mu)=1$. This means that $ s_k$ is the only $k$-symmetric measure with 
free cumulant sequence $\kappa_n=0$ for all $n\neq k$ and $\kappa_k=1$. For the second statement, on one hand, we calculate the moments of $s_k$ using the moment cumulant formula:
\begin{eqnarray}
 m_n(s_k^k) =m_{nk}(s_k)&=&\sum_{\pi\in NC(nk)} \kappa_\pi(s_k)  \\
                   &=&\sum_{\pi\in NC_k(n)} 1  \\
&=&\frac{\binom{kn}{n}}{kn-1}.
\end{eqnarray}
On the other hand, the moments of $\pi^{\boxtimes k-1}$ are known to be (See \cite{BBCC} or Example \ref{freebesselex} below).
$$m_n(\pi^{\boxtimes k-1})=\frac{\binom{kn}{n}}{kn-1}.$$
\end{proof}

\begin{rem}
We can derive properties of from the fact that $(s_k)^k=\pi^{\boxtimes k-1}$. Indeed, let $B(0,r)=\{z\in \mathbb{C} : |z|<r\}$. The measure $s_k$ satisfies the following properties.

(i) There are no atoms.

(ii) The support is $B(0,K)\cap \mathcal{A}_k$, where $K=\sqrt[k]{(k)^{k}/(k-1)^{k-1}}$.

(iii)The density is analytic on $(0,K)$.
\end{rem}

\begin{rem}
(1) Note from the proof of Theorem \ref{freeCLT} that in the algebraic sense we only need the first $k-1$ moments to vanish. For $k=1$, this is the law of large numbers and for $k=2$ we obtain the usual free central limit theorem.

(2) Observe that $s_k$ satisfies a stability condition. Indeed, $$s_k^{\boxtimes 2}=D_{2^{1/k}}(s_k)$$ from where we can interpret $s_k$ as a strictly stable distribution of index $k$. This raises the question whether there are other $k$-symmetric stable distributions. Of course, in the presence of moments we can only get a $s_k$ from the free central limit theorem above. Hence, if we expect to find other stable distribution we need to extend the notion of free additive powers to $k$-symmetric measures without moments. This will be done in Section \ref{Analytic Aspects}.
 
(3) The law of small numbers and more generally free compound Poisson type limit theorems are also valid for $k$-symmetric distributions. Moreover, a notion of free infinite divisibility will be given and studied. This is the content of next parts of this section.

\end{rem}

\subsection{Compound free Poissons}

The analogue of compound Poisson distributions and infinite divisibility is are the subjects of this subsection. Recall the definition of a free compound Poisson on $\real$.
\begin{defi}
A probability measure $\mu$ is said to be a free compound Poisson of rate $\lambda$ and jump distribution $\nu$ if the free cumulants $(\kappa_n)_{n\geq1}$ of $\mu$ are given by $\kappa_n(\mu)=\lambda m_n(\nu) $. In this case, $\lambda \nu$ coincides with the L\'evy measure of $\mu$.
\end{defi}

The most important free compound Poisson measure is the Marchenko-Pastur law $\pi$ whose $R$-transform is $R_\pi(z)=\frac{z}{1-z}$.
$\pi$ is also characterized by $S_\pi(z)=\frac{1}{z+1}$ in terms of the $S$-transform.

Following the definition of a free compound Poisson for selfadjoint random variables we can define their analogues for $k$-symmetric distributions. 

\begin{defi}
 A $k$-symmetric distribution $\mu$ is called a free compound Poisson of rate $\lambda$ and jump distribution $\nu$ if the free cumulants $(\kappa_n)_{n\geq1}$ of $\mu$ are given by $\kappa_n(\mu)=\lambda m_n(\nu) $, for some $\nu$ a  $k$-symmetric distribution.
\end{defi}

The existence of these measures can be easily proved by finding explicitly $\pi(\lambda,\nu)^k$.
 As announced we have a limit theorem for the free compound Poisson distributions. We shall mention that, implicitly, Banica et al. \cite{BBCC} observed the case $\nu=\frac{1}{k}\sum_{j=1}^k\delta_{q^j}$
\begin{thm}
We have the Poisson type limit convergence
\begin{equation*}
((1-\frac{\lambda}{N})\delta_0+\frac{\lambda}{N}\nu)^{\boxplus N}\rightarrow \pi(\lambda,\nu)
\end{equation*}
\end{thm}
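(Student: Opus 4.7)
The plan is to carry out the standard cumulant-method proof of a Poisson limit theorem, adapted to the $k$-symmetric setting. First, observe that $\mu_N := (1-\tfrac{\lambda}{N})\delta_0+\tfrac{\lambda}{N}\nu$ is itself a $k$-symmetric probability measure (the convex combination of $\delta_0$ and a $k$-symmetric $\nu$ remains invariant under multiplication by the primitive $k$-th root $q$), so by the free additive power theorem of Section 6, the measure $\mu_N^{\boxplus N}$ is well defined for $N>1$ and its free cumulants satisfy $\kappa_n(\mu_N^{\boxplus N}) = N\kappa_n(\mu_N)$.

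The first key step is to compute the asymptotics of $\kappa_n(\mu_N)$. Since $m_n(\mu_N) = \tfrac{\lambda}{N}\, m_n(\nu)$ for every $n\geq 1$, Möbius inversion of the moment-cumulant formula (\ref{freemc}) gives
\begin{equation*}
\kappa_n(\mu_N) \;=\; \sum_{\pi\in NC(n)} m_\pi(\mu_N)\,\mu(\pi,1_n) \;=\; \sum_{\pi\in NC(n)} \Bigl(\tfrac{\lambda}{N}\Bigr)^{|\pi|} m_\pi(\nu)\,\mu(\pi,1_n),
\end{equation*}
so each partition with $b$ blocks contributes $O(N^{-b})$. The dominant term as $N\to\infty$ comes from $\pi=1_n$, yielding
\begin{equation*}
\kappa_n(\mu_N) \;=\; \frac{\lambda}{N}\, m_n(\nu) + O(N^{-2}).
\end{equation*}

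Multiplying by $N$ gives
\begin{equation*}
\kappa_n(\mu_N^{\boxplus N}) \;=\; N\kappa_n(\mu_N) \;\longrightarrow\; \lambda\, m_n(\nu) \qquad (N\to\infty),
\end{equation*}
and by definition these are precisely the free cumulants of $\pi(\lambda,\nu)$. Hence the free cumulants (and therefore all moments) of $\mu_N^{\boxplus N}$ converge to those of $\pi(\lambda,\nu)$. Finally, I would invoke moment-determinacy: when $\nu$ has compact support so does $\pi(\lambda,\nu)$, and convergence of all moments to a compactly supported moment-determined target implies the asserted weak convergence $\mu_N^{\boxplus N}\to\pi(\lambda,\nu)$ in $\mathcal{M}_k$.

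The main obstacle is not the combinatorial core, which mirrors the classical proof, but rather the justification that moment convergence suffices in the complex-supported $k$-symmetric setting; this is clean for measures of bounded support (the case of primary interest, covering the Banica--Bichon--Collins--Curran example $\nu=\tfrac{1}{k}\sum_{j=1}^k\delta_{q^j}$), and in full generality requires either a compactness argument in $\mathcal{M}_k$ or passing through the fact that $\mu_N^k\to\pi(\lambda,\nu)^k$ weakly in $\mathcal{M}^+$ via Theorem \ref{MT} and the known Poisson limit theorem on the positive half-line.
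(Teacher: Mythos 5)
Your proposal is correct and follows essentially the same route as the paper: the paper's proof also reduces to the observation that $\kappa_n\bigl((1-\tfrac{\lambda}{N})\delta_0+\tfrac{\lambda}{N}\nu\bigr)=\tfrac{\lambda}{N}m_n(\nu)+O(1/N^2)$, so that $N\kappa_n\to\lambda m_n(\nu)$, citing the selfadjoint argument in Nica--Speicher. You merely spell out the M\"obius-inversion computation and the moment-determinacy step that the paper leaves implicit.
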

\begin{proof}
The proof is identical as for the selfadjoint case, see for example \cite{NiSp06}. The main observation is that if $\nu_N=((1-\frac{\lambda}{N})\delta_0+\frac{\lambda}{N}\nu)^{\boxplus N}$ then 
\begin{equation*}
\kappa_n(\nu_N)=\frac{\lambda}{N} m_n(\nu)+O(1/N^2)
\end{equation*}
and then $\kappa_n(\nu^{\boxplus N}_N)=N\kappa_n(\nu_N)$ converges to $\lambda m_n(\nu)$.
\end{proof}

\begin{exa}[Free Bessel laws] \label{freebesselex}

Free Bessel laws  introduced in \cite{BBCC}, are defined by
\begin{equation*}
\pi_{kt}=\pi^{\boxtimes k}\boxtimes\pi^{\boxplus k}.
\end{equation*}
We restrict attention to the case $t = 1$, for simplicity. 
They proved using a matrix model that the free Bessel law $\pi_{k1}$ with $k\in\nat$ is given by
\begin{equation}\label{bes1}
\pi_{k1}=law\left[\sum_{j=1}^k\ \left[ P_j{q^j}\right]\right]^k
\end{equation} 
where $P_1,...,P_k$ s are free random variables, each of them following the free Poisson
law of parameter $1/k$.
So they were lead to consider the modified free Bessel laws $\hat\pi_{s1}$, given by
\begin{equation}\label{bes2}
\hat\pi_{s1}=law\left[\sum_{j=1}^s\ \left[ P_j{q^j}\right]\right].
\end{equation} 
It is important to notice that $\sum_{j=1}^k\ \left[ W_j{q^j}\right]$ is not a normal operator so the equalities in (\ref{bes1}) and (\ref{bes2}) are just equalities in moments (and not $*$-moments). In our notation means that
\begin{equation*}
\hat\pi_{k1}=\hat\pi_{k1}^k
\end{equation*}

A modified free Bessel law is $k$-symmetric, but moreover it is a compound free Poisson with rate $\lambda=1$ and jump distribution a $k$-Haar measure. So we have the representation
\begin{equation*}
\hat\pi_{k1}=\pi(1,\nu)=\pi\boxtimes\frac{1}{k}\sum_{j=1}^k\delta_{q^j}
\end{equation*}
Combining these identities we see that
\begin{equation*}
(\pi\boxtimes\frac{1}{k}\sum_{j=1}^k\delta_{q^j})^k=(\pi(1,\nu))^k=\hat\pi_{k1}=\pi_{k1}^k=\pi^{\boxtimes k}.
\end{equation*}
which is nothing but Equation (\ref{multconv}) for $\mu=\pi$ and $\nu=\sum_{j=1}^k\delta_{q^j}$. Moreover the free cumulants and moments of $\pi^{\boxtimes k}$ are given by 
$$m_n( \pi^{\boxtimes k})=\frac{\binom{(k+1)n}{n}}{kn+1}~~~k_n(\pi^{\boxtimes k})=\frac{\binom{kn}{n}}{{(k-1)n+1}}.$$
This is easily seen since the free cumulants of $\pi$ are given by $k_n(\pi)=1$ for all $n\in\mathbb{N}$. So calculating the moments and cumulants of $\pi^{\boxtimes k}$ amounts counting the number of $k$-multichains of $NC(n)$ which was done in Example \ref{k-k-k}.
\end{exa}

\subsection{Free infinite divisibility}
 Given the limit theorems above, the concept of free infinite divisibility  in $mathcal{M}_k$ raises naturally.
\begin{defi}
A $k$-symmetric measure is $\boxplus$-infinitely divisible if for any $N>0$ there exist $\mu_N$ such that $\mu_N^{\boxplus N}=\mu$. We will denote the set of freely infinitely divisible distribution in $\mathcal{M}_k$ by $ID^\boxplus(\mathcal{M}_k)$
\end{defi}

It is easily seen the $ID^\boxplus(\mathcal{M}_k)$ is closed under convergence in distribution.
Free compound Poissons are $\boxplus$-infinitely divisible, since $\pi(\lambda,\mu)^{\boxplus t}=\pi(\lambda t,\mu)$. Moreover any free infinitely divisible measure can be approximated by free compound Poissons. The proof of this fact follows the same lines as for the selfadjoint case. We will give the main ideas of this proof for the convenience of the reader. 

The following is a special case of Lemma 13.2 in Nica Speicher \cite{NiSp06}.
\begin{lem}\label{lemma71}
Let $a_N$ be random variables in some non-commutative probability space $(\mathcal{A},\phi_N)$ and denote then the following statements are equivalent.

(1)For each $n\geq1$ the limit 
\begin{equation*}
 \lim_{N\rightarrow \infty}N\cdot \phi_N(a^n_N)
\end{equation*}
exists.

(2)For each $n\geq1$ the limit
\begin{equation*}
 \lim_{N\rightarrow \infty}N\cdot \kappa^N_n(a_N,...,a_N)
\end{equation*}
exists.
Furthermore the corresponding limits are the same.
\end{lem}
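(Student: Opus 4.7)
The plan is to use the moment-cumulant formula (\ref{freemc}),
$$\phi_N(a_N^n) = \sum_{\pi \in NC(n)} \kappa^N_\pi(a_N, \ldots, a_N),$$
multiplied throughout by $N$, and proceed by induction on $n$. The base case $n=1$ is immediate since $NC(1)$ has a single element, giving $\phi_N(a_N) = \kappa^N_1(a_N)$.

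For the inductive step, assume that both conditions hold for all $m < n$ and that the resulting limits coincide; set $c_m := \lim_{N \to \infty} N \kappa^N_m(a_N, \ldots, a_N)$. The strategy is to isolate the contribution of the single-block partition $1_n$, which equals $N \kappa^N_n(a_N, \ldots, a_N)$, and show that all other contributions vanish in the limit. Indeed, for $\pi = \{V_1, \ldots, V_r\} \in NC(n)$ with $r \geq 2$ blocks we have $|V_i| < n$ for each $i$, so the induction hypothesis supplies $\kappa^N_{|V_i|}(a_N, \ldots, a_N) = c_{|V_i|}/N + o(1/N)$, and therefore
$$N \kappa^N_\pi(a_N, \ldots, a_N) = N \prod_{i=1}^r \kappa^N_{|V_i|}(a_N, \ldots, a_N) = \frac{1}{N^{r-1}} \prod_{i=1}^r c_{|V_i|} + o\!\left(\frac{1}{N^{r-1}}\right) \longrightarrow 0.$$
Splitting off $\pi = 1_n$ from the moment-cumulant sum then gives
$$N \phi_N(a_N^n) - N \kappa^N_n(a_N, \ldots, a_N) = \sum_{\pi \in NC(n),\, \pi \neq 1_n} N \kappa^N_\pi(a_N, \ldots, a_N) \longrightarrow 0,$$
so the existence of either limit forces the other, and the two limits agree, closing the induction.

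The main subtlety, rather than a genuine obstacle, is inductive bookkeeping: one must note that the inductive hypothesis not only provides existence of the limits $c_m$ for $m < n$ but also the sharper estimate $\kappa^N_m = O(1/N)$, which is exactly what makes the multi-block contributions die out at the rate $N^{-(r-1)}$. Since this estimate is automatic from the existence of $\lim N \kappa^N_m$, no additional argument is required, and the proof reduces to the clean algebraic manipulation above.
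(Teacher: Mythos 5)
Your proof is correct, and it is essentially the standard argument: the paper itself gives no proof of this lemma, deferring to Lemma 13.2 of Nica--Speicher \cite{NiSp06}, whose proof is exactly your induction via the moment--cumulant formula, isolating the one-block partition $1_n$ and observing that a partition with $r\geq 2$ blocks contributes $O(N^{1-r})\to 0$ after multiplication by $N$. No gaps; the key point you flag (that existence of $\lim_N N\kappa^N_m$ already gives $\kappa^N_m=O(1/N)$ for $m<n$) is precisely what makes the induction close.
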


Now, we can prove the approximation result. 
\begin{prop}\label{approx}
A $k$-symmetric measure with is freely infinitely divisible if and only if it can be approximated (in distribution) by free compound Poissons.
\end{prop}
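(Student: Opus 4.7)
My plan is to prove both implications by reducing to convergence of free cumulants, in strict analogy with the classical selfadjoint argument of Nica--Speicher, with the extra bookkeeping needed to stay inside $\mathcal{M}_k$.

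For the ``only if'' direction, suppose $\mu \in ID^\boxplus(\mathcal{M}_k)$ and pick $\mu_N$ with $\mu_N^{\boxplus N} = \mu$. Since $\kappa_n(\mu_N) = \kappa_n(\mu)/N$ by Equation \eqref{DefAdtPow}, and since $\mu \in \mathcal{M}_k$ forces $\kappa_n(\mu) = 0$ for $k \nmid n$, the measure $\mu_N$ has a $k$-divisible cumulant sequence, so $\mu_N \in \mathcal{M}_k$. Now set $\pi_N := \pi(N, \mu_N)$. This is a compound Poisson whose cumulants are $\kappa_n(\pi_N) = N m_n(\mu_N)$; as $\mu_N$ is $k$-symmetric, $m_n(\mu_N)$ vanishes outside multiples of $k$, so $\pi_N \in \mathcal{M}_k$ as well. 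Picking random variables $a_N$ in some probability space realising $\mu_N$, we have $\lim_{N\to\infty} N\kappa_n^N(a_N,\ldots,a_N) = \kappa_n(\mu)$ for each $n$, hence by Lemma \ref{lemma71} also $\lim_{N\to\infty} N\phi_N(a_N^n) = \kappa_n(\mu)$. That is, $\kappa_n(\pi_N) \to \kappa_n(\mu)$ for every $n$; via the moment--cumulant formula this yields moment-wise convergence $m_n(\pi_N) \to m_n(\mu)$, which gives weak convergence $\pi_N \to \mu$.

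For the ``if'' direction, each free compound Poisson $\pi(\lambda, \nu) \in \mathcal{M}_k$ is freely infinitely divisible, because its free cumulants scale linearly: $\pi(\lambda, \nu)^{\boxplus t}$ is defined (via the cumulant sequence $t \lambda m_n(\nu)$) and equals $\pi(\lambda t, \nu) \in \mathcal{M}_k$ for every $t > 0$. Since $ID^\boxplus(\mathcal{M}_k)$ is closed under weak convergence (as remarked immediately after its definition), any weak limit of free compound Poissons lies in $ID^\boxplus(\mathcal{M}_k)$.

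The only delicate point is the passage from convergence of cumulants (equivalently, moments) to weak convergence of the approximating measures. In the bounded-support setting used implicitly in this section this is routine via the method of moments, since the limit distribution is moment-determined. In the unbounded-support case, one would additionally need a tightness argument; I expect that to be the main obstacle, but it lies outside what this proposition addresses.
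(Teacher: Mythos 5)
Your proposal is correct and follows essentially the same route as the paper's own proof: both directions use the closedness of $ID^\boxplus(\mathcal{M}_k)$ under weak convergence for one implication, and for the other take $\mu_N$ with $\mu_N^{\boxplus N}=\mu$, form the compound Poisson with rate $N$ and jump distribution $\mu_N$, and invoke Lemma \ref{lemma71} to pass from $N\kappa_n(\mu_N)$ to $Nm_n(\mu_N)$. The extra bookkeeping you supply (checking that $\mu_N$ and $\pi(N,\mu_N)$ stay in $\mathcal{M}_k$, and flagging the moment-to-weak-convergence step) is detail the paper leaves implicit, not a change of method.
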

\begin{proof}
On one hand, since free compound Poissons are freely infinitely divisible any measure approximated by them is also infinitely divisible. On the other hand, let $\mu$ be infinitely divisible. Then for any $N>0$ there exist $\mu_N$ such that $\mu_N^{\boxplus N}=\mu$. So by Lemma \ref{lemma71} we have
\begin{equation}
\kappa_n(\mu)=N\cdot \kappa_n(\mu_N)=\lim_{N\rightarrow \infty}N\cdot \kappa_n(\mu_N)=\lim_{N\rightarrow \infty}N\cdot m_n(\mu_N)
\end{equation}
Now, let $\nu_N$ be a free compound Poisson with rate $N$ and jump distribution $\mu_N$ then $\kappa_n(\nu_N)=Nm_n(\mu_N)$
\begin{equation}
\lim_{N\rightarrow \infty}\kappa_n(\nu_N)=\lim_{N\rightarrow \infty}N\cdot m_n(\mu_N)
\end{equation}
So $\nu_N\rightarrow\mu$ in distribution.
\end{proof}

Next, the results of Section 5  can be interpreted in terms of free compound Poissons.

\begin{prop}
\label{Poisson mult} Suppose that $x$ is a $k$-divisible element and $
\alpha _{n}=\kappa_{kn}(x)$ is a free cumulant sequence of a positive element $(\kappa_{n}(a)=\alpha _{n})$ with distribution $\nu $ then
\begin{equation*}
distr(x^{k})=\pi ^{\boxtimes k-1}\boxtimes \nu.
\end{equation*}
\end{prop}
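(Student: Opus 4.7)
The plan is to prove the proposition by matching free cumulants on both sides. Since both $\pi^{\boxtimes (k-1)} \boxtimes \nu$ and $x^{k}$ have cumulants that can be computed from $\alpha$ (the first via iterated Poisson convolution, the second via the results of Section~4), it is enough to verify that these two cumulant sequences coincide.

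First, I analyse the right-hand side. The crucial input is that the free Poisson measure $\pi$ of parameter $1$ has $\kappa_{n}(\pi)=1$ for every $n\geq 1$, so $\kappa_{K(\sigma)}(\pi) = 1$ for every non-crossing partition $\sigma$. Plugging this into the product-cumulant formula~(\ref{cum-prod}), for any measure $\mu$ with cumulant sequence $(\gamma_{n})$ one has
\begin{equation*}
\kappa_{n}(\mu \boxtimes \pi) = \sum_{\sigma \in NC(n)} \gamma_{\sigma}\, \kappa_{K(\sigma)}(\pi) = \sum_{\sigma \in NC(n)} \gamma_{\sigma} = [\gamma \ast \zeta]_{n},
\end{equation*}
so on the level of multiplicative families $\kappa(\mu \boxtimes \pi) = \kappa(\mu)\ast\zeta$. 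In other words, free multiplicative convolution by $\pi$ acts on cumulants exactly as combinatorial convolution by the zeta function in $NC$. Iterating this observation $(k-1)$ times, starting from $\mu = \nu$ whose cumulant sequence is $\alpha$ by hypothesis, yields
\begin{equation*}
\kappa_{n}\bigl(\pi^{\boxtimes (k-1)} \boxtimes \nu\bigr) = \bigl[\alpha \ast \underbrace{\zeta \ast \cdots \ast \zeta}_{(k-1)~\text{times}}\bigr]_{n}.
\end{equation*}

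Next, I turn to the left-hand side. Theorem~\ref{1formula} gives $\kappa_{n}(x^{k}) = \sum_{\sigma \in NC((k-1)n)} \alpha^{(k-1)}_{\sigma}$, which, reading both sides as multiplicative families evaluated after $(k-1)$-dilation, says $\kappa(x^{k})^{(k-1)} = \alpha^{(k-1)}\ast\zeta$ in $NC$. Applying the dilation--convolution equivalence of Theorem~\ref{MainRemark} (with $k$ replaced by $k-1$) translates this into $\kappa_{n}(x^{k}) = [\alpha \ast \zeta^{\ast(k-1)}]_{n}$, matching the cumulant sequence computed above.

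The two cumulant sequences therefore coincide, so $\mathrm{distr}(x^{k}) = \pi^{\boxtimes(k-1)} \boxtimes \nu$. The conceptual heart of the argument is the dictionary \emph{free multiplicative convolution by $\pi$ equals combinatorial convolution by $\zeta$}; the only point requiring care is whether the exponent of $\pi$ is $k$ or $k-1$, which is dictated by the single extra $\zeta$ appearing in the moment-cumulant formula $m = \kappa \ast \zeta$, and is confirmed by the sanity check $k=1$ (giving $\mathrm{distr}(x)=\nu$) and $k=2$ (recovering the classical formula $\kappa_{n}(x^{2}) = m_{n}(\nu) = \kappa_{n}(\pi \boxtimes \nu)$ for even elements).
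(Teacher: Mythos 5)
Your proof is correct and follows essentially the same route as the paper's: the paper likewise computes the cumulants of $\pi^{\boxtimes (k-1)}\boxtimes\nu$ by iterating Equation (\ref{cum-prod}) with $\kappa_{n}(\pi)=1$, and matches them against a $\zeta$-convolution formula for $\kappa_{n}(x^{k})$, citing Proposition \ref{3formula} where you instead rederive that formula from Theorem \ref{1formula} together with Theorem \ref{MainRemark}. Your care with the exponent is warranted: the correct count is $(k-1)$ convolutions with $\zeta$, as you obtain, and the ``$k$ times'' in the statement of Proposition \ref{3formula} is an off-by-one slip (with $k$ copies of $\zeta$ one gets the \emph{moments} of $x^{k}$, not its cumulants), exactly as your $k=2$ semicircular sanity check exposes.
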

\begin{proof}

By Proposition \ref{3formula} we have that the free cumulants of $x^k$ are given by
\begin{equation*}
\kappa_n(x^k)=[\alpha *\zeta \cdots *\zeta]_n.
\end{equation*}
On the other hand, by successive application of Equation (\ref{cum-prod}), we can see that the cumulants of $\pi ^{\boxtimes (k-1)}\boxtimes \nu$ are given by
\begin{equation*}
\kappa_n(\pi ^{\boxtimes (k-1)}\boxtimes \nu)=[\alpha *\zeta \cdots *\zeta]_n
\end{equation*}
as desired.
\end{proof}

\begin{cor}
If $x$ is a $k$-symmetric compound Poisson with rate $\lambda $ and jump
distribution $\nu.$ Then the distribution of $x^{k}$ is a compound Poisson with rate $1$ and jump
distribution $\pi^{\boxtimes k-1} \boxtimes \nu ^{k}$ .
\end{cor}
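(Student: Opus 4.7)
The plan is to apply Proposition \ref{Poisson mult} to a concrete positive measure derived from $\nu$.

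First, I would compute the $k$-determining sequence $\alpha_n := \kappa_{kn}(x)$. By hypothesis $x$ is a free compound Poisson of rate $\lambda$ and jump distribution $\nu$, so its free cumulants are $\kappa_n(x) = \lambda\, m_n(\nu)$. The $k$-symmetry of $\nu$ forces $m_n(\nu)=0$ unless $k\mid n$, and for $n=km$ one has $m_{km}(\nu)=m_m(\nu^k)$ by the definition of $\nu^k$ as the pushforward of $\nu$ under $t\mapsto t^k$. Hence
\[
\alpha_n \;=\; \lambda\, m_n(\nu^k).
\]

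Next, I would recognize this sequence as the free cumulant sequence of a positive measure. Since $\nu^k\in\mathcal{M}^+$, the free compound Poisson $\rho:=\pi(\lambda,\nu^k)$ is a probability measure in $\mathcal{M}^+$ whose free cumulants are exactly $\kappa_n(\rho)=\lambda\,m_n(\nu^k)=\alpha_n$. Applying Proposition \ref{Poisson mult} with this positive measure then gives
\[
distr(x^k) \;=\; \pi^{\boxtimes (k-1)}\boxtimes\rho \;=\; \pi^{\boxtimes (k-1)}\boxtimes\pi(\lambda,\nu^k).
\]

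To conclude, I would rewrite the right-hand side as a free compound Poisson. The key observation is that for any $\mu\in\mathcal{M}^+$ the measure $\pi\boxtimes\mu$ is itself a free compound Poisson of rate $1$ with jump distribution $\mu$: using Equation (\ref{cum-prod}) together with $\kappa_n(\pi)=1$ for all $n$, one computes
\[
\kappa_n(\pi\boxtimes\mu)\;=\;\sum_{\sigma\in NC(n)}\kappa_{K(\sigma)}(\mu)\;=\;m_n(\mu),
\]
which is exactly the cumulant form of a rate-$1$ compound Poisson. Peeling off one outer factor of $\pi$ from $\pi^{\boxtimes (k-1)}\boxtimes\pi(\lambda,\nu^k)$ by associativity of $\boxtimes$ thus exhibits $distr(x^k)$ as a free compound Poisson of rate $1$ whose jump distribution is the stated free multiplicative convolution involving $\pi$ and $\nu^k$.

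The main obstacle lies precisely in this final identification: one must absorb both the scalar $\lambda$ and the remaining copies of $\pi$ so that the jump distribution comes out as $\pi^{\boxtimes (k-1)}\boxtimes\nu^k$ on the nose. I would complete this by matching the free cumulants of the two candidates through iterated use of Equation (\ref{cum-prod}) and the interpretation of $\boxtimes$-convolution with the Marchenko--Pastur law as a shift by $\zeta$ in the combinatorial convolution, reducing the claim to an identity of multiplicative families on $NC(n)$.
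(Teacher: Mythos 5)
Your route is the same as the paper's: compute the $k$-determining sequence $\alpha_n=\kappa_{kn}(x)=\lambda\, m_n(\nu^k)$, recognize it as the free cumulant sequence of a positive free compound Poisson, feed that measure into Proposition \ref{Poisson mult}, and then peel off one Marchenko--Pastur factor. Up to the identity $distr(x^k)=\pi^{\boxtimes(k-1)}\boxtimes\pi(\lambda,\nu^k)$ your argument is complete and in fact more careful than the paper's own proof, which silently writes $\kappa_n(x)=m_n(\nu)$ (i.e.\ sets $\lambda=1$), so that $\pi(\lambda,\nu^k)$ becomes $\pi\boxtimes\nu^k$ and the conclusion $distr(x^k)=\pi^{\boxtimes k}\boxtimes\nu^k$ falls out immediately.

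The step you defer, however, is a genuine gap and cannot be closed for $\lambda\neq 1$. Peeling off one factor of $\pi$ exhibits $distr(x^k)$ as a rate-$1$ compound Poisson whose jump distribution is $\pi^{\boxtimes(k-2)}\boxtimes\pi(\lambda,\nu^k)$, and no amount of cumulant matching will turn this into $\pi^{\boxtimes(k-1)}\boxtimes\nu^k$ unless $\lambda=1$: comparing first cumulants, your own computation gives $\kappa_1(x^k)=\alpha_1=\lambda\, m_1(\nu^k)$, whereas a rate-$1$ compound Poisson with jump distribution $\pi^{\boxtimes(k-1)}\boxtimes\nu^k$ has $\kappa_1=m_1\bigl(\pi^{\boxtimes(k-1)}\boxtimes\nu^k\bigr)=m_1(\nu^k)$, since $m_1(\pi)=1$. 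So the identification you hope to force is simply false when $\lambda\neq1$; the corollary as stated is a $\lambda=1$ statement. You should either restrict to $\lambda=1$ (as the paper implicitly does) or state the correct general conclusion, namely that $x^k$ is a rate-$1$ compound Poisson with jump distribution $\pi^{\boxtimes(k-2)}\boxtimes\pi(\lambda,\nu^k)$; everything before that point in your write-up is sound.
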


\begin{proof}
If $x$ is $k$-symmetric compound Poisson with levy measure $\mu $ then $
\kappa_{n}(x)=m_{n}(\nu ).$ So, $\alpha _{n}=\kappa_{kn}(x)=m_{kn}(\nu )=m_{n}(\nu
^{k})$, that is $\alpha _{n}$ is the free cumulant sequence of $\pi \boxtimes \nu^k$. By the last proposition 
\begin{equation*}
distr(x^{k})=\pi ^{\boxtimes k}\boxtimes \nu^k .
\end{equation*}%
In other words $x^k$ is a free compound Poisson with levy measure $\pi^{\boxtimes k-1}\boxtimes
\nu^k .$
\end{proof}

We prove that free infinite divisibility is maintained under the mapping $\mu\rightarrow\mu^k$, this generalizes results of \cite{AHS} where the case $k=2$ was considered.

\begin{thm} \label{freeinf10}
If $\mu$ is $k$-symmetric and $\boxplus$-infinitely divisible, then $\mu ^{k}$ is
also $\boxplus$-infinitely divisible.
\end{thm}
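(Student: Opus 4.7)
My plan is to approximate $\mu$ by $k$-symmetric free compound Poissons, transfer infinite divisibility through the map $\mu\mapsto\mu^k$ using the corollary immediately preceding the theorem, and then pass to the weak limit. Concretely, applying Proposition \ref{approx} to the $k$-symmetric $\boxplus$-infinitely divisible measure $\mu$ produces a sequence $(\nu_N)$ of $k$-symmetric free compound Poissons with $\nu_N \to \mu$ weakly.

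Next, for each $N$ the preceding corollary identifies $\nu_N^k$ as a free compound Poisson supported on $\mathbb{R}^+$, with L\'evy measure of the form $\pi^{\boxtimes (k-1)} \boxtimes \tau_N^k$, where $\tau_N$ denotes the jump distribution of $\nu_N$. The free multiplicative convolution $\pi^{\boxtimes(k-1)}\boxtimes \tau_N^k$ is unambiguously defined as a measure in $\mathcal{M}^+$ by Proposition \ref{StransfPos} since $\tau_N^k \in \mathcal{M}^+$. Thus each $\nu_N^k$ lies in $ID^{\boxplus}(\mathcal{M}^+)$, using that free compound Poissons are freely infinitely divisible.

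To finish, I will use that $t\mapsto t^k$ is continuous from the $k$-semiaxes $A_k$ onto $\mathbb{R}^+$, so $\nu_N \to \mu$ weakly forces $\nu_N^k \to \mu^k$ weakly by the continuous mapping theorem. Since $\boxplus$-infinite divisibility on $\mathbb{R}^+$ is preserved under weak convergence---a classical consequence of the Bercovici--Voiculescu free L\'evy--Khintchine representation together with the continuity properties of the Voiculescu transform on the upper half-plane---we conclude that $\mu^k$ itself lies in $ID^{\boxplus}(\mathcal{M}^+)$. The only delicate step is this weak closure, for which I would cite \cite{BeVo93}; the rest of the argument is mechanical once Proposition \ref{approx} and the preceding corollary are in hand.
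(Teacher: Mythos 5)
Your proposal follows essentially the same route as the paper: approximate $\mu$ by $k$-symmetric free compound Poissons via Proposition \ref{approx}, use the preceding corollary to identify each $\nu_N^k$ as a free compound Poisson (hence $\boxplus$-infinitely divisible) on $\mathbb{R}^+$, and conclude by weak convergence together with the closedness of the class of $\boxplus$-infinitely divisible measures. Your write-up is in fact slightly more careful than the paper's (which loosely invokes closedness of $ID^{\boxplus}(\mathcal{M}_k)$ where the relevant class is the one on $\mathbb{R}^+$), but the argument is the same.
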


\begin{proof}
Suppose that $\mu $ is infinitely divisible. Then $\mu $ can be approximated
by free compound Poisson which are $k$-symmetric. Say $\mu =\lim_{n\rightarrow \infty
}\mu _{n}$ where $\mu _{n}=\pi \boxtimes \nu _{n}$. By the last corollary
there $\mu _{n}^{k}=\pi^{ \boxtimes k} \boxtimes \nu _{n}.$ Now $\mu _{n}^{k}\rightarrow
\mu ^{k}$ and since $ID^{\boxplus }(\mathcal{M}_k)$ is closed in the weak convergence topology we have that $\mu $ is infinitely divisible.
\end{proof}

\begin{cor}
A $k$-symmetric infinitely divisible measure has at most $k$-atoms. 
\end{cor}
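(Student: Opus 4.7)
The plan is to reduce the statement to the classical fact that a $\boxplus$-infinitely divisible probability measure on $\mathbb{R}$ has at most one atom, together with the bookkeeping of atoms under the $k$-fold power map. By the immediately preceding Theorem~\ref{freeinf10}, $\mu^{k}$ is a $\boxplus$-infinitely divisible probability measure on $[0,\infty)$, so I would invoke the classical Bercovici--Voiculescu result that such a measure has at most one atom.

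Next, I would analyze how atoms of $\mu$ correspond to atoms of $\mu^{k}$ under the pushforward by $t\mapsto t^{k}$. Since $\mu\in\mathcal{M}_{k}$ is supported on $A_{k}$ and satisfies $\mu(B)=\mu(qB)$ for every Borel set $B$, any atom of $\mu$ at a point $z\neq 0$ forces atoms of equal mass at $qz,q^{2}z,\dots,q^{k-1}z$. Hence the atoms of $\mu$ on $A_{k}\setminus\{0\}$ arrange themselves in $k$-cycles under multiplication by $q$, while $0$ is its own orbit. The map $t\mapsto t^{k}$ collapses each such $k$-cycle to a single point $z^{k}>0$ and sends $0$ to $0$, so an atom of $\mu^{k}$ at $r>0$ pulls back to exactly $k$ atoms of $\mu$ (at the $k$ roots $q^{j}r^{1/k}$, $j=0,\dots,k-1$), and an atom of $\mu^{k}$ at $0$ pulls back to a single atom of $\mu$ at $0$.

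Combining the two steps, since $\mu^{k}$ has at most one atom, $\mu$ falls into one of three cases: no atoms at all, a single atom at $0$, or a single $k$-orbit of atoms off the origin. In every case $\mu$ carries at most $k$ atoms, which is the desired conclusion.

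The main obstacle is essentially the input from the real-line theory: one must cite (or verify) that $\boxplus$-infinite divisibility forces at most one atom. This is standard (and follows from the L\'evy--Khintchine type representation of the free cumulant transform together with the analysis of the Cauchy transform near the real axis), but it is not recorded explicitly in the excerpt, so I would pause to attribute it carefully. The remaining two ingredients---the application of Theorem~\ref{freeinf10} and the orbit counting for $k$-symmetric measures---are formal and require no further estimates.
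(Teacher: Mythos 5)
Your proposal is correct and follows essentially the same route as the paper: the paper's proof is a one-line citation of the Bercovici--Voiculescu result that a $\boxplus$-infinitely divisible measure on the real line has at most one atom, leaving implicit the reduction to $\mu^{k}$ via Theorem~\ref{freeinf10} and the counting of preimages of an atom under $t\mapsto t^{k}$, both of which you spell out correctly. No gaps.
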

\begin{proof}This follows from the well known result of Bercovici and Voiculescu \cite{Be-Vo} that a freely infinitely divisible measure on the real line has at most $1$ atom.
\end{proof}

Finally we come back to the question of defining free convolution. We give a partial answer to the question raised in last section.

\begin{thm} \label{freeconvK} Let $\mu$ and $\nu$ be $k$-symmetric freely infinitely divisible measures. Then there exists a $k$-symmetric $\mu\boxplus\nu$ such that $$\kappa_n(\mu\boxplus\nu)=\kappa_n(\mu)+\kappa_n(\nu).$$ Moreover $\mu\boxplus\nu$ is also freely infinitely divisible.
\end{thm}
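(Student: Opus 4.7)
The plan is to reduce the construction of $\mu \boxplus \nu$ to the special case where both summands are free compound Poissons, where the convolution can be defined explicitly, and then pass to the general infinitely divisible setting via the approximation provided by Proposition~\ref{approx}.

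First I would handle the compound Poisson case directly. Given two $k$-symmetric compound Poissons $\mu = \pi(\lambda_1,\rho_1)$ and $\nu = \pi(\lambda_2,\rho_2)$ with $\rho_1,\rho_2 \in \mathcal{M}_k$, set
$$\mu \boxplus \nu \;:=\; \pi\!\left(\lambda_1+\lambda_2,\; \tfrac{\lambda_1\rho_1+\lambda_2\rho_2}{\lambda_1+\lambda_2}\right).$$
The jump distribution is a convex combination of elements of $\mathcal{M}_k$, hence remains supported on $A_k$ and $q$-invariant, so $\mu\boxplus\nu \in \mathcal{M}_k$; it is automatically freely infinitely divisible. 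The defining property of compound Poissons gives $\kappa_n(\mu\boxplus\nu) = \lambda_1 m_n(\rho_1)+\lambda_2 m_n(\rho_2) = \kappa_n(\mu)+\kappa_n(\nu)$, as desired.

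Next, for arbitrary $\mu, \nu \in ID^\boxplus(\mathcal{M}_k)$, Proposition~\ref{approx} yields sequences of $k$-symmetric compound Poissons $\mu_m \to \mu$ and $\nu_m \to \nu$. I would set $\eta_m := \mu_m \boxplus \nu_m$, which is a $k$-symmetric compound Poisson by the previous step, and note that Lemma~\ref{lemma71} combined with $\mu_m \to \mu$ and $\nu_m \to \nu$ gives
$$\kappa_n(\eta_m) \;=\; \kappa_n(\mu_m)+\kappa_n(\nu_m) \;\longrightarrow\; \kappa_n(\mu)+\kappa_n(\nu)$$
for each $n$, and in particular that all moments of $\eta_m$ converge. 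It remains to promote this to weak convergence $\eta_m \to \eta$ for some $\eta \in \mathcal{M}_k$; setting $\mu\boxplus\nu := \eta$ then yields the desired measure, which is $k$-symmetric as a weak limit of $k$-symmetric measures, and freely infinitely divisible by closedness of $ID^\boxplus(\mathcal{M}_k)$ under weak convergence.

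The hard part will be this last convergence step, since the $\mu_m,\nu_m$ need not have bounded support and pointwise convergence of moments is not in general enough to guarantee weak convergence. My proposed route is to apply the map $\rho \mapsto \rho^k$ throughout: by Theorem~\ref{freeinf10-2} each $\eta_m^k$ lies in $\mathcal{M}^+$ and is $\boxplus$-infinitely divisible, and its moments converge by the computation above together with the combinatorial identities of Section~4 expressing the cumulants of $\eta_m^k$ in terms of those of $\eta_m$. On $\mathcal{M}^+$ the standard analytic machinery (Cauchy transforms on $\mathbb{C}\setminus[0,\infty)$ and the continuity theorem for $\boxplus$) upgrades the moment convergence to weak convergence of $\eta_m^k$ to some positive freely infinitely divisible measure; the bijection between $\mathcal{M}_k$ and $\mathcal{M}^+$ induced by $t\mapsto t^k$ together with $k$-symmetry of the approximants then lifts this back to weak convergence of $\eta_m$ itself in $\mathcal{M}_k$, completing the construction.
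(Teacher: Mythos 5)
Your proposal is correct and takes essentially the same route as the paper: the paper's (very terse) proof likewise observes that the free convolution of two $k$-symmetric free compound Poissons is again a $k$-symmetric free compound Poisson and then invokes Proposition~\ref{approx} to pass to general freely infinitely divisible measures. Your handling of the final limiting step is in fact more careful than the paper's, which does not address the passage from cumulant/moment convergence to the existence of the limiting measure at all.
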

\begin{proof}
Since the free convolution of $k$-divisible free compound Poisson is also a $k$-divisible free compound the by Theorem \ref{approx} this is also true $k$-symmetric freely infinitely divisible measures.
\end{proof}

It would be interesting to give a Levy-Kintchine Formula and study triangular arrays for $k$-symmetric probability measures.

\subsection{Free multiplicative powers of measures on $\real^+$ revisited}
In this section, for a probability measure $\mu\in\mathcal{M}_+$ with compact support we will denote by $\mu^{1/k}$ the positive measure with $m_{nk}(\mu^{1/k})=m_n(\mu)$ and $\mu^{[1/k]}$ the $k$-symmetric measure such that $m_{nk}(\mu^{[1/k]})=m_n(\mu)$.
Consider Remark \ref{k-diagonal} for $ \nu=\frac{1}{k}\sum_{j=1}^k\delta_{q^j}$, a $k$-Haar measure. Then $$(\mu\boxtimes\sum_{j=1}^k\delta_{q^j})^k=\mu^{\boxtimes k}.$$ Using this fact, the moments of $\mu^{\boxtimes k}$ may be calculated using $k$-divisible non-crossing partitions as we show in the following proposition.
\begin{thm} 
Let $\mu$ be a measure with positive support. Then the moments of $\mu_k:=\mu^{\boxtimes k}$ are given by
\begin{eqnarray}
m_n(\mu_k)&=& \sum_{\pi \in NC^k(n)}\kappa_{Kr(\pi)}(\mu), \label{kmom2} 
\end{eqnarray}
where $NC^k(n)$ denotes the $k$-divisible partitions of $[kn]$.
\end{thm}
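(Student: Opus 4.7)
The plan is to reduce the statement to a direct moment computation for $xy$ with $x$ positive and $y$ a $k$-Haar unitary, using the Main Theorem via Corollary~\ref{cormulconv} plus the Nica--Speicher formula for moments of products of free random variables (Theorem~\ref{momentofprod}).

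First, I would observe that Corollary~\ref{cormulconv} applied to $\nu = \nu_k := \frac{1}{k}\sum_{j=1}^{k}\delta_{q^j}$ gives $(\mu\boxtimes\nu_k)^k = \mu^{\boxtimes k}\boxtimes \nu_k^k$. Since $q^{jk}=1$ for every $j$, the pushforward of $\nu_k$ under $t\mapsto t^k$ is $\delta_1$, so $\nu_k^k=\delta_1$ and therefore $(\mu\boxtimes\nu_k)^k = \mu^{\boxtimes k}$. Comparing moments,
\begin{equation*}
m_n(\mu_k) \;=\; m_n\!\bigl((\mu\boxtimes\nu_k)^k\bigr) \;=\; m_{kn}(\mu\boxtimes\nu_k).
\end{equation*}

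Next, take $x,y$ free with $x$ of distribution $\mu$ and $y$ of distribution $\nu_k$, and apply Theorem~\ref{momentofprod}:
\begin{equation*}
m_{kn}(\mu\boxtimes\nu_k) \;=\; \phi((xy)^{kn}) \;=\; \sum_{\pi\in NC(kn)} \kappa_{\pi}(\mu)\,\phi_{Kr(\pi)}(y,\dots,y).
\end{equation*}
The key observation is that the moment sequence of $\nu_k$ is precisely the indicator $m_j(\nu_k)=\mathbf{1}_{k\mid j}$; hence $\phi_{Kr(\pi)}(y,\dots,y)=\prod_{V\in Kr(\pi)}m_{|V|}(\nu_k)$ equals $1$ when every block of $Kr(\pi)$ has size divisible by $k$, and vanishes otherwise. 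Thus the sum collapses to those $\pi\in NC(kn)$ whose Kreweras complement lies in $NC^k(n)$.

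Finally, reindex by $\sigma = Kr(\pi)$. Since $Kr$ is a bijection on $NC(kn)$, this yields
\begin{equation*}
m_n(\mu_k) \;=\; \sum_{\sigma\in NC^k(n)} \kappa_{Kr^{-1}(\sigma)}(\mu).
\end{equation*}
To match the stated formula I would invoke the well-known fact that $Kr^2$ is a cyclic rotation of $NC(kn)$; in particular, $Kr^{-1}(\sigma)$ and $Kr(\sigma)$ share the same multiset of block sizes. Because the multiplicative extension $\kappa_{\tau}(\mu)=\prod_{V\in\tau}\kappa_{|V|}(\mu)$ depends only on the block-size multiset, we conclude $\kappa_{Kr^{-1}(\sigma)}(\mu)=\kappa_{Kr(\sigma)}(\mu)$, which gives \eqref{kmom2}.

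The only subtle step is this last identification of $\kappa_{Kr^{-1}(\sigma)}$ with $\kappa_{Kr(\sigma)}$; everything else is a clean application of Corollary~\ref{cormulconv} and the standard product-of-free-variables moment formula. Alternative: if one wishes to avoid this rotation argument entirely, one can instead expand $\phi((yx)^{kn})$ (which equals $\phi((xy)^{kn})$ by traciality) and now the Kreweras complement falls on the $x$-factors, producing $\sum_{\pi\in NC^k(n)}\kappa_{Kr(\pi)}(\mu)$ directly.
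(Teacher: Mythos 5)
Your proposal is correct and follows essentially the same route as the paper: reduce to $(\mu\boxtimes\nu_k)^k=\mu^{\boxtimes k}$ for the $k$-Haar measure $\nu_k$ via Corollary~\ref{cormulconv}, expand $\phi((xy)^{kn})$ with Theorem~\ref{momentofprod}, and use that $m_j(\nu_k)=\mathbf{1}_{k\mid j}$ to collapse the sum to $k$-divisible partitions. The only difference is that you explicitly justify the passage from $\kappa_{Kr^{-1}(\sigma)}$ to $\kappa_{Kr(\sigma)}$ via the fact that $Kr^2$ is a rotation, a bookkeeping point the paper's one-line proof glosses over; this is a genuine (and welcome) tightening rather than a different method.
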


\begin{proof}
Let $\nu=\sum_{j=1}^k\delta_{q^j}$, the moments of $\mu\boxtimes\nu$ can be calculated using Theorem \ref{momentofprod}:
$$m_n(\mu\boxtimes\nu))=\sum_{\pi \in NC^k(n)}\kappa_{Kr(\pi)}(\mu)m_\pi(\nu)=\sum_{\pi \in NC^k(n)}\kappa_{Kr(\pi)}(\mu)$$
where the last equality follows since $m_\pi(\nu)=0$ unless $\pi$ is $k$-divisible.
\end{proof}

This formula has been generalized for non-identically distributed random variables in \cite{A-V} where it was used to give new proofs of results in Kargin \cite{Kar,Kar2} and Sakuma and Yoshida \cite{SaYo} regarding the asymptotic behaviors of $\mu^{\boxtimes k}$ and $(\mu^{\boxtimes k})^{\boxplus k}$, respectively. 

Moreover, from results of Tucci \cite{Tu} we know that the $k$-th root of the measure $\mu^{\boxtimes k}$ converges to a non-trivial measure. More precisely, he proved the following.

\begin{thm}\label{tu2}
Let $\mu$ be a probability measure with compact support. If we denote by $\mu_k=(\mu^{\boxtimes k})^{1/k}$
then $\mu_k$ converges weakly to $\hat\mu$, where $\hat\mu$ is the unique measure characterized by $ \hat\mu( [0,\frac{1}{S_\mu(t-1)} ])=t $ for all $t\in(0,1)$. The support of the measure $ \hat \mu$ is the closure of the interval
 $$(\alpha,\beta)=((\int_0^\infty x^{-1}d\mu(x))^{-1},\int_0^\infty xd \mu (x)),$$
where $0\leq\alpha<\beta\leq\infty$
\end{thm}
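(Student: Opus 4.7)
The plan is to establish the weak convergence of $\mu_k = (\mu^{\boxtimes k})^{1/k}$ by identifying its cumulative distribution function $F_k(y) = \mu_k([0,y]) = \mu^{\boxtimes k}([0,y^k])$ with the one claimed. Since $\supp(\mu) \subseteq [0,M]$ implies $\supp(\mu^{\boxtimes k}) \subseteq [0,M^k]$, the family $\{\mu_k\}$ is uniformly supported in $[0,M]$ and hence tight. It therefore suffices to identify the weak limit along an arbitrary convergent subsequence.

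The key input is the multiplicativity $S_{\mu^{\boxtimes k}}(z) = S_\mu(z)^k$. Using $\chi_{\mu^{\boxtimes k}}(z) = \frac{z}{1+z}\,S_{\mu^{\boxtimes k}}(z)$, this translates, for each $t \in (-1,0)$, into the identity
\[
\int_0^\infty \frac{z_k\,x}{1-z_k\,x}\,d\mu^{\boxtimes k}(x) = t, \qquad z_k := \tfrac{t}{1+t}\,S_\mu(t)^k,
\]
where $z_k<0$ since $S_\mu(t)>0$ on $(-1,0)$ (indeed $\chi_\mu(t)<0$ and $\frac{1+t}{t}<0$ there). Pushing forward under $x\mapsto x^{1/k}$ and setting $a_k := |z_k|^{1/k} = \left(\tfrac{-t}{1+t}\right)^{1/k} S_\mu(t)$, the identity becomes
\[
-\int_0^M \frac{(a_k\,y)^k}{1+(a_k\,y)^k}\,d\mu_k(y) = t.
\]
As $k\to\infty$ we have $a_k\to S_\mu(t)$; the integrand is uniformly bounded by $1$ and converges pointwise to $-\mathbf{1}_{\{y > 1/S_\mu(t)\}}$.

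Taking any subsequence along which $\mu_k$ converges weakly to some $\mu^*$ with distribution function $F^*$, a dominated-convergence argument (valid whenever $1/S_\mu(t)$ is not an atom of $\mu^*$, which excludes only countably many $t$) gives
\[
t = F^*(1/S_\mu(t)) - 1, \qquad\text{i.e.}\qquad F^*(1/S_\mu(s-1)) = s, \quad s = 1+t \in (0,1).
\]
Since $t\mapsto 1/S_\mu(t)$ is continuous and strictly monotone on $(-1,0)$ and $F^*$ is monotone, this relation on a dense set forces $F^* = \hat F$, proving the convergence. For the support, $S_\mu(0)=1/m_1(\mu)$ yields $\beta=\int x\,d\mu$, while the asymptotic $\chi_\mu(z)\sim -(z+1)^{-1}\int x^{-1}\,d\mu$ as $z\to -1^+$ (obtained from $\psi_\mu(w)+1\sim -w^{-1}\int x^{-1}\,d\mu$ as $w\to-\infty$) gives $S_\mu(-1^+)=\int x^{-1}\,d\mu$, hence $\alpha=(\int x^{-1}\,d\mu)^{-1}$.

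The principal technical point is the dominated-convergence step, where the measures $\mu_k$, the exponent $a_k$, and the subsequential limit $F^*$ all vary simultaneously. This is handled by exploiting the monotonicity of the kernel in $y$: for $y$ bounded away from $1/S_\mu(t)$ the integrand converges uniformly, while the contribution from a small neighborhood of $1/S_\mu(t)$ is controlled by the $\mu^*$-mass of that neighborhood, which is arbitrarily small at points of continuity of $F^*$; since $F^*$ has at most countably many discontinuities, the identification $F^*=\hat F$ persists on a dense subset and by monotonicity everywhere.
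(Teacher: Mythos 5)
The paper itself offers no proof of this statement: Theorem \ref{tu2} is quoted directly from Tucci \cite{Tu} (see also Haagerup and M\"oller \cite{HaM}), so there is no internal argument to compare yours against. Your reconstruction is correct and follows the route of the known proofs: the defining identity $\psi_{\mu^{\boxtimes k}}(\chi_{\mu^{\boxtimes k}}(t))=t$ combined with $S_{\mu^{\boxtimes k}}=S_\mu^k$, the push-forward under $x\mapsto x^{1/k}$, and the degeneration of the kernel $\frac{(a_k y)^k}{1+(a_k y)^k}$ to the step function $\mathbf{1}_{\{y>1/S_\mu(t)\}}$ is exactly the mechanism behind the theorem, and your handling of the simultaneous limit (uniform convergence of the kernel away from the jump point, portmanteau control of the $\mu_k$-mass of a closed neighbourhood of $1/S_\mu(t)$ by the $\mu^*$-mass, identification of $F^*$ only at its continuity points, then monotonicity on a dense set) is sound, as are the computations of the endpoints $\beta=1/S_\mu(0^-)=\int x\,d\mu$ and $\alpha=1/S_\mu(-1^+)=(\int x^{-1}d\mu)^{-1}$.

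Two ingredients are asserted without justification and deserve to be isolated as lemmas. First, you use that $t\mapsto S_\mu(t)$ is continuous and strictly monotone (decreasing) on the relevant interval; this is what makes $t\mapsto 1/S_\mu(t)$ a homeomorphism onto $(\alpha,\beta)$, lets the countable exceptional set of $t$'s be absorbed, and yields that $\mathrm{supp}\,\hat\mu$ is the \emph{closure} of $(\alpha,\beta)$ rather than something smaller. This is a genuine (if standard) fact about $S$-transforms of non-degenerate measures in $\mathcal{M}^+$ and should be proved or cited. Second, the degenerate cases are silently excluded: if $\mu=\delta_c$ the characterization $\hat\mu([0,1/S_\mu(t-1)])=t$ does not define a probability measure (though the conclusion $\mu_k=\delta_c$ is trivial), and if $\mu(\{0\})>0$ then $\chi_\mu$ is only defined on $(\mu(\{0\})-1,0)$, so your parameter $t$ must be restricted accordingly and the persistent atom $\mu_k(\{0\})=\mu(\{0\})$ tracked separately. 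With these caveats made explicit the argument is complete.
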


On the other hand, for $R$-diagonal operators, Haagerup and Larsen \cite{HaL} proved the following. 
\begin{thm} Let $T$ be an $R$-diagonal operator and $t\in(0,1)$. If $\nu:=\mu_{|T|^2}$ is not a Dirac measure then $\mu_T( B(0,\frac{1}{\sqrt{S_\nu(t-1)}}))=t$
where $B(0,r)=\{z\in \mathbb{C} : |z|<r\}$
\end{thm}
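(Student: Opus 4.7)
The plan is to exploit the polar decomposition of $R$-diagonal operators together with the rotational invariance of the Brown measure. Writing $T = U h$ with $h = |T|$ positive and $U$ a Haar unitary $*$-free from $h$ (the standard characterization of $R$-diagonality), one notes that $e^{i\theta} T = (e^{i\theta}U)\,h$ has the same $*$-distribution as $T$, hence the Brown measure $\mu_T$ is rotation-invariant. In particular, it is fully determined by its radial distribution function $F(r) := \mu_T(B(0,r))$, and the target identity is the statement $F^{-1}(t) = 1/\sqrt{S_\nu(t-1)}$ for $t\in(0,1)$.

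The central analytic step is to compute the Fuglede--Kadison determinant $\Delta(T-\lambda) = \exp\tau(\log|T-\lambda|)$ as a function of $|\lambda|$ only. The natural tool here is the Hermitization
\begin{equation*}
\widetilde{T-\lambda} = \begin{pmatrix} 0 & T-\lambda \\ T^*-\bar\lambda & 0 \end{pmatrix},
\end{equation*}
whose squared distribution on the amplified space gives $|T-\lambda|^2 = h^2 - \lambda U^*h - \bar\lambda hU + |\lambda|^2$. Since $U$ is Haar unitary and free from $h$, the distribution of $|T-\lambda|^2$ can be read off from the free additive convolution of $\nu = \mu_{h^2}$ with the deterministic shift by $|\lambda|^2$, together with the Haar-unitary subordination identities. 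This leads to an explicit formula for $\tau(\log|T-\lambda|^2)$ purely in terms of $\nu$ and $|\lambda|$.

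Next, one recovers $\mu_T$ via the distributional identity $d\mu_T(\lambda) = \tfrac{1}{2\pi}\Delta_\lambda \tau(\log|T-\lambda|)\,d^2\lambda$. By rotational symmetry this Laplacian reduces to a single-variable calculation in $r=|\lambda|$: one obtains
\begin{equation*}
F(r) = r\,\frac{d}{dr}\,\tau\!\left(\log|T-r|\right),
\end{equation*}
and the previous step expresses the right-hand side via moments of $\nu$. The final identification is an $S$-transform change of variable: setting $t = F(r)$ and using $\psi_\nu(z) = \int \tfrac{zx}{1-zx}\,d\nu(x)$ together with $S_\nu(z) = \tfrac{1+z}{z}\psi_\nu^{-1}(z)$, the relation between $r$ and $t$ collapses to $r^2 \, S_\nu(t-1) = 1$.

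The main obstacle is the explicit computation in the second step: establishing $\tau(\log|T-\lambda|)$ as an effectively computable function of $\nu$ and $|\lambda|$. The two standard routes are (a) subordination for the free additive convolution against the Haar-unitary perturbation $\lambda U^*h + \bar\lambda hU$, or (b) a clever use of the identity $\Delta(T) = \Delta(|T|) = \exp\tfrac12 \int \log x\,d\nu(x)$ combined with an analytic continuation in $\lambda$. Either route requires some care, as the singular behavior of the logarithm at $|T-\lambda| = 0$ is precisely what encodes the non-trivial support of $\mu_T$ described by the formula.
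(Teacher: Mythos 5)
First, a point of orientation: the paper does not prove this statement at all --- it is quoted verbatim as a known result of Haagerup and Larsen (\cite{HaL}, on Brown measures of $R$-diagonal elements), so there is no internal proof to compare against. Your outline is, in spirit, the Haagerup--Larsen strategy: polar decomposition $T=Uh$ with $U$ Haar unitary $*$-free from $h=|T|$, rotational invariance of $\mu_T$, computation of the Fuglede--Kadison determinant $\lambda\mapsto\tau(\log|T-\lambda|)$, recovery of $\mu_T$ by the radial Laplacian identity $F(r)=rL'(r)$, and a final $S$-transform change of variables. Those bookend steps (rotation invariance, the Green's-identity reduction of $\frac{1}{2\pi}\Delta$ to $rL'(r)$, and the definition of the Brown measure) are all correct.

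The genuine gap is the central step, which you both misstate and leave unexecuted. Writing $T=Uh$ one has $|T-\lambda|^2=h^2-\lambda hU^*-\bar{\lambda}Uh+|\lambda|^2$, and the cross terms $-\lambda hU^*-\bar{\lambda}Uh$ are neither free from $h^2$ nor absorbable into a ``deterministic shift by $|\lambda|^2$''; the distribution of $|T-\lambda|^2$ is emphatically not $\nu\boxplus\delta_{|\lambda|^2}$ (already for $h=1$, i.e.\ $T=U$, that would give $\delta_{1+|\lambda|^2}$, which is false). The correct mechanism, which is the actual content of the Haagerup--Larsen proof, is that $|T-\lambda|=|h-\lambda U^*|$ and the Hermitization of $h-\lambda U^*$ decomposes as a sum of two \emph{free} self-adjoint elements whose distributions are the symmetrization $\tilde{\mu}_{|T|}$ and the symmetric Bernoulli $\frac{1}{2}(\delta_{|\lambda|}+\delta_{-|\lambda|})$; one must then run the free-convolution/subordination machinery for $\tilde{\mu}_{|T|}\boxplus\frac{1}{2}(\delta_{|\lambda|}+\delta_{-|\lambda|})$ to extract $\tau(\log|T-\lambda|)$ and, after differentiating in $r=|\lambda|$, to arrive at the relation $r^2S_\nu(t-1)=1$. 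You flag this as ``the main obstacle'' and offer two possible routes, but you carry out neither, and route (a) as described rests on the incorrect deterministic-shift claim. Since this computation is precisely where the theorem lives (it encodes the support and the radial distribution function of $\mu_T$), the proposal as written is an accurate road map with the destination named but the road not built; it does not constitute a proof. A complete argument should either reproduce the subordination analysis of \cite{HaL} or cite it, and should also record the range of $t$ for which the formula is asserted (between $\mu_T(\{0\})$ and $1$) and the standing hypotheses guaranteeing that the Brown measure and $S_\nu$ on $(-1,0)$ are defined.
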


If we combine these two results we obtain the following interesting interpretation of the limiting distribution. 

\begin{thm}\label{polar} Let $a,u\in A$ be free elements with a positive and $u$ a Haar unitary. Moreover, let $\mu$ be a probability measure with compact support distributed as $a^2$. If we denote by 
\begin{equation}\label{polarr}
\mu_k=\mu\boxtimes\frac{1}{k}\sum_{j=1}^k\delta_{q^j}
\end{equation}
then $\mu_k$ converges weakly to $\mu_\infty$ where $\mu_\infty$ is the rotationally invariant measure such that $\mu_\infty(B(0,t^2))=\mu_{au}(B(0,t))$, where $\nu$ is the Brown measure of $au$.
\end{thm}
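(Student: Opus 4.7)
The plan is to combine Corollary \ref{cormulconv} with Tucci's Theorem \ref{tu2} and the Haagerup--Larsen formula for the Brown measure of an $R$-diagonal element. First I would observe that if $\nu=\frac{1}{k}\sum_{j=1}^{k}\delta_{q^{j}}$, then $\nu^{k}$ (the push-forward by $t\mapsto t^{k}$) equals $\delta_{1}$, so Corollary \ref{cormulconv} gives $\mu_k^{k}=\mu^{\boxtimes k}\boxtimes\delta_{1}=\mu^{\boxtimes k}$. Since $\mu_k$ is $k$-symmetric and supported on $A_{k}$, it is determined by its modulus distribution $\rho_k$ on $[0,\infty)$, and the relation $\mu_k^{k}=\mu^{\boxtimes k}$ becomes $\rho_k^{k}=\mu^{\boxtimes k}$, that is $\rho_k=(\mu^{\boxtimes k})^{1/k}$ in the Tucci notation.

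Tucci's Theorem \ref{tu2} then yields $\rho_k\to\hat\mu$ weakly. To upgrade this to weak convergence of $\mu_k$ on $\comp$, I would check convergence of $\ast$-moments: all $\rho_k$ are supported in $[0,R]$ with $R=\sup\supp(\mu)$, hence the $\mu_k$ are uniformly supported in $\{|z|\le R\}$, and weak convergence of compactly supported measures is equivalent to convergence of polynomial moments in $z$ and $\bar z$. For $z=tq^{j}\in A_k$ one has $z^{a}\bar z^{b}=t^{a+b}q^{j(a-b)}$, so
\begin{equation*}
\int_{\comp} z^{a}\bar z^{b}\,d\mu_k(z)=\Big(\frac{1}{k}\sum_{j=1}^{k} q^{j(a-b)}\Big)\,m_{a+b}(\rho_k).
\end{equation*}
For fixed $a,b$ and $k>|a-b|$ the bracketed sum vanishes unless $a=b$, in which case it equals $1$, while $m_{2a}(\rho_k)\to m_{2a}(\hat\mu)$ by Tucci. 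These are exactly the $\ast$-moments of the rotationally invariant measure $\mu_\infty$ on $\comp$ defined by $\mu_\infty(B(0,r))=\hat\mu([0,r])$, so $\mu_k\to\mu_\infty$ weakly.

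Finally, I would identify $\mu_\infty$ with the Brown measure of $au$. Since $a$ is positive and free from the Haar unitary $u$, the element $au$ is $R$-diagonal, and by traciality $|au|^{2}=u^{\ast}a^{2}u$ has the same distribution as $a^{2}$, so $\mu_{|au|^{2}}=\mu$. Haagerup--Larsen then says $\mu_{au}$ is rotationally invariant with $\mu_{au}(B(0,1/\sqrt{S_{\mu}(t-1)}))=t$, while Tucci gives $\hat\mu([0,1/S_{\mu}(t-1)])=t$. Setting $r=1/\sqrt{S_{\mu}(t-1)}$, we obtain $\mu_\infty(B(0,r^{2}))=\hat\mu([0,r^{2}])=t=\mu_{au}(B(0,r))$ on the range $t\in(0,1)$, and this extends to all admissible radii by continuity of the two radial distribution functions.

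The main obstacle I expect is the careful passage from the one-dimensional weak convergence $\rho_k\to\hat\mu$ to the planar weak convergence $\mu_k\to\mu_\infty$: the discrete $k$-fold angular symmetry of $\mu_k$ must conspire with the radial limit to produce genuine continuous rotational invariance, and this is precisely what the vanishing of the off-diagonal $\ast$-moments ($a\neq b$) for large $k$ delivers through the geometric-sum computation above.
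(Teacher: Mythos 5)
Your proof follows the same route as the paper's: identify $\mu_k$ with $(\mu^{\boxtimes k})^{[1/k]}$ via Corollary \ref{cormulconv}, invoke Tucci's Theorem \ref{tu2} for the radial limit, and match it with the Haagerup--Larsen formula for the Brown measure of the $R$-diagonal element $au$. The only difference is that you spell out the passage from radial weak convergence to planar weak convergence via the $\ast$-moment computation (vanishing of the off-diagonal moments for $k>|a-b|$), a step the paper asserts without detail; your argument there is correct.
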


\begin{proof}
Let $T=au$ and then $|T|^2=a^2$, so $\mu_{|T|^2}=\mu$. Now,
since $(\mu^{\boxtimes k})^{1/k}$ converges to $\hat\mu$, then $\mu\boxtimes\sum_{j=1}^k\delta_{q^j}=(\mu^{\boxtimes k})^{[1/k]}$ converges to the rotationally invariant measure $\mu_\infty$ with $\mu_\infty (B(0,t))=\hat\mu(0,t)$. This implies that $$\mu_\infty (B(0,\frac{1}{S_\mu(t-1)}))=t=\mu_T( B(0,\frac{1}{\sqrt{S_\mu(t-1)}}))$$ and then $\mu_\infty (B(0,t^2))=\mu_{au}(B(0,t))$, as desired.
\end{proof}

\begin{rem} (1)Haagerup and M\"oller \cite{HaM} have generalized results of \cite{Tu} to unbounded operators. The previous theorem can be generalized to unbounded operators using the analytic methods of next section.

(2) Recall from Example \ref{freehk} that random permutation matrices with cycles of size $k$ are asymptotically free $k$-Haar unitaries. One can think of a Haar unitary as a limit of $k$-Haar unitaries, from previous theorem $R$-diagonal elements can be thought as the limit of $k$-divisible ones of the type (\ref{polarr}). 
\end{rem}

\begin{exa}[$\infty$-semicircle]
Let $s_k$ be the $k$-semicircle distribution from Theorem \ref{freeCLT}. Then there exist a measure $s_\infty$ such that $$\lim_{k\to\infty} s_k\to s_\infty.$$ Combining Theorems  \ref{freeCLT}, and \ref{polar} one can see that $s_\infty(B(0,t))=t$. 

Indeed, since $(s_{k+1})^{k+1}=\pi^{\boxtimes k}=(\pi\boxtimes\frac{1}{k}\sum_{j=1}^k\delta_{q^j})^k$ then by Theorem \ref{polar}, $s_\infty(B(0,t^2))=\mu_{au}(B(0,t))=t^2$, where $a$ is a quarter circular (see Example 5.2 of \cite{HaL}).

\end{exa}

\section{The unbounded case} \label{Analytic Aspects}

We end by generalizing some of our results to $k$-symmetric measures without moments.
The \textit{\ free multiplicative convolution} $\boxtimes$ for general measures in
$\mathcal{M}^+$ was defined in \cite{BeVo93} using operators affiliated to a $W*$-algebra. 
This convolution is characterized by $S$-transforms defined as follows. 
For a general probability measure $\mu$ on $\mathbb{R}$, let
\begin{equation}
\Psi_{\mu}(z)=\int_{\mathbb{R}}\frac{zt}{1-zt}\mu(\mathrm{d}t)=\frac{1}%
{z}G_{\mu}\left(  \frac{1}{z}\right)  -1,\quad z\in\mathbb{C}\backslash
\mathbb{R}_{+}. \label{PhsiCT}%
\end{equation}
The function $\Psi_{\mu}$ determines the measure $\mu$ uniquely since the
Cauchy transform $G_{\mu}$ does. $\Psi_\mu$ coincides with a moment generating function if $\mu$ has finite moments of all orders. The next result was proved in \cite{BeVo93} for probability measures in
$\mathcal{M}^{+}$ with unbounded support.
\begin{prop}
\label{PisiForPos} Let $\mu\in\mathcal{M}^{+}$ such that $\mu(\{0\})<1$. The
function
\begin{equation}
\Psi_{\mu}(z)=\int_{0}^{\infty}\frac{zx}{1-zx}\mu(\mathrm{d}x),\quad
z\in\mathbb{C}\backslash\mathbb{R}_{+} \label{DefPsi}%
\end{equation}
in univalent in the left-plane $i\mathbb{C}_{+}$ and $\Psi_{\mu}%
(i\mathbb{C}_{+})$ is a region contained in the circle with diameter
$(\mu(\{0\})-1,0)$. Moreover, $\Psi_{\mu}(i\mathbb{C}_{+})\cap\mathbb{R}%
=(\mu(\{0\})-1,0)$.
\end{prop}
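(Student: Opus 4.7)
The plan follows the Bercovici--Voiculescu argument and splits naturally into four steps: a reduction, the disk inclusion, the identification of the real trace, and the univalence statement, which is the subtle one.

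\textbf{Reduction.} Write $\mu = c\delta_0 + (1-c)\nu$ with $c=\mu(\{0\})\in[0,1)$ and $\nu$ a probability measure on $(0,\infty)$. Because the integrand $zx/(1-zx)$ vanishes at $x=0$, one has $\Psi_\mu = (1-c)\Psi_\nu$; both the image of $\Psi_\mu$ and the target disk (diameter $(\mu(\{0\})-1,0)$) scale by the factor $1-c$. Thus it suffices to handle the case $c=0$, which I assume from here on.

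\textbf{Disk inclusion.} The engine is the algebraic identity
\[
1+\Psi_\mu(z) = \int_0^\infty \frac{\mu(dx)}{1-zx}.
\]
For $z\in i\mathbb{C}_+$ (the open left half-plane) and $x>0$, $1-zx$ lies in the half-plane $\{\operatorname{Re}(w)>1\}$, which the map $w\mapsto 1/w$ sends conformally onto the open disk $D(1/2,1/2)$. By convexity of this disk and the fact that $\mu$ is a probability measure, $1+\Psi_\mu(z)\in\overline{D(1/2,1/2)}$, equivalently $\Psi_\mu(z)\in\overline{D(-1/2,1/2)}$, the closed disk with diameter $[-1,0]$.

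\textbf{Real trace.} For $z=-t$ with $t>0$, $\Psi_\mu(-t) = -\int_0^\infty tx/(1+tx)\,\mu(dx)$ is real, and differentiation under the integral gives $\frac{d}{dt}\Psi_\mu(-t) = -\int x/(1+tx)^2\,\mu(dx) < 0$, while dominated convergence yields limits $0$ as $t\to 0^+$ and $-1$ as $t\to\infty$. Hence $\Psi_\mu$ is a strictly decreasing homeomorphism of $(-\infty,0)$ onto $(-1,0)$, giving the inclusion $(-1,0)\subseteq\Psi_\mu(i\mathbb{C}_+)\cap\mathbb{R}$. Conversely, any real value of $\Psi_\mu$ on $i\mathbb{C}_+$ lies in $[-1,0]$ by the disk inclusion; the endpoints are ruled out because for any $z\in i\mathbb{C}_+$ and $x>0$ one has $\operatorname{Re}\!\big((1-zx)^{-1}\big)\in(0,1)$ \emph{strictly}, so $\operatorname{Re}(1+\Psi_\mu(z))\in(0,1)$ strictly, excluding both $\Psi_\mu(z)=0$ and $\Psi_\mu(z)=-1$.

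\textbf{Univalence.} My approach is to handle finitely supported $\mu$ first and then approximate. For general $\mu\in\mathcal{M}^+$, choose $\mu_n\to\mu$ weakly with each $\mu_n$ of finite support in $(0,\infty)$; then $\Psi_{\mu_n}\to\Psi_\mu$ locally uniformly on $i\mathbb{C}_+$, and Hurwitz's theorem propagates univalence to the limit once we know $\Psi_\mu$ is non-constant, which is guaranteed by the real trace step. For finitely supported $\mu$, the natural auxiliary object is $\eta_\mu(z) = \Psi_\mu(z)/(1+\Psi_\mu(z)) = 1 - zF_\mu(1/z)$; since the Möbius map $w\mapsto w/(1+w)$ carries $D(-1/2,1/2)$ onto the left half-plane $i\mathbb{C}_+$ (one verifies this by checking that the boundary circle, passing through $0$ and $-1$, maps to the imaginary axis), univalence of $\Psi_\mu$ is equivalent to univalence of $\eta_\mu$, and the latter is extracted from the Nevanlinna--Pick structure of $F_\mu=1/G_\mu$ combined with the monotonicity of $t\mapsto\eta_\mu(-t)$ that has already been established. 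The main obstacle lies precisely in this last step, since the Pick property of $F_\mu$ refers to the upper half-plane while we are working in the left half-plane; I would bridge this by applying the reflection identity $F_\mu(\bar z)=\overline{F_\mu(z)}$ (valid because $\operatorname{supp}(\mu)\subseteq[0,\infty)$) to transport Pick-function conclusions, and then invoke the standard fact that a Pick-type function strictly monotone along a real interval inside its domain is univalent on the connected domain obtained by reflection.
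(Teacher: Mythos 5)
The paper itself offers no proof of this proposition --- it is quoted from Bercovici and Voiculescu \cite{BeVo93} --- so your argument has to stand on its own. Three of your four steps do: the reduction to $\mu(\{0\})=0$, the disk inclusion via $1+\Psi_\mu(z)=\int_0^\infty(1-zx)^{-1}\mu(\mathrm{d}x)$ and convexity of $D(1/2,1/2)$, and the identification of the real trace (strict monotonicity of $t\mapsto\Psi_\mu(-t)$ together with the strict bounds $0<\operatorname{Re}\bigl(1+\Psi_\mu(z)\bigr)<1$ to exclude the endpoints) are all correct and complete.

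The univalence step has a genuine gap, and it sits exactly where you flag the ``main obstacle.'' The principle you propose to invoke --- that a Pick-type function strictly monotone along a real interval of its domain is univalent on the connected domain obtained by reflection --- is false, even inside the class you want to apply it to. For $\mu=\frac{1}{2}(\delta_0+\delta_2)\in\mathcal{M}^+$ one has $F_\mu(z)=(z-1)-\frac{1}{z-1}$, a Pick function that is real and strictly increasing on $(-\infty,0)$, yet satisfies $F_\mu(1+it)=F_\mu(1+i/t)$ for all $t>0$ and so fails to be univalent even on $\mathbb{C}_+$; classically, $\tan$ is a Pick function strictly increasing on $(-\pi/2,\pi/2)$ and periodic. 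Monotonicity on a real interval yields nothing global, so the finitely supported case is never actually established, and the Hurwitz reduction therefore has nothing to propagate. The detour is also unnecessary: univalence follows in one stroke from the two-point identity
\begin{equation*}
\Psi_\mu(z_1)-\Psi_\mu(z_2)=(z_1-z_2)\int_0^\infty\frac{x\,\mu(\mathrm{d}x)}{(1-z_1x)(1-z_2x)},\qquad z_1,z_2\in i\mathbb{C}_+.
\end{equation*}
Indeed $\operatorname{Re}(1-z_jx)>1$ and $\operatorname{Im}(1-z_jx)=-x\operatorname{Im}(z_j)$, so each $\arg(1-z_jx)$ lies in $(-\pi/2,\pi/2)$ with sign opposite to $\operatorname{Im}(z_j)$; hence the argument of the integrand, which equals $-\arg(1-z_1x)-\arg(1-z_2x)$, lies in $[0,\pi)$ for all $x>0$ when $\operatorname{Im}z_1,\operatorname{Im}z_2\ge0$, in $(-\pi,0]$ when both are $\le0$, and in $(-\pi/2,\pi/2)$ when the signs differ. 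In each case the integrand takes values in a salient convex cone with the origin removed, so the integral cannot vanish (if its imaginary part is zero, the integrand is almost everywhere a positive real and the real part of the integral is positive). This proves injectivity of $\Psi_\mu$ on all of $i\mathbb{C}_+$ directly, with no approximation and no appeal to Pick-function univalence.
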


Let $\chi_{\mu}:$ $\Psi_{\mu}(i\mathbb{C}_{+})$ $\rightarrow i\mathbb{C}_{+}$
be the inverse function of $\Psi_{\mu}.$ The $S$-\textit{transform} of $\mu$
is the function
\[
S_{\mu}(z)=\chi(z)\frac{1+z}{z}\text{.}%
\]

\begin{prop}[\cite{BeVo93}]
\label{StransfPos2} Let $\mu_{1}$ and $\mu_{2}$ be probability measures in
$\mathcal{M}^{+}$ with $\mu_{i}\neq\delta_{0}$, $i=1,2.$ Then $\mu
_{1}\boxtimes$ $\mu_{2}\neq\delta_{0}$ and
\[
S_{\mu_{1}\boxtimes\mu_{2}}(z)=S_{\mu_{1}}(z)S_{\mu_{2}}(z)
\]
in that component of the common domain which contains $(-\varepsilon,0)$ for
small $\varepsilon>0.$ Moreover, $(\mu_{1}\boxtimes$ $\mu_{2})(\{0\})=\max
\{\mu_{1}(\{0\}),\mu_{2}(\{0\})\}.$
\end{prop}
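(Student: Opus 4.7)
The plan is to reduce to the already-known compactly supported case (Proposition \ref{StransfPos}) by a weak-approximation argument. For each integer $N\geq 1$, let $\mu_i^{(N)}$ denote the pushforward of $\mu_i$ under the truncation $t\mapsto\min(t,N)$. These are compactly supported probability measures in $\mathcal{M}^+$; since the map sends only $0$ to $0$, one has $\mu_i^{(N)}(\{0\})=\mu_i(\{0\})$ for every $N$, so $\mu_i^{(N)}\neq\delta_0$ once $N$ is large. A dominated-convergence argument gives $\mu_i^{(N)}\to\mu_i$ weakly, and then by Proposition \ref{WeakConCon} also $\mu_1^{(N)}\boxtimes\mu_2^{(N)}\to\mu_1\boxtimes\mu_2$ weakly. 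The bounded-case statement yields $S_{\mu_1^{(N)}\boxtimes\mu_2^{(N)}}=S_{\mu_1^{(N)}}\,S_{\mu_2^{(N)}}$ on a common subdomain, and the goal is to pass to the limit $N\to\infty$.

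To justify this limit, I would establish the following continuity statement: if $\nu_N\to\nu$ weakly in $\mathcal{M}^+$, with $\nu\neq\delta_0$ and $\nu_N(\{0\})=\nu(\{0\})$, then $S_{\nu_N}\to S_\nu$ locally uniformly on a fixed real neighborhood of some interval $(-\varepsilon,0)$. The proof has two ingredients. First, for each fixed $z\in i\mathbb{C}_+$ the integrand in $\Psi_\nu(z)=\int \frac{zt}{1-zt}\,\nu(\mathrm{d}t)$ is bounded and continuous in $t\geq 0$, hence $\Psi_{\nu_N}\to\Psi_\nu$ locally uniformly on $i\mathbb{C}_+$. Second, by Proposition \ref{PisiForPos} each $\Psi_{\nu_N}$ is univalent on $i\mathbb{C}_+$ and its image contains the fixed interval $(\nu(\{0\})-1,0)$; Hurwitz's theorem together with uniform univalence on compact subdomains then gives $\chi_{\nu_N}\to\chi_\nu$ locally uniformly on $\Psi_\nu(i\mathbb{C}_+)$. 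Multiplying by $(1+z)/z$ transfers this convergence to the $S$-transforms. Applying the statement with $\nu_N=\mu_i^{(N)}$ on one side and $\nu_N=\mu_1^{(N)}\boxtimes\mu_2^{(N)}$ on the other lets me pass to the limit in the product identity.

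For the atom claim, Proposition \ref{PisiForPos} identifies $\Psi_\mu(i\mathbb{C}_+)\cap\mathbb{R}=(\mu(\{0\})-1,0)$, so the maximal real interval on which $S_\mu$ is defined is $(-(1-\mu(\{0\})),0)$. The right-hand side $S_{\mu_1}S_{\mu_2}$ is therefore defined on the intersection, namely $(-(1-\max\{\mu_1(\{0\}),\mu_2(\{0\})\}),0)$; equating this with the intrinsic real interval $(-(1-(\mu_1\boxtimes\mu_2)(\{0\})),0)$ attached to $S_{\mu_1\boxtimes\mu_2}$ forces $(\mu_1\boxtimes\mu_2)(\{0\})=\max\{\mu_1(\{0\}),\mu_2(\{0\})\}$. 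The main obstacle is the uniformization of the various domains: one needs a neighborhood of some fixed interval $(-\varepsilon,0)$ to lie inside the image $\Psi_{\mu_i^{(N)}}(i\mathbb{C}_+)$ for all large $N$, and similarly for the convolutions, which is exactly why the truncation $t\mapsto\min(t,N)$ is the right choice — it keeps the mass at $0$ constant in $N$ and hence freezes the real section of the image away from $0$. Once this uniform domain is in hand, the passage to the limit and the identification of the atom are standard complex-analytic continuity arguments.
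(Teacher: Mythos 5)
This proposition is quoted background: the paper attributes it to \cite{BeVo93} and gives no proof of its own, so your attempt can only be measured against the original source, where the result is established through the theory of operators affiliated to a $W^{*}$-algebra (the atom formula, in particular, comes from computing the support projection of $X_{1}^{1/2}X_{2}X_{1}^{1/2}$ via the trace of the meet of free projections). Your reduction of the multiplicative identity to the compactly supported case is a reasonable and essentially workable route, granted Proposition \ref{WeakConCon}: the truncations $t\mapsto\min(t,N)$ do converge weakly, $\Psi_{\nu_{N}}\to\Psi_{\nu}$ locally uniformly on $i\mathbb{C}_{+}$ because $t\mapsto zt/(1-zt)$ is continuous and bounded there (by $2$, since $\mathrm{Re}(1-zt)\geq 1$), and the Carath\'eodory--Hurwitz step for the inverses is standard once the images $\Psi_{\nu_{N}}(i\mathbb{C}_{+})$ are known to contain a fixed interval $(-\varepsilon,0)$.

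The genuine gap is your last paragraph. The identity $S_{\mu_{1}\boxtimes\mu_{2}}=S_{\mu_{1}}S_{\mu_{2}}$ is asserted (and is only provable by your limit argument) on \emph{some} component of the common domain containing an interval $(-\varepsilon,0)$; it says nothing about the \emph{maximal} domain of $S_{\mu_{1}\boxtimes\mu_{2}}$, so ``equating'' the real sections of the two domains is a non sequitur: the function $\frac{z}{1+z}S_{\mu_{1}}(z)S_{\mu_{2}}(z)$ could perfectly well be the restriction of $\chi_{\mu_{1}\boxtimes\mu_{2}}$ to a proper subinterval of $\bigl((\mu_{1}\boxtimes\mu_{2})(\{0\})-1,0\bigr)$. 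What weak convergence actually yields is only one inequality: since $\{0\}$ is closed and $(\mu_{1}^{(N)}\boxtimes\mu_{2}^{(N)})(\{0\})=\max\{\mu_{1}(\{0\}),\mu_{2}(\{0\})\}$ for every $N$ by the bounded case, the portmanteau theorem gives $(\mu_{1}\boxtimes\mu_{2})(\{0\})\geq\max\{\mu_{1}(\{0\}),\mu_{2}(\{0\})\}$. The reverse inequality --- which is also what you need in order to conclude $\mu_{1}\boxtimes\mu_{2}\neq\delta_{0}$, without which $S_{\mu_{1}\boxtimes\mu_{2}}$ is not even defined and the Hurwitz step could in principle degenerate to a constant limit --- does not follow from your argument and requires a separate input (in \cite{BeVo93} it comes from the operator picture, via $\tau(p\wedge q)=\max(\tau(p)+\tau(q)-1,0)$ for free projections $p,q$). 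As written, your proof establishes the product formula conditionally on $\mu_{1}\boxtimes\mu_{2}\neq\delta_{0}$, but it does not establish the atom statement or the non-degeneracy.
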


Free multiplicative convolution $\mu_1 \boxtimes \mu_2$ can be defined for any two probability measures $\mu_1$ and $\mu_2$ on $\mathbb{R}$, provided that one of them is supported on $[0,\infty)$. However, it is not known whether an $S$-transform can be defined for every probability measure.
However, Arizmendi and P\'erez-Abreu \cite{APA} defined an $S$-transform of a symmetric probability measures. 

We will define the free multiplicative convolution between measures $\mu\in\mathcal{M}_k$ and $\nu\in\mathcal{M}^+$. We generalize the $S$-transform to $k$-symmetric measures; we follow similar strategies as in \cite{APA} and show the multiplicative property still holds for this $S$-transform. 

\subsection{Analytic aspects of $S$-transforms}

Recall that for a $k$-symmetric probability measure $\mu$ on $\mathbb{R}$, let $\mu^{k}$ be the probability measure in $\mathcal{M}^{+}$ induced by the map
$t\rightarrow t^{k}$. 

We define the Cauchy transform a $k$-symmetric distribution $\mu$ by the formula
\begin{eqnarray*} 
G_{\mu}(z)=\int_{\comp}\frac{1}{z-t}\mu(\mathrm{d}t)
\end{eqnarray*}
and the $\Psi$ function in a similar way as (\ref{PhsiCT})
\begin{eqnarray} \label{PhsiCTk}
\Psi_{\mu}(z)=\int_{\comp}\frac{zt}{1-zt}\mu(\mathrm{d}t)=\frac{1}
{z}G_{\mu}\left(  \frac{1}{z}\right)  -1,\quad z\in\mathbb{C}\backslash
\mathbb{R}_{+}. 
\end{eqnarray}

The following two important relations between the Cauchy transforms and the
$\Psi$ functions of $\mu$ and $\mu^{k}$ were proved in \cite{APA} for $k=2$. The proof presented here is the same with obvious changes; we present it for the convenience of the reader.

\begin{prop}
\label{CTmuCTmu2} Let $\mu$ be a $k$-symmetric probability measure $\mu$ on
$\mathbb{R}$. Then

a) $G_{\mu}(z)=z^{k-1}G_{\mu^{k}}(z^{k}),$ $z\in\mathbb{C}\backslash\mathbb{R}_{+}.$

b) $\Psi_{\mu}(k)=\Psi_{\mu^{k}}(z^{k}),$ $z\in\mathbb{C}\backslash
\mathbb{R}_{+}.$
\end{prop}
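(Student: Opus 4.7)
The plan is to exploit the $k$-symmetry of $\mu$ (i.e.\ $\mu(B)=\mu(qB)$ for every Borel set $B$, with $q$ a primitive $k$-th root of unity) to rewrite $G_\mu(z)$ as an average over the $k$ rotations, and then collapse that average via partial fractions into something that only sees $t^k$. Part (b) will then follow mechanically from part (a) together with the definition $\Psi_\nu(z)=\tfrac{1}{z}G_\nu(1/z)-1$, so the real work is entirely in (a).

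For (a), I would first observe that $k$-symmetry is equivalent to the change-of-variable identity $\int f(t)\,\mu(dt)=\int f(q^j t)\,\mu(dt)$ for every Borel function $f$ and every $j=0,1,\dots,k-1$. Averaging over $j$ gives
\begin{equation*}
G_\mu(z)=\int \frac{1}{z-t}\,\mu(dt)=\frac{1}{k}\sum_{j=0}^{k-1}\int\frac{1}{z-q^j t}\,\mu(dt).
\end{equation*}
The key algebraic step is the partial-fraction identity
\begin{equation*}
\sum_{j=0}^{k-1}\frac{1}{z-q^j t}=\frac{k\,z^{k-1}}{z^k-t^k},
\end{equation*}
which is immediate from $\prod_{j=0}^{k-1}(z-q^j t)=z^k-t^k$ by logarithmic differentiation in $z$. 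Plugging this in yields
\begin{equation*}
G_\mu(z)=z^{k-1}\int\frac{1}{z^k-t^k}\,\mu(dt),
\end{equation*}
and since $\mu^k$ is the pushforward of $\mu$ under $t\mapsto t^k$, the integral on the right is exactly $G_{\mu^k}(z^k)$. This gives (a).

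For (b), I would just substitute. Using (a) with $1/z$ in place of $z$,
\begin{equation*}
\Psi_\mu(z)=\frac{1}{z}G_\mu\!\left(\frac{1}{z}\right)-1=\frac{1}{z}\cdot\frac{1}{z^{k-1}}G_{\mu^k}\!\left(\frac{1}{z^k}\right)-1=\frac{1}{z^k}G_{\mu^k}\!\left(\frac{1}{z^k}\right)-1=\Psi_{\mu^k}(z^k).
\end{equation*}
The only point requiring care is the domain: since $\mu^k\in\mathcal{M}^+$, the function $\Psi_{\mu^k}$ is naturally defined on $\mathbb{C}\setminus\mathbb{R}_+$, so the identity holds wherever $z^k\in\mathbb{C}\setminus\mathbb{R}_+$, which is exactly the region on which the left-hand side is defined via (\ref{PhsiCTk}). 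I do not anticipate a real obstacle here; the mild subtlety is making the partial-fraction identity rigorous inside the integral (which is trivial since the integrand is bounded on the support of $\mu$ for $z$ off the relevant semi-axes), and verifying that the resulting regions of definition match.
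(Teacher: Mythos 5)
Your proof is correct and follows essentially the same route as the paper: decompose $G_\mu$ using the $k$-fold rotational symmetry, collapse the sum via $\sum_{j=0}^{k-1}\frac{1}{z-q^jt}=\frac{kz^{k-1}}{z^k-t^k}$, recognize the pushforward $\mu^k$, and obtain (b) by direct substitution. Your write-up is in fact a bit more explicit than the paper's (which asserts the partial-fraction step without stating the identity), but there is no substantive difference.
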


\begin{proof}
a) Use the $k$-symmetry of $\mu$ twice to obtain
\begin{align*}
G_{\mu}(z)  &  =\int_{\mathbb{R}}\frac{1}{z-t}\mu(\mathrm{d}t)=\sum^\kappa_{i=1}\int
_{\mathbb{R}_{+}}\frac{1}{z-t\omega_i}\mu(\mathrm{d}t)\\
&  =kz^{k-1}\int_{\mathbb{R}_{+}}\frac{1}{z^{k}-t^{k}}\mu(\mathrm{d}t)=z\int
_{supp(\mu)}\frac{1}{z^{k}-t^{k}}\mu(\mathrm{d}t)\\
&  =z^{k-1}\int_{\mathbb{R}_{+}}\frac{1}{z^{k}-t}\mu^{k}(\mathrm{d}t)=zG_{\mu^{k}%
}(z^{2}).
\end{align*}

b) Use (\ref{PhsiCTk}) twice and (a) to obtain
\begin{align*}
\Psi_{\mu}(z)  &  =\frac{1}{z}G_{\mu}\left( \frac{1}{z}\right)  -1\\
&  =\frac{1}{z^{k}}G_{\mu^{k}}\left(  \frac{1}{z^{k}}\right)  -1=\Psi_{\mu
^{k}}(z^{k})
\end{align*}
which shows (b).
\end{proof}

An important consequence is that the function $G_{\mu}$ determines the measure $\mu$ uniquely since the Cauchy transform $G_{\mu^k}$ determines $\mu^k$ and then $\mu$. Also, the function $\Psi_{\mu}$ determines
the measure $\mu$ uniquely since the Cauchy transform $G_{\mu}$ does.

\begin{thm}
\label{main0}Let $\mu$ be a $k$-symmetric probability measure $\mu$ on
$\mathbb{R}$.

a) If $\mu\neq\delta_{0}$, the function $\Psi_{\mu}$ is univalent on $\mathbb{H}_k:=\{z\in \comp_+:\arg z\in {\pi/2k}<\arg z<{3\pi/2k} \}$.
Therefore $\Psi_{\mu}$ has a unique inverse on $H$, $\chi_{\mu}:\Psi_{\mu
}(H)\rightarrow H$.

b) If $\mu\neq\delta_{0}$, the $S$-transform%
\begin{equation}
S_{\mu}(z)=\frac{1+z}{z}\chi_{\mu}(z)
\end{equation}
satisfies
\begin{equation} \label{S-transform 2}
S_{\mu}^{k}(z)=(\frac{1+z}{z})^{k-1} S_{\mu^{k}}(z)
\end{equation}
for $z$ in $\Psi_{\mu}(H_k)$.
\end{thm}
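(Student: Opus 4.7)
The plan is to deduce both parts from the identity $\Psi_\mu(z) = \Psi_{\mu^k}(z^k)$ furnished by Proposition \ref{CTmuCTmu2}(b), together with the univalence of $\Psi_{\mu^k}$ on the left half-plane $i\mathbb{C}_+$ provided by Proposition \ref{PisiForPos}.

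For part (a), I would first check that the map $z \mapsto z^k$ is an injective holomorphic map of $\mathbb{H}_k$ into $i\mathbb{C}_+$: writing $z = re^{i\theta}$ with $\pi/(2k) < \theta < 3\pi/(2k)$ gives $z^k = r^k e^{ik\theta}$ with $k\theta \in (\pi/2, 3\pi/2)$, so $z^k$ lies in the open left half-plane $i\mathbb{C}_+$. Since $\mu \neq \delta_0$ forces $\mu^k \neq \delta_0$, Proposition \ref{PisiForPos} gives that $\Psi_{\mu^k}$ is univalent on $i\mathbb{C}_+$. Then $\Psi_\mu = \Psi_{\mu^k} \circ (\,\cdot\,)^k$ is a composition of two injective holomorphic maps, and hence is univalent on $\mathbb{H}_k$; this yields the inverse function $\chi_\mu : \Psi_\mu(\mathbb{H}_k) \to \mathbb{H}_k$.

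For part (b), the same functional identity $\Psi_\mu(z) = \Psi_{\mu^k}(z^k)$ tells us how the two inverses are related: for $w = \Psi_\mu(z)$ with $z \in \mathbb{H}_k$ we have $\chi_\mu(w) = z$ and $\chi_{\mu^k}(w) = z^k$, so
\[
\chi_\mu(w)^k = \chi_{\mu^k}(w).
\]
Raising $S_\mu(w) = \frac{1+w}{w} \chi_\mu(w)$ to the $k$-th power and pulling out one factor of $\frac{1+w}{w}$ to rebuild $S_{\mu^k}$ then yields
\[
S_\mu(w)^k = \left(\frac{1+w}{w}\right)^k \chi_\mu(w)^k = \left(\frac{1+w}{w}\right)^{k-1} \cdot \frac{1+w}{w}\,\chi_{\mu^k}(w) = \left(\frac{1+w}{w}\right)^{k-1} S_{\mu^k}(w),
\]
which is the asserted identity.

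The one step that needs care is the geometric verification in part (a): one must confirm both that $z \mapsto z^k$ really maps $\mathbb{H}_k$ injectively into the precise domain $i\mathbb{C}_+$ on which Proposition \ref{PisiForPos} applies, and that the branch of the $k$-th root implicit in $\chi_\mu(w)^k = \chi_{\mu^k}(w)$ is the one landing back in $\mathbb{H}_k$. Once these branch considerations are settled, everything else reduces to a short computation, and I do not anticipate any further obstacle.
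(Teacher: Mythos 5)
Your proposal is correct and follows essentially the same route as the paper: both parts rest on the identity $\Psi_\mu(z)=\Psi_{\mu^k}(z^k)$ from Proposition \ref{CTmuCTmu2}, the univalence of $\Psi_{\mu^k}$ on $i\mathbb{C}_+$ from Proposition \ref{PisiForPos}, and the resulting relation $\chi_{\mu^k}=\chi_\mu^{\,k}$ between the inverses. Your explicit check that $z\mapsto z^k$ maps $\mathbb{H}_k$ injectively onto $i\mathbb{C}_+$ is exactly the geometric step the paper invokes (somewhat tersely), so there is nothing to add.
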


\begin{proof}
a) On one hand, let $T:\mathbb{C\rightarrow C}$ be the function $T(z)=z^{k}.$ Then
$T(\mathbb{H}_k)=i\mathbb{C}_{+}$ and therefore $h$ is univalent in $\mathbb{H}_k$.On the other hand, since $\mu^{k}%
\in\mathcal{M}^{+}$, by Proposition \ref{PisiForPos}, $\Psi_{\mu^{k}}(z)$ is
univalent in $i\mathbb{C}_{+}$ and therefore $\Psi_{\mu^{k}}(z^{k})$ is
univalent in $\mathbb{H}_k$.

b) Since $\mu^{k}\in\mathcal{M}^{+}$, from Proposition \ref{PisiForPos}, the
unique inverse $\chi_{\mu^{k}}$ of $\Psi_{\mu^{k}}$ is such that $\chi
_{\mu^{k}}:$ $\Psi_{\mu^{k}}(i\mathbb{C}_{+})$ $\rightarrow i\mathbb{C}_{+}.$
Thus, use (a) to obtain $\Psi_{\mu^{k}}(\chi_{\mu}^{k}(z))=\Psi_{\mu}%
(\chi_{\mu}(z))=z$ for $z\in\Psi_{\mu}(\mathbb{H}_k)$ and the uniqueness of $\chi
_{\mu^{k}}$ gives $\chi_{\mu^{2}}(z)=\chi_{\mu}^{2}(z)$, $z\in\Psi_{\mu}(\mathbb{H}_k).$
Hence
\begin{align*}
S_{\mu}^{2}(z)  &  =\chi_{\mu}^{2}(z)(\frac{1+z}{z})^{2}=\chi_{\mu^{2}%
}(z)(\frac{1+z}{z})^{2}\\
&  =S_{\mu^{2}}(z)\frac{1+z}{z},\quad z\in\Psi_{\mu}(H).
\end{align*}
as desired
\end{proof}

\subsection{Free multiplicative convolution}

Now, we are in position to define free multiplicative convolution for measures with unbounded support. We will use Equation (\ref{multconv}) as our definition. The definition using free operators on a $W^*$-algebra will be addressed in a forthcoming paper.
\begin{defi}
 Let $\mu$ be $k$-symmetric and $\nu$ be a measure in $\mathcal{M}^+$. The free multiplicative convolution between $\mu$ and $\nu$ is defined unique $k$-symmetric measure $\mu\boxtimes\nu$  such that 
$$ (\mu\boxtimes\nu)^k=\mu^k\boxtimes\nu^{\boxtimes k} $$ 
\end{defi}

\begin{rem}
The fact that the last definition makes sense is justified as follows: $\mu^k$ is  and $\nu^{\boxtimes k}$ are in  $\mathcal{M}^+$  and then $\mu^k\boxtimes\nu^{\boxtimes k}$ also belongs to  $\mathcal{M}^+$. So the symmetric pull back under $x^k$  of the measure is unique and well defined.
\end{rem}

Now we show how to compute free multiplicative convolution of a $k$-symmetric probability measure and a probability measure supported on $[0,\infty)$. No existence of moments or bounded supports for the measures assumed.
\begin{thm}
 Let $\mu$ be $k$-symmetric and $\nu$ be a measure in $\mathcal{M}^+$ with respective S transforms $S_\mu (z)$ and $S_\nu(z)$ then
$$S_{\mu\boxtimes\nu}(z)= S_\mu (z)S_\nu(z) $$ 
\end{thm}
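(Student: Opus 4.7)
The plan is to combine the \emph{definition} of $\mu\boxtimes\nu$, namely the identity $(\mu\boxtimes\nu)^k = \mu^k\boxtimes\nu^{\boxtimes k}$ with both factors on the right in $\mathcal{M}^+$, with two ingredients already available in the paper: Proposition \ref{StransfPos2} (multiplicativity of $S$ on $\mathcal{M}^+$) and formula (\ref{S-transform 2}) from Theorem \ref{main0}, which relates $S_\mu$ to $S_{\mu^k}$ for any $k$-symmetric $\mu$.

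First, I would apply Proposition \ref{StransfPos2} iteratively to obtain
\[
S_{\mu^k \boxtimes \nu^{\boxtimes k}}(z) \;=\; S_{\mu^k}(z)\, S_{\nu^{\boxtimes k}}(z) \;=\; S_{\mu^k}(z)\, S_\nu(z)^k
\]
on the intersection of the appropriate domains. Next, applying (\ref{S-transform 2}) both to $\mu$ and to the $k$-symmetric measure $\mu\boxtimes\nu$ gives
\[
S_{\mu^k}(z) = \left(\frac{z}{1+z}\right)^{k-1} S_\mu(z)^k, \qquad S_{(\mu\boxtimes\nu)^k}(z) = \left(\frac{z}{1+z}\right)^{k-1} S_{\mu\boxtimes\nu}(z)^k.
\]
Using $(\mu\boxtimes\nu)^k = \mu^k\boxtimes\nu^{\boxtimes k}$ in these two displays and cancelling the common non-vanishing factor $(z/(1+z))^{k-1}$ yields
\[
S_{\mu\boxtimes\nu}(z)^k \;=\; \bigl(S_\mu(z)\,S_\nu(z)\bigr)^k.
\]

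The only delicate step --- and the one I expect to be the main obstacle --- is the extraction of the correct $k$-th root. Both $S_{\mu\boxtimes\nu}(z)$ and $S_\mu(z)\,S_\nu(z)$ are holomorphic on a connected open set (shrink the common domain if necessary so that all three $S$-transforms are simultaneously defined), so their ratio is a continuous $k$-th root of unity, hence a global constant $q^j$ for a single $j\in\{0,1,\dots,k-1\}$. To identify $j=0$ I would prefer the approximation route: approximate $\mu$ and $\nu$ weakly by compactly supported measures $\mu_n$ and $\nu_n$; for these, the Main Theorem combined with the bounded-support multiplicativity of the $S$-transform (Theorem \ref{Stran2}) yields $S_{\mu_n\boxtimes\nu_n}=S_{\mu_n}\,S_{\nu_n}$ unambiguously, and one then passes to the limit using continuity of the $\Psi$-transform and of its local inverse under weak convergence. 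Alternatively, one may argue directly from the mapping properties of $\chi_\bullet$: $\chi_{\mu\boxtimes\nu}(z)$ and $\chi_\mu(z)$ lie in the sector $\mathbb{H}_k$ of opening $\pi/k<2\pi/k$, while $\chi_\nu(z)$ lies in the left half-plane with real values on the real segment $\Psi_\nu(i\mathbb{C}_+)\cap\mathbb{R}$, so a short argument-tracking at a well chosen real base point rules out any rotation by a nontrivial power of $q$. Either route forces the trivial $k$-th root of unity and finishes the proof.
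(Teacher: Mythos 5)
Your proposal follows essentially the same route as the paper: the paper's proof likewise combines the defining identity $(\mu\boxtimes\nu)^k=\mu^k\boxtimes\nu^{\boxtimes k}$ with multiplicativity of $S$ on $\mathcal{M}^+$ and the relation $S_\mu^k(z)=\left(\tfrac{1+z}{z}\right)^{k-1}S_{\mu^k}(z)$ to arrive at $S_{\mu\boxtimes\nu}(z)^k=\bigl(S_\mu(z)S_\nu(z)\bigr)^k$. You go one step further than the paper, which stops at this equality of $k$-th powers: your argument-tracking via the sector $\mathbb{H}_k$ (each of $\chi_{\mu\boxtimes\nu}$ and $\chi_\mu$ has argument in an interval of length $\pi/k$ while $S_\nu$ is positive on the relevant real segment, so the constant root of unity must be $1$) correctly supplies the root-extraction step that the paper leaves implicit.
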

\begin{proof}
From Equation (\ref{S-transform 2})
\begin{eqnarray*}
S_{\mu\boxtimes\nu}^{k}(z)&=&(\frac{1+z}{z})^{k-1}S_{(\mu\boxtimes\nu)^{k}}(z)
=(\frac{1+z}{z})^{k-1} S_{\mu^k\boxtimes\nu^{\boxtimes k}}(z)\\
&=&(\frac{1+z}{z})^{k-1} (\frac{1+z}{z})^{k-1}S_{\mu^k}(z)S_{\nu}^k=
S_{\mu}^{k}(z)S_{\nu}^k.
\end{eqnarray*}
\end{proof}

\begin{rem}
By standard approximation arguments all the the theorems regarding freely infinite divisibility are valid for the unbounded case. 
\end{rem}

\subsection {Stable distributions}

Now we come back to the question of stability. A real probability measure  $\sigma _{\alpha }$ is said to be $\boxplus$- stable of \emph{index} $\alpha$ if $\sigma _{\alpha }^{\boxplus 2}\boxplus\delta_t=D_{2^{1/\alpha}}(\sigma _{\alpha })$ for some $t$. If $t=0$, we say that $\sigma _{\alpha }$ is $\boxplus$-strictly stable. 
Note that, among $k$-symmetric stable measures we can only have strictly stable laws since adding non-trivial Dirac measure is not closed in $\mathcal{M}_k$.

Closely related to the notion of stability is that of domains of attraction. Recall that for a probability measure $\mu$ we say that $\nu$ is in the \emph{free domain of attraction} of $\nu$ if there exists $\alpha$ such that $D_{N^{\alpha}}(\mu^{\boxplus N})\rightarrow \nu_i$. The following theorem explains the relation between domains of attraction and stable laws.

\begin{thm}
Assume that $\mu\in\mathcal{M}$ is not a point mass. Then $\nu$ is
$\boxplus$-stable if and only if the free domain of attraction of $\nu$ is not empty.
\end{thm}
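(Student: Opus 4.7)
The plan is to follow the classical proof scheme: prove ``stable $\Rightarrow$ in its own free domain of attraction'' directly, and conversely derive the stability functional equation by comparing the convergent sequence $D_{N^{\beta}}(\mu^{\boxplus N})$ to its own subsequences $D_{(kN)^{\beta}}(\mu^{\boxplus kN})$. Both ingredients rely only on the continuity of $\boxplus$ with respect to weak convergence and on the semigroup structure of free additive powers $\mu\mapsto\mu^{\boxplus t}$ coming from (\ref{DefAdtPow}).

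For the forward direction, assume first that $\nu$ is strictly $\boxplus$-stable of index $\alpha$, so $\nu^{\boxplus 2}=D_{2^{1/\alpha}}(\nu)$. Translating this to Voiculescu transforms via $\phi_{\mu^{\boxplus t}}(z)=t\phi_{\mu}(z)$ (from (\ref{DefAdtPow})) and the standard relation $\phi_{D_{t}\mu}(z)=t\phi_{\mu}(z/t)$, the stability identity becomes the functional equation $2\phi_{\nu}(z)=2^{1/\alpha}\phi_{\nu}(z/2^{1/\alpha})$. The analytic structure of $\phi_{\nu}$ on a Stolz angle $\Gamma_{\eta,M}$ forces this equation to hold with $2$ replaced by any real $t>1$, so that $\nu^{\boxplus N}=D_{N^{1/\alpha}}(\nu)$ for every integer $N$. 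Consequently $D_{N^{-1/\alpha}}(\nu^{\boxplus N})=\nu$ for all $N$, exhibiting $\nu$ as a member of its own free domain of attraction. The general (non-strict) stable case is handled by absorbing an additional Dirac mass $\delta_{t_{N}}$ into the normalization.

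Conversely, suppose there exist $\mu\in\mathcal{M}$, not a point mass, and $\beta\in\mathbb{R}$ with $D_{N^{\beta}}(\mu^{\boxplus N})\to\nu$ weakly. Fix an integer $k\geq 2$ and examine the same sequence along the subsequence $kN$. Writing $\mu^{\boxplus kN}=(\mu^{\boxplus N})^{\boxplus k}$ and using $D_{t}(\sigma_{1}\boxplus\sigma_{2})=D_{t}\sigma_{1}\boxplus D_{t}\sigma_{2}$, one obtains
$$D_{(kN)^{\beta}}(\mu^{\boxplus kN})=D_{k^{\beta}}\bigl((D_{N^{\beta}}(\mu^{\boxplus N}))^{\boxplus k}\bigr).$$
The left-hand side tends to $\nu$ (it is a subsequence of a convergent sequence), while by continuity of $\boxplus$ and of dilation under weak convergence the right-hand side tends to $D_{k^{\beta}}(\nu^{\boxplus k})$. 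Identifying the two limits gives $\nu=D_{k^{\beta}}(\nu^{\boxplus k})$ for each $k\geq 2$; the case $k=2$ is precisely the $\boxplus$-stability condition for $\nu$ with index $-1/\beta$.

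The main obstacle I anticipate is the careful treatment of centering in the non-strict case (the constant $t$ in the definition of $\boxplus$-stability) and, relatedly, the verification that the candidate limit $\nu$ is non-degenerate---this is where the hypothesis that the relevant measure is not a point mass is used, precluding a ``convergence of types'' ambiguity in the normalizing constants. Modulo these standard checks, both implications reduce to the continuity of $\boxplus$ under weak convergence together with the semigroup property of free additive powers established earlier in the paper.
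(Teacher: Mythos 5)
The paper does not actually prove this statement: it is recalled from Bercovici and Pata \cite{Be-Pa}, so there is no internal proof to compare against. Judged on its own terms, your converse direction is correct and is the standard argument: writing $D_{(kN)^{\beta}}(\mu^{\boxplus kN})=D_{k^{\beta}}\bigl((D_{N^{\beta}}(\mu^{\boxplus N}))^{\boxplus k}\bigr)$ and passing to the limit using the weak continuity of $\boxplus$ and of dilations yields $\nu=D_{k^{\beta}}(\nu^{\boxplus k})$ for every $k$, and $k=2$ is the stability relation; the hypothesis that the limit is not a point mass is indeed what makes the index $-1/\beta$ meaningful.

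The forward direction, however, has a genuine gap at the sentence ``the analytic structure of $\phi_{\nu}$ on a Stolz angle $\Gamma_{\eta,M}$ forces this equation to hold with $2$ replaced by any real $t>1$.'' The paper's definition of stability only posits $\nu^{\boxplus 2}\boxplus\delta_t=D_{c}(\nu)$ with $c=2^{1/\alpha}$, which translates into $2\phi_{\nu}(z)+t=c\,\phi_{\nu}(z/c)$; iterating gives the relation only along the geometric sequence $c^{m}$. In the strict case, setting $F(z)=z^{\alpha-1}\phi_{\nu}(z)$ the equation says exactly $F(cz)=F(z)$, and a multiplicatively periodic analytic function on a sector need not be constant (e.g.\ a branch of $\exp(2\pi i\log z/\log c)$), so the scaling relation does not self-improve to all scales by ``analytic structure'' alone. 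The input Bercovici and Pata actually use is that a stable law is $\boxplus$-infinitely divisible (for instance because $\nu=(D_{c^{-m}}\nu)^{\boxplus 2^{m}}$ realizes $\nu$ as a limit of infinitesimal triangular arrays), so that $\phi_{\nu}$ extends to all of $\mathbb{C}_{+}$ with a Nevanlinna--Pick representation, and only then does the functional equation force homogeneity and the identity $\nu^{\boxplus N}=D_{N^{1/\alpha}}\nu\boxplus\delta_{t_{N}}$ for \emph{every} $N$. Without that step you only get $D_{N^{-1/\alpha}}(\nu^{\boxplus N})=\nu$ along $N=2^{m}$, which does not show that the full sequence converges, i.e.\ does not show the domain of attraction is nonempty. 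The centering issue you flag is also real and not merely routine: with the paper's definition of domain of attraction, which allows no centering constants, the forward implication as stated only goes through for strictly stable laws (or for $\alpha<1$, where $t_{N}N^{-1/\alpha}\to 0$).
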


 As we have mentioned before, $s_k$ is strictly stable of index $k$. We begin by showing that for each $k$ and each $\alpha\in(0,k]$ there is a $k$-symmetric strictly stable law of index $\alpha$ (that we will denote $\sigma_{k,\alpha}$). In fact, we have an explicit representation of $\sigma_{k,\alpha}$ as the free multiplicative convolution between a $k$-semicircular distribution and strictly stable distribution on $\real$.
This result was proved in \cite{APA} for symmetric distributions in real line and in \cite{Be-Pa} for positive measures.

\begin{thm}\label{st1}
For $k>0$and $0<\alpha\leq 1$, let $\beta=\frac{ k\alpha}{\alpha+k-k\alpha}$ , then the measure $\sigma^k_\beta:=w_k\boxtimes\nu_\alpha$ is stable of index $\beta$. The $S$-transform of
$\sigma^k_\beta$ is given by 
\begin{equation}
S_\beta=\theta_\beta e^{i(1-\beta)\frac{\pi}{\beta}}
\end{equation}
\end{thm}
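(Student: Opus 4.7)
The plan is to compute the $S$-transform of $\sigma^k_\beta:=w_k\boxtimes\nu_\alpha$ by multiplying $S_{w_k}$ and $S_{\nu_\alpha}$, and then recognize the product as the $S$-transform of a $\boxplus$-strictly stable $k$-symmetric measure of index $\beta$. The multiplicativity $S_{\mu\boxtimes\nu}=S_\mu S_\nu$ for a $k$-symmetric $\mu$ against a positive $\nu$ was established in the previous subsection, so once both factors are identified, the bulk of the work is the exponent arithmetic plus a characterization of strict stability via the $S$-transform.

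First I would compute $S_{w_k}$. By Theorem \ref{freeCLT}, $w_k$ (the $k$-semicircle $s_k$) satisfies $(w_k)^k=\pi^{\boxtimes(k-1)}$, and since $S_\pi(z)=1/(1+z)$, multiplicativity of the classical $S$-transform on $\mathcal{M}^+$ gives $S_{(w_k)^k}(z)=1/(1+z)^{k-1}$. Plugging this into the identity $S_{w_k}(z)^k=((1+z)/z)^{k-1}S_{(w_k)^k}(z)$ from Theorem \ref{main0}(b), the factors $(1+z)^{k-1}$ cancel, leaving $S_{w_k}(z)^k=z^{-(k-1)}$, so $S_{w_k}(z)=c_k\, z^{1/k-1}$ for the unique branch of the $k$-th root whose range lies in the wedge $\mathbb{H}_k$ dictated by the definition of $\chi_{w_k}$. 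For the positive strictly stable $\nu_\alpha$ of index $\alpha\in(0,1]$, the $S$-transform is classical (Bercovici--Voiculescu): $S_{\nu_\alpha}(z)=d_\alpha\,(-z)^{1/\alpha-1}$, with an explicit phase. Multiplying,
\begin{equation*}
S_{\sigma^k_\beta}(z)=S_{w_k}(z)\,S_{\nu_\alpha}(z)=c_k d_\alpha\cdot z^{\,1/k+1/\alpha-2},
\end{equation*}
and the exponent collapses to $1/\beta-1$ since $1/\beta=1/k+1/\alpha-1$ is exactly the definition $\beta=k\alpha/(\alpha+k-k\alpha)$.

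Finally, I would argue that any $k$-symmetric measure whose $S$-transform has the form $C\,z^{1/\beta-1}$ is $\boxplus$-strictly stable of index $\beta$: from $S_{\mu^{\boxplus t}}(z)=t^{-1}S_\mu(z/t)$ and $S_{D_t(\mu)}(z)=t^{-1}S_\mu(z)$ in Equations (\ref{S-free})--(\ref{S-Dil}), a direct substitution shows $\mu^{\boxplus t}=D_{t^{1/\beta}}(\mu)$ for every $t>1$, which in the $k$-symmetric context (where non-trivial Dirac shifts are unavailable) is the strict stability condition. The explicit constant $\theta_\beta e^{i(1-\beta)\pi/\beta}$ in the statement then falls out of combining the phase of $c_k$ (fixed by the $k$-symmetry of $w_k$) with that of $d_\alpha$ on the chosen domain. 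The main technical obstacle is precisely this bookkeeping of branches and phases, since the $S$-transforms here live on the non-simply-connected image $\Psi_\mu(\mathbb{H}_k)$ and the $k$-th root extracted from $S_{w_k}^k=z^{-(k-1)}$ must be the one compatible with the univalent inverse $\chi_{w_k}$ from Theorem \ref{main0}(a); the algebraic identity for the exponent is trivial by comparison.
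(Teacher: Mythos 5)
Your proposal is correct and follows essentially the same route as the paper: compute $S_{w_k}(z)=c_k z^{(1-k)/k}$ and $S_{\nu_\alpha}$, multiply them, observe that the exponent $1/k+1/\alpha-2$ equals $1/\beta-1$, and verify strict stability from the identities $S_{\mu^{\boxplus t}}(z)=t^{-1}S_\mu(z/t)$ and $S_{D_t(\mu)}(z)=t^{-1}S_\mu(z)$ (the paper does this for $t=2$). The one place you go beyond the paper is in actually deriving $S_{w_k}$ from $(s_k)^k=\pi^{\boxtimes(k-1)}$ and Theorem \ref{main0}(b), where the paper merely asserts ``a direct calculation shows''; your branch/phase bookkeeping is left at the same level of detail as the paper's.
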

\begin{proof}
 The $S$-transform for positive strictly stable laws is found in \cite{APA} and can be easily derived from the appendix in \cite{BePa99}: $$S_\alpha=\theta_\alpha e^{i(1-\alpha)\frac{\pi}{\alpha}}z^{\frac{1-\alpha}{\alpha}}.$$
A direct calculation shows that the $S$-transform of $w_k$ is $$S_{w_k}
=z^{\frac{1-k}{k}}.$$
Thus, the $S$ transform of $w_k\boxtimes\nu_\alpha$ is given by 
$$S_{w_k\boxtimes\nu_\alpha}(z)=\theta_\alpha e^{i(1-\alpha)\frac{\pi}{\alpha}}z^{\frac{1-\alpha}{\alpha}+\frac{1-k}{k}}.$$
Hence, on one hand, from (\ref{S-free}) we get 
\begin{eqnarray}
S_{(w_k\boxtimes\nu_\alpha)^{\boxplus 2}}(z)&=&\frac{1}{2}S_{(w_k\boxtimes\nu_\alpha)}(z/2) \\
&=&1/2\cdot\theta_\alpha e^{i(1-\alpha)\frac{\pi}{\alpha}}(\frac{z}{2})^{\frac{1-\alpha}{\alpha}+\frac{1-k}{k}} \\
&=&1/2^{1/\beta}\cdot\theta_\alpha e^{i(1-\alpha)\frac{\pi}{\alpha}} z^{\frac{1-\alpha}{\alpha}+\frac{1-k}{k}}.
\end{eqnarray}
On the other hand, from (\ref{S-Dil}) we have $$S_{D_{2^{1/\beta}}(w_k\boxtimes\nu_\alpha)}(z)=\frac{1}{2^{1/\beta}}\cdot\theta_\alpha e^{i(1-\alpha)\frac{\pi}{\alpha}} z^{\frac{1-\alpha}{\alpha}+\frac{1-k}{k}}.$$
\end{proof}

\begin{conj}\label{conjstable}
Let $k>2$, the $k$-symmetric measures $\sigma^k_\beta$ defined in Theorem \ref{st1} are the only $k$-symmetric $\boxtimes$-stable distributions.
\end{conj}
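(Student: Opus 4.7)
The plan is to derive from strict $\boxplus$-stability a scaling equation on the $S$-transform of Section 9, solve it to conclude that $S_\sigma$ is a pure power of $z$, and then match the result against Theorem \ref{st1} (I read the $\boxtimes$ in the statement as a typographical slip for $\boxplus$, since strict $\boxplus$-stability is the notion used throughout Section 8 and satisfied by $\sigma^k_\beta$). Assume $\sigma\in\mathcal{M}_k$ is a non-degenerate $k$-symmetric strictly stable distribution of index $\beta$. Since free additive powers of $k$-symmetric measures exist for every $t>1$ (Section 6), strict stability gives $\sigma^{\boxplus t}=D_{t^{1/\beta}}\sigma$ for all $t>1$ (the identity holds for integers by iteration and extends by analyticity of $t\mapsto \sigma^{\boxplus t}$). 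Applying the scaling rules (\ref{S-free}) and (\ref{S-Dil}) of the $S$-transform, now in the $k$-symmetric form of Theorem \ref{main0}, yields $\tfrac{1}{t}S_\sigma(z/t)=\tfrac{1}{t^{1/\beta}}S_\sigma(z)$, hence
\begin{equation*}
S_\sigma(z/t) = t^{\,1-1/\beta}\, S_\sigma(z), \qquad t>1,
\end{equation*}
on the connected domain $\Psi_\sigma(\mathbb{H}_k)$.

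From here I would pass to a closed form. Differentiating the displayed equation in $t$ at $t=1$ gives the ODE $-z\,S_\sigma'(z)/S_\sigma(z) = 1-1/\beta$, which integrates on $\Psi_\sigma(\mathbb{H}_k)$ to
\begin{equation*}
S_\sigma(z) = c\, z^{\,1/\beta-1}
\end{equation*}
for some complex constant $c$ and a branch of the power selected by the condition $S_\sigma(\Psi_\sigma(\mathbb{H}_k))\subset \mathbb{H}_k$ coming from Theorem \ref{main0}(a). Using the identity $S_\sigma^{k}(z)=\bigl(\tfrac{1+z}{z}\bigr)^{k-1} S_{\sigma^k}(z)$ of Theorem \ref{main0}(b) reduces the problem to identifying the positive measure $\sigma^k$ from
\begin{equation*}
S_{\sigma^k}(z) = c^{k}\,\bigl(\tfrac{z}{1+z}\bigr)^{k-1}\, z^{\,k/\beta-k}.
\end{equation*}
I would match this against the factorization $\sigma^k_\beta = \pi^{\boxtimes(k-1)}\boxtimes \nu_\alpha^{\boxtimes k}$ obtained from $\sigma^k_\beta=w_k\boxtimes\nu_\alpha$, Theorem \ref{freeCLT}, and (\ref{multconv}); its $S$-transform computes explicitly from $S_\pi(z)=1/(1+z)$ and the known $S$-transform of positive stable laws recalled in the proof of Theorem \ref{st1}. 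The matching pins down $c$ as $\theta_\beta e^{i(1-\beta)\pi/\beta}$ (up to the unique rotation compatible with the fixed branch on $\mathbb{H}_k$), so $\sigma$ coincides with $\sigma^k_\beta$ for the corresponding $\beta\in(0,k]$.

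The main obstacle is the final matching step, i.e.\ the injectivity and surjectivity of the assignment $\beta\mapsto\sigma^k_\beta$ onto admissible pairs $(c,\beta)$. Concretely, one must rule out ``exotic'' $k$-symmetric solutions whose $S$-transform sits on a different branch of $z^{1/\beta-1}$, and verify that every admissible $(c,\beta)$ actually arises from a genuine probability measure in $\mathcal{M}_k$ rather than a merely formal analytic solution. This hinges on a characterization of the image of the $S$-transform map restricted to $\mathcal{M}_k$, which is not available in the paper; indeed this is presumably why the statement is only a conjecture. A plausible way around this obstacle would be to combine the scaling argument above with a $k$-symmetric analogue of the attraction theorem quoted just above Conjecture \ref{conjstable}, deducing uniqueness by showing that any candidate stable $\sigma$ lies in the domain of attraction of some $\sigma^k_\beta$ and hence must equal it.
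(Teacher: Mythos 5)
The statement you are trying to prove is left open in the paper: it is stated as Conjecture \ref{conjstable}, and the paper offers no proof of it, so there is nothing on the paper's side to compare your argument against. Your scaling strategy is sensible and its first half is consistent with what the paper does prove: strict stability plus the dilation/power rules for the $S$-transform does formally force $S_\sigma(z)=c\,z^{1/\beta-1}$, and this matches the form computed in the proof of Theorem \ref{st1}, where $\frac{1-\alpha}{\alpha}+\frac{1-k}{k}=\frac{1}{\beta}-1$. But the argument as written is not a proof, and the gap is exactly where you locate it: the functional equation only determines $S_\sigma$ up to a multiplicative constant $c$ and a branch, on a domain $\Psi_\sigma(\mathbb{H}_k)$ that itself depends on the unknown measure. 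To conclude you would need (i) injectivity of the $k$-symmetric $S$-transform on $\mathcal{M}_k$ in a form strong enough to compare functions defined a priori on different domains, and (ii) a characterization of which pairs $(c,\beta)$ are realized by genuine probability measures in $\mathcal{M}_k$, i.e.\ a description of the image of $\mathcal{M}_k$ under the $S$-transform. Neither is available in the paper, and this is precisely the content of the conjecture rather than a technical loose end.

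Two further points weaken the proposal. First, the scaling rules (\ref{S-free}) and (\ref{S-Dil}) are established in the paper only for measures on $\mathbb{R}$ (via cumulants or power series); for the $k$-symmetric $S$-transform of Theorem \ref{main0} they would have to be proved, and for stable laws of small index one cannot fall back on moments, while free additive powers and the stability equation $\sigma^{\boxplus t}=D_{t^{1/\beta}}\sigma$ for $k$-symmetric measures without moments rest only on an approximation remark, not a developed theory (the paper even lists the definition of $\boxplus$ on $\mathcal{M}_k$ as an open question). Second, your proposed escape route via domains of attraction is itself Conjecture \ref{conjstable2} of the paper: the attraction theorem you want to invoke is only quoted for measures on $\mathbb{R}$, and its $k$-symmetric analogue is another open statement, so that route replaces one conjecture by another rather than closing the gap.
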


The following reproducing property was proved in \cite{Be-Pa} for one sided free stable distributions: \begin{equation} \label{biane2}
\nu_{1/(1+t)}\boxtimes\nu_{1/(1+s)}= \nu_{1/(1+t+s)},
\end{equation}
while for the real symmetric free stable distribution the analog relation was proved in \cite{APA}.
\begin{equation}
\sigma_{1/(1+t)}\boxtimes\nu_{1/(1+s)}= \sigma_{1/(1+t+s)}.
\end{equation} 

A generalization for $k$-symmetric distributions is also true, the proofs in \cite{APA} and \cite{Be-Pa} rely on an explicit calculation of the $S$-transform and can be easily modified to this framework.

\begin{thm}\label{Tstable}
For any $s,r>0$, let $\sigma^k_{1/(1+r)}$ be a $k$-symmetric strictly stable distribution of index $1/(1+r)$ and $\nu_{1/(1+s)}$ be a positive strictly stable distribution of index $1/(1+s)$. Then
\begin{equation}
\sigma^k_{1/(1+t)}\boxtimes\nu_{1/(1+s)}= \sigma^k_{1/(1+t+s)}.                           \end{equation}
\end{thm}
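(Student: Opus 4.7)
The plan is to reduce the claim to the reproducing property (\ref{biane2}) for positive strictly stable laws via the representation of $k$-symmetric stable laws given in Theorem \ref{st1}.

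First, I would invoke Theorem \ref{st1} to realize $\sigma^k_{1/(1+t)}$ as $w_k\boxtimes\nu_\alpha$ for a suitable positive strictly stable $\nu_\alpha$. Matching the parametrization $\beta=k\alpha/(\alpha+k-k\alpha)$ with $\beta=1/(1+t)$, a direct computation (equivalently, matching the exponent $(1-\beta)/\beta=(1-\alpha)/\alpha+(1-k)/k$) yields $\alpha=1/(1+r)$ with $r=t+(k-1)/k$. Since $t,k>0$ this $\alpha$ lies in $(0,1)$, so $\sigma^k_{1/(1+t)}=w_k\boxtimes\nu_{1/(1+r)}$ is well defined.

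Next I would use the multiplicativity $S_{\mu\boxtimes\nu}(z)=S_\mu(z)S_\nu(z)$ established in the preceding subsection for $\mu\in\mathcal{M}_k$ and $\nu\in\mathcal{M}^+$. Because the $S$-transform determines a $k$-symmetric measure uniquely and ordinary multiplication of scalar functions is associative, $\boxtimes$ is itself associative in this mixed $k$-symmetric/positive setting. Therefore
\begin{equation*}
\sigma^k_{1/(1+t)}\boxtimes\nu_{1/(1+s)}=w_k\boxtimes\nu_{1/(1+r)}\boxtimes\nu_{1/(1+s)}=w_k\boxtimes\bigl(\nu_{1/(1+r)}\boxtimes\nu_{1/(1+s)}\bigr).
\end{equation*}
Applying the positive reproducing property (\ref{biane2}) to the inner bracket gives $\nu_{1/(1+r)}\boxtimes\nu_{1/(1+s)}=\nu_{1/(1+r+s)}$. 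A second application of Theorem \ref{st1}, now with parameter $r+s$, identifies $w_k\boxtimes\nu_{1/(1+r+s)}=\sigma^k_{1/(1+t')}$ where $t'=(r+s)-(k-1)/k=t+s$, finishing the proof.

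The only delicate technical point is justifying the associativity of $\boxtimes$ across the $k$-symmetric/positive boundary, but this is immediate: every intermediate measure stays within $\mathcal{M}_k\cup\mathcal{M}^+$, and the $S$-transform intertwines $\boxtimes$ with associative multiplication of analytic functions. Alternatively one can avoid associativity altogether by working purely at the level of $S$-transforms: the stability scaling relation forces $S_{\sigma^k_{1/(1+t)}}(z)=C_{t,k}\,z^{t}$ and $S_{\nu_{1/(1+s)}}(z)=D_s\,z^{s}$, and the explicit constants from Theorem \ref{st1} combine so that $C_{t,k}D_s=C_{t+s,k}$, yielding the desired equality of $S$-transforms and hence of measures.
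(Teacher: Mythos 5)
Your proof is correct and follows essentially the same route as the paper's: realize $\sigma^k_{1/(1+t)}$ as $w_k\boxtimes\nu_{1/(1+r)}$ via Theorem \ref{st1}, apply the positive reproducing property (\ref{biane2}), and identify the result again via Theorem \ref{st1}. Your explicit parameter bookkeeping ($r=t+(k-1)/k$) and your remark on associativity across the $k$-symmetric/positive boundary are slightly more careful than the paper's terse computation, but the underlying argument is the same.
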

\begin{proof} This follows from Theorem \ref{st1}, indeed letting $\beta=(k-t+kt)/(tk)$
\begin{eqnarray*}
\sigma^k_{1/(1+t)}\boxtimes\nu_{1(1+s)}&=&w_k\boxtimes\nu_{1+\beta}\boxtimes\nu_{1+s}\\
&=&w_k\boxtimes\nu_{(1+\beta+s)}\\
&=&\sigma^k_{1/(1+t+s)}.
\end{eqnarray*}
We used \ref{biane2} in the second inequality.
\end{proof}

We have the following conjecture regarding domains of attraction.
\begin{conj}\label{conjstable2}

Assume that $\mu\in\mathcal{M}_k$ is not a point mass. Then $\nu$ is
$\boxplus$-stable if and only if the free domain of attraction of $\nu$ is not empty.
\end{conj}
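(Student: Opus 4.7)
The forward implication is immediate: if $\nu\in\mathcal{M}_k$ is strictly $\boxplus$-stable of index $\alpha$, then $\nu^{\boxplus N}=D_{N^{1/\alpha}}(\nu)$ for every integer $N\geq 1$, and choosing $\mu=\nu$ with norming constants $a_N=N^{-1/\alpha}$ makes $D_{a_N}(\mu^{\boxplus N})=\nu$ identically, so $\nu$ lies in its own free domain of attraction.

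For the converse, assume $\mu\in\mathcal{M}_k$ is not a point mass and $\mu_N:=D_{a_N}(\mu^{\boxplus N})\to\nu$ weakly for some $a_N>0$. Since $\mathcal{M}_k$ is closed under both weak convergence and positive-real dilation, $\nu\in\mathcal{M}_k$. A standard limit-theoretic argument (analogous to Bercovici--Pata, carried out via the $k$-symmetric $S$-transform of Theorem~\ref{main0}, or equivalently after push-forward under $t\mapsto t^k$ to the classical result on $\mathcal{M}^+$) shows that $\nu$ is $\boxplus$-infinitely divisible, so by Theorem~\ref{freeconvK} the power $\nu^{\boxplus m}\in\mathcal{M}_k$ is defined for every positive integer $m$. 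The core of the argument then mimics the classical real-line proof. From $\mu^{\boxplus Nm}=(\mu^{\boxplus N})^{\boxplus m}$ and the identity $D_c(\rho)^{\boxplus m}=D_c(\rho^{\boxplus m})$ (immediate from the cumulant scaling $\kappa_n(D_c(\rho))=c^n\kappa_n(\rho)$), one derives
\begin{equation*}
D_{a_{Nm}}(\mu^{\boxplus Nm})=D_{a_{Nm}/a_N}\!\left(\mu_N^{\boxplus m}\right).
\end{equation*}
As $N\to\infty$ the LHS tends to $\nu$, while $\mu_N^{\boxplus m}\to\nu^{\boxplus m}$ by continuity of $\boxplus$. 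Provided $a_{Nm}/a_N\to b_m\in(0,\infty)$ one obtains $\nu=D_{b_m}(\nu^{\boxplus m})$, i.e.\ $\nu^{\boxplus m}=D_{1/b_m}(\nu)$, and iterating forces $b_{mn}=b_mb_n$, hence $b_m=m^{-1/\alpha}$ for some $\alpha>0$; this yields strict $\boxplus$-stability of $\nu$ of index $\alpha$.

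The principal obstacle is the convergence-of-types step, i.e.\ showing $a_{Nm}/a_N\to b_m\in(0,\infty)$. I would reduce this to the classical convergence-of-types theorem on $[0,\infty)$ via the pushforward $\rho\mapsto\rho^k$, which intertwines dilation as $D_c(\rho)^k=D_{c^k}(\rho^k)$ and is injective on $\mathcal{M}_k$. From $\mu_N\to\nu$ one then gets $\mu_N^k\to\nu^k$ weakly in $\mathcal{M}^+$, with $\nu^k$ non-degenerate (the non-degeneracy of $\nu$ itself requiring that the norming $a_N$ does not collapse $\mu^{\boxplus N}$ to $\delta_0$, which uses that $\mu$ is not a point mass, combined with a tightness argument). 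Applying the classical convergence-of-types to the $k$-th power of the displayed identity then yields convergence of $(a_{Nm}/a_N)^k$, and hence of $a_{Nm}/a_N$. A secondary point to verify is the continuity of $\rho\mapsto\rho^{\boxplus m}$ on the set of $k$-symmetric freely infinitely divisible measures under weak convergence; this should again follow by pulling back to $\mathcal{M}^+$ via $t\mapsto t^k$ and invoking the classical continuity of free additive convolution.
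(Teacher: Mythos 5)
First, a point of context: the paper offers no proof of this statement at all --- it is stated as Conjecture \ref{conjstable2} and left open (just as the existence of free additive convolution on $\mathcal{M}_k$ is flagged as an open question in Section 6). So your proposal is not competing with a proof in the paper; it is an attempt to settle an open problem, and it does not succeed.

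The fatal step is your repeated reduction of \emph{additive} statements to classical results on $\mathcal{M}^+$ via the pushforward $\rho\mapsto\rho^k$. This map does intertwine dilations ($D_c(\rho)^k=D_{c^k}(\rho^k)$) and, crucially, the \emph{multiplicative} structure --- that is exactly the content of the Main Theorem and Corollary \ref{cormulconv}, $(\mu\boxtimes\nu)^k=\mu^{\boxtimes k}\boxtimes\nu^k$ --- but it does \emph{not} intertwine $\boxplus$: in general $(\rho^{\boxplus m})^k\neq(\rho^k)^{\boxplus m}$. A two-line counterexample with $k=2$: for $\rho=\tfrac12(\delta_{-1}+\delta_1)\in\mathcal{M}_2$ one has $\rho^2=\delta_1$, so $(\rho^2)^{\boxplus 2}=\delta_2$, whereas $(\rho^{\boxplus 2})^2$ is the image of the arcsine law on $(-2,2)$ under $t\mapsto t^2$, a nondegenerate absolutely continuous measure on $[0,4]$. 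The correct additive-to-multiplicative dictionary is the nonlinear one of Theorem \ref{3formula} and Proposition \ref{Poisson mult}: $(\mu^{\boxplus t})^k=\pi^{\boxtimes(k-1)}\boxtimes\nu^{\boxplus t}$ where $\nu$ is a positive measure whose free cumulants are the determining sequence of $\mu$ (when such $\nu$ exists), not $(\mu^k)^{\boxplus t}$. Consequently the two steps you dispose of by ``pulling back to $\mathcal{M}^+$'' --- (i) that weak limits of $D_{a_N}(\mu^{\boxplus N})$ are $\boxplus$-infinitely divisible (the Bercovici--Pata step), and (ii) the weak continuity of $\rho\mapsto\rho^{\boxplus m}$ on $\mathcal{M}_k$ --- are exactly the missing ingredients, and they would require an additive analytic apparatus on $\mathcal{M}_k$ (an analogue of the Voiculescu transform for $k$-symmetric measures) that the paper does not possess; the paper's additive theory consists only of cumulants in the bounded case, the compression trick for powers $t>1$, and compound-Poisson approximation for infinitely divisible laws. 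Note also that your intermediate object $\mu_N^{\boxplus m}$ is not even defined by the paper's results in the unbounded setting unless $\mu_N=D_{a_N}(\mu^{\boxplus N})$ is already known to be freely infinitely divisible (Theorem \ref{freeconvK}), which is again point (i). Ironically, the one step you single out as ``the principal obstacle,'' convergence of types, is the sound part of your argument: it only involves weak convergence, dilations, and injectivity of $\rho\mapsto\rho^k$ on $\mathcal{M}_k$, so the pushforward argument works there (and with the paper's own definition of domain of attraction, where the norming is $a_N=N^{\alpha}$, it is not even needed). Your forward implication is fine modulo the same foundational caveat that $\nu^{\boxplus N}$ and its semigroup property must be available for the stable law in question.
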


Now, Theorem \ref{Tstable} may be explained by the following observation.

\begin{lem}\label{Domains}
Let $\mu_1$ and $\mu_2$ be in the $\boxplus$-domain of attraction of $\nu_1$ and $\mu_2$, respectively. Then $\mu_1\boxtimes \mu_2$ is in the $\boxtimes$-domain of attraction of $\nu_1\boxtimes \nu_2$.
\end{lem}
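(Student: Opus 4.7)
The plan is to reduce the claim to equation (\ref{mult-additive}), which links free additive powers of a $\boxtimes$-product to the $\boxtimes$-product of free additive powers via dilation. I read the conclusion as being about the $\boxplus$-domain of attraction of $\nu_1\boxtimes\nu_2$, since the lemma is invoked to explain Theorem \ref{Tstable} about $\boxplus$-stable laws. By hypothesis, there are normalizations $\alpha_1,\alpha_2>0$ with $D_{N^{\alpha_i}}(\mu_i^{\boxplus N}) \to \nu_i$ weakly as $N\to\infty$, and my aim is to exhibit an exponent $\gamma$ for which $D_{N^{\gamma}}((\mu_1 \boxtimes \mu_2)^{\boxplus N})$ converges weakly to $\nu_1 \boxtimes \nu_2$.

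First I would use (\ref{mult-additive}) to rewrite $(\mu_1 \boxtimes \mu_2)^{\boxplus N} = D_N(\mu_1^{\boxplus N} \boxtimes \mu_2^{\boxplus N})$. The second ingredient is the bilinear dilation identity $D_{ab}(\mu \boxtimes \nu) = D_a(\mu) \boxtimes D_b(\nu)$, which follows from (\ref{S-Dil}) combined with the multiplicativity of the $S$-transform, $S_{\mu\boxtimes\nu} = S_\mu S_\nu$, or equivalently from the operator-level identity $(aX)(bY)=ab\,XY$ for free $X,Y$. Applying $D_{N^{\alpha_1+\alpha_2-1}}$ to both sides and splitting the resulting dilation factor $N^{\alpha_1+\alpha_2}$ as $N^{\alpha_1}\cdot N^{\alpha_2}$ between the two $\boxtimes$-factors yields
\begin{equation*}
D_{N^{\alpha_1+\alpha_2-1}}\!\left((\mu_1 \boxtimes \mu_2)^{\boxplus N}\right) = D_{N^{\alpha_1}}\!\left(\mu_1^{\boxplus N}\right) \boxtimes D_{N^{\alpha_2}}\!\left(\mu_2^{\boxplus N}\right).
\end{equation*}
By hypothesis each factor on the right converges weakly, and by continuity of $\boxtimes$ in the weak topology (Proposition \ref{WeakConCon}) the product converges to $\nu_1 \boxtimes \nu_2$. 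Hence $\gamma := \alpha_1+\alpha_2-1$ does the job.

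The main technical obstacle is not the algebraic identity but the setting in which weak continuity of $\boxtimes$ is invoked: Proposition \ref{WeakConCon} is stated for measures in $\mathcal{M}^+$ distinct from $\delta_0$, whereas the present lemma is meant to cover mixed $k$-symmetric/positive situations as in Theorem \ref{Tstable}. I would handle this by passing through the map $\mu \mapsto \mu^k$, using the relation $(\mu\boxtimes\nu)^k = \mu^{\boxtimes k}\boxtimes\nu^{k}$ from Corollary \ref{cormulconv} and Proposition \ref{CTmuCTmu2} to transfer weak convergence between a $k$-symmetric measure and its positive image, thereby reducing the continuity step to the purely positive case already covered. A routine secondary check is that neither $\nu_i$ collapses to $\delta_0$, which is automatic in the stable-law applications where this lemma is used.
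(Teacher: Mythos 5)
Your proof follows exactly the same route as the paper's: the identity $(\mu_1\boxtimes\mu_2)^{\boxplus N}=D_N(\mu_1^{\boxplus N}\boxtimes\mu_2^{\boxplus N})$ from Equation (\ref{mult-additive}), dilation by $N^{\alpha_1+\alpha_2-1}$, splitting the dilation across the two $\boxtimes$-factors, and passing to the limit. Your added remarks --- that the conclusion should read $\boxplus$-domain of attraction (the statement's ``$\boxtimes$-domain'' is evidently a typo, as the paper's own proof concludes $\mu_1\boxtimes\mu_2\in\mathcal{D}^\boxplus(\nu_1\boxtimes\nu_2)$) and that the weak-continuity step deserves justification in the mixed $k$-symmetric/positive setting via the map $\mu\mapsto\mu^k$ --- are correct refinements of details the paper leaves implicit.
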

\begin{proof}
For $i=1,2$, since  $\mu_i\in \mathcal{D}^\boxplus(\nu_i)$ then there are some $\alpha_i$\'{}s such that $D_{N^{\alpha_i}}(\mu^{\boxplus N})\rightarrow \nu_i$. 
Now using Equation (\ref{mult-additive}) we have 
$$(\mu_1\boxtimes\mu_2)^{\boxplus N}=D_N(\mu_1^{\boxplus N}\boxtimes\mu_2^{\boxplus N})$$
and dilating by $N^{\alpha_1+\alpha_2-1}$ we get 
\begin{equation}\label{stable2}
D_{N^{\alpha_1+\alpha_2-1}}((\mu_1\boxtimes\mu_2)^{\boxplus N})
=D_{N^{\alpha_1+\alpha_2}}(\mu_1^{\boxplus N}\boxtimes\nu_2^{\boxplus N})  =D_{N^{\alpha_1}}(\mu_1^{\boxplus N})\boxtimes D_{N^{\alpha_2}}(\mu_2^{\boxplus N}) 
\end{equation}

The RHS of the Equation (\ref{stable2}) tends to $\nu_1\boxtimes\nu_2$, and so the LHS. This of course means that $\mu_1\boxtimes\mu_2\in\mathcal{D}^\boxplus(\nu_1\boxtimes\nu_2)$.
\end{proof}

\begin{rem}
A better look to the proof of Lemma \ref{Domains} gives another proof of the reproducing property for $k=1,2$ and for general $k$ if Conjectures \ref{conjstable} and \ref{conjstable} are true.

Indeed, for any $s,t>0$, let $\sigma^k_{1+t}$ a $k$-symmetric strictly stable distribution of index $1/(1+s)$ and $\nu_{1+s}$ be a positive strictly stable distribution of index $1/(1+t)$.
The measure $\sigma^k_{1+t}\boxtimes\nu_{1+s}$ is clearly $k$-symmetric and strictly stable since  $\mathcal{D}(\sigma^k_{1+t}\boxtimes\nu_{1+s})$ is non-empty by the last lemma. The index of stability can be easily calculated from Equation (\ref{stable2}), since in this  $$D_{N^{1+s+1+t-1}}(\sigma^k_{1+t}\boxtimes\nu_{1+s})\rightarrow\sigma^k_{1+t}\boxtimes\nu_{1+s}$$
which means that $\sigma^k_{1+t}\boxtimes\nu_{1+s}$ is a $k$-symmetric strictly stable distribution of index $1/(1+s+t)$.
\end{rem}

Finally, recall from Theorem \ref{freeinf10} that the $k$-power of a freely infinitely divisible measure in $\mathcal{M}_k$ is also freely infinitely divisible. In the case of stable laws we can identify explicitly the Levy measure, for $k\geq2$. Indeed, since $$(w_k\boxtimes\nu_{1/(1+s)})^k=w_k^k\boxtimes(\nu_{1/(1+s)})^{\boxtimes k}=\pi^{\boxtimes {k-1}}\boxtimes \nu_{1/(1+ks)},$$
the Levy measure is given by $\pi^{\boxtimes k-2}\boxtimes \nu_{1/(1+ks)}$.

\section*{Acknowledgement}

I thank my advisor, Roland Speicher, for many helpful discussions and encouragement. I am also grateful to Professor James Mingo for many discussions during the time I spent at Queen's University.

\end{document}